\theoremstyle{plain}
\newtheorem{sbthm}[subsubsection]{Theorem}
\newtheorem{sbprop}[subsubsection]{Proposition}
\newtheorem{sbcor}[subsubsection]{Corollary}
\newtheorem{sblem}[subsubsection]{Lemma}
\theoremstyle{definition}
\newtheorem{sbrem}[subsubsection]{Remark}
\newtheorem{sbpara}[subsubsection]{}
\newenvironment{pf}{\proof[\proofname]}{\endproof}
\begin{document}

\title{Refined Swan conductors mod $p$ of one-dimensional Galois representations}

\author
{Kazuya Kato, Isabel Leal, Takeshi Saito}

\maketitle

\newcommand{\lr}[1]{\langle#1\rangle}
\newcommand{\ul}[1]{\underline{#1}}
\newcommand{\eq}[2]{\begin{equation}\label{#1}#2 \end{equation}}
\newcommand{\ml}[2]{\begin{multline}\label{#1}#2 \end{multline}}
\newcommand{\ga}[2]{\begin{gather}\label{#1}#2 \end{gather}}
\newcommand{\mc}{\mathcal}
\newcommand{\mb}{\mathbb}
\newcommand{\surj}{\twoheadrightarrow}
\newcommand{\inj}{\hookrightarrow}
\newcommand{\red}{{\rm red}}
\newcommand{\codim}{{\rm codim}}
\newcommand{\rank}{{\rm rank}}
\newcommand{\Pic}{{\rm Pic}}
\newcommand{\Div}{{\rm Div}}
\newcommand{\Res}{{\rm Res}}
\newcommand{\Sw}{{\rm Sw}}
\newcommand{\Rsw}{{\rm Rsw}}
\newcommand{\divi}{{\rm div}}
\newcommand{\Hom}{{\rm Hom}}
\newcommand{\Ext}{{\rm Ext}}
\newcommand{\im}{{\rm im}}
\newcommand{\fil}{{\rm fil}}
\newcommand{\gp}{{\rm gp}}
\newcommand{\Spec}{{\rm Spec}}
\newcommand{\Sing}{{\rm Sing}}
\newcommand{\Char}{{\rm char}}
\newcommand{\ab}{{\rm ab}}
\newcommand{\Tr}{{\rm Tr}}
\newcommand{\Gal}{{\rm Gal}}
\newcommand{\Min}{{\rm Min}}
\newcommand{\mult}{{\rm mult}}
\newcommand{\Max}{{\rm Max}}
\newcommand{\Alb}{{\rm Alb}}
\newcommand{\gr}{{\rm gr}}
\newcommand{\Ker}{{\rm Ker}}
\newcommand{\Lie}{{\rm Lie}}
\newcommand{\infi}{{\rm inf}}
\newcommand{\pole}{{\rm pole}}
\newcommand{\ti}{\times }
\newcommand{\modu}{{\rm mod}}
\newcommand{\GL}{{\rm GL}}
\newcommand{\Ab}[1]{{\mathcal A} {\mathit b}/#1}
\newcommand{\sA}{{\mathcal A}}
\newcommand{\sB}{{\mathcal B}}
\newcommand{\sC}{{\mathcal C}}
\newcommand{\sD}{{\mathcal D}}
\newcommand{\sE}{{\mathcal E}}
\newcommand{\sF}{{\mathcal F}}
\newcommand{\sG}{{\mathcal G}}
\newcommand{\sH}{{\mathcal H}}
\newcommand{\sI}{{\mathcal I}}
\newcommand{\sJ}{{\mathcal J}}
\newcommand{\sK}{{\mathcal K}}
\newcommand{\sL}{{\mathcal L}}
\newcommand{\sM}{{\mathcal M}}
\newcommand{\sN}{{\mathcal N}}
\newcommand{\sO}{{\mathcal O}}
\newcommand{\sP}{{\mathcal P}}
\newcommand{\sQ}{{\mathcal Q}}
\newcommand{\sR}{{\mathcal R}}
\newcommand{\sS}{{\mathcal S}}
\newcommand{\sT}{{\mathcal T}}
\newcommand{\sU}{{\mathcal U}}
\newcommand{\sV}{{\mathcal V}}
\newcommand{\sW}{{\mathcal W}}
\newcommand{\sX}{{\mathcal X}}
\newcommand{\sY}{{\mathcal Y}}
\newcommand{\sZ}{{\mathcal Z}}
\newcommand{\A}{{\mathbb A}}
\newcommand{\B}{{\mathbb B}}
\newcommand{\C}{{\mathbb C}}
\newcommand{\D}{{\mathbb D}}
\newcommand{\E}{{\mathbb E}}
\newcommand{\F}{{\mathbb F}}
\newcommand{\G}{{\mathbb G}}
\renewcommand{\H}{{\mathbb H}}
\newcommand{\I}{{\mathbb I}}
\newcommand{\J}{{\mathbb J}}
\newcommand{\M}{{\mathbb M}}
\newcommand{\N}{{\mathbb N}}
\renewcommand{\P}{{\mathbb P}}
\newcommand{\Q}{{\mathbb Q}}
\newcommand{\R}{{\mathbb R}}
\newcommand{\T}{{\mathbb T}}
\newcommand{\U}{{\mathbb U}}
\newcommand{\V}{{\mathbb V}}
\newcommand{\W}{{\mathbb W}}
\newcommand{\X}{{\mathbb X}}
\newcommand{\Y}{{\mathbb Y}}
\newcommand{\Z}{{\mathbb Z}}
\newcommand{\pic}{{\text{Pic}(C,\sD)[E,\nabla]}}
\newcommand{\ocd}{{\Omega^1_C\{\sD\}}}
\newcommand{\oc}{{\Omega^1_C}}
\newcommand{\al}{{\alpha}}
\newcommand{\be}{{\beta}}
\newcommand{\la}{{\lambda}}
\newcommand{\ta}{{\theta}}
\newcommand{\ve}{{\varepsilon}}
\newcommand{\phe}{{\varphi}}
\newcommand{\om}{{\overline M}}
\newcommand{\sym}{{\text{Sym}(\om)}}
\newcommand{\an}{{\text{an}}}
\newcommand{\bs}{{\backslash}}
\newcommand{\lra}{\longrightarrow}
\newcommand{\lam}{{\lambda}}
\newcommand{\sig}{{\sigma}}
\newcommand{\cF}{{\cal F}}
\newcommand{\cO}{{\cal O}}
\newcommand{\cG}{{\cal G}}
\newcommand{\cC}{{\cal C}}
\newcommand{\cS}{{\cal S}}
\newcommand{\cT}{{\cal T}}
\newcommand{\cH}{{\cal H}}


Dedicated to Shuji Saito on his 60th birthday.

\section{Introduction}

Let $K$ be a complete discrete valuation field whose residue field $F$ is of characteristic $p>0$. 
 The Swan conductor 
 \begin{equation}
 \Sw(\chi)\in \Z_{\geq 0}\;\;\text{for}\;\; \chi\in H^1(K, \Q_p/\Z_p)=\Hom_{\text{cont}}(\Gal(\bar K/K), \Q_p/\Z_p)
 \label{eqSw}
 \end{equation} 
  generalizing the classical perfect residue field case, the subgroups 
 \begin{equation}
 F_nH^1(K, \Q_p/\Z_p)= \{\chi\in 
H^1(K, \Q_p/\Z_p)\;|\; \Sw(\chi)\leq n\} 
\end{equation}
of $H^1(K, \Q_p/\Z_p)$ for $n\geq 0$, and an injective homomorphism
\begin{equation} \text{rsw} \;:\; F_nH^1(K, \Q_p/\Z_p)/F_{n-1}H^1(K, \Q_p/\Z_p) \to {\frak m}_K^{-n}/{\frak m}_K^{-n+1} \otimes_{O_K} \Omega^1_{O_K}(\log)
\label{eqrsw}
\end{equation}
called the refined Swan conductor for $n\geq 1$, where $\Omega^1_{O_K}(\log)$ is the module of differential forms with log poles, are defined in \cite{KKs}. Let $m=\max(n-e_K, [n/p])$ where $e_K$ denotes the absolute ramification index $\text{ord}_K(p)$ of $K$ ($e_K=\infty$ if $K$ is of characteristic $p$) and $[n/p]= \max\{x\in \Z\;|\; x\leq n/p\}$. In this paper, we define an injective  homomorphism
\begin{equation}
\Rsw\;:\; F_nH^1(K, \Q_p/\Z_p)/F_mH^1(K, \Q_p/\Z_p)  \to {\frak m}_K^{-n}/{\frak m}_K^{-m} \otimes_{O_K} \Omega^1_{O_K}(\log)
\label{eqRsw}
\end{equation}
which is a lifting of (\ref{eqrsw}).
Note that ${\frak m}_K^{-m}=p{\frak m}^{-n}_K+{\frak m}_K^{-[n/p]}$. We will call the homomorphism  (\ref{eqrsw}) the refined Swan conductor mod ${\frak m}_K$ and the homomorphism (\ref{eqRsw})  the refined Swan conductor mod $p$. 

In the case $K$ is of characteristic $p$ (in this case, $m=[n/p]$), the homomorphism Rsw is defined by using Artin-Schreier-Witt theory and is already known (\cite{Ma}, \cite{Bo2}). In the mixed characteristic case,  we define Rsw by using higher dimensional class field theory. See Section 3 for the definition of Rsw.

In Section 4, we show that for a regular scheme $X$ of finite type over $\Z$ and for a divisor $D$ on $X$ with simple normal crossings, and for $U=X\smallsetminus D$ and $\chi\in H^1_{et}(U, \Q_p/\Z_p)$, the refined Swan conductors mod $p$ of $\chi$ at generic points of $D$ defined in Section 3 glue to a global section of a certain sheaf of differential forms on $X$. 

We will give applications \ref{thmB}, \ref{thmIL1} and \ref{thmC} of our theory. \ref{thmB} (resp. \ref{thmC}) shows that the Swan conductor (\ref{eqSw}) is recovered from pullbacks of $\chi$ to the perfect residue field cases (resp. to one dimensional subschemes in the situation of Section 4). \ref{thmIL1} improves a result of the second author in \cite{IL} concerning the change of the Swan conductor (\ref{eqSw}) in a transcendental extension of local fields.

Shuji Saito played leading roles in the developments of higher dimensional class field theory and its applications. We dedicate this paper to him  with admiration. 

\section{Preliminaries}

\subsection{Differential forms and discrete valuations}\label{difdv}

\begin{sbpara}
Let $K$ be a discrete valuation field with residue field $F$,  valuation ring $O_K$ and maximal ideal ${\frak m}_K$. Assume $F$ is  of characteristic $p>0$. 

Let $e_K=\text{ord}_K(p)$ be the absolute ramification index of $K$ ($e_K=\infty$ if $K$ is of characteristic $p$). 

\end{sbpara}

\begin{sbpara}

Let $\Omega^1_{O_K}(\log) =\Omega^1_{O_K/\Z}(\log)$ be the module of differential forms with log poles of $O_K$ over $\Z$ with respect to the standard log structure of $O_K$ (\cite{KK2}). It is the $O_K$-module defined by the following generators and relations.

Generators. $dx$ for $x\in O_K$ and $d\log(x)$ for $x\in K^\times$.

Relations. $d(x+y)= dx+dy$ and $d(xy)= xdy+ydx$ for $x,y\in O_K$. $d\log(xy)=d\log(x)+d\log(y)$ for $x,y\in K^\times$. $dx=xd\log(x)$ for $x\in O_K\smallsetminus \{0\}$.

Let $${\hat \Omega}^1_{O_K}(\log)=\varprojlim_n \Omega^1_{O_K}(\log)/{\frak m}_K^n\Omega^1_{O_K}(\log).$$ 
Note that ${\hat \Omega}^1_{O_K}(\log)/{\frak m}^n_K{\hat \Omega}^1_{O_K}(\log)=\Omega^1_{O_K}(\log)/{\frak m}_K^n\Omega^1_{O_K}(\log)$ for any $n\geq 1$ (\cite{FK} Chap. 0, 7.2.8, 7.2.16). 

\end{sbpara}

The following 2 of \ref{hatdif} is the log version of \cite{Ku0} Lemma 1.1 and the proof given below is essentially the same as that in loc. cit. 
\begin{sblem}\label{hatdif}  Let $(b_{\lam})_{\lam\in \Lambda}$ be a $p$-base of $F$ (\cite{Gr}  21.1.9) and let $\tilde b_{\lam}$ is a lifting of $b_{\lam}$ to $O_K$ for each $\lam$. 

1. Assume $K$ is of characteristic $p$, and let $\pi$ be a prime element of $K$. Then ${\hat \Omega}^1_{O_K}(\log)$ is the ${\frak m}_K$-adic completion of the free $O_K$-module with base $d\tilde b_{\lam}$ ($\lam\in \Lambda$) and $d\log(\pi)$. If $K$ is complete and $[F:F^p]<\infty$, we have ${\hat \Omega}^1_{O_K}(\log)=\Omega^1_{O_K}(\log)$. 

2. If $K$ is of mixed characteristic,  we have an isomorphism of topological $O_K$-modules
$${\hat \Omega}^1_{O_K}(\log)\cong ({\hat \oplus}_{\Lambda} O_K) \oplus O_K/{\frak m}_K^a,$$
for some integer $a\geq 1$, where ${\hat \oplus}_{\Lambda} O_K$ denotes the ${\frak m}_K$-adic completion of the free $O_K$-module $\oplus_{\Lambda} O_K$ with base $\Lambda$, such that the $\lam$-th base in $\oplus_{\Lambda} O_K$ is sent to $d\tilde b_{\lam}\in {\hat \Omega}^1_{O_K}(\log)$. 

3. If $K$ is of mixed characteristic, 
$O_K/pO_K\otimes_{O_K} \Omega^1_{O_K}(\log)$ is a free $O_K/pO_K$-module with base $d\tilde b_{\lam}$ ($\lam\in \Lambda$) and $d\log(\pi)$. 

\end{sblem}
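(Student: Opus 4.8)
Here is how I would try to prove the lemma.

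\medskip

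\noindent\emph{Reduction and the common first step.} We may assume $K$ is complete, since $\Omega^1_{O_K}(\log)/{\frak m}_K^n\Omega^1_{O_K}(\log)$, hence $\hat\Omega^1_{O_K}(\log)$, is unchanged when $O_K$ is replaced by its completion (same residue ring $O_K/{\frak m}_K^n$ and compatible presentations; see also \cite{FK}), and a $p$-base with liftings is the same for $K$ and $\widehat K$. In all three cases the first step — and this is the part that runs ``essentially as in \cite{Ku0} Lemma 1.1'' — is to show that for every $x\in O_K$ and every $n\ge 1$ the element $dx$ lies, modulo ${\frak m}_K^n$, in the $O_K$-submodule of $\Omega^1_{O_K}(\log)$ generated by the $d\tilde b_\lambda$ and $d\log\pi$; equivalently, that the canonical map to $\hat\Omega^1_{O_K}(\log)$ from the ${\frak m}_K$-adic completion of the free $O_K$-module on $\{d\tilde b_\lambda\}\cup\{d\log\pi\}$ is surjective. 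This is a successive-approximation argument: writing $\bar x=\sum_I\bar c_I^{\,p}\bar b^I$ (a finite sum over multi-indices $I$ with $0\le i_\lambda<p$, possible because $(b_\lambda)$ is a $p$-base), lifting to $x=\sum_I c_I^{\,p}\tilde b^I+\pi y$, and differentiating, one finds, using that the $\tilde b_\lambda$ are units and $d\pi=\pi\,d\log\pi$, that $dx\equiv\sum_\lambda(\ast)\,d\tilde b_\lambda+(\pi y)\,d\log\pi+\pi\,dy+p\sum_I(\ast)\,dc_I$; in characteristic $p$ the last term vanishes, in mixed characteristic it lies in ${\frak m}_K^{e_K}\Omega^1_{O_K}(\log)$, and in either case one iterates on the $dy$ and $dc_I$, each iteration raising the ${\frak m}_K$-order, to conclude.

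\medskip

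\noindent\emph{Proof of 1.} Here $\Omega^1_{O_K}(\log)=\Omega^1_{O_K/{\F_p}}(\log)$, and by Cohen's structure theorem $O_K\cong F[[\pi]]$, with a coefficient field that we may take to contain the $\tilde b_\lambda$. Since a $p$-base trivializes the differentials of a field, $\Omega^1_{F/{\F_p}}$ is free with basis $(db_\lambda)$ (\cite{Gr} 21.1.9); passing to the truncations $F[\pi]/(\pi^n)$ and taking the limit shows $\hat\Omega^1_{O_K/{\F_p}}$ is the ${\frak m}_K$-adic completion of the free $O_K$-module on $\{d\tilde b_\lambda\}\cup\{d\pi\}$, and since $\Omega^1_{O_K}(\log)=(\Omega^1_{O_K/{\F_p}}\oplus O_K\,d\log\pi)/(d\pi-\pi\,d\log\pi)$ with $d\pi$ a basis element, the same module is (after completion) free on $\{d\tilde b_\lambda\}\cup\{d\log\pi\}$. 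If moreover $K$ is complete and $[F:F^p]<\infty$, then $\Lambda$ is finite, $\{\tilde b_\lambda\}\cup\{\pi\}$ is a $p$-base of $O_K$, and $\Omega^1_{O_K/{\F_p}}=\Omega^1_{O_K/O_K^p}$; as $O_K$ is of the form $O_K^p[T_\bullet]/(T_i^p-a_i)$ with $a_i\in O_K^p$ the relations $d(T_i^p-a_i)=0$ are automatic, so $\Omega^1_{O_K/{\F_p}}$ is already free (hence ${\frak m}_K$-adically complete) on $\{d\tilde b_\lambda\}\cup\{d\pi\}$, and then so is $\Omega^1_{O_K}(\log)$ on $\{d\tilde b_\lambda\}\cup\{d\log\pi\}$, giving $\Omega^1_{O_K}(\log)=\hat\Omega^1_{O_K}(\log)$.

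\medskip

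\noindent\emph{Proof of 2 and 3.} In mixed characteristic I would fix, by Cohen's theorem, a complete discrete valuation ring $V_0\subseteq O_K$ with uniformizer $p$ and residue field $F$, and take $\tilde b_\lambda\in V_0$; then $O_K$ is totally ramified and finite over $V_0$, monogenic, say $O_K=V_0[\pi]$ with $\pi$ a root of an Eisenstein polynomial $g$ of degree $e_K$. Two inputs: (i) $\hat\Omega^1_{V_0/\Z}$ is the $p$-adic completion of the free $V_0$-module on $\{d\tilde b_\lambda\}$ — the case $e_K=1$, coming from the formal smoothness of a Cohen ring over $\Z_p$ together with $\hat\Omega^1_{\Z_p/\Z}=0$; (ii) $K/\mathrm{Frac}(V_0)$ is separable, so $g'(\pi)\ne 0$, i.e.\ $L_{O_K/V_0}\simeq[\,O_K\xrightarrow{g'(\pi)}O_K\,]$ with $H_1=0$ and $\Omega^1_{O_K/V_0}=O_K/(g'(\pi))$. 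Feeding this and the formal smoothness of $V_0$ into the transitivity triangle for $\Z\to V_0\to O_K$ and completing ${\frak m}_K$-adically gives an exact conormal sequence $0\to{\hat\oplus}_\Lambda O_K\to\hat\Omega^1_{O_K/\Z}\to O_K/(g'(\pi))\to0$, the first term being $O_K\,\widehat{\otimes}_{V_0}\hat\Omega^1_{V_0/\Z}$ with $\lambda$-th base $\mapsto d\tilde b_\lambda$; adjoining $d\log\pi$ (the map $\Omega^1_{O_K/\Z}\to\Omega^1_{O_K/\Z}(\log)$ is injective because $d\pi\ne 0$ and $O_K$ is a domain) yields $0\to{\hat\oplus}_\Lambda O_K\to\hat\Omega^1_{O_K}(\log)\to O_K/(\pi g'(\pi))\to0$. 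Since $dp=0$, differentiating $g(\pi)=0$ puts the class of this extension into ${\frak m}_K^{e_K}$ times the free part, and a structure-theory (elementary-divisor) argument over the discrete valuation ring $O_K$ then produces an isomorphism $\hat\Omega^1_{O_K}(\log)\cong({\hat\oplus}_\Lambda O_K)\oplus O_K/{\frak m}_K^{a}$ compatible with $d\tilde b_\lambda\leftrightarrow$ ($\lambda$-th base), with $a\ge e_K$; this is 2. For 3, apply $-\otimes_{O_K}O_K/pO_K$: since $a\ge e_K$ one has $O_K/{\frak m}_K^{a}\otimes_{O_K}O_K/pO_K=O_K/pO_K$, so $O_K/pO_K\otimes_{O_K}\hat\Omega^1_{O_K}(\log)$ is free on $\{d\tilde b_\lambda\}\cup\{d\log\pi\}$, and $O_K/pO_K\otimes_{O_K}\Omega^1_{O_K}(\log)=O_K/pO_K\otimes_{O_K}\hat\Omega^1_{O_K}(\log)$.

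\medskip

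\noindent The substantive obstacle is entirely in the mixed-characteristic case: establishing the left-exactness of the completed log conormal sequence (this packages the formal smoothness of $V_0$ over $\Z$, the perfectness of $L_{O_K/V_0}$ stemming from the separability of $K/\mathrm{Frac}(V_0)$, and the exactness of ${\frak m}_K$-adic completion in this setting), and then extracting from it the clean direct-sum decomposition with the $d\tilde b_\lambda$ as a topological basis of the free part, together with the bound $a\ge e_K$ needed for 3 — the latter being exactly where $dp=0$ and the Eisenstein shape of $g$ enter. Part 1, by contrast, needs essentially only Cohen's theorem and the classical triviality of the differentials of a field relative to a $p$-base.
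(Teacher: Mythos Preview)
Your argument follows essentially the same route as the paper's: reduce to $K$ complete, dispatch Part~1 via $O_K\cong F[[\pi]]$, and for the mixed-characteristic case introduce the Cohen subring $V_0$ (the paper's $O_{K_0}$), write $O_K=V_0[\pi]$ via an Eisenstein polynomial, and read off a one-relation presentation of $\hat\Omega^1_{O_K}(\log)$ over the completed free $O_K$-module on the $d\tilde b_\lambda$ together with $d\log\pi$. Your successive-approximation ``common first step'' and the cotangent-complex packaging are just alternate phrasings of what the paper obtains directly from $O_K=O_{K_0}[T]/(f(T))$.

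The only substantive divergence is in Part~3. The paper simply rewrites the single relation
\[
f'(\pi)\pi\,d\log\pi+\sum_{i=0}^{e-1}\pi^i\,da_i=0
\]
and checks that every term lies in $pO_K\cdot\Omega^1_{O_K}(\log)$ (since $a_i\in pO_{K_0}$ for $i<e$ gives $da_i\in p\hat\Omega^1_{O_{K_0}}$, and $f'(\pi)\pi\in pO_K$ by the Eisenstein shape), so the relation vanishes in $O_K/pO_K\otimes\Omega^1_{O_K}(\log)$ and freeness on $\{d\tilde b_\lambda,d\log\pi\}$ is immediate. You instead route through the structural bound $a\ge e_K$ from Part~2 and then tensor down. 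This works in spirit, but note that it needs more than $a\ge e_K$: you also need that after reducing mod $p$ the generator of the torsion factor becomes $d\log\pi$ (not just some element), which you do not make explicit. The paper's direct verification that the relation vanishes mod $p$ sidesteps this and is both shorter and cleaner.
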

\begin{pf} We may assume that $K$ is complete (and so we assume it).

1 follows from the fact that $O_K\cong F[[T]]$ in this case. 

Assume $K$ is of characteristic $0$. Then $K$ is a finite extension of a complete discrete valuation field $K_0$ such that $p$ is a prime element in $K_0$ and the residue field of $K_0$ coincides with that of $K$. We may assume $\tilde b_{\lam}\in O_{K_0}$ for all $\lam$. As is easily seen,  there is an isomorphism between the  ${\frak m}_{K_0}$-adic completion ${\hat \Omega}^1_{O_{K_0}}$ of the differential module $\Omega^1_{O_{K_0}/\Z}$ (without log) and ${\hat \oplus}_{\Lambda} O_{K_0}$ which sends the $\lam$-th base of $\oplus_{\Lambda} O_{K_0}$ to $d\tilde b_{\lam}$. Let $\pi$ be a prime element of $K$ and let $f(T)=\sum_{i=0}^e a_iT^i$ be the irreducible polynomial of $\pi$ over $K$ such that $a_e=1$. Then $a_0$ is a prime element of $K_0$ and $p|a_i$ for all $0\leq i<e$. We have $O_{K_0}[T]/(f(T))\overset{\cong}\to O_K\;;\;T\mapsto \pi$. From this, we see that $\hat \Omega^1_{O_K}(\log)$ has the presentation with generators the $O_K$-module $O_K\otimes_{O_{K_0}} {\hat \Omega}^1_{O_{K_0}}$ and $d\log(\pi)$ and with the relation $f'(\pi)\pi d\log(\pi)+\sum_{i=0}^{e-1} \pi^ida_i=0$. Here $f'(\pi)\pi\in {\frak m}_K\smallsetminus \{0\}$. This proves 2. 

We prove 3. The last relation is written as 
$$(\sum_{i=1}^e ia_i \pi^i d\log(\pi)) + \sum_{i=0}^{e-1} (\pi^ia_0d(a_i/a_0) + \pi^ia_i d\log(a_0))=0$$
which is trivial mod $p$. 
\end{pf}

\begin{sbrem} Note that the version of 2 of \ref{hatdif}  without log poles is false. For example, if $F$ is perfect and $p$ is a prime element of $K$,
 $O_K/pO_K \otimes_{O_K} \Omega^1_{O_K}=0$ whereas $O_K/pO_K\otimes_{O_K} \Omega^1_{O_K}(\log)\cong O_K/pO_K$ with base $d\log(p)$. Our theory will go well with log poles. 

\end{sbrem}

\begin{sblem}\label{KLdif} Let $L/K$ be a separable extension of complete discrete valuation fields. Assume that the residue field of $K$ is perfect.  Then the map
$O_L \otimes_{O_K} {\hat \Omega}^1_{O_K}(\log)\to {\hat \Omega}^1_{O_L}(\log)$ is injective. 
\end{sblem}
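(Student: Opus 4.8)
The plan is to combine the structure theory of~\ref{hatdif} with the perfectness of the residue field of $K$ to reduce the assertion to two transparent cases, each handled by a presentation of differential modules in the style of the proof of~\ref{hatdif}(2). We may assume $K$, hence also $L$, complete. Write $F$ for the residue field of $K$ and fix a prime element $\pi_K$ of $K$. Since $F$ is perfect it admits the empty $p$-base, so by~\ref{hatdif} the $O_K$-module $\hat\Omega^1_{O_K}(\log)$ is generated by $d\log\pi_K$: it is free of rank one when $\Char K=p$, while in the mixed characteristic case the computation in the proof of~\ref{hatdif}(2) --- in which the terms $\pi_K^i\,da_i$ vanish because $\hat\Omega^1_{O_{K_0}}=0$ for $K_0$ of perfect residue field --- gives $\hat\Omega^1_{O_K}(\log)=O_K/(f'(\pi_K)\pi_K)$, where $f$ is the Eisenstein polynomial of $\pi_K$ over a complete subfield $K_0$ of $K$ with prime element $p$ and residue field $F$. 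In either case $O_L\otimes_{O_K}\hat\Omega^1_{O_K}(\log)$ is a cyclic $O_L$-module on the image of $d\log\pi_K$, and we must show that the canonical map carries it isomorphically onto a submodule of $\hat\Omega^1_{O_L}(\log)$.

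Let $E$ be the residue field of $L$ and let $M\subseteq L$ be the complete discrete valuation field with residue field $E$ and prime element $\pi_K$; concretely $M=M_0(\pi_K)$, where $M_0\subseteq L$ is the fraction field of a coefficient ring of $O_L$ chosen so that $O_{K_0}\subseteq O_{M_0}$ (such a compatible choice exists because $E/F$ is separable, $F$ being perfect). Then $\pi_K$ is again a prime element of $M$, and $L/M$ is totally ramified, so $O_L$ is free, hence flat, over $O_M$. It therefore suffices to prove the injectivity of the two maps
$$O_M\otimes_{O_K}\hat\Omega^1_{O_K}(\log)\to\hat\Omega^1_{O_M}(\log)\qquad\text{and}\qquad O_L\otimes_{O_M}\hat\Omega^1_{O_M}(\log)\to\hat\Omega^1_{O_L}(\log),$$
since the map attached to $L/K$ is their composite, via $O_L\otimes_{O_K}\hat\Omega^1_{O_K}(\log)=O_L\otimes_{O_M}(O_M\otimes_{O_K}\hat\Omega^1_{O_K}(\log))$.

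For $L/M$: since $L/M$ is separable, writing $O_L=O_M[\pi_L]/(h)$ with $h=\sum_ic_iX^i$ an Eisenstein --- hence separable --- polynomial over $M$, the argument of~\ref{hatdif}(2) presents $\hat\Omega^1_{O_L}(\log)$ as the quotient of $\bigl(\hat\Omega^1_{O_M}(\log)\otimes_{O_M}O_L\bigr)\oplus O_L\,d\log\pi_L$ by the $O_L$-submodule generated by the single element $R=\bigl(\sum_i\pi_L^i\,dc_i,\;h'(\pi_L)\pi_L\bigr)$ coming from $h(\pi_L)=0$. As $h$ is separable, $h'(\pi_L)\pi_L$ is a nonzero element of the domain $O_L$, so an $O_L$-multiple of $R$ lying in the first summand (i.e.\ with zero $d\log\pi_L$-component) must vanish; hence $\hat\Omega^1_{O_M}(\log)\otimes_{O_M}O_L$ injects into $\hat\Omega^1_{O_L}(\log)$. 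For $M/K$: since $\pi_K$ is a prime element of $M$ and the Eisenstein polynomial $f$ remains irreducible over $M_0$, one has $O_M=O_{M_0}[\pi_K]/(f)$ with the same $f$; applying the argument of~\ref{hatdif}(2) to $M/M_0$ and using once more that $da_i=0$ for the coefficients $a_i\in O_{K_0}$ of $f$ (as $\hat\Omega^1_{O_{K_0}}=0$), we obtain a direct sum decomposition $\hat\Omega^1_{O_M}(\log)=\bigl(\hat\Omega^1_{O_{M_0}}\otimes_{O_{M_0}}O_M\bigr)\oplus\bigl(O_M/(f'(\pi_K)\pi_K)\bigr)d\log\pi_K$, and $O_M\otimes_{O_K}\hat\Omega^1_{O_K}(\log)=\bigl(O_M/(f'(\pi_K)\pi_K)\bigr)d\log\pi_K$ is carried isomorphically onto the second summand. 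The equal characteristic case of both steps is identical, with~\ref{hatdif}(1) in place of~\ref{hatdif}(2): there $O_M=E[[\pi_K]]$ and $\hat\Omega^1_{O_M}(\log)$ is the completion of the free $O_M$-module on $d\log\pi_K$ together with the differentials of a $p$-base of $E$, so that $O_M\,d\log\pi_K$ is a direct summand.

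The step I expect to demand the most care is justifying the~\ref{hatdif}(2)-type presentations of $\hat\Omega^1_{O_L}(\log)$ over $O_M$ and of $\hat\Omega^1_{O_M}(\log)$ over $O_{M_0}$ --- that for $O_M[\pi_L]/(h)$, resp.\ $O_{M_0}[\pi_K]/(f)$, the relation coming from the defining polynomial is the only new one --- which is precisely the computation of loc.\ cit.\ transplanted to these extensions, together with the Cohen-theoretic bookkeeping that places $K_0$ inside $M_0$ with a single $f$ serving both $K/K_0$ and $M/M_0$. Separability of $L/K$ is used only through $h'(\pi_L)\neq0$; the lemma genuinely fails without it (already for $L=K(\pi_K^{1/p})$ when $\Char K=p$).
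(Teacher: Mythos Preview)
Your mixed-characteristic argument is essentially the paper's own: both introduce the intermediate field $M=L'$ with residue field $E$ and prime $\pi_K$, identify $O_M\otimes_{O_K}\hat\Omega^1_{O_K}(\log)$ as a direct summand of $\hat\Omega^1_{O_M}(\log)$, and then use an Eisenstein presentation for the totally ramified step $L/M$. Your extra care with the Cohen subrings $K_0\subset M_0$ makes explicit exactly what the paper's one-line appeal to ``the proof of 2 of~\ref{hatdif}'' is invoking.

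There is, however, a genuine gap in your equal-characteristic treatment. You assert that the two steps go through ``identically'' in char~$p$, but the step $L/M$ relies on $h'(\pi_L)\ne0$, i.e.\ on $L/M$ being separable, and this can fail. Take $F=\mathbb F_p$, $E=\mathbb F_p(b)$ with $b$ transcendental, $L=E((\pi_L))$, and embed $K=\mathbb F_p((\pi_K))$ via $\pi_K\mapsto b\,\pi_L^{\,p}$. Then $\pi_K\notin L^p$ (since $b\notin E^p$), so $L/K$ is separable; but with the obvious coefficient field $E\hookrightarrow O_L$ one gets $M=E((b\pi_L^{\,p}))=E((\pi_L^{\,p}))$ and $L/M$ is purely inseparable of degree $p$. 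In fact here $O_L\otimes_{O_M}\hat\Omega^1_{O_M}(\log)\to\hat\Omega^1_{O_L}(\log)$ is \emph{not} injective: $d\log\pi_K\mapsto d\log(b\pi_L^{\,p})=b^{-1}db$ lies in the span of the $db_\lambda$, so $d\log\pi_K-b^{-1}db$ is a nonzero element of the source with zero image. (The parenthetical ``Eisenstein --- hence separable'' is harmless in characteristic~$0$, where every irreducible polynomial is separable, but is simply false in characteristic~$p$.)

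The paper sidesteps this entirely by not factoring in char~$p$: since $\hat\Omega^1_{O_K}(\log)$ is free of rank one on $d\log\pi_K$ and $\hat\Omega^1_{O_L}(\log)$ is torsion-free, it suffices to show $d\pi_K\ne0$ in $\hat\Omega^1_{O_L}(\log)$; but $df=0$ forces $f\in O_L^{\,p}$, and $\pi_K\notin L^p$ by separability of $L/K$ (here is where perfectness of $F$ is used: $\{\pi_K\}$ is a $p$-base of $K$). That two-line argument is the correct replacement for your $L/M$ step in positive characteristic.
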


\begin{pf} Assume first $K$ is of characteristic $p$. Let $\pi_K$ be a prime element of $K$. By 1 of \ref{hatdif}, $\hat \Omega^1_{O_K}(\log)$ is a free $O_K$-module of rank $1$ with base $d\log(\pi_K)$.  As is easily seen, if $f\in O_L$ and $df=0$ in $\hat \Omega^1_{O_L}(\log)$, then $f$ is a $p$-th power in $O_L$. Hence by the assumption that $L/K$ is separable, we have $d\pi_K\neq 0$ in ${\hat \Omega}^1_{O_L}(\log)$. By \ref{hatdif}, this proves the injectivity in question. 

Assume next $K$ is of characteristic $0$.  We have $K\subset L'\subset L$ where $L'$ is a complete discrete valuation field such that $L$ is a finite extension of $L'$, the residue field of $L'$ coincides with that of $L$, and a prime element of $K$ is still a prime element of $L'$. Take a $p$-base $(b_{\lam})_{\lam\in \Lambda}$ of the residue field of $L$ and let $\tilde b_{\lam}$ be a lifting of $b_{\lam}$ to $O_{L'}$ for each $\lam$. Then the proof of 2 of \ref{hatdif} shows that ${\hat \Omega}^1_{O_{L'}}(\log)= O_{L'} \otimes_{O_K} {\hat \Omega}^1_{O_K}(\log) \oplus ({\hat \oplus}_{\Lambda} O_{L'})$ where the $\lam$-th base of $\oplus_{\Lambda} O_{L'}$ is sent to $d\tilde b_{\lam}\in {\hat \Omega}^1_{O_{L'}}(\log)$. Let $\pi$ be a prime element of $L$ and let $f(T)=\sum_{i=0}^e a_iT^i$ be the irreducible polynomial of $\pi$ over $L'$ with $a_e=1$. Then $a_0$ is a prime element of $L'$ and $a_0|a_i$ for $0\leq i\leq e-1$. From the fact $O_{L'}[T]/(f(T))\overset{\cong}\to O_L\;;\;T\mapsto \pi$, we have that ${\hat \Omega}^1_{O_L}(\log)$ has a presentation with the generators the $O_L$-module $O_L\otimes_{O_{L'}} {\hat \Omega}^1_{O_{L'}}(\log)$ and $d\log(\pi)$ and with the relation 
$$(f'(\pi)\pi/a_0)d\log(\pi) + \sum_{i=0}^{e-1} \pi^i (a_i d\log(a_0) + a_0 d(a_i/a_0))=0$$
(note $f'(\pi)\pi/a_0\in O_L\smallsetminus \{0\}$). This proves \ref{KLdif}. 
\end{pf}

\begin{sbpara} Let $a$ be an integer such that $0\leq a\leq e_K$. Take a ring homomorphism $\iota: F\to O_K/{\frak m}_K^a$ which lifts the identity map of $F$ ($\iota$ exists by a theorem of Cohen (\cite{Gr} 19.6.1) and a prime element $\pi$ of $K$. Then we  have an isomorphism 
\begin{equation}
F[T]/(T^a) \overset{\cong}\to O_K/{\frak m}^a_K\;;\;\sum_i a_iT^i\mapsto \sum_i \iota(a_i)\pi^i
\label{isoTi}
\end{equation}
 of rings with log structures given by $T$ and $\pi$, respectively, which sends $T$ to $\pi$ in the log structures. (See \cite{KK2} for log structures.)  Since $O_K/{\frak m}^a_K\otimes_{O_K} \Omega^1_{O_K}(\log)$ coincides with the module of differential forms with log poles (\cite{KK2}) of the ring $O_K/{\frak m}^a_K$ with the log structure given by $\pi$, it is isomorphic to the module of differential forms with log poles of the ring $F[T]/(T^a)$ with the log structure given by $T$. In the case $K$ is of mixed characteristic and $a=e_K$, this gives another proof of 3 of \ref{hatdif}. 
\end{sbpara}

In the rest of Section 2.1,  we consider the case $[F:F^p]=p^r<\infty$ and consider a  residue map whose target is the one dimensional $F$-vector space $\Omega^r_F=\wedge^r_F \Omega^1_F$.

\begin{sbprop}\label{Res1} Assume $K$ is complete and of characteristic $p$, and that $[F:F^p]=p^r<\infty$.  Consider a prime element $\pi$ of $K$, a ring homomorphism $\iota: F \to O_K$ which lifts the identity map of $F$,   and  the residue map  $$\Res\;:\;\Omega_K^{r+1}\to \Omega^r_F\; ;\; \sum_{i\gg -\infty}  \pi^i \iota(\omega_i) \wedge  d\log(\pi) \mapsto \omega_0 \quad (\omega_i\in \Omega^r_F).$$ Let $C: \Omega^r_F\to \Omega^r_F$ be the Cartier operator (\cite{IL} Chap. 0, Section 2). Then for integers $a, b$ such that $a\geq 1$ and $p^b\geq a$, the restriction of $C^b\circ \Res$ to $ {\frak m}_K^{1-a} \Omega^{r+1}_{O_K}(\log)$ is independent of the choices of $\pi$ and $\iota$.

\end{sbprop}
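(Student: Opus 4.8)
The plan is to exhibit $C\circ\Res$ as a composite of operations that are visibly independent of the choices, by lifting the Cartier operator to the top degree on $K$. Some preparation first. Since $[F:F^p]=p^r$, the family $\{\tilde b_\lambda\ (\lambda\in\Lambda),\ \pi\}$ with $\tilde b_\lambda:=\iota(b_\lambda)\in O_K^\times$ is a $p$-base of $K$, and by 1 of \ref{hatdif} the $O_K$-module $\Omega^{r+1}_{O_K}(\log)$ is free of rank one with basis $\nu:=d\log\tilde b_1\wedge\dots\wedge d\log\tilde b_r\wedge d\log\pi$; in particular $\Omega^{r+1}_K=K\cdot\nu$. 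For $g\in K$ write $g=\sum_n\iota(g_n)\pi^n$ with $g_n\in F$, so that $\Res(g\nu)=g_0\,\eta_0$ with $\eta_0:=d\log b_1\wedge\dots\wedge d\log b_r\in\Omega^r_F$. The restriction of $\Res$ to $\Omega^{r+1}_{O_K}(\log)$ is then the composite of the projection $\Omega^{r+1}_{O_K}(\log)\surj\Omega^{r+1}_{O_K}(\log)/{\frak m}_K\Omega^{r+1}_{O_K}(\log)$ with the canonical identification of the target with $\Omega^r_F$ (residue along the closed point), and hence is independent of $\pi$ and $\iota$. Finally, $\Omega^1_K$ is an $(r+1)$-dimensional $K$-vector space (cf. \ref{hatdif}), so $\Omega^{r+2}_K=0$ and every element of $\Omega^{r+1}_K$ is closed; thus the Cartier operator $\widetilde{C}\colon\Omega^{r+1}_K\to\Omega^{r+1}_K$ is defined on the whole of $\Omega^{r+1}_K$ and depends only on the field $K$, not on $\pi$ or $\iota$.

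The first step is to prove that $\Res\circ\widetilde{C}=C\circ\Res$ on $\Omega^{r+1}_K$ (for any choice of $\pi,\iota$), so that $\Res\circ\widetilde{C}^{\,b}=C^b\circ\Res$ by iteration. This is a direct computation with the $p$-base $\{\tilde b_\lambda,\pi\}$: if $c\in K^p$ denotes the component of $g$ along $1$ in the $K^p$-basis $\{\tilde b_1^{i_1}\cdots\tilde b_r^{i_r}\pi^k\ :\ 0\le i_\lambda,k<p\}$ of $K$ and $c=(c')^p$, then the explicit description of the Cartier operator gives $\widetilde{C}(g\nu)=c'\,\nu$, whence $\Res(\widetilde{C}(g\nu))=c'_0\,\eta_0$; while $C(\Res(g\nu))=C(g_0\eta_0)=c''\,\eta_0$, where $(c'')^p$ is the component of $g_0\in F$ along $1$ in the $F^p$-basis of $F$. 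Expanding $g$ $\pi$-adically and each $g_n$ over $F^p$ one reads off $c'_0=c''$, and the claim follows.

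The second step is an order estimate: if $\omega\in{\frak m}_K^{1-a}\Omega^{r+1}_{O_K}(\log)$ and $p^b\ge a$, then $\widetilde{C}^{\,b}(\omega)\in\Omega^{r+1}_{O_K}(\log)$. Write $\omega=g\nu$ with $\mathrm{ord}_K(g)\ge 1-a$, and let $h\in K^{p^b}$ be the component of $g$ along $1$ in the $K^{p^b}$-basis $\{\tilde b_1^{i_1}\cdots\tilde b_r^{i_r}\pi^k\ :\ 0\le i_\lambda,k<p^b\}$, so that $\widetilde{C}^{\,b}(\omega)=h^{1/p^b}\cdot\nu$ (again by the explicit form of the iterated Cartier operator). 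Since this decomposition is a direct sum whose summands are read off from the $\pi$-adic and $F^{p^b}$-adic expansions of $g$, every summand — in particular $h$ — has $\mathrm{ord}_K\ge\mathrm{ord}_K(g)\ge 1-a$; as $h\in K^{p^b}$ its order is divisible by $p^b$, and $1-a>-p^b$ (because $p^b\ge a$) forces $\mathrm{ord}_K(h)\ge 0$, i.e.\ $h^{1/p^b}\in O_K$. Combining the two steps, for such $\omega$ we get $C^b(\Res(\omega))=\Res(\widetilde{C}^{\,b}(\omega))$, which is the image of $\widetilde{C}^{\,b}(\omega)\in\Omega^{r+1}_{O_K}(\log)$ under the canonical map to $\Omega^r_F$; since $\widetilde{C}$ and this map are independent of $\pi$ and $\iota$, so is the restriction of $C^b\circ\Res$ to ${\frak m}_K^{1-a}\Omega^{r+1}_{O_K}(\log)$, as required.

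I expect the main obstacle to be recognizing the right move, namely to pass to the intrinsic top-degree Cartier operator $\widetilde{C}$ on $K$ and to the identity $\Res\circ\widetilde{C}^{\,b}=C^b\circ\Res$, rather than comparing head-on the non-canonical residue maps attached to two different pairs $(\pi,\iota)$; proving this identity is then a short, if slightly fussy, check with $p$-bases. After that the order estimate is routine, and it is exactly there that the hypothesis $p^b\ge a$ enters — and it cannot in general be weakened, since when $p^b<a$ the $K^{p^b}$-component of a form with a pole of order $a-1$ may again have a pole, so that $\widetilde{C}^{\,b}(\omega)$ leaves $\Omega^{r+1}_{O_K}(\log)$. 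The hypothesis $\mathrm{char}\,K=p$ is used only to guarantee $\Omega^{r+2}_K=0$, i.e.\ that $\widetilde{C}$ is everywhere defined.
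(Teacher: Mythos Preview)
Your proof is correct and follows essentially the same route as the paper's: introduce the Cartier operator $\widetilde{C}$ on $\Omega^{r+1}_K$, establish $\Res\circ\widetilde{C}=C\circ\Res$, show $\widetilde{C}^{\,b}$ carries ${\frak m}_K^{1-a}\Omega^{r+1}_{O_K}(\log)$ into $\Omega^{r+1}_{O_K}(\log)$ when $p^b\ge a$, and observe that $\Res$ restricted to $\Omega^{r+1}_{O_K}(\log)$ is canonical. The paper states these three steps tersely and, for the last, phrases the canonicity as ``$\Res$ is the unique map sending $\omega\wedge d\log(t)$ to $\mathrm{ord}_K(t)\bar\omega$''; you instead identify $\Omega^{r+1}_{O_K}(\log)/{\frak m}_K$ with $\Omega^r_F$ via the Poincar\'e residue, which amounts to the same thing. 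Your write-up supplies the $p$-basis computations that the paper leaves implicit.
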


\begin{pf} Note that the Cartier operator $C: \Omega_F^r\to \Omega^r_F$ is characterized by the properties $C(x^p d\log y_1\wedge \dots\wedge  d\log y_{r})= x d\log y_1\wedge \dots \wedge d\log y_r$ for $x\in F$ and $y_i\in F^\times$ and $C(d\omega)=0 \;\;\text{for}\;\; \omega\in \Omega^{r-1}_F$. Using also the Cartier operator $C: \Omega_K^{r+1}\to \Omega^{r+1}_K$, we have $$\Res\circ C = C\circ \Res: \Omega^{r+1}_K \to \Omega^r_F.$$ We have $C^b ({\frak m}_K^{1-a}\Omega^{r+1}_{O_K}(\log)) \subset \Omega^{r+1}_{O_K}(\log)$. On $\Omega^{r+1}_{O_K}(\log)$, the residue map is the unique map which sends $\omega \wedge d\log(t)$ for any $\omega\in \Omega^r_{O_K}$ and any $t\in K^\times$  to $\text{ord}_K(t)\bar \omega$ where $\bar \omega$  is the image of $\omega$ in $\Omega^r_F$. 
\end{pf}

\begin{sbpara}\label{theta} Assume $[F:F^p]=p^r<\infty$. Let $a$ be an integer such that   $1 \leq a \leq e_K$. 

Fix a  ring homomorphism $\iota: F\to O_K/{\frak m}^a_K$ which lifts the identity map of $F$. 
Fix a prime element $\pi$ of $K$. Then by the isomorphism (\ref{isoTi}), we have an isomorphism
\begin{equation}
\oplus_{i=0}^{a-1} \; \Omega^r_F \overset{\cong}\to O_K/{\frak m}^a_K\otimes_{O_K} \Omega^{r+1}_{O_K}(\log)
\label{isodif}
\end{equation}
which sends $(\omega_i)_{0\leq i\leq a-1}$ to $\sum_{i=0}^{a-1} \pi^i \iota(\omega_i) \wedge d\log(\pi)$.

\end{sbpara}

\begin{sbprop}\label{Rsw3} Assume $[F:F^p]=p^r<\infty$. Let $a, b$ be  integers such that $1\leq a\leq e_K$ and $p^b \geq a$. 

1. The map 
$$R_b: {\frak m}_K^{1-a}/{\frak m}_K \otimes_{O_K} \Omega^{r+1}_{O_K}(\log)\to \Omega^r_F\;;\;\sum_{i=0}^{a-1} \pi^{-i} \otimes \iota(\omega_i) \wedge d\log(\pi)\mapsto C^b(\omega_0)$$
($\omega_i\in \Omega^r_F$), which is 
defined by  fixing $\iota$ and $\pi$ as in \ref{theta} using the isomorphism (\ref{isodif}), is independent of the choices of $\iota$ and $\pi$. 

\medskip

2. Let $\iota_b: F \to O_K/{\frak m}^a_K$ be the ring homomorphism which sends $x\in F$ to $(\tilde x)^{p^b}$ where $\tilde x$ is a lifting of $x$ to $O_K/{\frak m}^a_K$. Then for integers $i,j$ such that $i\geq 0$, $j\geq 0$ and $i+j=r+1$ and for integers $m,n$ such that $a=n-m$, the pairing $$({\frak m}_K^{-n}/{\frak m}_K^{-m}\otimes_{O_K} \Omega^j_{O_K}(\log))\times ({\frak m}_K^{m+1}/{\frak m}_K^{n+1}  \otimes_{O_K} \Omega^i_{O_K}(\log)) 
 \to \Omega^r_F\;;\; (x,y) \mapsto R_b(x\wedge y)$$
is a perfect duality of finite-dimensional  $F$-vector spaces, where $F$ acts on the two $O_K/{\frak m}^a_K$-modules on the left hand side via $\iota_b$. 

\medskip

\end{sbprop}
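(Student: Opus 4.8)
I plan to prove both parts by transporting the situation to the equal-characteristic case, where Proposition \ref{Res1} and the standard properties of the Cartier operator are available. Fix $\iota$ and a prime element $\pi$ as in \ref{theta}. By (\ref{isoTi}) the ring $O_K/\mathfrak{m}_K^a$ with its log structure is identified, compatibly with log structures and with $\pi\leftrightarrow T$, with $F[T]/(T^a)=O_{K'}/\mathfrak{m}_{K'}^a$, where $K'=F((T))$ has valuation ring $O_{K'}=F[[T]]$ and is of characteristic $p$. Under this identification $\mathfrak{m}_K^{1-a}/\mathfrak{m}_K\otimes_{O_K}\Omega^{r+1}_{O_K}(\log)$ becomes $\mathfrak{m}_{K'}^{1-a}\Omega^{r+1}_{O_{K'}}(\log)/\mathfrak{m}_{K'}\Omega^{r+1}_{O_{K'}}(\log)$, and, comparing the defining formula for $R_b$ with the formula for $\Res$ in \ref{Res1}, $R_b$ becomes the map induced by $C^b\circ\Res\colon\mathfrak{m}_{K'}^{1-a}\Omega^{r+1}_{O_{K'}}(\log)\to\Omega^r_F$; this descends to the quotient because $\Res$ is intrinsic on $\Omega^{r+1}_{O_{K'}}(\log)$ and there kills $\mathfrak{m}_{K'}\Omega^{r+1}_{O_{K'}}(\log)$.

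For Part 1, given two choices $(\iota_\nu,\pi_\nu)$ ($\nu=1,2$) with attached identifications $\phi_\nu\colon O_K/\mathfrak{m}_K^a\xrightarrow{\ \sim\ }F[T]/(T^a)$, the two resulting recipes for $R_b$ are $[\,C^b\circ\Res\text{ for }K'\text{ in the standard chart}\,]\circ(\phi_\nu)_*$, so they agree once the standard-chart version of $C^b\circ\Res$ for $O_{K'}$ is invariant under $(\phi_1\phi_2^{-1})_*$. But $\phi_1\phi_2^{-1}$ is an automorphism of $F[T]/(T^a)$ as a ring with log structure inducing the identity on $F$, and any such automorphism lifts to one of $O_{K'}=F[[T]]$ inducing the identity on $F$ — equivalently, to a change of prime element and of the lift $F\to O_{K'}$ — so the required invariance is precisely \ref{Res1} (applicable since $p^b\ge a$). (The lifting is routine: a coefficient field of $F[[T]]$ can be prescribed modulo $T^a$, and an automorphism fixing $F$ is then determined by the images of $T$ and of a coefficient field.)

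For Part 2, note first that $\iota_b$ is a well-defined ring homomorphism, since for $x\in\mathfrak{m}_K$ one has $x^{p^b}\in\mathfrak{m}_K^{p^b}\subseteq\mathfrak{m}_K^a$ while, on expanding $(\tilde x+m)^{p^b}$ with $m\in\mathfrak{m}_K$, the non-leading terms lie in $p\,\mathfrak{m}_K\subseteq\mathfrak{m}_K^{e_K+1}\subseteq\mathfrak{m}_K^a$; and from $C^b(c^{p^b}\omega)=c\,C^b(\omega)$ one checks $R_b$, hence the displayed pairing, is $F$-bilinear for the $\iota_b$-structures. Both sides are finite-dimensional over $F$, of dimensions $\binom{r+1}{j}\,a\,[F:F^{p^b}]$ and $\binom{r+1}{i}\,a\,[F:F^{p^b}]$, equal because $i+j=r+1$; so it suffices to prove non-degeneracy. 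Filter the first space by the images of $\mathfrak{m}_K^{-n+l}\otimes_{O_K}\Omega^j_{O_K}(\log)$ and the second by the images of $\mathfrak{m}_K^{m+1+l}\otimes_{O_K}\Omega^i_{O_K}(\log)$, for $0\le l\le a$. A valuation count shows the $l$-th step of the first filtration pairs trivially with the $l'$-th step of the second whenever $l+l'\ge a$; so the pairing is ``block anti-diagonal'' with respect to these filtrations, and it is enough to prove each induced pairing on associated graded pieces is perfect.

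Each such graded pairing is the case $a=1$, with the same $b$: writing a class of $\Omega^j_{O_K}(\log)\otimes_{O_K}F$ as $\alpha_0+\alpha_1\wedge d\log\pi$ with $\alpha_0\in\Omega^j_F$, $\alpha_1\in\Omega^{j-1}_F$, and a class of $\Omega^i_{O_K}(\log)\otimes_{O_K}F$ as $\beta_0+\beta_1\wedge d\log\pi$ with $\beta_0\in\Omega^i_F$, $\beta_1\in\Omega^{i-1}_F$, the graded pairing sends $((\alpha_0,\alpha_1),(\beta_0,\beta_1))$ to $C^b\big(\alpha_0\wedge\beta_1\pm\alpha_1\wedge\beta_0\big)\in\Omega^r_F$, where $F$ acts on both sides through the $p^b$-power map $F\to F$. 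This is anti-diagonal in the blocks $(\alpha_0,\beta_1)$ and $(\alpha_1,\beta_0)$, so we reduce to showing that for $0\le s\le r$ the pairing $(\alpha,\beta)\mapsto C^b(\alpha\wedge\beta)$ on $\Omega^s_F\times\Omega^{r-s}_F$, with $F$ acting through the $p^b$-power map on both factors, is perfect into $\Omega^r_F$. Here the wedge product $\Omega^s_F\times\Omega^{r-s}_F\to\Omega^r_F$ is already perfect, because $\Omega^1_F$ is an $r$-dimensional $F$-vector space and $\Omega^r_F$ is one-dimensional; hence for $\alpha\ne0$ the map $\beta\mapsto\alpha\wedge\beta$ is onto $\Omega^r_F$, and as $C^b\colon\Omega^r_F\to\Omega^r_F$ is surjective, some $\beta$ gives $C^b(\alpha\wedge\beta)\ne0$. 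Thus the pairing is non-degenerate in the first variable, and since both sides have $F$-dimension $\binom{r}{s}[F:F^{p^b}]$, it is perfect; re-assembling the perfect graded pairings along the two filtrations yields the assertion. The crux is Part 2 — verifying that the pairing is honestly upper-triangular for the filtrations and that the graded pairings are the Cartier-twisted exterior products above (which rests on the bookkeeping of which parts of the Cartier operator survive modulo $\mathfrak{m}_K$, together with $C^b(c^{p^b}\omega)=c\,C^b(\omega)$); Part 1 is essentially a transcription of \ref{Res1}.
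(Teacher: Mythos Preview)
Your proof is correct and takes essentially the same approach as the paper. For Part~1 you reduce to Proposition~\ref{Res1} via the isomorphism (\ref{isoTi}) just as the paper does, making explicit the lifting of the truncated automorphism of $F[T]/(T^a)$ to $F[[T]]$; for Part~2 the paper merely says ``straightforward'', and your filtration argument reducing to the perfectness of the Cartier-twisted wedge pairing $\Omega^s_F\times\Omega^{r-s}_F\to\Omega^r_F$ is a natural way to carry out that direct verification.
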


\begin{pf} We prove 1. If $K$ is of characteristic $p$, this follows from \ref{Res1}.  Using the isomorphism (\ref{isoTi}),  the mixed characteristic case is reduced to the positive characteristic case.

The proof of 2 is straightforward. 
\end{pf}

\subsection{Truncated exponential maps for Milnor  $K$-groups}

 Kurihara \cite{Ku} defined exponential maps of (completed) Milnor $K$-groups of complete discrete valuation fields in  mixed characteristic.
 
 Proposition \ref{Kexp} below  is a truncated version of it but works also in the positive characteristic case and outside the area of the convergence of the usual exponential map.

\begin{sbpara} Let 
$E(T)= \sum_{i=0}^{p-1} \frac{T^i}{i!}\in \Z_{(p)}[T]$.
\end{sbpara}
The following \ref{exponential} is well known. (In 2 of \ref{exponential}, note that for an integer $i\geq 1$ which is coprime to $p$, the map $1+T\Z_{(p)}[[T]]\to 1+T\Z_{(p)}[[T]]\; ;\; x \mapsto x^i$ is bijective and hence the converse map $x\mapsto x^{1/i}$ is defined on $1+T\Z_{(p)}[[T]]$.)

\begin{sblem}\label{exponential} 1. $E(T_1+T_2)\equiv E(T_1)E(T_2) \bmod (T_1,  T_2)^p\Z_{(p)}[T_1, T_2]$.

2. 
$E(T) \equiv \prod_{1\leq i\leq p-1, (i,p)=1}  (1-T^i)^{-\mu(i)/i}\bmod T^p\Z_{(p)}[[T]]$. 
Here $\mu$ is the M\"{o}bius function. 

3. $\frac{T}{E(T)}\frac{dE(T)}{dT}\equiv 1 \bmod T^p\Z_{(p)}[[T]]$.

\end{sblem}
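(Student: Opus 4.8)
The plan is to deduce all three parts from the corresponding exact identities for the genuine exponential series $\exp(T)=\sum_{i\geq 0}T^i/i!\in\Q[[T]]$, using that $E(T)\equiv\exp(T)\bmod T^p$ in $\Q[[T]]$ together with the following elementary remark, which I would use repeatedly: if $f$ is a polynomial (or power series) with coefficients in $\Z_{(p)}$ which, regarded in $\Q[[T_1,T_2]]$, lies in $(T_1,T_2)^p$, then already $f\in(T_1,T_2)^p\Z_{(p)}[T_1,T_2]$ — because the hypothesis just says that every coefficient of $f$ of total degree $<p$ vanishes. In this way each assertion becomes visible over $\Q$, and the only genuine content is its $p$-integrality.

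For part 1, I would set $r(T)=\exp(T)-E(T)=\sum_{i\geq p}T^i/i!$, a power series all of whose monomials have degree $\geq p$. Then
\[ E(T_1+T_2)-\exp(T_1+T_2)=-r(T_1+T_2),\qquad \exp(T_1)\exp(T_2)-E(T_1)E(T_2)=r(T_1)E(T_2)+E(T_1)r(T_2)+r(T_1)r(T_2), \]
and both right-hand sides lie in $(T_1,T_2)^p\Q[[T_1,T_2]]$. Adding them and using $\exp(T_1+T_2)=\exp(T_1)\exp(T_2)$ gives $E(T_1+T_2)-E(T_1)E(T_2)\in(T_1,T_2)^p\Q[[T_1,T_2]]$; since this difference is a polynomial with coefficients in $\Z_{(p)}$, the remark above yields part 1.

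For part 2, I would start from the classical factorization $\exp(T)=\prod_{n\geq 1}(1-T^n)^{-\mu(n)/n}$ in $\Q[[T]]$, checked by taking $\log$ of both sides and using $\sum_{d\mid m}\mu(d)=\delta_{m,1}$. Rearranging the ($T$-adically convergent) product as $P(T)Q(T)$ with $P(T)=\prod_{1\leq i\leq p-1}(1-T^i)^{-\mu(i)/i}$ and $Q(T)=\prod_{n\geq p}(1-T^n)^{-\mu(n)/n}$, every factor of $Q(T)$ lies in $1+T^p\Q[[T]]$, hence so does $Q(T)$; therefore $E(T)\equiv\exp(T)\equiv P(T)\bmod T^p$ in $\Q[[T]]$. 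For $1\leq i\leq p-1$ the exponent $-\mu(i)/i$ lies in $\Z_{(p)}$, and $\binom{c}{k}\in\Z_{(p)}$ for every $c\in\Z_{(p)}$ and $k\geq 0$ (the integer-valued polynomial $\binom{X}{k}$ maps $\Z_p$ into $\Z_p$, hence $\Z_{(p)}=\Q\cap\Z_p$ into $\Q\cap\Z_p$), so each factor of $P(T)$, and hence $P(T)$ itself, lies in $1+T\Z_{(p)}[[T]]$; then $E(T)-P(T)\in\Z_{(p)}[[T]]\cap T^p\Q[[T]]=T^p\Z_{(p)}[[T]]$, which is part 2. For part 3 I would simply differentiate:
\[ \frac{dE}{dT}=\sum_{i=1}^{p-1}\frac{T^{i-1}}{(i-1)!}=E(T)-\frac{T^{p-1}}{(p-1)!},\qquad\text{hence}\qquad \frac{T}{E(T)}\,\frac{dE(T)}{dT}=T-\frac{T^p}{(p-1)!\,E(T)}, \]
and since $(p-1)!$ is a unit in $\Z_{(p)}$ and $E(T)^{-1}\in\Z_{(p)}[[T]]$ (the constant term of $E$ is $1$), the right-hand side lies in $T+T^p\Z_{(p)}[[T]]$, giving the claimed congruence.

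I do not anticipate a serious obstacle: the lemma is elementary, as the paper itself notes. The only thing requiring attention is the $p$-integrality bookkeeping — making sure that the congruences, transparent over $\Q$, descend to $\Z_{(p)}$ — and in every part this comes down to the single remark that a $\Z_{(p)}$-coefficient series lying in $(T)^p\Q[[T]]$ already lies in $(T)^p\Z_{(p)}[[T]]$, supplemented in part 2 by the integrality of the generalized binomial coefficients.
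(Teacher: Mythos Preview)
Your proof is correct. For parts 1 and 3 you follow the paper's line (it deduces 1 from the functional equation of $\exp$ and calls 3 ``straightforward''); your explicit computation in part 3 is the natural one. One remark: your computation yields $\tfrac{T}{E(T)}\tfrac{dE}{dT}\in T+T^p\Z_{(p)}[[T]]$, i.e.\ congruence to $T$, not to $1$ as printed --- the ``$1$'' in the statement is a typo for ``$T$'', consistent with how the lemma is actually used later (one needs $d\log E(\tilde x)\equiv d\tilde x$). You computed this correctly but should flag the discrepancy rather than write ``giving the claimed congruence''.

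For part 2 you take a genuinely different route. The paper invokes the Artin--Hasse identity
\[
\exp\Bigl(\sum_{i\geq 0}\tfrac{T^{p^i}}{p^i}\Bigr)=\prod_{\substack{i\geq 1\\(i,p)=1}}(1-T^i)^{-\mu(i)/i}
\]
in $\Q[[T]]$ and reduces modulo $T^p$. You instead use the factorization $\exp(T)=\prod_{n\geq 1}(1-T^n)^{-\mu(n)/n}$ of the ordinary exponential, discard the factors with $n\geq p$ (all $\equiv 1\bmod T^p$), and then supply the $p$-integrality of the remaining finite product directly via $\binom{c}{k}\in\Z_{(p)}$ for $c\in\Z_{(p)}$. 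Both arguments are valid; the paper's is shorter if one takes the Artin--Hasse exponential and its $p$-integrality as known, while yours is more self-contained and makes explicit the one point the paper leaves implicit --- namely why the product side lies in $\Z_{(p)}[[T]]$ in the first place.
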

\begin{proof} 1 (resp. 2) follows from the property $\exp(T_1+T_2)=\exp(T_1)\exp(T_2)$ in $\Q[[T_1, T_2]]$  (resp.  $\exp\left(\sum_{i\geq 0} \frac{T^{p^i}}{p^i}\right)= \prod_{i\geq 1, (i,p)=1}(1-T^i)^{-\mu(i)/i}$ in $\Q[[T]]$) of the usual exponential (of Artin-Hasse exponential) and the injectivity of $\Z_{(p)}[T_1, T_2]/(T_1,T_2)^p\to \Q[[T_1,T_2]]/(T_1,T_2)^p$ (resp. of $\Z_{(p)}[[T]]/(T^p)\to \Q[[T]]/(T^p)$). 

3 is straightforward.  
\end{proof}

\begin{sbpara} For a field $K$, let $K^M_r(K)$ be the $r$-th Milnor $K$-group of $K$. For a discrete valuation field $K$ and for $i\geq 1$, we denote by $U^iK^M_r(K)$ the subgroup of $K^M_r(K)$ generated by elements of the form $\{u_1, \dots, u_r\}$ where $u_i\in K^\times$ ($1\leq i\leq r$) and $u_1\in \text{Ker}(O_K^\times \to (O_K/{\frak m}_K^i)^\times)$.

\end{sbpara}

\begin{sbprop}\label{Kexp}  Let $K$ be a discrete valuation field and let $r\geq 0$, $t\geq 1$.

 1. We have a well-defined  homomorphism  $$E: {\frak m}_K^t/{\frak m}_K^{pt}\otimes_{O_K} \Omega^r_{O_K}(\log) \to U^tK_{r+1}^M(K)/U^{pt}K^M_{r+1}(K)$$
$$x \otimes d\log(y_1)\wedge \dots \wedge d\log(y_r)\mapsto \{E(x), y_1, \dots, y_r\}$$ 
($x\in {\frak m}_K^i$, $y_j\in K^\times$). Here $\Omega^r_{O_K}(\log)=\wedge^r_{O_K} \Omega^1_{O_K}(\log)$.

2. This map $E$ is surjective.

3. This map $E$ kills the image of 
\begin{equation}
d:  {\frak m}_K^t/{\frak m}_K^{pt}\otimes_{O_K} \Omega^{r-1}_{O_K}(\log)\to  {\frak m}_K^t/{\frak m}_K^{pt}\otimes_{O_K} \Omega^r_{O_K}(\log).
\label{eqd1}
\end{equation}
\end{sbprop}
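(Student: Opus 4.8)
The three assertions are of rather different natures, and I would prove them in the order $1,2,3$; the substantive work lies in parts $1$ and $3$.

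\textbf{Part 1 (well-definedness).} I would present the target as a quotient of the free abelian group on symbols $[x;y_1,\dots,y_r]$ with $x\in{\frak m}_K^t$ and $y_j\in K^\times$, send $[x;y_1,\dots,y_r]$ to $\{E(x),y_1,\dots,y_r\}$, and verify compatibility with the defining presentation of ${\frak m}_K^t/{\frak m}_K^{pt}\otimes_{O_K}\Omega^r_{O_K}(\log)$. As $dx=x\,d\log x$ for $x\ne0$ and $d0=0$, that module is generated by the $x\otimes d\log y_1\wedge\cdots\wedge d\log y_r$, so what must be matched is: (a) multilinearity in, and the exterior relations among, the $d\log$-slots --- these come straight from multilinearity and the Steinberg relation in $K^M_{r+1}(K)$; and (b) additivity in $x$ modulo ${\frak m}_K^{pt}$ --- this is precisely Lemma~\ref{exponential}.1, together with the remarks that $x,x'\in{\frak m}_K^t$ forces $(x,x')^p\subseteq{\frak m}_K^{pt}$ and that $\{1+{\frak m}_K^{pt},K^\times,\dots,K^\times\}\subseteq U^{pt}K^M_{r+1}(K)$. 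The one delicate point in (a) is the strictly alternating law: since $\{y,y\}=\{-1,y\}$ in $K^M_2(K)$ rather than $0$, a repeated $d\log$-entry yields a symbol of shape $\{E(x),-1,\dots\}$, which must be shown to lie in $U^{pt}K^M_{r+1}(K)$. For $p$ odd this is immediate: $(1+{\frak m}_K^t)/(1+{\frak m}_K^{pt})$ is a successive extension of $\F_p$-vector spaces, hence uniquely $2$-divisible, so $E(x)\equiv w^2$ modulo $1+{\frak m}_K^{pt}$, whence $\{E(x),-1\}\equiv\{w^2,-1\}=2\{w,-1\}=\{w,1\}=0$ modulo $U^{pt}$. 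For $p=2$ with $\Char K=p$ one has $-1=1$. For $p=2$ with $K$ of mixed characteristic, the Steinberg relation applied to $-x$ gives $\{E(x),-1\}=\{1+x,-1\}=-\{1+x,x\}=-\{E(x),x\}$, which lies in $U^{pt}K^M_{r+1}(K)$ by part $3$. This alternating bookkeeping is the step I expect to be the main obstacle.

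\textbf{Part 2 (surjectivity).} Now $U^tK^M_{r+1}(K)/U^{pt}$ is generated by symbols $\{1+a,u_2,\dots,u_{r+1}\}$ with $a\in{\frak m}_K^t$, so it suffices to write any such $1+a$ as $E(x)\,v$ with $x\in{\frak m}_K^t$ and $v\in1+{\frak m}_K^{pt}$. I would argue by successive approximation: $E(b)\equiv1+b$ modulo ${\frak m}_K^{2\,\mathrm{ord}_K(b)}$, so dividing $1+a$ by $E(a)$ leaves a $1$-unit in $1+{\frak m}_K^{2t}$; iterating about $\log_2p$ times gives $1+a=E(x_1)\cdots E(x_k)\,v$ with $x_j\in{\frak m}_K^{2^{j-1}t}$ and $v\in1+{\frak m}_K^{pt}$, and Lemma~\ref{exponential}.1 then collapses $E(x_1)\cdots E(x_k)$ to $E(x_1+\cdots+x_k)$ modulo $1+{\frak m}_K^{pt}$, since any length-$p$ monomial in the $x_j$ lies in ${\frak m}_K^{pt}$. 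Hence $\{1+a,u_2,\dots,u_{r+1}\}\equiv\{E(x),u_2,\dots,u_{r+1}\}=E\bigl(x\otimes d\log u_2\wedge\cdots\wedge d\log u_{r+1}\bigr)$ modulo $U^{pt}$.

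\textbf{Part 3 ($E\circ d=0$).} On a $d\log$-monomial the differential sends $x\otimes d\log y_1\wedge\cdots\wedge d\log y_{r-1}$ to $x\otimes\bigl(d\log x\wedge d\log y_1\wedge\cdots\wedge d\log y_{r-1}\bigr)$ (using $dx=x\,d\log x$ and the closedness of the $d\log$-forms), so $E\circ d$ sends it to $\{E(x),x,y_1,\dots,y_{r-1}\}$, and it suffices to prove $\{E(x),x\}\in U^{pt}K^M_2(K)$. By Lemma~\ref{exponential}.2, modulo $1+{\frak m}_K^{pt}$ we have $E(x)=\prod_{1\le i\le p-1,\,(i,p)=1}w_i$ with $w_i=(1-x^i)^{-\mu(i)/i}\in1+{\frak m}_K^{it}$, so modulo $U^{pt}K^M_2(K)$ one has $\{E(x),x\}\equiv\sum_i\{w_i,x\}$. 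For $i=1$, $\{1-x,x\}=0$ by Steinberg. For $2\le i\le p-1$, the symbol $\{1-x^i,x\}$ lies in $U^{it}K^M_2(K)$ and is $i$-torsion, since $i\{1-x^i,x\}=\{1-x^i,x^i\}=0$ by Steinberg; as $U^{it}K^M_2(K)/U^{pt}K^M_2(K)$ is a successive extension of the groups $\gr^jK^M_2(K)$, each killed by $p$ because $(1+b\pi^j)^p\in1+{\frak m}_K^{j+1}$, it is $i$-torsion-free, whence $\{1-x^i,x\}\in U^{pt}K^M_2(K)$ and therefore, as $i\{w_i,x\}=-\mu(i)\{1-x^i,x\}$, also $\{w_i,x\}\in U^{pt}K^M_2(K)$. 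Thus $\{E(x),x\}\in U^{pt}K^M_2(K)$, so $\{E(x),x,y_1,\dots,y_{r-1}\}\in U^{pt}K^M_{r+1}(K)$ and $E\circ d=0$.
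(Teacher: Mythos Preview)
Your Parts~2 and~3 are correct. The argument for $\{E(x),x\}\in U^{pt}K^M_2(K)$ via the Artin--Hasse factorization and the observation that an $i$-torsion element in a $p$-torsion group (with $(i,p)=1$) vanishes is fine; the paper does essentially the same computation, writing it in the $p$-torsion quotient directly as $-\sum_i\mu(i)i^{-2}\{1-(hx)^i,(hx)^i\}=0$ via Steinberg.

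However, Part~1 has a genuine gap. Your checklist (a) multiplicativity and alternation in the $d\log$-slots, (b) additivity in $x$ modulo ${\frak m}_K^{pt}$, does not exhaust the relations presenting ${\frak m}_K^t/{\frak m}_K^{pt}\otimes_{O_K}\Omega^r_{O_K}(\log)$. The $O_K$-module $\Omega^1_{O_K}(\log)$ carries, besides $d\log(yy')=d\log y+d\log y'$, the Leibniz-type relation $x_0\,d\log x_0=\sum_i x_i\,d\log x_i$ whenever $x_0=\sum_i x_i$ in $O_K\smallsetminus\{0\}$. After $O_K$-balancing, this says that for a generator $h$ of ${\frak m}_K^t$,
\[
(hx_0)\otimes d\log x_0\wedge\omega'\;=\;\sum_i (hx_i)\otimes d\log x_i\wedge\omega',
\]
and under your proposed map this demands
\[
\{E(hx_0),x_0,\dots\}\;\equiv\;\sum_i\{E(hx_i),x_i,\dots\}\pmod{U^{pt}}.
\]
This relation mixes the first slot with a $d\log$-slot, so neither (a) nor (b) touches it. It is in fact the heart of the paper's proof of Part~1: one writes $\{E(hx_i),x_i\}=\{E(hx_i),hx_i\}-\{E(hx_i),h\}$; the $h$-terms match by additivity of $E$ (your~(b)), and each $\{E(hx_i),hx_i\}$ lies in $U^{pt}K^M_2(K)$ by the very fact you establish in Part~3. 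So the repair is immediate with your own ingredients, but you must recognize that this relation needs to be checked.

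A further remark on the alternation step: once the case $r=1$ (including the Leibniz relation above) is established, the paper's argument avoids your case split entirely. Since $d(-1)=(-1)\,d\log(-1)=0$ forces $d\log(-1)=0$ in $\Omega^1_{O_K}(\log)$, the element $hx\otimes d\log(-1)$ is zero in the source, and well-definedness for $r=1$ then gives $\{E(hx),-1\}\in U^{pt}K^M_2(K)$ uniformly in $p$.
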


\begin{pf} We prove 1. The $O_K$-module $\Omega^1_{O_K}(\log)$  has the following presentation by generators and relations. 

Generators. $d\log(x)$ for $x\in O_K\smallsetminus \{0\}$.

Relations. $d\log(xy)=d\log(x)+d\log(y)$ for $x,y\in O_K\smallsetminus \{0\}$. $x_0d\log(x_0)= \sum_{i=1}^n x_id\log(x_i)$ if $n\geq 1$, $x_i\in O_K\smallsetminus \{0\}$, and $x_0=\sum_{i=1}^n x_i$. 

Let $h$ be a generator of the ideal ${\frak m}_K^t$ of $O_K$.
By the above presentation of $\Omega^1_{O_K}(\log)$,  in the case $r=1$, it is sufficient to prove that $\{E(hx_0),x_0\}\equiv \sum_{i=1}^n \{E(hx_i), x_i\} \bmod U^{pt}K^M_2(K)$ if $x_i\in O_K\smallsetminus \{0\}$ and $x_0=\sum_{i=1}^n x_i$. (Here we denote the group law of $K^M_2(K)$ additively.) Since $\{E(hx_i), x_i\}=\{E(hx_i), hx_i\}-\{E(hx_i), h\}$ and since 
$\{E(hx_0),h\}\equiv \sum_{i=1}^n \{E(hx_i), h\}$ by 1 of \ref{exponential},   it is sufficient to prove that 
\begin{equation}
\{E(hx), hx\}\in U^{pt}K_2(K) \;\;\text{for}\;\; x\in O_K\smallsetminus \{0\}
\label{dkilled}
\end{equation}   
From 2 of \ref{exponential} we have, modulo $U^{pt}K^M_2(K)$, 
$$\{E(hx), hx\}\equiv  \{\prod_{\substack{1\leq i\leq p-1\\ (i,p)=1}} (1-(hx)^i)^{-\mu(i)/i}, hx\}\equiv  -\sum_{\substack{1\leq i\leq p-1\\(i,p)=1}} \mu(i)i^{-2} \{1-(hx)^i, (hx)^i\}=0.$$

The case $r\geq 2$ is reduced to the case $r=1$ and to $\{E(hx), y, y\}\equiv 0\bmod U^{pt}K_3^M(K)$  for $x\in O_K$ and $y\in K^\times$. Since $\{y,y\}=\{-1,y\}$ in $K^M_2(K)$, it is sufficient to prove that $\{E(hx), -1\}\in U^{pt}K^M_2(K)$. But this follows from the case $r=1$ because $d\log(-1)=0$ in $\Omega^1_{O_K}(\log)$. 

2 is clear. 

We prove 3. For $x\in O_K\smallsetminus \{0\}$ and $y_1, \dots, y_{r-1}\in K^\times$, $E$ sends $d(h\otimes xd\log(y_1)\wedge \dots \wedge d\log(y_{r-1}))$ to $\{E(hx), hx, y_1, \dots, y_{r-1}\}$. 
Hence 3 follows from (\ref{dkilled}). 
\end{pf}

\subsection{Cohomology of the highest degree}\label{Hd}

\begin{sbpara} Let $X$ be a Noetherian scheme of dimension $d<\infty$. Let  $\cF$ be a sheaf of abelian groups on $X$ for Zariski topology. In \ref{coh} (resp. \ref{lcoh}) below, for an abelian group $A$, we give an elementary understanding of a homomorphism $H^d(X, \cF)\to A$ (resp. $H^d_x(X, \cF)\to A$ for a closed point $x$ of $X$), where $H^d$ is the cohomology for Zariski topology
and $H^d_x$ is the cohomology with support in $x$. 

\end{sbpara}

\begin{sbpara}\label{PQX} Let $P(X)$ be the set of all $(d+1)$-tuples $\frak p=(x_0, \dots, x_d)$ of points of $X$  such that $\overline{\{x_i\}}\subsetneq  \overline{\{x_{i+1}\}}$  for all $0\leq i\leq d-1$.

For an integer $s$ such that $0\leq s\leq d$,  let $Q_s(X)$ be the set of all $d$-tuples $\frak q=(x_0, \dots, x_{s-1}, x_{s+1}, \dots, x_d)$ of points of $X$ such that the set $P_{\frak q}(X):=\{(x')_i\in P(X)\;|\; x'_i=x_i \;\text{if}\; i\neq s\}$ is not empty.
\end{sbpara}

\begin{sbpara}\label{hp} For any $\frak p=(x_i)_i\in P(X)$, we have a homomorphism $\iota_{\frak p}: \cF_{\eta(\frak p)}\to H^d(X, \cF)$ with $\eta(\frak p)=x_d$ defined as the composition 
$\cF_{\eta(\frak p)}= H^0_{x_d}(X, \cF)\to H^1_{x_{d-1}}(X, \cF)\to \dots \to H^d_{x_0}(X, \cF)\to H^d(X, \cF)$. 
(See \cite{Ha} Chap. IV for these cohomology groups with support $\{x_i\}$.) 

Hence for an abelian group $A$, a homomorphism $h: H^d(X, \cF)\to A$ induces a homomorphism $h_{\frak p}=h\circ \iota_{\frak p}: F_{\eta(\frak p)}\to A$ for each $\frak p\in P(X)$. 

\end{sbpara}

\begin{sblem}\label{coh} Let the notation be as above. Then for an abelian group $A$, the map $$\Hom(H^d(X, \cF), A) \to \prod_{\frak p\in P(X)} \Hom(\cF_{\eta(\frak p)}, A)\;;\; h\mapsto (h_{\frak p})_{\frak p\in P(X)}$$ defined above is an injection and the image consists of all elements $(h_{\frak p})_{\frak p\in P(X)}$ satisfying 
 the following conditions (i) and (ii).

\medskip
(i) Let $0\leq s\leq d-1$, $\frak q=(x_i)_i\in Q_s(X)$, and let $a\in \cF_{x_d}$.  Then $h_{\frak p}(a)=0$ for almost all $\frak p\in P_{\frak q}(X)$ and $\sum_{\frak p\in P_{\frak q}(X)}\; h_{\frak p}(a)=0$. 

(ii) Let $\frak q=(x_i)_i \in Q_d(X)$ and let $a\in \cF_{x_{d-1}}$. Then $\sum_{\frak p\in P_{\frak q}(X)} h_{\frak p}(a)=0$. (Note that $P_{\frak q}(X)$ is a finite set in this situation.)

\end{sblem}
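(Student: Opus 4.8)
The plan is to realize $H^d(X,\cF)$ via a concrete flasque (or at least acyclic) resolution built from the points of $X$, in the style of the Cousin complex, and then to read off the description of $\Hom(H^d(X,\cF),A)$ directly from that complex. First I would recall that, because $X$ is Noetherian of dimension $d$, the functor $H^d(X,-)$ is right exact on the category of Zariski sheaves of abelian groups (higher cohomology vanishes above the dimension, by Grothendieck's theorem). Hence $H^d(X,\cF)$ is computed as the cokernel of a map of the form $C^{d-1}\to C^{d}$, and the task reduces to identifying $C^d$ and the image of the differential in terms of local data at points. The natural choice is the (flasque) Cousin-type complex associated to the filtration of $X$ by dimension of support: $C^d=\bigoplus_{x\in X,\ \dim\overline{\{x\}}=0} (i_x)_*H^d_x(X,\cF)$ is supported on closed points, and similarly $C^{d-1}$ is a sum over points $x$ with $\dim\overline{\{x\}}\le 1$ of the local cohomology $H^{d-1}_x$. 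Taking global sections and then $H^d$, one gets $H^d(X,\cF)$ as a quotient of $\bigoplus_{x\text{ closed}} H^d_x(X,\cF)$.

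Next I would unwind the maps $\iota_{\frak p}$ of \ref{hp} in this language. A tuple $\frak p=(x_0,\dots,x_d)\in P(X)$ is exactly a maximal chain of specializations, so $x_0$ is a closed point and $x_d$ a point with $\dim\overline{\{x_d\}}=d$; the composite $\cF_{x_d}=H^0_{x_d}\to H^1_{x_{d-1}}\to\cdots\to H^d_{x_0}\to H^d(X,\cF)$ is precisely the matrix coefficient along the chain $\frak p$ of the boundary maps in the Cousin complex followed by the projection to cohomology. Therefore a homomorphism $h:H^d(X,\cF)\to A$ is the same as an $A$-valued functional on $\bigoplus_{x\text{ closed}}H^d_x(X,\cF)$ that kills the image of the differential from $C^{d-1}$, and pairing with $\iota_{\frak p}$ records the "value along the chain $\frak p$." The two relations (i) and (ii) are then the two ways the differential $C^{d-1}\to C^d$ decomposes according to which spot in the chain gets contracted: relation (i), for $0\le s\le d-1$ and $\frak q\in Q_s(X)$ with a class $a\in\cF_{x_d}$, expresses that the component of $d(C^{d-1})$ arising from varying the $s$-th entry of an otherwise fixed chain must be annihilated — this is the familiar "sum of residues along a fixed codimension-$(d-1)$ situation is zero," and the finiteness statement ($h_{\frak p}(a)=0$ for almost all $\frak p\in P_{\frak q}(X)$) reflects that the Cousin differentials are locally finite. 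Relation (ii), for $\frak q\in Q_d(X)$ and $a\in\cF_{x_{d-1}}$, is the boundary relation for the top two spots, where $P_{\frak q}(X)$ is finite because only finitely many closed points specialize a given point in a scheme of dimension $d$.

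Concretely the argument runs: (1) construct the Cousin-type complex $C^\bullet$, verify it is a resolution of $\cF$ in degrees $\le d$ with flasque terms so that $H^d(X,\cF)=\mathrm{coker}(\Gamma(C^{d-1})\to\Gamma(C^d))$; (2) identify $\Gamma(C^d)=\bigoplus_{\frak p\in P(X)}\cF_{\eta(\frak p)}/({\sim})$ — more precisely, note that $H^d_{x_0}(X,\cF)$ for a closed point $x_0$ is itself a colimit over chains through $x_0$, so that $\bigoplus_{\frak p}\iota_{\frak p}$ surjects onto $H^d(X,\cF)$; (3) compute the differential $\Gamma(C^{d-1})\to\Gamma(C^d)$ component-wise and observe its image is spanned exactly by the elements forced to vanish by (i) and (ii); (4) dualize into $A$ to get the stated bijection with the subset of $\prod_{\frak p}\Hom(\cF_{\eta(\frak p)},A)$. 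Injectivity of $h\mapsto(h_{\frak p})$ is then the statement that the $\iota_{\frak p}$ jointly surject onto $H^d(X,\cF)$, and exactness of the description at $C^{d-1}$ gives that the constraints (i),(ii) are not only necessary but sufficient.

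I expect the main obstacle to be step (1)–(2): namely making precise, with full rigor and without circularity, that the chain-indexed local cohomology groups $H^d_{x_0}(X,\cF)$ assemble into a complex whose cohomology is genuinely $H^d(X,\cF)$ for an \emph{arbitrary} abelian sheaf on a Noetherian scheme (the Cousin complex is cleanest for quasi-coherent sheaves, so one must instead argue via the coniveau/niveau spectral sequence $E_1^{s,t}=\bigoplus_{\dim\overline{\{x\}}=d-s}H^{s+t}_x(X,\cF)\Rightarrow H^{s+t}(X,\cF)$ and use $H^{>d}=0$ together with $H^i_x=0$ for $i<\dim\overline{\{x\}}$ is \emph{not} automatic for non-coherent $\cF$, so one keeps all the rows and reads off the edge map in total degree $d$). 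Verifying that this edge map has exactly the cokernel description claimed, and that the two bookkeeping relations (i) and (ii) are precisely the $E_1$-differential hitting degree $d$, is the technical heart; once that is in place, dualizing into $A$ and matching with the $\iota_{\frak p}$ is formal.
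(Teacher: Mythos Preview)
Your outline is on the right track and uses the same underlying machinery as the paper (the niveau spectral sequence $E_1^{s,t}=\bigoplus_{x}H^{s+t}_x(X,\cF)\Rightarrow H^{s+t}(X,\cF)$ together with Grothendieck vanishing above degree $d$), but you are missing the organizing device that actually closes the gap you yourself flag at the end.

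The paper does not attempt to unwind the whole Cousin complex at once. Instead it proves \ref{coh} and the local version \ref{lcoh} \emph{simultaneously by induction on} $d=\dim X$. The exact sequence
\[
\bigoplus_{x\in X_1} H^{d-1}_x(X,\cF)\ \longrightarrow\ \bigoplus_{x\in X_0} H^d_x(X,\cF)\ \longrightarrow\ H^d(X,\cF)\ \longrightarrow\ 0
\]
(your ``$C^{d-1}\to C^d$'' step) reduces the problem to describing $\Hom(H^d_x(X,\cF),A)$ for closed points $x$ and $\Hom(H^{d-1}_x(X,\cF),A)$ for $x\in X_1$. For a closed point $x$ (so $x\in X^d$ with $d\ge 2$) one has the standard identification
\[
H^d_x(X,\cF)\ \cong\ H^{d-1}\bigl(\Spec(\cO_{X,x})\smallsetminus\{x\},\,\cF\bigr),
\]
and $\Spec(\cO_{X,x})\smallsetminus\{x\}$ is a Noetherian scheme of dimension $d-1$. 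Now the inductive hypothesis for \ref{coh} (in dimension $d-1$) gives exactly the description of $\Hom(H^d_x(X,\cF),A)$ in terms of chains through $x$, together with the relations of type (i) at levels $1\le s\le d-1$ and of type (ii); this is simultaneously the content of \ref{lcoh}. The remaining relation---type (i) with $s=0$---is precisely the vanishing on the image of the differential from $\bigoplus_{x\in X_1}H^{d-1}_x$, handled the same way. The base cases are the elementary facts $H^0_x=\cF_x$ for $x\in X^0$ and the exact sequence $\cF_x\to\bigoplus_{\eta}\cF_\eta\to H^1_x\to 0$ for $x\in X^1$.

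Your step (2), where you assert that ``$H^d_{x_0}(X,\cF)$ for a closed point $x_0$ is itself a colimit over chains through $x_0$, so that $\bigoplus_{\frak p}\iota_{\frak p}$ surjects onto $H^d(X,\cF)$,'' is exactly the statement that requires proof, and the inductive reduction above is how one proves it for an arbitrary abelian sheaf without any Cohen--Macaulay or quasi-coherence hypothesis. Without the induction you are left trying to control all rows of the spectral sequence at once, which is the obstacle you correctly anticipate but do not resolve.
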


\begin{sblem}\label{lcoh} Let the notation be as above. Let $x$ be a closed point of $X$.  Let $P_x(X)=\{\frak p=(x_i)_i\;|\; x_0=x\}$ and $Q_{s,x}(X) = \{\frak q=(x_i)_i\in Q_s(X) \;|\; x_0=x\}$ for $1\leq s\leq d$. Then for an abelian group $A$, the map
$$\Hom(H^d_x(X, \cF), A) \to \prod_{\frak p\in P_x(X)} \Hom(\cF_{\eta(\frak p)}, A)\;;\; h\mapsto (h_{\frak p})_{\frak p\in P_x(X)}$$  is an injection and the image consists of all elements $(h_{\frak p})_{\frak p\in P_x(X)}$ satisfying 
 the following conditions (i) and (ii).

\medskip
(i) Let $1\leq s\leq d-1$, $\frak q=(x_i)_i\in Q_{s,x}(X)$, and let $a\in \cF_{x_d}$.  Then $h_{\frak p}(a)=0$ for almost all $\frak p\in P_{\frak q}(X)$ and $\sum_{\frak p\in P_{\frak q}(X)}\; h_{\frak p}(a)=0$. 

(ii) Let  $\frak q=(x_i)_i \in Q_{d,x}(X)$,  and let $a\in \cF_{x_{d-1}}$. Then $\sum_{\frak p\in P_{\frak q}(X)} h_{\frak p}(a)=0$. 
\end{sblem}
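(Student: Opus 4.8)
The plan is to deduce this from Lemma~\ref{coh} by a localization argument. Since $H^d_x(X,\cF)$, the maps $\iota_{\frak p}$, and the index sets $P_x(X)$ and $Q_{s,x}(X)$ depend only on the local scheme $S:=\Spec\sO_{X,x}$ with closed point $x$ (and the restriction of $\cF$ to it), I would first replace $X$ by $S$. Set $c=\dim S$. If $c<d$, then $P_x(X)=\emptyset$, and also $H^d_x(S,\cF)=0$: the open subscheme $U:=S\smallsetminus\{x\}$ then has dimension $c-1<d$, so $H^{d-1}(U,\cF)=H^d(U,\cF)=0$ by Grothendieck vanishing, while $H^d(S,\cF)=0$ because the only open subset of $S$ containing $x$ is $S$ itself, so $\Gamma(S,-)$ is the (exact) stalk functor at $x$; the local cohomology exact sequence of $x\subset S$ then forces $H^d_x(S,\cF)=0$, and both sides of the asserted bijection vanish. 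Hence one may assume $c=d$; the case $d=0$ is trivial, so assume $d\geq 1$.

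The two elementary facts I would then use are: (a) $\Gamma$ of any local scheme is the stalk functor at its closed point, hence exact, so $H^i$ of a local scheme vanishes for all $i\geq 1$; and (b) if $S$ is local of dimension $d\geq 1$ with closed point $x$, the open subscheme $U=S\smallsetminus\{x\}$ has dimension $d-1$ (its chains of irreducible closed subsets correspond to chains of non-maximal primes of $\sO_{X,x}$). From the local cohomology exact sequence of $x\subset S$,
$$\cdots\to H^{d-1}(S,\cF)\overset{\rho}\to H^{d-1}(U,\cF)\overset{\delta}\to H^d_x(S,\cF)\to H^d(S,\cF)\to\cdots,$$
and (a) (which kills $H^d(S,\cF)$, and also $H^{d-1}(S,\cF)$ when $d\geq 2$), I obtain: for $d\geq 2$ an isomorphism $\delta\colon H^{d-1}(U,\cF)\overset{\cong}\to H^d_x(S,\cF)$; and for $d=1$, writing $U=\{\eta_i\}_i$ for the finite discrete set of generic points of $S$, the identification $H^1_x(S,\cF)=\mathrm{coker}(\rho\colon\cF_x\to\bigoplus_i\cF_{\eta_i})$.

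Finally, I would apply Lemma~\ref{coh} to the Noetherian scheme $U$ of dimension $d-1$. Every maximal chain of $S$ ends at $x$; deleting this first entry gives a bijection $P_x(X)=P(S)\overset{\cong}\to P(U)$ which preserves $\eta$ and induces bijections $Q_{s,x}(X)\cong Q_{s-1}(U)$ ($1\leq s\leq d$) compatible with the subsets $P_{\frak q}$. A direct check shows that, transported along these bijections, conditions (i) and (ii) of Lemma~\ref{coh} for $U$ become precisely condition (i) (for $1\leq s\leq d-1$) of the present lemma together with, when $d\geq 2$, its condition (ii) (for $s=d$); combined with the isomorphism $\delta$ and with the compatibility of the iterated connecting maps of \ref{hp} for $S$ and for $U$ (which gives $\iota_{\frak p}=\delta\circ\iota_{\frak p'}$ for corresponding $\frak p,\frak p'$), this proves the lemma for $d\geq 2$. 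For $d=1$, condition (i) is vacuous and $\Hom(H^1_x(S,\cF),A)=\{\phi\in\prod_i\Hom(\cF_{\eta_i},A)\mid\phi\circ\rho=0\}$; writing $\phi=(h_i)_i$, the relation $\phi\circ\rho=0$ is exactly condition (ii) for the single element $\frak q=(x)\in Q_{1,x}(X)$, with $P_x(X)=\{(x,\eta_i)\}_i$. I expect the main obstacle to be the bookkeeping here: checking that the flag combinatorics of Lemma~\ref{coh} for $U$ matches conditions (i) and (ii) with the correct indices at the two extremes $s=1$ and $s=d$, that the connecting-map compatibility holds, and that the low-dimensional cases $d=0,1$ — where condition (ii) is supplied by the cokernel in the localization sequence rather than by Lemma~\ref{coh} for $U$ — are treated correctly.
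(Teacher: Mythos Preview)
Your proposal is correct and follows essentially the same approach as the paper: reduce to the punctured local spectrum $U=\Spec\cO_{X,x}\smallsetminus\{x\}$ via the isomorphism $H^d_x\cong H^{d-1}(U,\cF)$ for $d\geq 2$ (the paper's fact~3) and the cokernel description for $d=1$ (fact~4), then invoke the global statement Lemma~\ref{coh} for the $(d-1)$-dimensional scheme $U$. The paper packages this as a simultaneous induction on $\dim X$ proving \ref{coh} and \ref{lcoh} together, but the content is the same.
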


\begin{sbpara}
\ref{coh} and \ref{lcoh} are proved together by induction on $\dim(X)$ as follows, by using the localization theory of cohomology with supports explained in \cite{Ha}  Chap. IV. (A version of \ref{coh} for  Nisnevich topology (not Zariski topology) is stated and proved in \cite{KSh}, Section 1.6. The proof for the Zariski topology given below is essential the same as that given there.).

Let $X_i$ be the set of all $x\in X$ such that $\dim(\overline{\{x\})})=i$. Let $X^i$ be the set of all $x\in X$ such that $\dim(\Spec(\cO_{X,x}))=i$. 
By using  the spectral sequence 
$$E_1^{ij}=\oplus_{x\in X_{-i}} H_x^{i+j}(X, \cF)\Rightarrow H^{i+j}(X, \cF)$$
and using $H^i_x(X, \cF)= H^i_x(\Spec(\cO_{X,x}), \cF)$, where we denote the pullback of $\cF$ to $\Spec(\cO_{X,x})$ also by $\cF$, we obtain the following 1--5 by induction on $\dim(X)$.

1. If $i>d$, $H^i(X, \cF)=0$ and $H^i_x(X, \cF)=0$ for any  $x\in X$. 

2. We have an exact sequence $\oplus_{x\in X_1} H^{d-1}_x(X, \cF) \to \oplus_{x\in X_0} H^d_x(X, \cF)\to H^d(X, \cF)\to 0$.

3. If $x\in X^i$ and $i\geq 2$, $H^i_x(X, \cF)\cong H^{i-1}(\Spec(\cO_{X,x})\smallsetminus \{x\}, \cF)$ and $\Spec(\cO_{X,x})\smallsetminus \{x\}$ is of dimension $i-1$.

4. If $x\in X^1$, we have an exact sequence $\cF_x\to \oplus_{\eta\in \Spec(\cO_{X,x})\smallsetminus \{x\}}\cF_{\eta}\to H^1_x(X, \cF)\to 0$.

5. If $x\in X^0$, $H^0_x(X, \cF)= \cF_x$. 

\ref{coh} and \ref{lcoh} follow from these 1--5 by induction on $d$.

\end{sbpara}

\subsection{Higher dimensional class field theory (review)}

We review higher dimensional local class field theory in \cite{Pa}, \cite{KK0}, etc. in \ref{dlocal}--\ref{loc10} and higher dimensional global class field theory in \cite{KSh}, etc. in \ref{gl1}--\ref{gl10} briefly, giving  complements \ref{cpl1}--\ref{cpl10} to the  relation between local theory and global theory.

We first review the higher dimensional local class field theory.

\begin{sbpara}\label{dlocal} Recall that the notion $d$-dimensional local field is as follows. A $0$-dimensional local field is a finite field. For $d\geq 1$, a $d$-dimensional local field is a complete discrete valuation field whose residue field is a $d-1$-dimensional local field. 

\end{sbpara}

\begin{sbpara}\label{VK} The following remark is used in \ref{cpl9} later.

To give a $d$-dimensional local field $K$ is equivalent to giving a valuation ring $V$ having the following properties (i)--(iii). (i) The residue field of $V$ is a finite field. (ii) The value group of $V$ is isomorphic to $\Z^d$ with the lexicographic order. (iii) If $P_0\supsetneq \dots \supsetneq P_d=(0)$ are all prime ideals of $V$, for each $1\leq i\leq d$, the local ring of $V/P_i$ at the prime ideal $P_{i-1}/P_i$ (which is a discrete valuation ring) is complete. 

In fact, $K$ is obtained from $V$ as the field of fractions of $V$. $V=V_K$ is a subring of $K$ defined by induction on $d$ as follows. If $d=0$, then $V_K=K$. If $d\geq 1$, $V_K$ is the subring of the discrete valuation ring $O_K$ consisting of all elements whose images in the residue field $F$ of $K$ belong to $V_F$.

\end{sbpara}

\begin{sbpara}\label{loc1}

By higher dimensional local field theory (see \cite{Pa}, \cite{KK0}), for a $d$-dimensional local field $K$, we have a canonical homomorphism $K^M_d(K)\to \Gal(K^{\ab}/K)$ called the reciprocity map, where $K^{\ab}$ is the maximal abelian extension of $K$.

\end{sbpara}

\begin{sbpara}\label{loc2}  For $n\geq 0$, define the category $\cF_n$ inductively as follows. $\cF_0$ is the category of finite sets. For $n\geq 1$, $\cF_n= \text{ind}(\text{pro}(\cF_{n-1}))$ where $\text{pro}(\;)$ is the category of pro-objects and $\text{ind}(\;)$ is the category of ind-objects. Let $\cF_{\infty}= \cup_n \cF_n$. 

\end{sbpara}

\begin{sbpara}\label{loc10} 
For a $d$-dimensional local field $K$, we can regard $K$ and $K^\times$ as objects of $\cF_{\infty}$ (actually of $\cF_d$) canonically (\cite{KK3}). A homomorphism $K^M_d(K)\to \Q/\Z$ is said to be continuous if the composition $K^\times \times \dots \times K^\times\; (\text{$d$ times}) \overset{\{\;\}}\to K^M_d(K)\to \Q/\Z$ is a morphism of $\cF_{\infty}$.

The main result of higher dimensional local class field theory formulated in the style of \cite{KK3} is the following. Via the reciprocity map in \ref{dlocal}, $H^1(K, \Q/\Z)= \Hom_{\text{cont}}(\Gal(K^{\ab}/K), \Q/\Z)$ is isomorphic to the group of continuous homomorphisms $K^M_d(K) \to \Q/\Z$ of finite orders.   
\end{sbpara}

We now consider the global theory. In the higher dimensional global class field theory in \cite{KSh}, ''henselian variants of higher dimensional local fields'' (the fields $K^h_v$ below) are used. But we use in this paper the higher dimensional local fields $K_v$ (see below). We give complements to relate $K_v$ to the global class field theory in \cite{KSh}.

\begin{sbpara}\label{cpl1} Let $X$ be an integral scheme of finite type over $\Z$ of dimension $d$, and let $K$ be the function field of $X$. By a  place of $K$ along $X$, we mean a subring $v$ of $K$ satisfying the following conditions (i)--(iii).

(i) $K$ is the field of fractions of $v$.

(ii) $v$ is a valuation ring and the value group is isomorphic to $\Z^d$ with the lexicographic order.

(iii) There is an element $\frak p=(x_i)_{0\leq i \leq d}\in P(X)$ such that  if  $P_0\supsetneq \dots \supsetneq P_d$ denotes the set of all prime ideals of $v$, then for $0\leq i\leq d$, the local ring of $v$ at $P_i$ (which is a valuation ring) dominates  $x_i$.

Let $Pl(X)$ be the set of all places of $K$ along $X$. 
We have a canonical map $Pl(X) \to P(X)$ which sends $v\in Pl(X)$ to $\frak p$ in (iii). 

For $v\in Pl(X)$, we will define a $d$-dimensional local field $K_v\supset K$ called the local field of $K$ at $v$. The class field theory of $K$ is related to the local class field theory of $K_v$ as is explained below. 

\end{sbpara}

\begin{sbpara}\label{cpl2} To obtain local fields $K_v$ of $K$  for $v\in Pl(X)$ and their henselian versions $K^h_v$, we use the following iterated completion and iterated henselization.

For $0\leq i\leq d$ and for a sequence $(x_0, \dots, x_i)$ of points of $X$ such that $\overline{\{x_0\}}\subset \dots\subset  \overline{\{x_i\}}$ and such that $\dim(\overline{\{x_j\}})=j$ for $0\leq j\leq i$, we define the rings ${\hat \cO}_{X, x_0, \dots, x_i}$ and  $\cO^h_{X, x_0, \dots, x_i}$ over $\cO_{X,x_i}$ as follows, inductively.

In the case $i=0$,  ${\hat \cO}_{X, x_0}$ (resp. $\cO^h_{X, x_0}$) is the completion (resp. henselization) of $\cO_{X, x_0}$.

For $i\geq 1$, let $${\hat \cO}_{X,x_0, \dots,x_i}= \prod_P  ({\hat \cO}_{X, x_0, \dots, x_{i-1}})_P^{\wedge}, \quad \cO^h_{X,x_0, \dots,x_i}= \prod_P  (\cO^h_{X, x_0, \dots, x_{i-1}})_P^h$$
where $P$ ranges over 
  all prime ideals of ${\hat \cO}_{X, x_0, \dots, x_{i-1}}$ (resp. $\cO^h_{X, x_0, \dots, x_{i-1}}$) lying over the prime ideal of $\cO_{X,x_{i-1}}$ corresponding to $x_i$, and $(\;)_P^{\wedge}$ (resp. $(\;)_P^h$) denotes the completion (resp. henselization) of the local ring $(\;)_P$ of the ring $(\;)$ at $P$.  By induction on $i$, we see that the set of such $P$ is a non-empty finite set and that  ${\hat \cO}_{X, x_0, \dots, x_i}$ and  $\cO^h_{X, x_0, \dots, x_i}$ are finite products of complete (resp. henselian) local integral domains of dimension $d-i$.  
  
For $\frak p=(x_i)_i \in P(X)$, let $$K_{\frak p}= {\hat \cO}_{X,x_0, \dots, x_d}, \quad K^h_{\frak p}= \cO^h_{X, x_0, \dots, x_d}.$$ Then  $K_{\frak p}$ and $K^h_{\frak p}$ are finite products of fields.

\end{sbpara}

\begin{sbpara}\label{cpl9} Let $X$ and $K$ be as above. The following statements are proved in \ref{cpl10} by induction on $d$.

1. Let $\frak p\in P(X)$, and let $Pl(\frak p)=Pl(X, \frak p)$ be the inverse image of 
 $\frak p$ in $Pl(X)$. Then each field factor of $K_{\frak p}$ has canonically a structure of a $d$-dimensional local field. We have a bijection $\Spec(K_{\frak p})\to Pl(\frak p)$ which sends the point of $\Spec(K_{\frak p})$ corresponding to a field factor $L$ of $K_{\frak p}$ to the valuation ring $V_L\cap K$, where $V_L\subset L$ is the valuation ring of rank $d$ associated to $L$ by \ref{VK}. In particular, the set $Pl(\frak p)$ is a non-empty finite set. 

2. Let $\frak p\in P(X)$. Then the map $\Spec(K_{\frak p})\to \Spec(K^h_{\frak p})$ induced by the inclusion map $K^h_{\frak p} \to K_{\frak p}$ is bijective. 

3.  Let $X' \to X$ be a finite surjective morphism of integral schemes of finite type over $\Z$ and let $K'$ be the function field of $X'$. Then we have  canonical isomorphisms
$$K'\otimes_K  K_v= \prod_{v'|v} K'_{v'}, \quad K'\otimes_K K^h_v= \prod_{v'} (K')^h_{v'}$$
where $v'|v$ means that $v'$ ranges over all elements of $Pl(X')$ such that $v'\cap K=v$. 

\end{sbpara}

\begin{sbpara}\label{cpl10} We prove statements in \ref{cpl9} by induction on $d$. 

We prove 1. Let $\frak p=(x_i)_i\in P(X)$, and let $Y\subset X$ be the closure of $x_{d-1}$ in $X$ with the reduced scheme structure. Let ${\frak q}=(x_0,\dots, x_{d-1})\in P(Y)$.
Let $F$ be the residue field of $x_{d-1}$, that is, the function field of $Y$. 
Let $A$ be the normalization of $\cO_{X, x_{d-1}}$ and let 
 $\Delta$ be the set of all maximal ideals of $A$. Then $\Delta$ is finite. For $z\in \Delta$, let $\kappa(z)$ be the residue field of $z$ which is a finite extension of $F$,  let $Y(z)$ be the integral closure of $Y$ in $\kappa(z)$ and let $Pl(Y(z), \frak q)\subset Pl(Y(z))$ be the inverse image of $\frak q$ under the map  $Pl(Y(z))\to P(Y)$. Then we have a bijection 
$\coprod_{z\in \Delta}  Pl(Y(z), \frak q)\to Pl(\frak p)$ which sends  $w\in Pl(Y(z), \frak q)$ with $z\in \Delta$ to the valuation ring consisting of all elements of the local ring of $A$ at $z$ whose residue classes in $\kappa(z)$ belong to $w$.  
 The ring $A$ is Noetherian normal one-dimensional semi-local integral domain, and hence is a PID. Let $t$ be a generator of the intersection of all maximal ideals of $A$. Let $B={\hat \cO}_{X,x_0,\dots, x_{d-1}}$ and let $C=A\otimes_{\cO_{X,x_{d-1}}} B$. Then $C$ is a finite product of one-dimensional local integral domains,   $C/tC=(\prod_{z\in \Delta} \kappa(z)) \otimes_F  B/{\frak m}B$ where $\frak m$ is the maximal ideal of $\cO_{X, x_{d-1}}$, and $B/{\frak m}B= {\hat \cO}_{Y, x_0, \dots, x_{d-1}}=\prod_{\lambda\in Pl(\frak q)} F_{\lambda}$ where the last $=$ is by the statement 1 for dimension $d-1$. 
 Hence 
$\kappa(z) \otimes_F B/{\frak m}B= \kappa(z) \otimes_F \prod_{\lambda\in Pl(\frak q)} F_{\lambda}= \prod_{w\in Pl(Y(z), \frak q)} \kappa(z)_w$ by the statement 3 for dimension $d-1$.  This shows that $\Spec(K_{\frak p})$ is identified with $\coprod_{z\in \Delta} Pl(Y(z), \frak q)$. Hence $\Spec(K_{\frak p})$ is identified with $Pl(\frak p)$, and for $v\in Pl(\frak p)$ corresponding to $w\in Pl(Y(z), \frak q)$, $K_v$  is the field of fractions of a complete discrete valuation ring whose residue field is $\kappa(z)_w$. This proves 1. 

We prove 2. Replacing the iterated completion in the above arguments by the iterated henselization, and by using the induction on $d$,  we obtain similarly a bijection between $\Spec(K^h_{\frak p})$ and $Pl(\frak p)$. This shows that the map $\Spec(K_{\frak p})\to \Spec(K^h_{\frak p})$ is bijective.

We prove 3 for $K_v$. Let $f$ be the morphism $X'\to X$. By induction on $i$, we have that 
$(f_*\cO_{X'})_{x_i} \otimes_{\cO_{X,x_{d-1}}} {\hat \cO}_{X,x_0,\dots, x_i}\overset{\cong}\to  \prod_{x'_0, \dots, x'_i}  {\hat \cO}_{X', x_0',\dots, x'_i}$ where $(x'_0, \dots, x'_i)$ ranges over all sequences of points of $X'$ such that $x'_j$ lies over $x_j$ for $0\leq j\leq i$ and such that $\overline{\{x'_0\}}\subset \dots \subset \overline{\{x'_i\}}$. 
The case $i=d$ gives an isomorphism $K'\otimes_K K_{\frak p}\overset{\cong}\to \prod_{\frak p'} K'_{\frak p'}$ where $\frak p'$ ranges over all elements of $P(X')$ lying over $\frak p$. By looking at the $v$-factor of this isomorphism, we have 3 for $K_v$.

The proof of 3 for $K^h_v$ is similar to that for $K_v$.

\end{sbpara}

\begin{sbpara}\label{gl1}  In \ref{gl1}--\ref{gl10}, let $X$ be a proper normal integral scheme over $\Z$ and let $K$ be the function field of $X$. We assume $X(\R)=\emptyset$.

The following is what we use in this paper from the higher dimensional global class field theory.

We have a unique continuous homomorphism 
\begin{equation}
\varprojlim_I H^d(X, K^M_d(\cO_X, I)) \to \Gal(K^{ab}/K)
\label{glrec1}
\end{equation}
called the reciprocity map, where $I$ ranges over all non-zero coherent ideals of $\cO_X$, $K^M_d(\cO_X, I)$ denotes the kernel of $K^M_d(\cO_X)\to K^M_d(\cO_X/I)$ with $K_d^M$ the sheaf of $d$-th Milnor $K$-groups, the cohomology groups are the  Zariski cohomology groups, and the left hand side is regarded a topological space for the projective limit of the discrete topologies of $H^d(X, K^M_d(\cO_X,I))$, which is characterized by the following property. For $\chi\in H^1(K, \Q/\Z)= \Hom_{\text{cont}}(\Gal(K^{\ab}/K), \Q/\Z)$, the homomorphism $H^d(X, K^M_d(\cO_X, I))\to \Q/\Z$ induced by $\chi$ for some $I$ corresponds to $(h_{\frak p})_{\frak p\in P(X)}$ via \ref{coh} where $h_{\frak p}: K^M_d(K)\to \Q/\Z$ is the composition 
$$K^M_d(K) \to \oplus_{v\in Pl(\frak p)} K_d^M(K_v)\overset{\chi}\to \Q/\Z$$
where $Pl(\frak p)$ is as in \ref{cpl9}, the first arrow is the diagonal map, and the map $K^M_d(K_v)\to \Q/\Z$ is induced by the image of $\chi$ in $H^1(K_v, \Q/\Z)$ and by the $d$-dimensional local class field theory of $K_v$. 
\end{sbpara}

\begin{sbpara}\label{gl2}  In \cite{KSh}, a continuous map  
\begin{equation}
\varprojlim_I H^d(X_{\text{Nis}}, K^M_d(\cO_X, I)) \to \Gal(K^{ab}/K)
\label{glrec2}
\end{equation}
called the reciprocity map is defined, where the cohomology groups are  Nisnevich cohomology,  instead of Zariski cohomology,  and the left hand side is endowed with the topology of the projective limit of the discrete sets.

The map (\ref{glrec1}) is induced from the map (\ref{glrec2}) and the canonical map $H^d(X, K_d^M(\cO_X, I)) \to H^d(X_{\text{Nis}}, K^M_d(\cO_X,I))$. 

In fact, by the definition of the reciprocity map (\ref{glrec2}) in \cite{KSh}, for $\chi\in H^1(K, \Q/\Z)$, the induced homomorphism (\ref{glrec1}) corresponds to $(h_{\frak p})_{\frak p\in P(X)}$ where $h_{\frak p}: K^M_d(K)\to \Q/\Z$ is the composition $K_d^M(K) \to \oplus_{v\in Pl(\frak p)} K^M_d(K^h_v)\overset{\chi}\to \Q/\Z$ with  $K^M_d(K^h_v)\to \Q/\Z$ a homomorphism defined in \cite{KSh}. This last homomorphism  coincides with the composition $K^M_d(K^h_v) \to K^M_d(K_v) \to \Q/\Z$ where the last map is that in \ref{gl1}. 

\end{sbpara}

\begin{sbpara}\label{gl10}
The main result of \cite{KSh} concerning  the class field theory of $K$ is as follows. 
(In \cite{KSh}, we do not need to assume $X(\R)=\emptyset$, but if we do not assuming $X(\R)=\emptyset$, the left hand side of (\ref{glrec2}) should be modified by adding archimedean objects to have results below).

In the case $K$ is of characteristic $0$, the map (\ref{glrec2}) is an isomorphism of topological groups.  

 In the case $K$ is of characteristic $p>0$, the map (\ref{glrec2}) induces an isomorphism of topological groups from the left hand side of (\ref{glrec2}) to the fiber product of $\Gal(K^{\ab}/K) \to \Gal(\F_p^{\ab}/\F_p) \leftarrow \Z$, where $\Z$ is discrete and the right arrow sends $1\in \Z$ to the Frobenius $\F_p^{\ab}\to \F_p^{\ab}\;;\;x\mapsto x^p$. 

By Raskind \cite{Ra} and Kerz and Saito (\cite{KeS}), these results on  the  class field theory of $K$ hold also for Zariski cohomology $\varprojlim_I H^d(X, K^m_d(\cO_X, I))$ (replacing Nisnevich cohomology)  if $K$ is of characteristic $\neq  2$.

\end{sbpara}

\begin{sbpara} There is another formulation of higher dimensional class  field theory due to Wiesend (\cite{Wi}) which was studied more in \cite{KeS}. But we do not use it in this paper. 

\end{sbpara}

\begin{sbrem}\label{gl4}

The first author would like to take this opportunity to express that the both authors of \cite{KSh} regret that Nisnevich topology is called henselian topology in \cite{KSh} due to their ignorance of the preceding works of Nisnevich. 

\end{sbrem}

\section{Refined Swan conductors mod $p$}

\subsection{The subject}

\begin{sbpara}\label{Rsw1} Let $K$ be a complete discrete valuation field with residue field $F$, and assume $F$ is of characteristic $p>0$. Let $n\geq 1$ and let $m= \max(n-e_K, [n/p])$. 

We  define a homomorphism $$\Rsw: F_nH^1(K, \Q/\Z) \to {\frak m}_K^{-n}/{\frak m}_K^{-m} \otimes_{O_K} \Omega^1_{O_K}(\log)$$
which we call the refined Swan conductor modulo $p$.

\end{sbpara}
\begin{sbpara}\label{Rsw4} The homomorphism $\Rsw$ in \ref{Rsw1} is characterized by the following properties (i) and (ii). 

\medskip

(i)  $\Rsw$ is compatible with any homomorphisms of cdvf. That is,  the following diagram is commutative for an extension of complete discrete valuation fields $K'/K$, where $n'=e(K'/K)n$ with $e(K'/K)$ the ramification index of the extension $K'/K$ and $m'=\max(n'-e_{K'}, [n'/p])$. 

$$\begin{matrix}  
F_nH^1(K, \Q/\Z)& \to & {\frak m}_K^{-n}/{\frak m}_K^{-m} \otimes_{O_K} \Omega^1_{O_K}(\log)\\
\downarrow & & \downarrow \\
F_{n'}H^1(K', \Q/\Z)& \to & {\frak m}_{K'}^{-n'}/{\frak m}_{K'}^{-m'} \otimes_{O_{K'}} \Omega^1_{O_{K'}}(\log)
\end{matrix}$$
(The fact $F_nH_1(K, \Q_p/\Z_p)$ is sent to $F_{n'}H^1(K', \Q_p/\Z_p)$ is proved in \cite{KKs}.) 

(ii) If $F$ is an $r$-dimensional local field ($r\geq 0$), Rsw is characterized by the  property
$$\chi(E(\alpha))=Res_F(R_b(\alpha\wedge \Rsw(\chi)))$$
for $\chi\in F_nH^1(K, \Q_p/\Z_p)$, $\alpha\in {\frak m}_K^{m+1}/{\frak m}_K^{n+1}\otimes_{O_K} \Omega^1_{O_K}(\log)$. Here $E(\alpha)$ denotes the image of $\alpha$ under the truncated exponential map $${\frak m}_K^{m+1}/{\frak m}_K^{n+1}\otimes_{O_K} \Omega^1_{O_K}(\log)\to 
 U^{m+1}K^M_{r+1}(K)/U^{n+1}K^M_{r+1}(K),$$ 
 $\chi(E(\alpha))$ denotes the image of $E(\alpha)$ under 
the composition $K^M_{r+1}(K)\to \Gal(K^{\ab}/K)\overset{\chi}\to \Q_p/\Z_p$ of the reciprocity map of the $r+1$-dimensional local field $K$ and $\chi$, which kills $U^{n+1}K^M_{r+1}(K)$ by the condition $\text{Sw}(\chi)\leq n$, 
and  $$\Res_F:\Omega^r_F \to \F_p$$ is the residue map (\ref{rdl}). 
Note that $\alpha\wedge \Rsw(\chi)\in {\frak m}_K^{m+1-n}/{\frak m}_K \otimes_{O_K} \Omega^{r+1}_{O_K}(\log)$ and $n-m\leq e_K$ and hence its $R_b$ is defined in $\Omega^r_F$. 

\end{sbpara}

\begin{sbpara} If $\text{char}(K)=p>0$, 
$$\Rsw: F_nH^1(K, \Q/\Z) \to {\frak m}_K^{-n}/{\frak m}_K^{-[n/p]} \otimes_{O_K} \Omega^1_{O_K}(\log)$$
is the homomorphism of
 Matsuda  \cite{Ma} Rem. 3.2.2 and  Borger \cite{Bo2}  3.6 defined by using Artin-Schreier-Witt theory. It was studied in \cite{IL} Section 2.
See Section \ref{pos} and \ref{charp=}.

In the mixed characteristic case, we will construct $\Rsw$ using higher dimensional local class field theory (Section 3.3) and also higher dimensional global class field theory (Section 3.4). 

\end{sbpara}

\begin{sbpara} It may seem strange that we use higher dimensional global class field theory for the local subject $\Rsw$. Our idea is that since any field is a union of finitely generated fields over a prime field and since each finitely generated field over a prime field has  class field theory, any subject about one-dimensional Galois representations of any field can be studied by using higher dimensional global class field theory. Our method in \ref{union} to define $\Rsw$ actually follows this idea.  

A purely local method for the local subject $\Rsw$ exists also in the mixed characteristic case as in \ref{localmethod} below. 

\end{sbpara}

\begin{sbpara}\label{localmethod} In the case $F$ is perfect, our homomorphism Rsw can be defined also by using the local class field theory of Serre \cite{Se} and Hazewinkel \cite{HM}.

In the case $[F:F^p]=p^r<\infty$, Rsw can be also defined by using the duality theory \cite{KS} (a work in preparation) which is  a generalization of the local class field theory of Serre \cite{Se} and Hazewinkel \cite{HM} to the case $[F:F^p]=p^r$.  

These things will be explained in \cite{KS}. 

\end{sbpara}

\begin{sbpara}

The authors wonder whether Rsw in general can be obtained by the reduction to the perfect residue field case using the local class field theory of Serre \cite{Se} and Hazewinkel \cite{HM} and 
 the work \cite{Bo1} of Borger.
\end{sbpara}

\begin{sbpara}
The authors  wonder whether our refined Swan conductor mod $p$ can be obtained from the relation of $p$-adic \'etale cohomology and Hochschild homology in \cite{BMS}.
\end{sbpara}

\subsection{Positive characteristic case (review)}\label{pos}

\begin{sbpara} Let $K$ be a complete discrete valuation field of characteristic $p>0$. We briefly review the definition of 
$$\Rsw: F_nH^1(K, \Q/\Z) \to {\frak m}_K^{-n}/{\frak m}_K^{-[n/p]} \otimes_{O_K} \Omega^1_{O_K}(\log).$$ For details, see \cite{Bo2,Ma,IL}. 

In \ref{charp=}, we will show that this Rsw has the properties (i) and (ii) in \ref{Rsw4}. 

\end{sbpara}

\begin{sbpara}
From Artin-Schreier-Witt theory, there are isomorphisms 
\begin{equation}
W_s(K)/(\phi-1)W_s(K)\simeq H^1(K,\Z/p^s \Z),
\label{ASW}
\end{equation}
where $W_s(K)$ denotes the ring of Witt vectors of length $s$, and $\phi$ the endomorphism of Frobenius.

\end{sbpara}

\begin{sbpara}

For a Witt vector $a=(a_{s-1},\ldots, a_0)\in W_s(K)$, let $\text{ord}_K(a)=\min\limits_i\{p^i \text{ord}_K(a_i)\}$.
In \cite{Br}, Brylinski defined an increasing filtration of $W_s(K)$ as 
$$
F_n W_s(K) = \{a \in W_s(K)\;|\; \text{ord}_K(a) \geq -n\}
$$
for $n\in \Z_{\geq0}$. The filtration $F_n H^1(K,\Z/p^s \Z)$  defined in \cite{KKs} is the image of 
 $F_n W_s(K)$ under  (\ref{ASW}).
 
 \end{sbpara}
 
 \begin{sbpara}

 The homomorphism $-d:W_s(K)\to \Omega_{K}^1$ given by $a=(a_{s-1},\ldots, a_0)\mapsto  -\sum\limits_i a_i^{p^i-1}da_i$  produces a map 
 $$
 F_n W_s(K)/F_{[n/p]}W_s(K)\to {\frak m}_K^{-n}/{\frak m}_K^{-[n/p]} \otimes_{O_K} \Omega^1_{O_K}(\log), 
 $$
  which factors through 
  $$F_nH^1(K, \Z/p^s\Z)/F_{[n/p]}H^1(K,\Z/p^s\Z) \to {\frak m}_K^{-n}/{\frak m}_K^{-[n/p]} \otimes_{O_K} \Omega^1_{O_K}(\log),$$
  inducing
  $$\Rsw: F_nH^1(K, \Q/\Z) \to {\frak m}_K^{-n}/{\frak m}_K^{-[n/p]} \otimes_{O_K} \Omega^1_{O_K}(\log).$$
  
  Here the minus sign of the definition of $-d$ may seem strange, but we put it to have the compatibility with the refined Swan conductor mod ${\frak m}_K$ in \cite{KKs}. The minus sign naturally appears in the argument in \ref{pfcharp=}. 
  
  \end{sbpara}
  
  \begin{sbrem}
	  The second author was unaware of the unpublished work of Borger \cite{Bo2} when writing \cite{IL}, and sincerely regrets not quoting it in \cite{IL}.
  \end{sbrem}

\subsection{Application of higher dimensional local class field theory}\label{local}

\begin{sbpara} Let $K$ be a complete discrete valuation field with residue field $F$.  In the case $F$ is an $r$-dimensional local field, we define Rsw by using  higher dimensional local class field theory as below. 

We will use the continuity of $K^M_{r+1}(K)\to \Q/\Z$ induced by the reciprocity map of the $r+1$-dimensional local field $K$ and by $\chi\in H^1(K, \Q/\Z)$ and we will use also the self-duality of the additive group $F$. Here the continuity is the one defined in \cite{KK3}, and  the duality is also treated by using such continuity. (In the case $r=1$, this duality is the usual self-duality of the locally compact abelian group $F$.)

\end{sbpara}

\begin{sbpara}\label{rdl}  Let $F$ be an $r$-dimensional local field of characteristic $p>0$. 
We have the residue map $$\Res_F: \Omega_F^r\to \F_p$$
defined as follows. (The following definition and properties of $\Res_F$ are contained in \cite{KK1} Chap. 2, Lemma 12 and Lemma 14.) Let $F_r=F$ and for $1\leq i\leq r$, define the field $F_{i-1}$ to be the residue field of $F_i$ by downward induction on $i$. Then $\Res_F$ is the composition 
$$\Omega^r_F =\Omega^r_{F_r} \overset{\Res}\to \Omega^{r-1}_{F_{r-1}}\to \dots \to \Omega^1_{F_1} \overset{\Res}\to \Omega^0_{F_0}=F_0\overset{\text{trace}}\to \F_p$$
where each 
 $\Res: \Omega^i_{F_i} \to \Omega^{i-1}_{F_{i-1}}$ is the residue map in \ref{Res1} defined by taking $F_{i-1}\to O_{F_i}$ and a prime element $\pi_i$ of the discrete valuation field $F_i$ as in \ref{Res1}. This composition $\Res_F$  is independent  of the choices of $F_{i-1}\to O_{F_i}$ and $\pi_i$. 
 
 We have 
 \begin{equation}
  \Res_F\circ C=\Res_F,
  \label{ResC}
  \end{equation}
   where $C$ is the Cartier operator. For a  finite extension $F'$ of $F$, we have
 \begin{equation}
 \Res_F\circ Tr_{F'/F}= \Res_{F'},
 \label{ResTr}
 \end{equation}
   where $Tr_{F'/F}$ is the trace map $\Omega^r_{F'}\to \Omega^r_F$. (This trace map for differential forms is defined in \cite{KK1} as $$\Omega^r_{F'}\cong U^1K^M_{r+1}(F'((T)))/U^2K^M_{r+1}(F'((T))) $$ $$\overset{\text{norm}}\to U^1K^M_{r+1}(F((T)))/U^2K^M_{r+1}(F((T)))\cong \Omega^r_F.$$ The trace map for differential forms in a more general setting  is defined in \cite{Ga}.  The formula (\ref{ResTr}) contains the formula (\ref{ResC}) because
  for a field $k$ of characteristic $p$ such that $[k:k^p]=p^r$, the Cartier operator $\Omega^r_k \to \Omega^r_k$ is the trace map of the homomorphism $k\to k\; ;\;x\mapsto x^p$.) 
   
 \end{sbpara}

\begin{sbpara}\label{rdl2}  Let $F$ be as in \ref{rdl} and let $V$ be a finite dimensional vector space over $F$. Then $V$ is canonically regarded as an object of the category $\cF_{r,\ab}$ of abelian group objects of the category $\cF_r$ (\ref{loc2}). 

\end{sbpara}

\begin{sblem}\label{rdl4} Let $F$ and $V$ be as in \ref{rdl2} and let $V^*=\Hom_F(V, \Omega^r_F)$. Then we have the bijection
$$V^*\overset{\cong}\to \Hom_{\text{cont}}(V, \F_p)\;;\;h\mapsto (x\mapsto \Res(h(x))).$$
Here $\Hom_{\text{cont}}$ is the set of homomorphisms in the category  $\cF_{r,\ab}$. 

\end{sblem}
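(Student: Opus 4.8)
The plan is to proceed by induction on $r$, reducing at each stage first to the case $V=F$. For this reduction I would use that $\cF_{r,\ab}$ is an additive category: a choice of $F$-basis $e_1,\dots,e_d$ of $V$ produces isomorphisms $\Hom_F(V,\Omega^r_F)\cong(\Omega^r_F)^d$ and $\Hom_{\text{cont}}(V,\F_p)\cong\Hom_{\text{cont}}(F,\F_p)^d$ under which the map of the lemma becomes the $d$-fold product of the corresponding map for $V=F$; so it suffices to show that, identifying $\Hom_F(F,\Omega^r_F)=\Omega^r_F$, the map $\omega\mapsto(x\mapsto\Res_F(x\omega))$ is a bijection $\Omega^r_F\overset{\cong}\to\Hom_{\text{cont}}(F,\F_p)$. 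For $r=0$ this is the nondegeneracy of the trace form of the finite separable extension $F/\F_p$, since $\Omega^0_F=F$ and $\Res_F=\Tr_{F/\F_p}$.

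For the inductive step, let $F'$ be the residue field of $F$, an $(r-1)$-dimensional local field, and fix a prime element $\pi$ of $F$ and a ring homomorphism $\iota:F'\to O_F$ lifting the identity. These data exhibit $F$ as the ind-pro-finite object $F=\varinjlim_n{\frak m}_F^{-n}$ with ${\frak m}_F^{-n}=\varprojlim_m{\frak m}_F^{-n}/{\frak m}_F^m$, each quotient ${\frak m}_F^{-n}/{\frak m}_F^m$ being a finite-dimensional $F'$-vector space and hence an object of $\cF_{r-1,\ab}$ by \ref{rdl2}; they also give the analogous presentation of $\Omega^r_F$ via $\Omega^r_F\cong F\otimes_{F'}\Omega^{r-1}_{F'}\,d\log(\pi)$ (from \ref{hatdif}). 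A morphism $F\to\F_p$ in $\cF_{r,\ab}$ is then a compatible family of morphisms $f_n:{\frak m}_F^{-n}\to\F_p$, each factoring through some finite-dimensional quotient ${\frak m}_F^{-n}/{\frak m}_F^m$; by the inductive hypothesis such an $f_n$ corresponds to an element of $\Hom_{F'}({\frak m}_F^{-n}/{\frak m}_F^m,\Omega^{r-1}_{F'})$, and the perfect $F'$-bilinear pairing
$$({\frak m}_F^{-n}/{\frak m}_F^m)\times({\frak m}_F^{1-m}/{\frak m}_F^{n+1})\to F'$$
induced by the first residue map $\Omega^1_F\to F'$ turns this into an element of $({\frak m}_F^{1-m}/{\frak m}_F^{n+1})\otimes_{F'}\Omega^{r-1}_{F'}$. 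Keeping track of the compatibilities in $n$ and $m$ (and of the finiteness constraint at each pro-stage, which is what makes the assembled object $\Omega^r_F$ rather than a larger completion), one sees that such families are exactly the elements of $\Omega^r_F$, and that the resulting bijection $\Omega^r_F\to\Hom_{\text{cont}}(F,\F_p)$ is $\omega\mapsto(x\mapsto\Res_F(x\omega))$, using the defining factorization $\Res_F=\Res_{F'}\circ\Res$ of \ref{rdl}.

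The main obstacle I expect is the bookkeeping with the ind- and pro-limits: verifying the formal facts about $\cF_n$ that a morphism out of $F$ is a compatible family of morphisms on the ${\frak m}_F^{-n}$ each factoring through a finite stage, checking that the finite-level $F'$-duality is compatible both with the inductive hypothesis and with the transition maps in $n$ and in $m$, and confirming that the perfect $F'$-pairing on the ${\frak m}_F^{-n}/{\frak m}_F^m$ has the stated form and is compatible with wedging by $\Omega^{r-1}_{F'}$. Once these compatibilities are in place, the identification of the assembled pro-ind datum with $\Omega^r_F$ and the fact that the transported map is the residue pairing are formal consequences of the factorization of $\Res_F$.
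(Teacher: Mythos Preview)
Your proposal is correct and follows exactly the approach the paper indicates: the paper's entire proof is the single sentence ``This is shown by induction on $r$,'' and your outline supplies the details of that induction. The one point worth flagging is that the crucial step you allude to parenthetically---that in the compatible family $(f_n)$ the level $m$ at which $f_n$ factors can be taken independent of $n$ (since $f_n|_{O_F}=f_0$ already kills some ${\frak m}_F^{m_0}$, forcing every $f_n$ to factor through ${\frak m}_F^{-n}/{\frak m}_F^{m_0}$)---is exactly what allows the interchange of limits and prevents the ``larger completion'' you mention; this is the heart of the argument and deserves to be made explicit rather than left as bookkeeping.
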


\begin{pf} This is shown by induction on $r$. \end{pf}

\begin{sbpara}  Let $K$ be a complete discrete valuation field with residue field $F$ and assume that $F$ is an $r$-dimensional local field of characteristic $p>0$. (Hence $K$ is an $r+1$-dimensional local field.)

Let $\chi\in F_nH^1(K, \Q_p/\Z_p)$. 
Then by \cite{KS}, $\chi$ induces a continuous homomorphism $K^M_{r+1}(K)/U^{n+1}K^M_{r+1}(K)\to \Q_p/\Z_p$, where the continuity is that of \cite{KK3}. 
Let $m=\max(n-e_K,[n/p])$. Then via $E: {\frak m}_K^{m+1}/{\frak m}_K^{n+1}\otimes_{O_L} \Omega^r_{O_K}(\log) \to K^M_{r+1}(K)/U^{n+1}K^M_{r+1}(K)$, we obtain a continuous homomorphism 
$u_{\chi}: {\frak m}_K^{m+1}/{\frak m}_K^{n+1}\otimes_{O_K} \Omega^r_{O_K}(\log) \to \F_p$. 

Take an integer $b\geq 0$ such that $p^b\geq n-m$. Then 
$$({\frak m}_K^{-n}/{\frak m}_K^{-m}\otimes_{O_K} \Omega^1_{O_K}(\log))\times ({\frak m}_K^{m+1}/{\frak m}_K^{n+1}\otimes_{O_K} \Omega^r_{O_K}(\log)) \to \Omega^r_F\;;\;(x,y)\mapsto R_b(x\wedge y)$$
is a perfect duality of finite-dimensional $F$-vector spaces where $F$ acts on the two $O_K/{\frak m}^{n-m}_K$-modules on the left hand side via $\iota_b$ (2 of \ref{Rsw3}). Hence by \ref{rdl4}, $u_{\chi}$ corresponds to an element $\Rsw_{K_v}(\chi)\in {\frak m}_K^{-n}/{\frak m}_K^{-m}\otimes_{O_K} \Omega^1_{O_K}(\log)$. This element is independent of the choice of $b$. 

\end{sbpara}

The following lemma will be used in Section \ref{s3.5}.

\begin{sblem}\label{lem3} Assume that $K$ is a $d$-dimensional local field whose residue field $F$ is of characteristic $p>0$. Assume one of the following (i) and (ii). 

(i) $K'$ is a finite extension of $K$.

(ii) $K'$ is the field of fractions $K\{\{T\}\}$ of the completion of the local ring of $O_K[[T]]$ at the prime ideal generated by ${\frak m}_K$ (then $K'$ is a $d+1$-dimensional local field with residue field $F((T))$).

Then Rsw$_K$ and Rsw$_{K'}$   are compatible. 

\end{sblem}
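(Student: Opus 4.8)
\textbf{Proof plan for Lemma \ref{lem3}.}

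The plan is to verify the compatibility through the characterization of $\Rsw$ by property (ii) of \ref{Rsw4}, together with the defining duality. Recall that for a $d$-dimensional local field with residue field an $(d-1)$-dimensional local field, $\Rsw_K(\chi)$ is the element of ${\frak m}_K^{-n}/{\frak m}_K^{-m}\otimes_{O_K}\Omega^1_{O_K}(\log)$ corresponding, under the perfect pairing $(x,y)\mapsto R_b(x\wedge y)$ of part 2 of \ref{Rsw3}, to the continuous homomorphism $u_\chi$ on ${\frak m}_K^{m+1}/{\frak m}_K^{n+1}\otimes_{O_K}\Omega^r_{O_K}(\log)$ obtained by composing the truncated exponential $E$ with the continuous character $K^M_{r+1}(K)/U^{n+1}K^M_{r+1}(K)\to\Q_p/\Z_p$ attached to $\chi$ via the reciprocity map. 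So everything reduces to tracking how each ingredient --- the reciprocity map, the exponential $E$, the residue map $\Res_F$, and the pairing $R_b$ --- behaves under the transition from $K$ to $K'$.

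For case (i), the key facts are: functoriality of the reciprocity map of higher-dimensional local fields with respect to finite extensions (the norm map $K^M_{r+1}(K')\to K^M_{r+1}(K)$ on Milnor $K$-theory corresponds to restriction of Galois characters), the compatibility of $E$ with $K'/K$ (the truncated exponential of \ref{Kexp} is clearly functorial in the sense that $E(\alpha)$ for $\alpha$ coming from $O_K$ maps to $E$ of its image in $K'$, since it is built from $\{E(x),y_1,\dots,y_r\}$), the injectivity of $O_{K'}\otimes_{O_K}\hat\Omega^1_{O_K}(\log)\to\hat\Omega^1_{O_{K'}}(\log)$ from \ref{KLdif} (after noting that the residue field of $K$ here is itself a local field, hence satisfies the relevant finiteness, and that \ref{KLdif} applies after reducing to a suitable subextension as in its proof), and the trace formula \eqref{ResTr}, i.e. $\Res_F\circ\Tr_{F'/F}=\Res_{F'}$. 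One writes out the adjunction: for $\alpha'\in{\frak m}_{K'}^{m'+1}/{\frak m}_{K'}^{n'+1}\otimes\Omega^r_{O_{K'}}(\log)$ one needs $\chi'(E(\alpha'))=\Res_{F'}(R_{b}(\alpha'\wedge\Rsw_{K'}(\chi')))$ where $\chi'$ is the restriction of $\chi$; expressing the left side via the norm down to $K$ and using the characterization of $\Rsw_K(\chi)$ together with \eqref{ResTr} gives the identity, which by perfectness of the $R_b$-pairing over $F'$ pins down $\Rsw_{K'}(\chi')$ as the image of $\Rsw_K(\chi)$.

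Case (ii) is handled similarly but with $K'=K\{\{T\}\}$, where the residue field extension is $F((T))/F$; here one uses that $\Omega^1_{O_{K'}}(\log)\cong O_{K'}\otimes_{O_K}\Omega^1_{O_K}(\log)\ \oplus\ O_{K'}\,d\log T$ (a direct sum adding one log-free variable), that the relevant filtration jumps are unchanged since $e(K'/K)=1$ hence $n'=n$, $m'=m$, and that the reciprocity map of $K'$ restricted to elements coming from $K^M_{r+1}(K)$ factors through that of $K$ --- this is the standard relation between the class field theory of $K\{\{T\}\}$ and of $K$, where one extra "$T$-direction" appears but does not interfere with the image of $K$. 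The residue map $\Res_{F((T))}$ is defined (see \ref{rdl}) precisely so that $\Res_{F((T))}(\omega\wedge d\log T)=\Res_F(\omega)$ on forms pulled back from $F$, which matches the extra summand. Combining these, the defining equation for $\Rsw_{K'}$ on the subspace generated from $K$ reduces verbatim to that for $\Rsw_K$, and perfectness of the pairing finishes.

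The main obstacle I anticipate is the bookkeeping around \ref{KLdif}: that lemma is stated under the hypothesis that the residue field of the base is \emph{perfect}, which is not literally the case here (the residue field is an $(d-1)$-dimensional local field, generally imperfect). So one must either re-derive the needed injectivity of $O_{K'}\otimes_{O_K}\hat\Omega^1_{O_K}(\log)\to\hat\Omega^1_{O_{K'}}(\log)$ directly in the local-field situation --- which is feasible because higher-dimensional local fields admit a nice system of local parameters and $p$-bases --- or argue that for the purpose of identifying $\Rsw_{K'}(\chi')$ only the image of $\Rsw_K(\chi)$ in the quotient modules matters and the relevant map on the truncated modules ${\frak m}_K^{-n}/{\frak m}_K^{-m}\otimes\Omega^1_{O_K}(\log)$ is injective by \ref{hatdif}(3) and the structure \eqref{isoTi}. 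A secondary technical point is ensuring the chosen integer $b$ (with $p^b\geq n-m=n'-m'$) can be taken the same for $K$ and $K'$, which is immediate since the quantity $n-m$ is preserved.
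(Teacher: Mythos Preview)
Your overall strategy---verify the defining pairing identity of $\Rsw_{K'}$ for the image of $\Rsw_K(\chi)$, using norm functoriality of the reciprocity map and the trace formula \eqref{ResTr}---is the same as the paper's. But several concrete steps are missing or misstated, and one of your worries is a red herring.

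First, the concern about \ref{KLdif} is misplaced: the paper's argument never invokes injectivity of $O_{K'}\otimes_{O_K}\hat\Omega^1_{O_K}(\log)\to\hat\Omega^1_{O_{K'}}(\log)$. Compatibility is checked purely through the pairing, so no such module-theoretic injectivity is needed.

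In case (i), the substantive gap is in ``expressing the left side via the norm down to $K$.'' You need $\chi(N(E(\alpha')))=\chi(E(\beta))$ for an explicit $\beta$ depending on $\alpha'$, and then $R_K(\Rsw_K(\chi)\wedge\beta)=R_{K'}(\Rsw_K(\chi)_{K'}\wedge\alpha')$. The paper first \emph{reduces to $[K':K]$ prime} (using solvability of Galois groups of higher local fields); only then does the module ${\frak m}_{K'}^{m'+1}/{\frak m}_{K'}^{n'+1}\otimes\Omega^{d-1}_{O_{K'}}(\log)$ admit generators of three simple shapes (a) $f\omega$, (b) $\omega\wedge df$, (c) $\omega\wedge d\log f$ with $\omega$ coming from $K$ and a single $f$ from $K'$. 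For each shape one identifies $\beta$ (via $\Tr_{K'/K}(f)$ or $N_{K'/K}(f)$) and reduces to three residue identities (A), (B), (C). In characteristic $p$ these follow from $\Res_{K'}=\Res_K\circ\Tr_{K'/K}$ on $\Omega^d$; in mixed characteristic one must pass through the isomorphism \eqref{isoTi} to a characteristic-$p$ model $L'/L$ to invoke the same formula. None of this is visible in your sketch.

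In case (ii) there is a dimensional slip. The field $K'=K\{\{T\}\}$ is $(d{+}1)$-dimensional, so its reciprocity map lives on $K^M_{d+1}(K')$, not on $K^M_d$. The correct functoriality is not ``restriction'' but the \emph{residue homomorphism} $\Res_{K'/K}:\hat K^M_{d+1}(K')\to\hat K^M_d(K)$, which fits into a commutative square with the reciprocity maps. One then decomposes test forms $\alpha'$ analogously into three types and computes $\Res_{K'/K}(E(\alpha'))=E(\beta)$ explicitly (using 2 of \ref{exponential}), reducing to the identity $R_{K'}(\sum_i T^i\omega_i\wedge d\log T)=R_K(\omega_0)$, which is a direct calculation with the chosen lift $\iota$.
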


\begin{sbpara}\label{pflem3i} We prove the case (i) of \ref{lem3}. 
We have a  commutative diagram
$$\begin{matrix}  K_d^M(K')  &\to & \Gal((K')^{\ab}/K')\\
\downarrow && \downarrow\\
K_d^M(K) &\to& \Gal(K^{\ab}/K)
\end{matrix}$$
where the left vertical arrow is the norm map and the right vertical arrow is the natural one (\cite{KK0}, Section 3.2, Cor. 1). 

Let $R_K: {\frak m}^{1-n+m}_K/{\frak m}_K \otimes_{O_K} \Omega^d_{O_K}(\log)\to \F_p$ be the composite map $\Res_F\circ R_b$ ($p^b\geq n-m$) which is independent of $b$, and let $R_{K'}:  {\frak m}_{K'}^{1-n'+m'}/{\frak m}_{K'} \otimes_{O_{K'}} \Omega^d_{O_{K'}}(\log)\to \F_p$  ($n'=e(K'/K)n$, $m'= \max(n'-e_{K'}, [n'/p])$) be the corresponding map for $K'$. 

Let $\chi\in F_nH^1(K, \Q_p/\Z_p)$, let $\chi_{K'}\in F_{n'}H^1(K', \Q_p/\Z_p)$ be the image of $\chi$, and let $\Rsw(\chi)_{K'}\in {\frak m}_{K'}^{-n'}/{\frak m}_{K'}^{-m'} \otimes_{O_{K'}} \Omega^1_{O_{K'}}(\log)$
be the image of $\Rsw(\chi)\in {\frak m}_K^{-n}/{\frak m}_K^{-m}\otimes_{O_K} \Omega^1_{O_K}(\log)$.  Since the Galois group of any finite Galois extension of a higher dimensional local field is solvable, any finite extension of a higher dimensional local field is a successive extension of extensions whose degrees are prime numbers. Hence it is sufficient to prove $\Rsw(\chi_{K'})=\Rsw(\chi)_{K'}$ in the case  $[K':K]$ is a prime number. It is sufficient to prove in this case that
\begin{equation}
R_{K'}(\Rsw(\chi_{K'})\wedge \alpha)= R_{K'}(\Rsw(\chi)_{K'}\wedge \alpha)
\label{R=R}
\end{equation}
for any $\alpha\in {\frak m}^{m'+1}_{K'}/{\frak m}^{n'+1}_{K'}\otimes_{O_{K'}} \Omega^{d-1}_{O_{K'}}(\log)$. In this case, the last group is generated additively by elements of the following three forms. 

(a)  $f \omega$ where $f\in {\frak m}_{K'}^{m'+1}$ and $\omega\in  \Omega^{d-1}_{O_K}(\log)$. 

(b)  $\omega \wedge df$ where $f$ is as in (a) and $\omega\in \Omega_{O_K}^{d-2}(\log)$. 

(c) $\omega\wedge d\log(f)$ where $\omega\in {\frak m}^{m+1}_K/{\frak m}^{n+1}_K\otimes_{O_K} \Omega_{O_K}^{d-2}(\log)$  and $f\in (K')^\times$. 

Assume $\alpha$ is an element as in (a) (resp. (b), resp. (c)), and let $\beta\in {\frak m}^{m+1}_K/{\frak m}^{n+1}_K \otimes_{O_K} \Omega^{d-1}_{O_K}(\log)$ be $Tr_{K'/K}(f) \omega$ (resp. $\omega\wedge d(Tr_{K'/K}(f))$, resp. $\omega \wedge d\log(N(f))$ where $N$ is  the norm map $(K')^\times \to K^\times$). Then
$R_{K'}(\Rsw(\chi_{K'}\wedge \alpha))=\chi_{K'}(E(\alpha)) = \chi(N(E(\alpha))$ (here $N$ is the norm map) $= \chi(E(\beta))= R_K(\Rsw(\chi)\wedge \beta)$. Hence, for the proof of (\ref{R=R}), it is sufficient to prove 

(A) $R_{K'}(f \omega)= R_K(Tr_{K'/K}(f) \omega)$ for $f\in {\frak m}^{1-n'+m'}_{K'}$ and $\omega\in \Omega^d_{O_K}(\log)$,

(B) $R_{K'}(\omega \wedge df)= R_K(\omega \wedge d(Tr_{K'/K}(f)))$ for $f$ as in (A) and $\omega\in \Omega^{d-1}_{O_K}(\log)$, 

(C) $R_{K'}(\omega \wedge d\log(f)) = R_K(\omega\wedge d\log(N(f)))$ for $\omega\in {\frak m}^{1-n+m}_K \otimes_{O_K} \Omega^{d-1}_{O_K}(\log)$ and for $f\in (K')^\times$. 

If $K$ is of characteristic $p$, ${\frak m}^{1-n+m}_K/{\frak m}_K \otimes_{O_K} \Omega^d_{O_K}(\log)$ is a subquotient of $\Omega_K^d$ and $R_K$ coincides with the map induced from $\Res_K: \Omega^d_K\to \F_p$ on the subquotient, and the same thing holds for $K'$. Hence (A), (B), (C) follow from $\Res_{K'}= \Res_K \circ Tr_{K'/K}$ (\ref{rdl}). 

Assume $K$ is of characteristic $0$.  Let $L= F((T))$. Then via the isomorphism $F[T]/(T^{n-m}) \overset{\cong}\to O_K/{\frak m}_K^{n-m}$ of rings with log structures ((\ref{isoTi}) with $a=n-m$), ${\frak m}^{1-n+m}_K/{\frak m}_K \otimes_{O_K} \Omega^d_{O_K}(\log)$ is a subquotient of $\Omega_L^d$ and $R_K$ coincides with the map induced from $\Res_L: \Omega^d_L\to \F_p$ on the subquotient. As is easily seen, there is an extension $L'$ of $L$ of degree $[K':K]$ and an isomorphism $O_{K'}/{\frak m}^{n-m}_K O_{K'}\cong O_{L'}/{\frak m}^{n-m}_{L'} O_{L'}$ of rings with log structures extending the isomorphism  $O_K/{\frak m}^{n-m}_K\cong O_L/{\frak m}^{n-m}_L$ of rings with log structures, and $R_{K'}: {\frak m}_{K'}^{1-n'+m'}\otimes_{O_{K'}} \Omega^d_{O_{K'}}(\log) \to \F_p$ is identified with the map induced from $\Res_{L'}: \Omega^d_{L'} \to \F_p$ on the subquotient. Hence (A), (B), (C) follow from $\Res_{L'}= \Res_L \circ Tr_{L'/L}$ (\ref{rdl}).

\end{sbpara}

\begin{sbpara}\label{pflem3ii} We prove the case (ii) of \ref{lem3}. 
 We have a  commutative diagram
$$\begin{matrix}  
{\hat K}_{d+1}^M(K')  &\to & \Gal((K')^{\ab}/K')\\
\downarrow && \downarrow\\
{\hat K}_d^M(K) &\to& \Gal(K^{\ab}/K)
\end{matrix}$$
where $\hat K_d^M(K)=\varprojlim_i K^M_d(K)/U^iK^M_d(K)$, ${\hat K}^M_{d+1}(K')$ is defined similarly, and the left vertical arrow is the residue homomorphism in \cite{KK1} and the right vertical arrow is the natural one.

Let $R_K: {\frak m}^{1-n+m}_K/{\frak m}_K \otimes_{O_K} \Omega^d_{O_K}(\log)\to \F_p$ and $R_{K'}:  {\frak m}^{1-n+m}_{K'}/{\frak m}_{K'} \otimes_{O_{K'}} \Omega^{d+1}_{O_{K'}}(\log)\to \F_p$ be  the maps defined as in \ref{pflem3i}. 

Let $\chi\in F_nH^1(K, \Q_p/\Z_p)$.  We prove $\Rsw(\chi_{K'})=\Rsw(\chi)_{K'}$. For this, it is sufficient to prove that
$R_{K'}(\Rsw(\chi_{K'})\wedge \alpha)= R_{K'}(\Rsw(\chi)_{K'}\wedge \alpha)$
for any $\alpha\in {\frak m}^{m+1}_{K'}/{\frak m}^{n+1}_{K'}\otimes_{O_{K'}} \Omega^d_{O_{K'}}(\log)$. The last group is generated additively by elements as in the following (i)--(iii). 

(a)  Elements in the image of ${\frak m}_K^{m+1}/{\frak m}_K^{n+1} \otimes_{O_K}  \Omega^d_{O_K[[T]]}\oplus {\frak m}_{K'}^{m+1}/{\frak m}_{K'}^{n+1}\otimes_{O_K} \Omega^d_{O_K}(\log)$. 

(b)  $\omega \otimes d\log(T)$ with $\omega\in {\frak m}^{m+1}_K/{\frak m}_K^{n+1}\otimes_{O_K} \Omega^{d-1}_{O_K}(\log)$.

(c)  $T^{-i}\omega \otimes d\log(T)$ with $\omega\in {\frak m}^{m+1}_K/{\frak m}_K^{n+1}\otimes_{O_K} \Omega^{d-1}_{O_K}(\log)$ and with $i\geq 1$.

Assume $\alpha$ is an element as in (a) (resp. (b), resp. (c)). Let $\Res_{K'/K}: {\hat K}^M_{d+1}(K') \to {\hat K}^M_d(K)$ be the residue map. Then $R_{K'}(\Rsw(\chi_{K'})\wedge \alpha)= \chi_{K'}(E(\alpha))= \chi(\Res_{K'/K}(E(\alpha)))$. By (ii) of \ref{exponential} and by  the definition of $\Res_{K'/K}$ in \cite{KK1}, we have $ \chi(\Res_{K'/K}(E(\alpha)))= \chi(E(\beta)) = R_K(\Rsw(\chi)\wedge \beta)$ where $\beta=0$ in cases (a) and (c) and $\beta= \omega$ in the case (b). Hence it is sufficient to prove 
\begin{equation}
R_{K'}(\sum_{i\gg -\infty}  T^i\omega_i \wedge d\log(T)) = R_K(\omega_0)\quad (\omega_i\in {\frak m}^{1-n+m}_K/{\frak m}_K\otimes_{O_K} \Omega^d_{O_K}(\log)).
\label{R=R2}
\end{equation}

Take a lifting $\iota:F\to O_K/{\frak m}^{n-m}_K$ and extend it to $\iota: F((T))\to O_{K'}\; ;\;\sum_{i\gg -\infty}  a_i T^i\mapsto \sum_{i\gg -\infty} \iota(a_i)T^i$. Let $\pi$ be a prime element of $K$. Write $$\omega_i= \sum_{j=0}^{n-m-1} \pi^{-j}\iota(\omega_{ij})\wedge d\log(\pi) \quad (\omega_{ij}\in \Omega^{d-1}_F).$$
We have 
$$R_{K'}(\sum_{i\gg -\infty}  T^i\omega_i \wedge d\log(T))= R_{K'}(\iota(\sum_{i, j} T^i\omega_{ij})\pi^{-j}d\log(\pi) \wedge d\log(T))
$$ $$= \Res_{F((T))}(\sum_i T^i\omega_{i0}\wedge d\log(T)) = \Res_F(\omega_{00})=R_K(\omega_0).$$ This proves (\ref{R=R2}).

\end{sbpara}

\subsection{Application of higher dimensional global class field theory}\label{global}

\begin{sbpara}

Let $X$ be a proper normal integral scheme  over $\Z$ with function field $J$ and let $\nu$ be a point of $X$ of codimension one whose residue field is of characteristic $p>0$. We assume $X(\R)=\emptyset$. 

 Let $n\geq 1$ and let $F_{\nu, n}H^1(J, \Q_p/\Z_p)\subset H^1(J, \Q_p/\Z_p)$ be the inverse image of $F_nH^1(\hat J_{\nu}, \Q_p/\Z_p)\subset H^1(\hat J_{\nu}, \Q_p/\Z_p)$ where $\hat J_{\nu}$ denotes the field of fractions of the completion of the discrete valuation ring $O_{X,\nu}$. Let ${\frak m}_{\nu}$ be the maximal ideal of $\cO_{X,\nu}$. Let $m=\max(n-e_{\hat J}, [n/p])$.

We  define a canonical homomorphism 
$$\Rsw_{X,\nu}: F_{\nu, n}H^1(J, \Q_p/\Z_p) \to {\frak m}_{\nu}^{-n}/{\frak m}_{\nu}^{-m}\otimes_{O_{X,\nu}} \Omega^1_{X,\nu}(\log)$$
by using the higher dimensional global class field theory in \cite{KSh}. 

\end{sbpara}

\begin{sbpara}

By the class field theory of $X$ \cite{KSh}, we have a canonical continuous homomorphism $$C_X:= \varprojlim_I H^d(X, K^M_d(\cO_X, I))\to \Gal(J^{\ab}/J)$$
where $d=\dim(X)$, and hence
$\chi\in H^1(J, \Q_p/\Z_p)$
 induces a homomorphism $C_X\to \Q_p/\Z_p$ which factors through $H^d(X, K^M_d(\cO_X, I))$ for some $I$ (\ref{gl1}). 

\end{sbpara}

\begin{sbpara} Let $Y$ be the closure of $\nu$ in $X$. We will identify an element $\frak p=(y_i)_i\in P(Y)$ with the element $(x_i)_i\in P(X)$ where  $x_i=y_i$ for $0\leq i\leq d-1$ and $x_d$ is the generic point of $X$. 

Let $\chi\in F_{\nu, n}H^1(J, \Q_p/Z_p)$. 
For $\frak p\in P(X)$, let $h_{\frak p}: K^M_d(J)\to \Q_p/\Z_p$ be the homomorphism induced by $\chi:C_X\to \Q_p/\Z_p$ (\ref{hp}). Then if $\frak p\in P(Y)$, $h_{\frak p}$ kills $U^{n+1}K_d^M(J)$ where $U^{\bullet}$ is defined with respect to the discrete valuation ring $\cO_{X, \nu}$.  For $\frak p\in P(Y)$, let $s_{\frak p}: {\frak m}^{m+1}_{\nu}/{\frak m}^{n+1}_{\nu}\otimes_{\cO_{X,\nu}} \Omega^{d-1}_{X,\nu}(\log)\to \F_p$ be the homomorphism induced by $h_{\frak p}$ and the truncated exponential map. 

\end{sbpara}

\begin{sblem}\label{scoh} There exists a sheaf $\cF$ on $Y$ satisfying the following (i) and (ii).

(i) $\cF$ is a coherent $\cO_X$-submodule of the  constant sheaf ${\frak m}^{m+1}_{\nu}/{\frak m}^{n+1}_{\nu}\otimes_{\cO_{X,\nu}} \Omega^{d-1}_{X,\nu}(\log)$ on $Y$ and 
the map $\cF_{\nu}\to {\frak m}^{m+1}_{\nu}/{\frak m}^{n+1}_{\nu}\otimes_{\cO_{X,\nu}} \Omega^{d-1}_{X,\nu}(\log)$ is an isomorphism. 

(ii) $(s_{\frak p})_{\frak p\in P(Y)}$ defines a homomorphism $H^{d-1}(Y, \cF) \to \F_p$.  

\end{sblem}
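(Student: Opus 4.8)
The plan is to build $\cF$ as an increasing union is not needed; rather, I would exhibit a single coherent sheaf explicitly. First I would fix a generator $h$ of the maximal ideal ${\frak m}_{\nu}$ of the discrete valuation ring $\cO_{X,\nu}$; shrinking $X$ around $Y$ (which is harmless, since $\cF$ is to be a sheaf on $Y$ and $H^{d-1}(Y,\cF)$ only depends on a neighborhood of $Y$, and in fact $Y$ is proper so we may work Zariski-locally on $X$) we may assume $h\in\Gamma(X,\cO_X)$ cuts out $Y$ scheme-theoretically at $\nu$. Then ${\frak m}^{j}_{\nu}/{\frak m}^{n+1}_{\nu}$ is a free $\cO_{X,\nu}/h^{n+1-j}$-module of rank one on $h^{j}$, and I would set $\cF$ to be the image in the constant sheaf ${\frak m}^{m+1}_{\nu}/{\frak m}^{n+1}_{\nu}\otimes_{\cO_{X,\nu}}\Omega^{d-1}_{X,\nu}(\log)$ of the natural $\cO_X$-linear map from $h^{m+1}\,\Omega^{d-1}_{X/\Z}(\log D)/h^{n+1}\Omega^{d-1}_{X/\Z}(\log D)$, where $D$ is the relevant reduced divisor (containing $Y$) — i.e. $\cF$ is the coherent $\cO_Y$-module cut out on $Y$ by the log-differentials on $X$. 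Since $\Omega^{d-1}_{X}(\log)$ is coherent and localization at $\nu$ is exact, property (i) is immediate: $\cF$ is coherent, it is an $\cO_X$-submodule of the constant sheaf, and its stalk at $\nu$ is the whole group ${\frak m}^{m+1}_{\nu}/{\frak m}^{n+1}_{\nu}\otimes_{\cO_{X,\nu}}\Omega^{d-1}_{X,\nu}(\log)$ because that group is already generated as an $\cO_{X,\nu}$-module by $h^{m+1}d\log(\cdot)$-type elements coming from $\Omega^{d-1}_{X,\nu}(\log)$.

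For (ii), the idea is to invoke Lemma \ref{coh} applied to the Noetherian scheme $Y$ of dimension $d-1$ and the sheaf $\cF$: a homomorphism $H^{d-1}(Y,\cF)\to\F_p$ is the same as a family $(t_{\frak p})_{\frak p\in P(Y)}$ of homomorphisms $t_{\frak p}:\cF_{\eta(\frak p)}\to\F_p$ satisfying the two compatibility conditions (i) and (ii) of \ref{coh}. Here $\eta(\frak p)=y_{d-1}=\nu$ for every $\frak p\in P(Y)$, so $\cF_{\eta(\frak p)}$ is the fixed group ${\frak m}^{m+1}_{\nu}/{\frak m}^{n+1}_{\nu}\otimes\Omega^{d-1}_{X,\nu}(\log)$, and I would take $t_{\frak p}=s_{\frak p}$. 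So the content is purely to verify the sum relations in \ref{coh}(i),(ii) for $(s_{\frak p})$. I would deduce these from the analogous relations that the family $(h_{\frak p})_{\frak p\in P(X)}$ already satisfies by virtue of defining the homomorphism $\chi:H^d(X,K^M_d(\cO_X,I))\to\Q_p/\Z_p$ via \ref{coh} (this is exactly the characterization recalled in \ref{gl1}). Concretely, under the identification of $P(Y)$ with the subset of $P(X)$ whose top entry is the generic point of $X$, the sets $Q_s(Y)$ map into $Q_s(X)$, and for $\frak q\in Q_s(Y)$ the fiber $P_{\frak q}(Y)$ is a subset (in fact, after reindexing, equal to) the corresponding fiber $P_{\frak q}(X)$ for $0\le s\le d-2$, while the case $s=d-1$ in \ref{coh} for $Y$ corresponds to the case $s=d-1$ in \ref{coh} for $X$ with $x_d$ the generic point held fixed. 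Thus the vanishing "$h_{\frak p}(a)=0$ for almost all $\frak p$" and "$\sum h_{\frak p}(a)=0$" for $a\in K^M_d(J)$, restricted to the subgroup $U^{m+1}K^M_d(J)$ and transported through the surjection $E:{\frak m}^{m+1}_{\nu}/{\frak m}^{n+1}_{\nu}\otimes\Omega^{d-1}_{X,\nu}(\log)\surj U^{m+1}K^M_d(J)/U^{n+1}K^M_d(J)$ of \ref{Kexp}, give precisely the required relations for $(s_{\frak p})$. One subtlety I would address here: the relations in \ref{coh} for $X$ are indexed over all of $P(X)$, including $\frak p$ whose top face is not the generic point of $X$; but for those the element $a$ lives in a stalk of $K^M_d$ at a lower-dimensional point of $X$ and does not interact with the $\frak p\in P(Y)$ we are summing over — so restricting to $\frak q\in Q_s(Y)\subset Q_s(X)$ isolates exactly the relevant subsum. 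I would also note that the almost-all-vanishing and the finiteness in \ref{coh}(ii) hold automatically because they hold upstairs for $\chi$.

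The main obstacle I anticipate is making the index-matching between $P(Y)$, $Q_s(Y)$ and $P(X)$, $Q_s(X)$ fully rigorous, in particular checking that for $\frak q\in Q_{d-1}(Y)$ the finite set $P_{\frak q}(Y)$ is matched correctly with a set of the form $P_{\frak q'}(X)$ (with $q'$ obtained by appending the generic point) so that the $s=d-1$ relation for $Y$ is genuinely a consequence of an instance of the $s=d-1$ relation for $X$, and not something that mixes faces lying on different irreducible components of $D$ through $\nu$. A secondary technical point is to confirm that $\cF$, as defined via $\Omega^{d-1}_X(\log)$, really is a \emph{sub}sheaf of the given constant sheaf on $Y$ (no unexpected kernel when passing to the stalk at $\nu$), which follows because the constant sheaf has no sections supported on proper closed subsets of $Y$ and $\cF$ injects at the generic point $\nu$ by construction; one then checks $\cF$ has no embedded associated points forcing it into the constant sheaf. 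None of this requires new input beyond \ref{coh}, \ref{Kexp}, \ref{gl1} and coherence of $\Omega^1_X(\log)$, so I expect the proof to be short once the bookkeeping is set up.
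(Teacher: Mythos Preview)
Your reduction via Lemma~\ref{coh} is the right framework, and the index-matching for $0\le s\le d-2$ is correct: appending the generic point $\eta_X$ to $\frak q\in Q_s(Y)$ gives $\frak q'\in Q_s(X)$ with $P_{\frak q'}(X)=P_{\frak q}(Y)$, so the relations for $(h_{\frak p})$ transfer directly to $(s_{\frak p})$.

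The gap is at $s=d-1$, and it is not bookkeeping. Since $Y$ is irreducible, for $\frak q=(y_0,\dots,y_{d-2})\in Q_{d-1}(Y)$ the set $P_{\frak q}(Y)$ is the \emph{singleton} $\{\frak p\}$ with $y_{d-1}=\nu$; so condition (ii) of \ref{coh} for $Y$ demands $s_{\frak p}(a)=0$ for every $a\in\cF_{\xi}$, where $\xi=y_{d-2}$. On the $X$-side, $\frak q'=(y_0,\dots,y_{d-2},\eta_X)\in Q_{d-1}(X)$ has fiber $P_{\frak q'}(X)$ consisting of \emph{all} codimension-one points $x_{d-1}$ of $X$ whose closure contains $\xi$, not just $\nu$. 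Thus the reciprocity relation gives only
\[
h_{\frak p}(\tilde a)\;=\;-\sum_{\substack{\frak p'\in P_{\frak q'}(X)\\ x_{d-1}\ne\nu}} h_{\frak p'}(\tilde a),
\]
and the right-hand side has no reason to vanish in general. The paper's proof is exactly the argument that these extra terms do vanish on suitable elements: when $\chi$ is unramified at $x_{d-1}$, $h_{\frak p'}$ factors through the tame symbol $K^M_d(J)\to K^M_{d-1}(\kappa(x_{d-1}))$, which kills $\{E(\pi^{m+1}g),u_1,\dots,u_{d-1}\}$ for $u_i\in\cO_{X,\xi}^\times\cdot\pi^{\Z}$; and at the finitely many $\xi$ where $\chi$ ramifies along some other $\mu\ne\nu$ (or where the geometry is bad), one must \emph{shrink} $\cF_\xi$ by a factor $f$ chosen so that $\text{ord}_\mu(\pi^{m+1}f)>\Sw_\mu(\chi)$, forcing $\tilde a\in U_\mu^{\Sw_\mu(\chi)+1}K^M_d(J)$.

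This second point also shows why your single global $\cF$ built from $\Omega^{d-1}_X(\log D)$ cannot work as stated: at a bad codimension-one point $\xi$ of $Y$ there is no reason your $\cF_\xi$ is small enough in the $\mu$-adic direction for the extraneous $h_{\frak p'}$ to vanish. (There is also the minor issue that $X$ is only proper normal, not regular, so a global divisor $D$ with the required properties need not exist; the paper handles this by first choosing a regular dense open $U\subset X$ with $U\cap Y$ regular and $\chi$ unramified on $U\smallsetminus Y$, defining $\cF$ naturally there, and then prescribing smaller $\cF_\xi$ at the finitely many remaining $\xi$.) In short, the ``subtlety'' you flag is the heart of the proof, not an afterthought.
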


\begin{pf} Let $U$ be a regular dense open subset of $X$
such that $D:=U\cap Y$ is also a regular dense open
subset of $Y$ and that $\chi$ is unramified on $U\smallsetminus D$.
Let $\cF_U$ be the coherent $\cO_U$-module $\cO_U(-(m+1)Y)/\cO_U(-(n+1)Y)\otimes_{\cO_U} \Omega^{d-1}_U(\log D)$. Note that $\cF_{U, \nu}={\frak m}^{m+1}_{\nu}/{\frak m}^{n+1}_{\nu}\otimes_{\cO_{X,\nu}} \Omega^{d-1}_{X,\nu}(\log)$.
 By \ref{coh}, it is sufficient to prove the following (i) and (ii). Let $\xi\in Y$ be a point of codimension one. 

(i) If $\xi\in U$,  then for any $\frak p=(y_i)_i\in P(Y)$ such that $y_{d-2}=\xi$, $s_{\frak p}$ kills the image of $\cF_{U, \xi}\to \cF_{U,\nu}$.

(Note that there are only finitely many $\xi\in Y$ of codimension one such that $\xi\notin U$.)

(ii) There is a finitely generated $\cO_{X, \xi}$-submodule $\cF_{\xi}$ of $\cF_{\nu}:= {\frak m}_\nu^{m+1}/{\frak m}_\nu^{n+1}\otimes_{\cO_{X,\nu}} \Omega^{d-1}_{X,\nu}(\log)$ which generates $\cF_{\nu}$ over $\cO_{X,\nu}$ having the following property: 
For any $\frak p=(y_i)_i\in P(Y)$ such that $y_{d-2}=\xi$, $s_{\frak p}$ kills $\cF_{\xi}$. 

Note that for any $\frak p=(y_i)_i\in P(Y)$, we have $\sum_{\frak p'\in R(\frak p)} h_{\frak p'}=0$ on $K^M_d(J)$ where $R(\frak p)$ denotes the set of $\frak p'=(x_i)_i\in P(X)$ such that $x_i=y_i$ for $0\leq j\leq d-2$. 

We prove (i). Let $\pi$ be an element of $\cO_{X,\xi}$ which defines $Y$ at $\xi$. For $\frak p$ as in (i) and for $g\in \cO_{X,\xi}$ and $u_1, \dots, u_{d-1}\in\cO_{X,\xi}^\times \cdot \pi^{\Z}$, $s_{\frak p}(\pi^{m+1} g d\log(u_1)\wedge\dots \wedge  d\log(u_{d-1}))= h_{\frak p}(\{E(\pi^{m+1}g), u_1, \dots, u_{d-1}\})$. This $\{E(\pi^{m+1}g), u_1, \dots, u_{d-1}\}\in K^M_d(J)/U^{n+1}K^M_d(J)$ belongs to the subgroup of $K^M_d(J)/U^{n+1}K^M_d(J)$ generated by all elements of the form $\{u_1,\dots, u_d\}\in K^M_d(J)$ such that $u_1, \dots, u_d\in \cO_{X,\xi}^\times\cdot  \pi^{\Z}$. For any $\frak p'=(x_i)_i\in R(\frak p)\smallsetminus \{\frak p\}$,  $h_{\frak p'}: K^M_d(J)\to \Q_p/\Z_p$  kills such $\{u_1, \dots, u_d\}$ because $\chi$ is unramified at $x_{d-1}$ and hence  $h_{\frak p'}$ factors through the boundary map $K^M_d(J) \to K^M_{d-1}(\kappa(x_{d-1}))$ ($\kappa$ denotes the residue field) which kills $\{u_1,\dots, u_d\}$. Hence $h_{\frak p}(\{u_1,\dots, u_d\})=0$.

We prove (ii). Take an element $\pi$ of $\cO_{X,\xi}$ which is a prime element of the discrete valuation ring $\cO_{X,\nu}$.  
Take an element $f\in \cO_{X,\xi}$ having the following properties (a1) and (a2).  
(a1) $f$ is a unit in $\cO_{X,\nu}$. (a2) $\text{ord}_{\mu}(\pi^{m+1}f) > \Sw_{\mu}(\chi)$ for any point $\mu$ of $X$ of codimension one such that $\xi\in \overline{\{\mu\}}$ and $\mu\neq \nu$ and such that either $\chi$ ramifies at $\mu$ or  $\text{ord}_{\mu}(\pi)>0$. Let $\cF_{\xi}$ be the $\cO_{X,\xi}$-submodule of $\cF_{\nu}$ generated by the images of $\pi^{m+1}f\otimes\Omega^{d-1}_{X,\xi}$ and $\pi^{m+1}f\otimes \Omega_{X,\xi}^{d-2}\wedge d\log(\pi)$. We prove that $s_{\frak p}$ kills 
 $\cF_{\xi}$. For $g\in \cO_{X,x}$ and $u_1, \dots, u_{d-1}\in\cO_{X,\xi}^\times \cdot \pi^{\Z}$,  $s_{\frak p}(\pi^{m+1}f\otimes  g d\log(u_1)\wedge\dots \wedge  d\log(u_{d-1}))= h_{\frak p}(\alpha)$ where $\alpha=\{E(\pi^{m+1}fg), u_1, \dots, u_{d-1}\}\in K^M_d(J)$.  To prove that $h_{\frak p}(\alpha)=0$,  it is enough to prove that $h_{\frak p'}(\alpha)=0$ for any $\frak p' =(x_i)_i\in R(\frak p)\smallsetminus \{\frak p\}$.  Let $\mu=x_{d-1}$. Assume first that either $\chi$ ramifies at $\mu$ or $\text{ord}_{\mu}(\pi)>0$. 
 Then $\alpha\in U^{s+1}_{\mu}K^M_d(J)$ where $s=\Sw_{\mu}(\chi)$ and $U^{\bullet}_\mu$ is the filtration defined for the discrete valuation ring $\cO_{X,\mu}$. Hence $h_{\frak p'}$ kills $\alpha$. Assume next $\chi$ is unramified at $\mu$ and $\pi$ is a unit at $\mu$. Then $h_{\frak p'}$ 
 factors through the boundary map $K^M_d(J)\to K^M_{d-1}(\kappa(\mu))$ which kills $\alpha$. 
\end{pf}

\begin{sbpara}\label{SG} 

 By Serre-Grothendieck duality of the cohomology of coherent sheaves (\cite{Ha}), we have a canonical isomorphism 
$$\varinjlim_{\cF}  \Hom(H^{d-1}(Y,\cF), \F_p)\cong  {\frak m}_\nu^{-n}/{\frak m}_\nu^{-m} \otimes_{O_{X,\nu}} \Omega^1_{X,\nu}(\log)$$
where $\cF$ ranges over coherent $\cO_X$-modules  $\cF$ as in (i) in \ref{scoh}. (The inductive system is given by making $\cF$ smaller and smaller. If $\cF$ and $\cF'$ are as in (i) in \ref{scoh} and $\cF'\subset \cF$, the canonical map $H^{d-1}(Y, \cF')\to H^{d-1}(Y, \cF)$ is a surjective map of finite abelian groups.) 
By \ref{scoh}, $\chi\in F_{\nu,n}H^1(J, \Q_p/\Z_p)$ gives  an element of the left hand side of this isomorphism, and hence gives an element of ${\frak m}_{\nu}^{-n}/{\frak m}_{\nu}^{-m} \otimes_{O_{X,\nu}}  \Omega^1_{X,\nu}(\log)$. This is our  $\Rsw_{X,\nu}(\chi)$. 

\end{sbpara}

\subsection{Rsw in general}\label{s3.5}

We prove our statements in \ref{Rsw1} and \ref{Rsw4}.

\begin{sblem}\label{gen0} Let $K$ be a complete discrete valuation field whose residue field $F$ is a finitely generated field over $\F_p$, and let $\chi\in H^1(K, \Q_p/\Z_p)$. Then there exist a proper normal integral scheme $X$ over $\Z$ such that $X(\R)=\emptyset$,  a point $\nu$ of $X$ of codimension one, and an isomorphism $\alpha$ between  $O_K$ and  the completion of the local ring $\cO_{X,\nu}$ such that if $J$ denotes the function field of $X$,  $\chi$ comes from $H^1(J, \Q_p/\Z_p)$ via $\alpha$. 

\end{sblem}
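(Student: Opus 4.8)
The plan is to first realize $O_K$ as the completed local ring $\hat\cO_{X,\nu}$ at a codimension-one point of a suitable $X$, and then to realize $\chi$ by pulling back a character of a finite Galois group over the function field $J$, using Krasner's lemma together with the injectivity of $\Q_p/\Z_p$ as an abelian group.

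\emph{Realizing $K$.} Since $F$ is finitely generated over $\F_p$, fix a normal projective integral variety $V$ over $\F_p$ with $\F_p(V)=F$. If $\Char(K)=p$, then $O_K\cong F[[t]]$ by Cohen's structure theorem, and I would take $X=V\times_{\F_p}\P^1_{\F_p}$ with $\nu$ the generic point of $V\times\{0\}$: here $X$ is integral, normal (because $\P^1_{\F_p}$ is smooth over $\F_p$), proper over $\F_p$ hence over $\Z$, and $X(\R)=\emptyset$ since $X$ is an $\F_p$-scheme, while $\cO_{X,\nu}$ is a discrete valuation ring with residue field $F$ and uniformizer $t$, so $\hat\cO_{X,\nu}\cong F[[t]]\cong O_K$. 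If $\Char(K)=0$, then $e_K<\infty$ and, by Cohen's structure theorem, $O_K=W[\pi]$ where $W$ is a Cohen ring for $F$ (a complete discrete valuation ring with uniformizer $p$ and residue field $F$, unique up to isomorphism) and $\pi$ satisfies an Eisenstein polynomial $g\in W[x]$ of degree $e_K$. To realize $W$ globally: $\F_p$ being perfect, $F/\F_p$ is separably generated, say $F=\F_p(t_1,\dots,t_r)[\theta]/(\phi)$ with $\phi$ monic and separable in $\theta$ over $\F_p[t_1,\dots,t_r]$. Fix an imaginary quadratic field $E$ in which $p$ splits (such $E$ exists for every $p$), let $\mathfrak p\mid p$ be a prime of $\cO_E$ with $\cO_{E,\mathfrak p}=\Z_p$, lift $\phi$ to a monic $\tilde\phi\in\cO_E[t_1,\dots,t_r][\theta]$ reducing to $\phi$ modulo $\mathfrak p$, and consider $\Spec(\cO_E[t_1,\dots,t_r][\theta]/(\tilde\phi))$. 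It has a codimension-one point $\nu_0$ with residue field $F$ lying over $\mathfrak p$, and by the Jacobian criterion applied to $\partial\tilde\phi/\partial\theta$ — a unit there by separability of $\phi$ — the scheme is regular at $\nu_0$, so its local ring is a discrete valuation ring with uniformizer $p$ and residue field $F$, whose completion is therefore a Cohen ring for $F$, hence isomorphic to $W$. Transporting $g$ to this Cohen ring and approximating its coefficients, Krasner's lemma produces a monic $\tilde g$ with $\cO_E$-integral coefficients near $\nu_0$, still Eisenstein there, with $W[x]/(\tilde g)\cong W[x]/(g)\cong O_K$. Taking $X$ to be a projective normalization of the integral scheme built from $\tilde\phi$ and $\tilde g$, with function field $J$, the unique point $\nu$ of $X$ over $\nu_0$ has codimension one and $\hat\cO_{X,\nu}\cong O_K$, while $X$ is proper, normal and integral over $\Z$ with $X(\R)=\emptyset$ (being an $\cO_E$-scheme). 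In either case fix an isomorphism $\alpha\colon O_K\xrightarrow{\sim}\hat\cO_{X,\nu}$; through $\alpha$ we identify $K$ with $\hat J_\nu$, and $J$ is dense in $K$.

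\emph{Realizing $\chi$.} The character $\chi$ has finite order $p^s$; if $s=0$ it comes from $0\in H^1(J,\Q_p/\Z_p)$, so assume $s\geq 1$. Then $\chi$ corresponds to a cyclic extension $L/K$ of degree $p^s$ together with an injection $\Gal(L/K)\hookrightarrow\Q_p/\Z_p$; write $L=K[x]/(g)$ with $g\in K[x]$ monic, separable, irreducible. Approximating the coefficients of $g$ by elements of the dense subfield $J\subset K$ gives a monic $\tilde g\in J[x]$ for which Krasner's lemma shows that $\tilde g$ is irreducible over $K$ (hence over $J$) and $K[x]/(\tilde g)\cong L$ as $K$-algebras. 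Let $M$ be the splitting field of $\tilde g$ over $J$ and choose a place $w_0$ of $M$ above $\nu$ lying over the unique place of $J[x]/(\tilde g)$ above $\nu$. Since $K[x]/(\tilde g)\cong L$ is already Galois over $K$, the completion $M_{w_0}$ equals $L$, so the decomposition group $D\subseteq\Gal(M/J)$ at $w_0$ is canonically identified with $\Gal(M_{w_0}/K)=\Gal(L/K)$, compatibly with the restriction map $\Gal(\bar K/K)\to\Gal(\bar J/J)\twoheadrightarrow\Gal(M/J)$. Since $\Q_p/\Z_p$ is injective as an abelian group, the given homomorphism $\chi\colon D=\Gal(L/K)\to\Q_p/\Z_p$ extends to $\tilde\chi\colon\Gal(M/J)\to\Q_p/\Z_p$, that is, to an element of $H^1(J,\Q_p/\Z_p)$. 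By the compatibility of decomposition groups, the image of $\tilde\chi$ in $H^1(K,\Q_p/\Z_p)$ is the composite $\Gal(\bar K/K)\twoheadrightarrow\Gal(L/K)\xrightarrow{\chi}\Q_p/\Z_p$, which is $\chi$. This proves the lemma.

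\emph{Main difficulty.} The delicate step is the mixed-characteristic part of the first half: constructing $X$ whose completed local ring at $\nu$ is a prescribed Cohen ring while retaining properness, normality and $X(\R)=\emptyset$. This is exactly where separable generation of $F/\F_p$ enters (via the Jacobian criterion, to get a discrete valuation ring with uniformizer $p$ and the correct residue field), together with the uniqueness of Cohen rings and the choice of an imaginary quadratic base in which $p$ splits; by contrast the equal-characteristic case and the passage from $K$ to $J$ for the character $\chi$ are routine once Krasner's lemma and the standard compatibility of decomposition groups with completions are in hand.
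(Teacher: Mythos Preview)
Your argument is correct, but it takes a noticeably different route from the paper's, so let me record the comparison.

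In the equal-characteristic case, both you and the paper take $X=\P^1_Y$ (your $V\times\P^1$) with $\nu$ the generic point of a section; this is the same idea.

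In the mixed-characteristic case the two arguments diverge. The paper lifts a transcendence basis $(t_i)$ of $F$ to $(T_i)\subset O_K$, sets $J_0=\Q(T_1,\dots,T_r)$, and observes that the algebraic closure $J_\infty$ of $J_0$ in $K$ is a \emph{henselian} discrete valuation field with completion $K$, so that $H^1(J_\infty,\Q_p/\Z_p)\overset{\sim}\to H^1(K,\Q_p/\Z_p)$; hence $\chi$ descends to $H^1(J,\Q_p/\Z_p)$ for some finite $J/J_0$ inside $K$, and $X$ is taken to be the normalization of $\P^r_\Z$ in $J$. The condition $X(\R)=\emptyset$ is then obtained by adjoining to $J$ an explicit purely imaginary number already lying in $K$ (e.g.\ $\sqrt{1-p}$). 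This packages the realization of $K$ and of $\chi$ in a single step via the henselian property, and keeps $J$ inside $K$ throughout.

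Your approach is more hands-on: you first build a global model of a Cohen ring for $F$ over an imaginary quadratic base $E$ in which $p$ splits (using separable generation of $F/\F_p$ and the Jacobian criterion), then globalize the Eisenstein polynomial for a uniformizer via Krasner, and only afterwards lift $\chi$---uniformly in both characteristics---by Krasner plus the injectivity of $\Q_p/\Z_p$ as an abelian group. This is longer but entirely elementary; it avoids appealing to the henselian property of $J_\infty$ and gives a single clean argument for $\chi$ that the paper leaves implicit in characteristic $p$.

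Two small points to tidy: (i) when you write $\cO_{E,\mathfrak p}=\Z_p$ you mean its completion; (ii) the lift $\tilde\phi$ need not be irreducible over $E(t_1,\dots,t_r)$, but since your affine model is regular at $\nu_0$ there is a unique irreducible component through $\nu_0$, and you should pass to it before normalizing. Neither affects the substance of your proof.
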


\begin{pf} If $K$ is of characteristic $p$, take a proper normal integral scheme $Y$ over $\F_p$ with function field $F$,  let $X= {\bf P}^1_Y$, and let $\nu$ be the generic point of the image of any section $Y\to {\bf P}^1_Y$. Next assume  $K$ is of characteristic $0$. Let $(t_i)_{1\leq i\leq r}$ be a transcendental basis of $F$ over $\F_p$ and let $(T_i)_{1\leq i\leq r}$ be its lifting to $O_K$. Let $J_0=\Q(T_1, \dots, T_r)$. Then 
the algebraic closure $J_{\infty}$ of $J_0$ in $K$ is a henselian discrete valuation field whose completion is $K$, and hence $H^1(J_{\infty}, \Q_p/\Z_p)\overset{\cong}\to H^1(K, \Q_p/\Z_p)$. Hence $\chi$ comes from $H^1(J, \Q_p/\Z_p)$ for some finite extension $J$ of $J_0$ in $J_{\infty}$ such that $J$ is dense in $K$. Let $X$ be the integral closure of ${\bf P}^r_{\Z}\supset \Spec(\Z[T_1, \dots, T_r])$ in $J$. By replacing $J$ by a bigger $J$, we have $X(\R)=\emptyset$. (Indeed, $K$ contains a purely imaginary  algebraic number $\beta$ (for example, a  square root  of $1-p$ if  $p\neq 2$, and  a square root of $-7=1-8$ in the case $p=2$).  
By replacing $J$ by $J(\beta)\subset K$, we have $\beta \in J$. For such $J$, $X(\R)=\emptyset$.) Let $\nu$ be image of the closed point of $\Spec(O_K)$ under $\Spec(O_K) \to X$. 
\end{pf}

\begin{sblem}\label{gen1} Let $K$ be a complete discrete valuation field whose residue field $F$ is a function field in $r$ variables over $\F_p$, and let $\chi\in F_nH^1(K, \Q_p/\Z_p)$.  Then there is a unique element  $\Rsw_K(\chi)$ of $ {\frak m}^{-n}_K/{\frak m}_K^{-m} \otimes_{O_K} \Omega^1_{O_K}(\log)$ ($m= \max(n-e_K, [n/p])$) having the following property (i) for any $(X, \nu, \alpha)$ as in \ref{gen0}.

(i) Let $J$ be as in \ref{gen0} and let $Y\subset X$ be the closure of $\nu$. Then for any $v\in Pl(Y)\subset Pl(X)$,
the image of $\Rsw_K(\chi)$ in ${\frak m}^{-n}_{J_v}/{\frak m}_{J_v}^{-m} \otimes_{O_{J_v}} \Omega^1_{O_{J_v}}(\log)$ coincides with the element $\Rsw_{J_v}(\chi_{J_v})$ defined in Section 3.3. Here we regard $K$ as a subfield of $J_v$ via $\alpha$.

\end{sblem}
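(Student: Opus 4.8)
The plan is to produce $\Rsw_K(\chi)$ from the global construction of Section~\ref{global} and then to see that it is pinned down by property (i). Fix one model $(X,\nu,\alpha)$ as in \ref{gen0} and set $\Rsw_K(\chi):=\alpha_*\Rsw_{X,\nu}(\chi)$, using that $\Omega^1_{O_K}(\log)/{\frak m}_K^N\Omega^1_{O_K}(\log)=\Omega^1_{\cO_{X,\nu}}(\log)\otimes_{\cO_{X,\nu}}\cO_{X,\nu}/{\frak m}_\nu^N$ for all $N$, so that $\alpha$ identifies the target of \ref{SG} with the target in \ref{gen1}. The whole content is property (i): once the image of $\alpha_*\Rsw_{X,\nu}(\chi)$ in each $J_v$ is shown to equal the element $\Rsw_{J_v}(\chi_{J_v})$ of Section~\ref{local} --- which depends only on $\chi_{J_v}$ and the local class field theory of $J_v$ --- independence of $(X,\nu,\alpha)$ is automatic, and uniqueness follows since for suitable $v$ the restriction map is injective.

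Before that I would record the local geometry (\ref{cpl9}, \ref{cpl10}). With $d=\dim X=r+1$, a flag $\frak p=(y_i)_{0\le i\le r}\in P(Y)$, viewed as $(y_0,\dots,y_r,\eta)\in P(X)$ with $\eta$ the generic point of $X$, and a $v\in Pl(\frak p)$ yield a $d$-dimensional local field $J_v\supset K$ with $e(J_v/K)=1$ (a uniformizer $\pi$ of $\cO_{X,\nu}$ stays a uniformizer of $O_{J_v}$), whose residue field $F_w$ is the $r$-dimensional local field attached to the restriction $w$ of $v$ to $Y$. Hence $O_K\to O_{J_v}$ is flat with $O_{J_v}/{\frak m}_KO_{J_v}=F_w$; the integers $n,m$ do not change; $\chi_{J_v}\in F_nH^1(J_v,\Q_p/\Z_p)$ by \cite{KKs}; and, as $F$ is finitely generated hence separably generated over the perfect field $\F_p$, we have $[F:F^p]=p^r$ and $[F_w:F_w^p]=p^r$, so Section~\ref{local} applies to $J_v$; also $F$ is dense in the iterated completion $F_w$.

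For property (i): both $\alpha_*\Rsw_{X,\nu}(\chi)|_{J_v}$ and $\Rsw_{J_v}(\chi_{J_v})$ lie in $M_v:={\frak m}_{J_v}^{-n}/{\frak m}_{J_v}^{-m}\otimes_{O_{J_v}}\Omega^1_{O_{J_v}}(\log)$, and by 2 of \ref{Rsw3} an element of $M_v$ is determined by the continuous functional $\beta\mapsto\Res_{F_w}(R_b((-)\wedge\beta))$ on $N_v:={\frak m}_{J_v}^{m+1}/{\frak m}_{J_v}^{n+1}\otimes_{O_{J_v}}\Omega^r_{O_{J_v}}(\log)$; by the construction in Section~\ref{local}, for $\Rsw_{J_v}(\chi_{J_v})$ this functional is $\beta\mapsto\chi_{J_v}(E(\beta))$ with $E$ the truncated exponential of \ref{Kexp} into $U^{m+1}K^M_{r+1}(J_v)/U^{n+1}$ and $\chi_{J_v}$ evaluated by the reciprocity map of $J_v$. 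Thus it suffices to show, for all $\frak p$, all $v\in Pl(\frak p)$ and all $\beta\in N_v$, that $\Res_{F_w}(R_b(\alpha_*\Rsw_{X,\nu}(\chi)|_{J_v}\wedge\beta))=\chi_{J_v}(E(\beta))$. I would derive this from two descriptions of the number $\langle\omega,\iota_{\frak p}(\beta_0)\rangle$, the evaluation --- for $\omega$ in the target ${\frak m}_\nu^{-n}/{\frak m}_\nu^{-m}\otimes_{\cO_{X,\nu}}\Omega^1_{X,\nu}(\log)\cong\varinjlim_{\cF}\Hom(H^{d-1}(Y,\cF),\F_p)$ of \ref{SG}, a flag $\frak p\in P(Y)$, and $\beta_0$ in the generic stalk $\cF_\nu\cong{\frak m}_\nu^{m+1}/{\frak m}_\nu^{n+1}\otimes_{\cO_{X,\nu}}\Omega^{d-1}_{X,\nu}(\log)$ --- of the Serre--Grothendieck functional attached to $\omega$ at the class $\iota_{\frak p}(\beta_0)\in H^{d-1}(Y,\cF)$ of \ref{hp}. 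On one side, the explicit residue description of Serre--Grothendieck duality underlying \ref{lcoh} gives $\langle\omega,\iota_{\frak p}(\beta_0)\rangle=\sum_{v\in Pl(\frak p)}\Res_{F_w}(R_b(\omega|_{J_v}\wedge\bar\beta_0))$, $\bar\beta_0$ denoting the image of $\beta_0$ in $N_v$. On the other side, for $\omega=\Rsw_{X,\nu}(\chi)$ this number is $s_{\frak p}(\beta_0)=h_{\frak p}(E(\beta_0))$, which by the compatibility of the global reciprocity map with the local ones recalled in \ref{gl1} and \ref{cpl10} equals $\sum_{v\in Pl(\frak p)}\chi_{J_v}(E(\bar\beta_0))$. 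Comparing, isolating one place $v_0\in Pl(\frak p)$ by approximation at the finitely many places of $Y$ over $\frak p$, and passing from the image of $\cF_\nu$ in $N_{v_0}$ (dense, since $F$ is dense in $F_w$) to all of $N_{v_0}$ by continuity of the functionals, yields the desired identity. Consequently property (i) holds for every model with the same element, which gives existence; uniqueness follows by choosing one model and a place $w\in Pl(Y)$ in general position (a $p$-base of $F$ maps to a $p$-base of $F_w$; such $w$ exists as $F$ is a function field over $\F_p$), so that the map ${\frak m}_K^{-n}/{\frak m}_K^{-m}\otimes_{O_K}\Omega^1_{O_K}(\log)\to M_v$ is injective.

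The main obstacle is the explicit residue description of Serre--Grothendieck duality on $Y$ used above: the identification of the abstract duality pairing with the sum over places $v$ of the local pairings $\Res_{F_w}\circ R_b$ on the $J_v$, a higher-dimensional residue theorem, together with the check that it is compatible with the log structures, the operators $R_b$ of \ref{Rsw3}, and the mod-$p$ truncated exponentials of \ref{Kexp}. Granting this matching of the two dualities, the remaining steps --- the approximation-and-continuity extraction of the per-place identity, the flatness of $O_K\to O_{J_v}$, and the injectivity for uniqueness --- are routine.
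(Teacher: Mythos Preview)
Your approach is essentially the paper's: construct $\Rsw_K(\chi)$ via the global duality of Section~\ref{global}, then verify property (i) by matching the Serre--Grothendieck pairing with the local residue pairings of Section~\ref{local}. You are right that the crux is the residue description of the duality isomorphism in \ref{SG}; the paper's ``by construction'' hides exactly this, and your approximation--continuity argument to pass from $\sum_{v\in Pl(\frak p)}$ to a single $v$ is a legitimate way to unpack it.

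Two points where the paper is sharper. First, your claim that ``independence of $(X,\nu,\alpha)$ is automatic'' once property (i) holds for the chosen model is not justified: a second model $(X',\nu',\alpha')$ produces a different family of local fields $J'_{v'}$, and you have not said why any of them coincides with a $J_v$ from the first model. The paper closes this gap by observing that $Y$ and $Y'$ are proper normal with the same function field $F$, hence birational, so there is a common place $v\in Pl(Y)\cap Pl(Y')$; then \cite{KKIII} Lemma~1 gives a $K$-isomorphism $J_v\cong J'_v$, and injectivity finishes. Second, for uniqueness you appeal to a place $w$ ``in general position'' where a $p$-base of $F$ stays a $p$-base of $F_w$. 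The paper proves more: Lemma~\ref{FFv} shows $F\otimes_{F,\phi}F_v\cong F_v$ for \emph{every} $v\in Pl(Y)$, so a $p$-base of $F$ is automatically a $p$-base of $F_v$; consequently Lemma~\ref{inj} gives that $O_{J_v}\otimes_{O_K}({\frak m}_K^{-n}/{\frak m}_K^{-m}\otimes\Omega^1_{O_K}(\log))\to M_v$ is an isomorphism for every $v$, not just generic ones. This removes the need for any genericity hypothesis and is also what makes the birationality argument above work cleanly.
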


The proof of \ref{gen1} is given after preparations  \ref{FFv} and \ref{inj}. 

\begin{sblem}\label{FFv} Let $Y$ be an integral scheme over $\F_p$ of finite type, let $F$ be the function field of $Y$, and let $v\in Pl(Y)$. Then we have an isomorphism 
$$F\otimes_F F_v\overset{\cong}\to  F_v\;;\;x\otimes y\mapsto xy^p,$$
where  $F\to F$ in the tensor product is $x\mapsto x^p$.

\end{sblem}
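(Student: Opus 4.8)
The plan is to reduce the statement to exhibiting a finite subset of $F$ that is simultaneously a $p$-basis of $F$ and of $F_v$, and then to construct such a set by induction on $d=\dim Y$ using the description of $F_v$ from \ref{cpl2}--\ref{cpl10}.

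\emph{Reformulation.} With the Frobenius $x\mapsto x^p$ as structure map on the left factor, the map $F\otimes_FF_v\to F_v$, $x\otimes y\mapsto xy^p$, is identified with the multiplication map $F\otimes_{F^p}F_v^p\to F_v$, $a\otimes b\mapsto ab$: one sends $x\otimes y$ to $x\otimes y^p$, using the isomorphisms $F^p\xrightarrow{\sim}F$ and $F_v^p\xrightarrow{\sim}F_v$ given by $p$-th roots, and the facts that under these $F^p\hookrightarrow F$ becomes the Frobenius of $F$ and $F^p\hookrightarrow F_v^p$ becomes the inclusion $F\hookrightarrow F_v$. Now suppose $S\subset F$ is a finite $p$-basis of $F$ which is also a $p$-basis of $F_v$. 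Then the finitely many monomials $\prod_{s\in S}s^{e(s)}$ ($0\le e(s)\le p-1$) form an $F^p$-basis of $F$ and an $F_v^p$-basis of $F_v$; hence in $F\otimes_{F^p}F_v^p$ the elements $\{\,m\otimes 1\,\}$ (over these monomials $m$) form an $F_v^p$-basis, and the multiplication map sends it to the $F_v^p$-basis $\{\,m\,\}$ of $F_v$. Thus $F\otimes_{F^p}F_v^p\to F_v$ is an isomorphism, and it suffices to produce such an $S$.

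\emph{Induction on $d$.} For $d=0$ both $F$ and $F_v$ equal a finite, hence perfect, field, and $S=\emptyset$ works. Let $d\ge 1$, and let $\frak p=(y_i)_{0\le i\le d}\in P(Y)$ be the flag attached to $v$. Let $A$ be the normalization of $\cO_{Y,y_{d-1}}$, a finite semilocal (hence principal ideal) domain with $\mathrm{Frac}(A)=F$; let $t\in A\subset F$ generate its Jacobson radical; let $z$ be the maximal ideal of $A$ determined by $v$ as in \ref{cpl10}; and let $Y(z)$ be the normalization of $\overline{\{y_{d-1}\}}$ in the residue field $\kappa(z)$ of $z$, a finite-type integral $\F_p$-scheme of dimension $d-1$ with function field $\kappa(z)$. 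By the construction in \ref{cpl10}, $\cO_{F_v}$ is the completion of a one-dimensional local factor $C_0$ of $A\otimes_{\cO_{Y,y_{d-1}}}B$ with $B=\hat\cO_{Y,y_0,\dots,y_{d-1}}$, where $C_0/tC_0=\kappa(z)_w$ for some place $w\in Pl(Y(z))$ and $\kappa(z)_w$ is the local field of $\kappa(z)$ at $w$. Since $C_0/tC_0$ is a field, $C_0$ is a discrete valuation ring with uniformizer $t$, so $\cO_{F_v}=\widehat{C_0}$ is a complete discrete valuation ring with uniformizer $t\in F$ and residue field $\bar F_v:=\kappa(z)_w$; moreover the local inclusion $A_z\hookrightarrow\cO_{F_v}$ carries the uniformizer $t$ to the uniformizer $t$ and so induces $\kappa(z)=A_z/tA_z\hookrightarrow\bar F_v$.

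Apply the lemma inductively to $(Y(z),w)$: a $p$-basis $(b_\mu)_\mu$ of $\kappa(z)$ is a $p$-basis of $\bar F_v$. Lift each $b_\mu$ to $\tilde b_\mu\in A_z\subset F$. By Cohen's structure theorem $\cO_{F_v}\cong\bar F_v[[t]]$, and the computation used in the proof of \ref{hatdif} shows that $S:=\{t\}\cup\{\tilde b_\mu\}_\mu\subset F$ is a $p$-basis of $F_v$: the $p^d$ monomials $t^i\prod_\mu\tilde b_\mu^{e_\mu}$ ($0\le i,e_\mu\le p-1$) form an $F_v^p$-basis of $\bar F_v((t))=F_v$, replacing each $b_\mu$ by a lift $\tilde b_\mu$ only altering the transition matrix relative to $(b_\mu)$ by a matrix congruent to the identity modulo $t$, hence invertible over $\bar F_v^p[[t]]$. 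Finally $S$ has $d$ elements, which being $F_v^p$-independent are a fortiori $F^p$-independent (as $F^p\subseteq F_v^p$); since $[F:F^p]=p^d$ because $F$ is a finitely generated field of transcendence degree $d$ over $\F_p$, $S$ is also a $p$-basis of $F$. This completes the induction, and with it the proof.

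\emph{Main obstacle.} The conceptual content is thin --- essentially ``adjoining a uniformizer to a lifted $p$-basis of the residue field gives a $p$-basis of a complete discrete valuation field'' --- so the only real work is to read off from the iterated-completion construction of \ref{cpl2}--\ref{cpl10} the three facts used above: that $\cO_{F_v}$ is a discrete valuation ring, that it has a uniformizer lying in $F$, and that its residue field is exactly the local field $\kappa(z)_w$ of the $(d-1)$-dimensional finite-type integral $\F_p$-scheme $Y(z)$ to which the inductive hypothesis applies. Granting this, everything else is routine manipulation of $p$-bases and complete discrete valuation rings.
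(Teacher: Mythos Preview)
Your proof is correct, but it takes a different route from the paper's. The paper dispatches the lemma in one line: apply part~3 of \ref{cpl9} to the absolute Frobenius morphism $Y\to Y$, which is finite and surjective since $Y$ is of finite type over $\F_p$. There is a unique $v'$ lying over $v$ (Frobenius is a homeomorphism), and the induced map $F_v\to F_{v'}=F_v$ is the Frobenius of $F_v$, so the canonical isomorphism $F\otimes_F F_v\cong F_{v'}$ is exactly $x\otimes y\mapsto xy^p$.

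Your approach instead unwinds the iterated-completion construction from \ref{cpl10} to exhibit, by induction on $\dim Y$, a single finite subset of $F$ that is a $p$-basis of both $F$ and $F_v$. This is more hands-on and arguably more informative: it makes explicit that a uniformizer of $F_v$ can be chosen in $F$ and that the residue field of $F_v$ is again a higher local field attached to a lower-dimensional scheme, to which the induction applies. The cost is that you essentially reprove a special case of \ref{cpl9}.3 rather than invoking it. Your extraction of the facts that $C_0$ is a complete discrete valuation ring with uniformizer $t\in F$ and residue field $\kappa(z)_w$ is accurate (a one-dimensional Noetherian local domain with principal maximal ideal is a DVR, and $C_0$ is complete because it is finite over a factor of $B$), and the passage from the inductive hypothesis to a common $p$-basis is clean.
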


\begin{pf}  This follows from 3 of \ref{cpl9}. \end{pf}

\begin{sblem}\label{inj} Let $(X, \nu, \alpha,J)$ be as in \ref{gen0} and let $Y\subset X$ be the closure of $\nu$. Then  for any $v\in Pl(Y)\subset Pl(X)$ and for $t\geq 1$, the map 
$$O_K/{\frak m}_K^t\otimes_{O_K} \Omega^1_{O_K}(\log) \to O_{J_v}/{\frak m}_{J_v}^t\otimes_{O_{J_v}} \Omega^1_{O_{J_v}}(\log)$$
is injective. More precisely, the map $O_{J_v}\otimes_{O_K} (\text{l.h.s.})\to (\text{r.h.s.})$ is bijective.

\end{sblem}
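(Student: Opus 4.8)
The plan is to deduce everything from the assertion that the canonical $O_{J_v}$-linear map
$$\varphi\colon O_{J_v}\otimes_{O_K}\Omega^1_{O_K}(\log)\longrightarrow \Omega^1_{O_{J_v}}(\log)$$
is an isomorphism. First I would extract, from the iterated-completion construction of $J_v$ in \ref{cpl9}--\ref{cpl10}, the basic facts about the inclusion $O_K\subset O_{J_v}$: the rank one coarsening of the rank $d$ valuation attached to $v\in Pl(Y)$ is the $\nu$-adic valuation, so that $O_{J_v}\cap K=O_K$, a prime element $\pi$ of $O_K$ stays a prime element of $O_{J_v}$, whence $e(J_v/K)=1$ and ${\frak m}_KO_{J_v}={\frak m}_{J_v}$; moreover the residue field of $O_{J_v}$ is the $r$-dimensional local field $F_v$ of $Y$ at $v$ (with $F$ the function field of $Y$, $r=\dim Y$), and $F\hookrightarrow F_v$ is the natural inclusion. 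Granting this, $O_{J_v}$ is a local ring faithfully flat over the discrete valuation ring $O_K$, and ${\frak m}_K^tO_{J_v}={\frak m}_{J_v}^t$; hence $O_{J_v}\otimes_{O_K}(\text{l.h.s.})=(O_{J_v}/{\frak m}_{J_v}^t)\otimes_{O_{J_v}}(O_{J_v}\otimes_{O_K}\Omega^1_{O_K}(\log))$ and the bijectivity assertion becomes exactly $\varphi$ being an isomorphism after applying $(O_{J_v}/{\frak m}_{J_v}^t)\otimes_{O_{J_v}}-$; the injectivity in the statement then follows since $O_K/{\frak m}_K^t\to O_{J_v}/{\frak m}_{J_v}^t$ is faithfully flat, hence pure.

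The isomorphy of $\varphi$ rests on \ref{FFv}: it shows that a $p$-base $(b_\lambda)_{\lambda\in\Lambda}$ of $F$ remains a $p$-base of $F_v$, equivalently that the canonical maps $\Omega^i_F\otimes_FF_v\to\Omega^i_{F_v}$ are bijective; note $\#\Lambda=r$, because $[F:F^p]=p^r$ ($F$ a function field in $r$ variables over $\F_p$) and $[F_v:F_v^p]=p^r$ ($F_v$ an $r$-dimensional local field). In the equal characteristic case $O_K\cong F[[\pi]]$ and $O_{J_v}\cong F_v[[\pi]]$, so by 1 of \ref{hatdif} the modules $\Omega^1_{O_K}(\log)$ and $\Omega^1_{O_{J_v}}(\log)$ are free of rank $r+1$ on $d\tilde b_\lambda$ ($\lambda\in\Lambda$) and $d\log(\pi)$, for liftings $\tilde b_\lambda$ of $b_\lambda$; then $\varphi$ carries the indicated basis of the source to that of the target, so $\varphi$ is surjective (equivalently, by Nakayama, the classes $d\bar b_\lambda$ span $\Omega^1_{F_v}$, which is \ref{FFv}), and a surjection between free modules of equal finite rank is an isomorphism.

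In the mixed characteristic case I would proceed as in the proof of 2 of \ref{hatdif}. Using the formal smoothness of Cohen rings over $\Z$ for the $p$-adic topology, lift $F\hookrightarrow F_v$ to an inclusion $O_{K_0}\hookrightarrow O_{L_0}$ of Cohen rings with $p\mapsto p$, where $O_{K_0}$ and $O_{L_0}$ are the rings attached to $K$ and to $J_v$ in that proof. Choose a prime element $\pi$ of $O_K$ — also a prime element of $O_{J_v}$ since $e(J_v/K)=1$ — with Eisenstein minimal polynomial $f(T)=\sum_{i\le e_K}a_iT^i$ over $K_0$; because $p\mapsto p$, $f$ remains Eisenstein over $L_0$, and the degree count $[J_v:L_0]=e(J_v/L_0)=e(J_v/K)\,e(K/K_0)=e_K=\deg f$ (the residue extension being trivial) shows $f$ is also the minimal polynomial of $\pi$ over $L_0$. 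Therefore the presentations of $\Omega^1_{O_K}(\log)$ and $\Omega^1_{O_{J_v}}(\log)$ supplied by that proof carry the same relation $f'(\pi)\pi\,d\log(\pi)+\sum_{i<e_K}\pi^i\,da_i=0$, and under them $\varphi$ is identified with the base change along $O_{L_0}\hookrightarrow O_{J_v}$ of the map $O_{L_0}\otimes_{O_{K_0}}{\hat \Omega}^1_{O_{K_0}}\to{\hat \Omega}^1_{O_{L_0}}$. The latter is an isomorphism: both sides are free $O_{L_0}$-modules of rank $r$, with bases $d\tilde b_\lambda$ and $d\tilde c_\mu$ for a $p$-base $(c_\mu)$ of $F_v$, and by \ref{FFv} we may take $(c_\mu)=(b_\lambda)$. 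Hence $\varphi$ is an isomorphism, and with the first paragraph the lemma follows.

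The main obstacle is the mixed characteristic bookkeeping in the third paragraph: one must lift the residue extension to Cohen rings compatibly and control the Eisenstein polynomial of $\pi$ so that the torsion summands of $\Omega^1_{O_K}(\log)$ and $\Omega^1_{O_{J_v}}(\log)$ are matched by $\varphi$ — without this one only obtains surjectivity. A secondary but still delicate point is reading off $e(J_v/K)=1$ and the residue field $F_v$ cleanly from the construction of higher-dimensional local fields in Section 2.4.
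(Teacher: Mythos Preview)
Your proof is correct and follows the same approach the paper intends: the one-line proof ``This follows from \ref{FFv}'' is precisely the assertion that a $p$-base of $F$ remains a $p$-base of $F_v$, which you then feed into the structural description of $\Omega^1_{O_K}(\log)$ and $\Omega^1_{O_{J_v}}(\log)$ from \ref{hatdif}. Your unpacking of the mixed characteristic case via compatible Cohen subrings and the shared Eisenstein polynomial is exactly the mechanism implicit in the proof of 2 of \ref{hatdif}, so there is no genuine divergence in method, only in level of detail.
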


\begin{pf} This follows from  \ref{FFv}. 
\end{pf}

\begin{sbpara} We prove \ref{gen1}.  Take $(X, \nu, \alpha,J)$ as in \ref{gen0} and let $Y\subset X$ be the closure of $\nu$. 

By Section \ref{global}, 
 $\chi$ defines an element of ${\frak m}_K^{-n}/{\frak m}_K^{-m}\otimes_{O_K} \Omega^1_{O_K}(\log)$ 
which we denote by $\Rsw_K(\chi)$. By the construction of this element in Section \ref{global}, it is sent to $\Rsw_{J_v}(\chi_{J_v})$ for any $v\in Pl(Y)\subset Pl(X)$. 

If we change $(X, \nu, \alpha)$ by $(X', \nu', \alpha')$, since the associated $Y$ and $Y'$ are birational, there is $v\in Pl(Y)$ which belongs also to $Pl(Y')$. By \cite{KKIII} Lemma 1,  there is a unique $K$-isomorphism of complete discrete valuation fields between $J_v$ and $J'_v$ which induces the identity map of the residue field $F_v$. 
By the injectivity \ref{inj}, we have that 
$\Res_K(\chi)$ defined by using $(X,\nu, \alpha)$ coincides with that defined by using $(X', \nu', \alpha')$.

\end{sbpara}

\begin{sbpara}\label{union} 

 Now we prove the unique existence of the definition of $\Rsw$ satisfying (i) and (ii) in  \ref{Rsw4}.

Note that $K=\cup_J J$ where $J$ ranges over subfields of $K$ which are  finitely generated over the prime field. For a sufficiently large such $J$, $J$ contains a prime element of $K$ and $\chi$ comes from  $F_nH^1(\hat J, \Q_p/\Z_p)$ where $\hat J$ denotes the completion of $J$ for the restriction of the discrete valuation of $K$ to $J$. By \ref{gen1} applied to $\hat J$, we get an element of ${\frak m}_{\hat J}^{-n}/{\frak m}_{\hat J}^{-m}\otimes_{O_{\hat J}} \Omega^1_{O_{\hat J}}(\log)$,  and hence an element $\Rsw(\chi)$ of 
${\frak m}_K^{-n}/{\frak m}_K^{-m}\otimes_{O_K} \Omega^1_{O_K}(\log)$ as the image of it. 

This element $\Rsw(\chi)$  is independent of such $J$. This is reduced, by the injectivity \ref{inj}, to  \ref{lem3}. 

It is clear that this $\Rsw$ has the properties (i) and (ii) in \ref{Rsw4}, and the uniqueness follows from \ref{inj}.
 
  \end{sbpara}

\begin{sblem}\label{pmK} Let $n$ and $m$ be as in \ref{Rsw1}. Then for an integer $i$ such that $m<i<n$, $\Rsw: F_nH^1(K, \Q_p/\Z_p) \to {\frak m}_K^{-n}/{\frak m}^{-m}_K \otimes_{O_K} \Omega^1_{O_K}(\log)$ sends $F_iH^1(K, \Q_p/\Z_p)$ to ${\frak m}_K^{-i}/{\frak m}^{-m}_K \otimes_{O_K} \Omega^1_{O_K}(\log)$, and the induced homomorphism 
 $F_iH^1(K, \Q_p/\Z_p)/F_{i-1}H^1(K, \Q_p/\Z_p) \to {\frak m}_K^{-i}/{\frak m}_K^{-i+1}\otimes_{O_K} \Omega^1_{O_K}(\log)$ coincides with the refined Swan conductor mod ${\frak m}_K$ (\ref{eqrsw}). 
 \end{sblem}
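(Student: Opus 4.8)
The plan is to reduce to the case where the residue field $F$ is an $r$-dimensional local field, and there to compare the characterizing property (ii) of \ref{Rsw4} with the analogous characterization of the refined Swan conductor mod $\frak m_K$ of \cite{KKs}; the heart of the comparison is the perfect duality of 2 of \ref{Rsw3} together with the identity $\Res_F\circ C=\Res_F$ of \eqref{ResC}.

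For the reduction: $\Rsw$ was built in \ref{union} out of the cases $K=\hat J$ with $F$ a function field over $\F_p$, and from there, via \ref{gen1}, out of the $(r{+}1)$-dimensional local fields $J_v$, $v\in Pl(Y)$; since $e(J_v/\hat J)=1$ the integers $n$ and $m$ are unchanged, and $O_K/\frak m_K^t\otimes_{O_K}\Omega^1_{O_K}(\log)\to O_{J_v}/\frak m_{J_v}^t\otimes_{O_{J_v}}\Omega^1_{O_{J_v}}(\log)$ is injective by \ref{inj}. The refined Swan conductor mod $\frak m_K$ of \cite{KKs} has the parallel compatibility with completion and with cdvf extensions. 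Hence it suffices to prove both assertions when $F$ is an $r$-dimensional local field, so that $K$ is an $(r{+}1)$-dimensional local field and (ii) of \ref{Rsw4} applies.

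So assume $F$ is an $r$-dimensional local field and $\chi\in F_iH^1(K,\Q_p/\Z_p)$ with $m<i<n$. Then $\Sw(\chi)\le i$, hence $\chi$ kills $U^{i+1}K^M_{r+1}(K)$, so by \ref{Kexp} we have $\chi(E(\alpha))=0$ for all $\alpha\in\frak m_K^{i+1}/\frak m_K^{n+1}\otimes_{O_K}\Omega^r_{O_K}(\log)$. By (ii) together with \ref{rdl4}, the element $\Rsw(\chi)$ corresponds, under the perfect duality $(x,y)\mapsto\Res_F(R_b(x\wedge y))$ of 2 of \ref{Rsw3} (any $b$ with $p^b\ge n-m$), to the continuous functional $\alpha\mapsto\chi(E(\alpha))$ on $\frak m_K^{m+1}/\frak m_K^{n+1}\otimes_{O_K}\Omega^r_{O_K}(\log)$. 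This functional kills the subgroup $\frak m_K^{i+1}/\frak m_K^{n+1}\otimes_{O_K}\Omega^r_{O_K}(\log)$, and applying 2 of \ref{Rsw3} with $n$ replaced by $i$ shows that the annihilator of that subgroup is precisely the image of $\frak m_K^{-i}/\frak m_K^{-m}\otimes_{O_K}\Omega^1_{O_K}(\log)$ in $\frak m_K^{-n}/\frak m_K^{-m}\otimes_{O_K}\Omega^1_{O_K}(\log)$. Thus $\Rsw(\chi)$ lies in $\frak m_K^{-i}/\frak m_K^{-m}\otimes_{O_K}\Omega^1_{O_K}(\log)$, which is the first assertion.

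For the second assertion, write $\bar\chi$ for the class of $\chi$ in $F_iH^1/F_{i-1}H^1$ and $\bar{\Rsw}(\chi)\in\frak m_K^{-i}/\frak m_K^{-i+1}\otimes_{O_K}\Omega^1_{O_K}(\log)$ for the reduction of $\Rsw(\chi)$ (legitimate since $m\le i-1$). Given $\alpha\in\frak m_K^i/\frak m_K^{i+1}\otimes_{O_K}\Omega^r_{O_K}(\log)$, lift it to $\beta\in\frak m_K^i/\frak m_K^{n+1}\otimes_{O_K}\Omega^r_{O_K}(\log)$; since $m<i$ we have $\beta\in\frak m_K^{m+1}/\frak m_K^{n+1}\otimes_{O_K}\Omega^r_{O_K}(\log)$, and $\chi(E(\beta))=\chi(E(\alpha))$ because $\chi$ kills $U^{i+1}K^M_{r+1}(K)$ (and, as a character in $F_{i-1}H^1$ kills $U^iK^M_{r+1}(K)$, the quantity $\chi(E(\alpha))$ depends only on $\bar\chi$). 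Since $\Rsw(\chi)$ has pole order $\le i$ and $\beta\in\frak m_K^i$, the product $\beta\wedge\Rsw(\chi)$ has no poles, so $R_b(\beta\wedge\Rsw(\chi))$ equals $C^b$ applied to its residue $R_0(\beta\wedge\Rsw(\chi))\in\Omega^r_F$; combining (ii) with $\Res_F\circ C^b=\Res_F$ (iterate \eqref{ResC}) we get $\chi(E(\alpha))=\Res_F(R_0(\alpha\wedge\bar{\Rsw}(\chi)))$ for all such $\alpha$. This is exactly the property characterizing the refined Swan conductor mod $\frak m_K$ of $\bar\chi$ in the local field case (uniqueness being the $a=1$, $b=0$ case of 2 of \ref{Rsw3} with \ref{rdl4}), so $\bar{\Rsw}(\chi)=\text{rsw}(\bar\chi)$. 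The delicate point throughout is the input from \cite{KKs}: one must know that the refined Swan conductor mod $\frak m_K$ is, in the $r$-dimensional local field case, characterized by exactly this mod-$\frak m_K$ form of (ii), and that it has the compatibilities used in the reduction step; granting these, the rest is the formal manipulation indicated above, with essential inputs only \ref{Rsw3}, \ref{rdl4}, \ref{Kexp} and $\Res_F\circ C=\Res_F$.
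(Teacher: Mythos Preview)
Your proof is correct and is essentially a detailed unpacking of the paper's one-line proof (``This follows from the definition of the refined Swan conductor mod ${\frak m}_K$ in \cite{KKs}''): both $\Rsw$ and $\mathrm{rsw}$ are characterized, after reduction to the higher local field case, by the same pairing formula, and you make the comparison explicit via \ref{Rsw3}, \ref{rdl4}, and $\Res_F\circ C=\Res_F$.

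One minor slip: where you write ``applying 2 of \ref{Rsw3} with $n$ replaced by $i$'', you want the parameters $(n,m)$ replaced by $(n,i)$ (i.e.\ $m$ replaced by $i$), so as to obtain the perfect pairing between ${\frak m}_K^{-n}/{\frak m}_K^{-i}\otimes\Omega^1_{O_K}(\log)$ and ${\frak m}_K^{i+1}/{\frak m}_K^{n+1}\otimes\Omega^r_{O_K}(\log)$; this is what identifies the annihilator of the latter with the image of ${\frak m}_K^{-i}/{\frak m}_K^{-m}\otimes\Omega^1_{O_K}(\log)$. With this correction the argument stands.
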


\begin{pf} This follows from the definition of the refined Swan conductor mod ${\frak m}_K$ in \cite{KKs}.  \end{pf}

\begin{sbprop}\label{m<i<n} Let $n$ and $m$ be as in \ref{Rsw1}.  

1. The map $\Rsw: F_nH^1(K, \Q_p/\Z_p)/F_mH^1(K, \Q_p/\Z_p)\to {\frak m}_K^{-n}/{\frak m}_K^{-m} \otimes_{O_K} \Omega^1_{O_K}(\log)$ is injective.

2. Let  $\chi\in F_nH^1(K, \Q_p/\Z_p)$ and let $i$ be an integer such that $m<i\leq n$. Assume that  $\Rsw(\chi) \in {\frak m}_K^{-n}/{\frak m}_K^{-m} \otimes_{O_K} \Omega^1_{O_K}(\log)$ belongs to the image of ${\frak m}_K^{-i} \otimes_{O_K} \Omega_{O_K}^1(\log)$ but not to the image of  ${\frak m}_K^{-i+1} \otimes_{O_K} \Omega_{O_K}^1(\log)$. Then $\Sw(\chi) =i$. 

3. The image of $\Rsw$ is contained in the kernel of 

\begin{equation} 
d: {\frak m}_K^{-n}/{\frak m}_K^{-m} \otimes_{O_K} \Omega^1_{O_K}(\log)\to {\frak m}_K^{-n}/{\frak m}_K^{-m} \otimes_{O_K} \Omega^2_{O_K}(\log).
\label{eqd2}
\end{equation}

\end{sbprop}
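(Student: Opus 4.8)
The plan is to prove the three assertions in turn. Throughout, the general case is reduced to the case where $F$ is an $r$-dimensional local field, in which $K$ is an $(r+1)$-dimensional local field and the characterization (ii) of \ref{Rsw4} is available: given $\chi$ over $K$, one base changes along an extension $K\to J_v$ with $F_v$ an $r$-dimensional local field as in \ref{union} and \ref{gen1}, and notes that by \ref{inj} — and, taking exterior powers, its analogue for $\Omega^2_{O_K}(\log)$ — the base-change maps on the relevant subquotients of $\Omega^\bullet_{O_K}(\log)$ are injective, while $\Rsw$, $d$, and (as $e(J_v/K)=1$) the Swan filtration are compatible with $K\to J_v$. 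In the local-field case I would first check that $\Rsw$ kills $F_mH^1(K,\Q_p/\Z_p)$: if $\Sw(\chi)\le m$ the homomorphism $K^M_{r+1}(K)\to\Q_p/\Z_p$ attached to $\chi$ kills $U^{m+1}K^M_{r+1}(K)$, so $\chi(E(\alpha))=0$ for every $\alpha\in{\frak m}_K^{m+1}/{\frak m}_K^{n+1}\otimes_{O_K}\Omega^r_{O_K}(\log)$, and then (ii) of \ref{Rsw4} together with the perfect duality of 2 of \ref{Rsw3} (with $j=1$, $i=r$, $a=n-m$) forces $\Rsw(\chi)=0$. Hence $\Rsw$ kills $F_mH^1$ in general and descends to $F_nH^1/F_mH^1$.

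For the injectivity in 1 and for 2, let $M={\frak m}_K^{-n}/{\frak m}_K^{-m}\otimes_{O_K}\Omega^1_{O_K}(\log)$ with its increasing filtration $M_k=\im\big({\frak m}_K^{-k}\otimes_{O_K}\Omega^1_{O_K}(\log)\to M\big)$, $m\le k\le n$, so $M_m=0$ and $M_n=M$. If $\chi\in F_nH^1$ with $\Sw(\chi)=j$ and $m<j\le n$, then $\chi\in F_jH^1$ and, since $\Rsw$ is a lifting of the refined Swan conductor mod ${\frak m}_K$ (this is \ref{pmK} for $j<n$, and holds for $j=n$ by construction, $\Rsw$ being a lifting of (\ref{eqrsw})), $\Rsw(\chi)\in M_j$ with image in $M_j/M_{j-1}$ equal to $\text{rsw}(\chi)$, which is nonzero because (\ref{eqrsw}) is injective and $\chi\notin F_{j-1}H^1$; thus $\Rsw(\chi)\in M_j\smallsetminus M_{j-1}$ and in particular $\Rsw(\chi)\ne0$. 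Together with the vanishing on $F_mH^1$ this gives 1 ($\Rsw(\chi)=0\iff\chi\in F_mH^1$). For 2, the hypothesis $\Rsw(\chi)\in M_i\smallsetminus M_{i-1}$ forces $\Rsw(\chi)\ne0$, so $\chi\notin F_mH^1$ and $j:=\Sw(\chi)\in(m,n]$; as a nonzero element of the filtered module $M$ lies in $M_k\smallsetminus M_{k-1}$ for a unique $k$, comparison with $\Rsw(\chi)\in M_j\smallsetminus M_{j-1}$ yields $i=j=\Sw(\chi)$.

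For 3, by the reduction we may assume $F$ is an $r$-dimensional local field with $r\ge1$ (when $r=0$ the target ${\frak m}_K^{-n}/{\frak m}_K^{-m}\otimes_{O_K}\Omega^2_{O_K}(\log)$ vanishes). Fix $\chi\in F_nH^1$, put $a=n-m$, and pick $b$ with $p^b\ge a$. By the perfect duality of 2 of \ref{Rsw3} with $j=2$, $i=r-1$, it is enough to show that $\Res_F\big(R_b(d\Rsw(\chi)\wedge\beta)\big)=0$ for every $\beta\in{\frak m}_K^{m+1}/{\frak m}_K^{n+1}\otimes_{O_K}\Omega^{r-1}_{O_K}(\log)$. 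The Leibniz rule gives $d\Rsw(\chi)\wedge\beta=d(\Rsw(\chi)\wedge\beta)+\Rsw(\chi)\wedge d\beta$ in ${\frak m}_K^{1-a}/{\frak m}_K\otimes_{O_K}\Omega^{r+1}_{O_K}(\log)$. On that subquotient $\Res_F\circ R_b$ is induced by the residue map $\Res_K\colon\Omega^{r+1}_K\to\F_p$ of the $(r+1)$-dimensional local field $K$ — combine the description of the residue in \ref{Res1}, the identity (\ref{ResC}), and the inductive definition of $\Res_K$ in \ref{rdl} — and $\Res_K$ annihilates exact forms, so the first term contributes $0$. For the second term, reorder to $(-1)^r\,d\beta\wedge\Rsw(\chi)$ and use (ii) of \ref{Rsw4}: $\Res_F\big(R_b(d\beta\wedge\Rsw(\chi))\big)=\chi(E(d\beta))$; since $d\beta$ is exact and $p(m+1)\ge n+1$, the truncated exponential in (ii) of \ref{Rsw4} factors through that of \ref{Kexp} (applied with $t=m+1$, then projected to $U^{m+1}K^M_{r+1}(K)/U^{n+1}K^M_{r+1}(K)$), which kills exact forms by 3 of \ref{Kexp}; hence $E(d\beta)=0$ and $\chi(E(d\beta))=0$. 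Therefore $\Res_F\big(R_b(d\Rsw(\chi)\wedge\beta)\big)=0$ for all $\beta$, so $d\Rsw(\chi)=0$.

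I expect the main obstacle to lie in 3: arranging the duality pairing in the correct bidegree, identifying $\Res_F\circ R_b$ with the map induced by $\Res_K$ on the appropriate subquotient and recalling that $\Res_K$ annihilates exact forms, and verifying the numerical inequality $p(m+1)\ge n+1$ that allows 3 of \ref{Kexp} to be applied to $d\beta$. The vanishing of $\Rsw$ on $F_mH^1$ in the local-field case is a short duality argument, and parts 1 and 2 are then essentially formal, resting on the injectivity of the refined Swan conductor mod ${\frak m}_K$ and the lifting property of $\Rsw$.
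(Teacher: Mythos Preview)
Your argument is correct and follows the paper's line exactly: parts 1 and 2 rest on \ref{pmK} and the injectivity of the refined Swan conductor mod ${\frak m}_K$, while part 3 reduces to the case where $F$ is a higher-dimensional local field and then uses that the map $d$ in (\ref{eqd2}) is adjoint to the map $d$ in (\ref{eqd1}) under the $\F_p$-valued pairing, together with 3 of \ref{Kexp}. You have simply unpacked the adjointness (Leibniz plus ``$\Res_F\circ R_b$ kills exact forms'') that the paper leaves implicit.

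One imprecision worth flagging: your identification of $\Res_F\circ R_b$ with $\Res_K:\Omega^{r+1}_K\to\F_p$ from \ref{rdl} is only literally valid when $K$ has characteristic $p$, since \ref{rdl} defines $\Res_K$ only for higher-dimensional local fields of characteristic $p$ (in mixed characteristic there is no such map on $\Omega^{r+1}_K$). The fix is the same device used in the proof of 1 of \ref{Rsw3}: the isomorphism (\ref{isoTi}) of rings with log structures identifies ${\frak m}_K^{1-a}/{\frak m}_K\otimes_{O_K}\Omega^\bullet_{O_K}(\log)$, together with its differential $d$, with the corresponding object for $L=F((T))$; on the $L$-side $\Res_F\circ R_b$ is induced by $\Res_L$ and kills exact forms, and this transfers back to $K$.
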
 
\begin{pf} 1 and 2 follow from \ref{pmK} and the injectivity of the refined Swan conductor mod ${\frak m}_K$ (\ref{eqrsw}). 

3 is reduced to the case $F$ is a higher dimensional local field. Then it is reduced to 3 of \ref{Kexp}  by the fact that the map (\ref{eqd2}) is  dual to the map (\ref{eqd1}) in 3 of \ref{Kexp}. 
\end{pf}

In the case of characteristic $p$, the two definitions of the refined Swan conductor mod $p$ coincide:

\begin{sbprop}\label{charp=}  In the case $K$ is of characteristic $p$, the refined Swan conductor mod $p$ defined in Section 3.2 coincides with that defined in Sections 3.3--3.5. 
\end{sbprop}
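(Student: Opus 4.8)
The plan is to show that the homomorphism $\Rsw$ defined in Section 3.2 by Artin--Schreier--Witt theory and the differential value $-d$ satisfies the two characterizing properties (i) and (ii) of \ref{Rsw4}. By \ref{union}, the homomorphism constructed in Sections 3.3--3.5 has these properties, and (using \ref{gen0}, \ref{gen1} and the injectivity \ref{inj}) a homomorphism satisfying (i) and (ii) is unique; so verifying (i) and (ii) for the Section 3.2 map identifies it with the Section 3.3--3.5 construction for every complete discrete valuation field $K$ of characteristic $p$.

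Property (i), compatibility with an extension $K'/K$ of complete discrete valuation fields, is immediate from the construction in Section 3.2: if $\chi\in F_nH^1(K,\Z/p^s\Z)$ corresponds to a Witt vector $a=(a_{s-1},\dots ,a_0)$ under (\ref{ASW}), then $\chi_{K'}$ corresponds to the image of $a$ in $W_s(K')$; one has $\text{ord}_{K'}(a)=e(K'/K)\,\text{ord}_K(a)$, so $F_nW_s(K)\subset F_{n'}W_s(K')$ with $n'=e(K'/K)n$, and $e(K'/K)[n/p]\le [n'/p]$, so the differential value $-da=-\sum_i a_i^{p^i-1}da_i$ maps correctly. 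This functoriality is already contained in \cite{IL}, \cite{Ma}, \cite{Bo2}.

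For property (ii) we may assume $F$ is an $r$-dimensional local field of characteristic $p$. Since both sides of the identity are additive in $\chi$ and $H^1(K,\Q_p/\Z_p)=\cup_s H^1(K,\Z/p^s\Z)$, we reduce to $\chi$ represented by $a\in F_nW_s(K)$ and to a generator $\alpha=x\otimes d\log(u_1)\wedge\dots\wedge d\log(u_r)$ of ${\frak m}_K^{m+1}/{\frak m}_K^{n+1}\otimes_{O_K}\Omega^r_{O_K}(\log)$, so that $E(\alpha)=\{E(x),u_1,\dots ,u_r\}$. First, $\Res_F\circ R_b$ on ${\frak m}_K^{1-(n-m)}/{\frak m}_K\otimes_{O_K}\Omega^{r+1}_{O_K}(\log)$ agrees with the map induced by the iterated residue $\Res_K\colon\Omega^{r+1}_K\to\F_p$ obtained by applying the construction of \ref{rdl} to the $(r+1)$-dimensional local field $K$, since the Cartier operator $C^b$ is absorbed by (\ref{ResC}); hence the right-hand side of (ii) equals $\Res_K(\alpha\wedge(-da))$. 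The left-hand side $\chi(E(\alpha))$ is computed by the explicit reciprocity law for the $(r+1)$-dimensional local field $K$. In the case $s=1$, writing $f=a_0$ so that $\Rsw(\chi)=-df$, this law gives $\chi(\{g_1,\dots ,g_{r+1}\})=\Res_K(f\,d\log(g_1)\wedge\dots\wedge d\log(g_{r+1}))$; from $E'(T)=E(T)-T^{p-1}/(p-1)!$ one gets $d\log(E(x))\equiv dx$ modulo ${\frak m}_K^{p(m+1)}\Omega^1_{O_K}(\log)\subset{\frak m}_K^{n+1}\Omega^1_{O_K}(\log)$ (using $m=[n/p]$, whence $p(m+1)\ge n+1$), so after multiplying by $f\in{\frak m}_K^{-n}$ the error term lies in ${\frak m}_K\Omega^{r+1}_{O_K}(\log)$ and is killed by $\Res_K$, giving $\chi(E(\alpha))=\Res_K(f\,dx\wedge d\log(u_1)\wedge\dots\wedge d\log(u_r))$. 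Finally, since $\Res_K$ annihilates exact forms (again by (\ref{ResC}), applied to $K$) and $d\big(fx\,d\log(u_1)\wedge\dots\wedge d\log(u_r)\big)=(f\,dx+x\,df)\wedge d\log(u_1)\wedge\dots$, an integration by parts turns this into the right-hand side of (ii); it is exactly here that the minus sign built into $-d$ compensates the sign produced by reordering $df$ past the $d\log(u_i)$, as anticipated in Section 3.2.

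For general $s$ one proceeds in the same manner from the Witt-vector explicit reciprocity law (Brylinski, Kato), whose class modulo $U^{n+1}K^M_{r+1}(K)$ reduces to the contribution of $-\sum_i a_i^{p^i-1}da_i$; alternatively one argues by induction on $s$ using the Verschiebung and Frobenius exact sequences for $W_s$ together with the behaviour of the Brylinski filtration under them. This general-$s$ step is the main obstacle: one must make the explicit Artin--Schreier--Witt reciprocity law (or the induction) precise enough to isolate exactly the differential value $-\sum_i a_i^{p^i-1}da_i$ modulo level $n+1$, while checking that every remaining term lands in ${\frak m}_K\Omega^{r+1}_{O_K}(\log)$ and is therefore annihilated by $\Res_K$. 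The sign bookkeeping (orientations in the wedge products and the normalization in the residue pairing) also requires care, but it is forced once the case $s=1$ is settled.
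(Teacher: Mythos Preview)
Your approach is essentially the same as the paper's: reduce to the case where $F$ is an $r$-dimensional local field, and verify property (ii) of \ref{Rsw4} by computing $\chi(E(\alpha))$ via the explicit reciprocity law, using $d\log E(x)\equiv dx$ (3 of \ref{exponential}) and integration by parts. Your $s=1$ computation is correct.

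The one real difference is that you leave general $s$ as an acknowledged obstacle, whereas the paper handles all $s$ at once. The device is the lifting reviewed in \ref{charpd}: one chooses a $\Z/p^s\Z$-flat ring $P$ with $P/pP=K$ (built from $W_s(\F_q)$ by iterated Laurent series), together with the ring map $\theta:W_s(K)\to P$, $(a_{s-1},\dots,a_0)\mapsto\sum_i p^{s-1-i}\tilde a_i^{p^i}$. The reciprocity map is then described by $\chi(\{y_1,\dots,y_d\})=\Res_P(\theta(f)\,d\log\tilde y_1\wedge\dots\wedge d\log\tilde y_d)$ with values in $\Z/p^s\Z$, and the key identity is (\ref{Wittd}):
\[
d\theta(f)=p^{s-1}\sum_i \tilde a_i^{\,p^i-1}d\tilde a_i\quad\text{in }\Omega^1_P.
\]
With this in hand, your integration-by-parts argument goes through verbatim in $\Omega^d_P$ (where $\Res_P$ kills exact forms), and the factor $p^{s-1}$ exactly accounts for the inclusion $\F_p\hookrightarrow\Z/p^s\Z$, $1\mapsto p^{s-1}$. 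So there is no separate induction on $s$ needed; the Witt-vector reciprocity law you allude to \emph{is} this $\theta$-lifted residue formula, and (\ref{Wittd}) is precisely the statement that isolates $-\sum_i a_i^{p^i-1}da_i$ for you.
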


\begin{sbpara}\label{charpd}

To prove \ref{charp=}, we review the definition of the reciprocity map $K^M_d(K)\to \Gal(K^{\ab}/K)$ of a $d$-dimensional local field of characteristic $p$. 

Let $K$ be a field of characteristic $p>0$, let $s\geq 1$,  and let $P$ be a commutative ring over $\Z/p^s\Z$ which is flat over $\Z/p^s\Z$ such that $P/pP=K$. We have a well-defined ring homomorphism $$\theta: W_s(K)\to P\;;\; (a_{s-1},\dots, a_0)\mapsto \sum_i   p^{s-1-i} \tilde a_i^{p^i}$$
where $\tilde a_i$ is a lifting of $a_i$ to $P$. In $\Omega^1_P$, we have 
\begin{equation} 
d\theta(a_{s-1}, \dots, a_0)= p^{s-1}\sum_i {\tilde a}_i^{p^i-1}d{\tilde a}_i. 
\label{Wittd}
\end{equation}

Now let $K$ be a $d$-dimensional local field of characteristic $p>0$.  The reciprocity map $K^M_d(K)\to \Gal(K^{\ab}/K)$ is defined as follows (\cite{KK1} Chap. 3). Take an isomorphism 
\begin{equation}
K\cong \F_q((T_1))\dots ((T_d))
\label{dlisom}
\end{equation}
 of $d$-dimensional local fields and identify them.
Define rings $P_i$ ($0\leq i \leq d$) inductively as $P_0=W_s(\F_q)$, $P_i= P_{i-1}[[T_i]][T_i^{-1}]$ ($1\leq i\leq d$). Let $P=P_d$. So $P$ is $\Z/p^s\Z$-flat and $P/pP=K$. 
Let $\Res_P :\Omega^d_P\to \Z/p^s\Z$ be the composition
$$\Omega^d_P =\Omega^d_{P_d}\to \Omega^{d-1}_{P_{d-1}}\to \dots \to \Omega^0_{P_0} =W_s(\F_q) \overset{\text{trace}}\to \Z/p^sZ$$
where $\Omega^i_{P_i} \to \Omega^{i-1}_{P_{i-1}}$ ($1\leq i\leq d$) is the map $$\sum_{i\gg -\infty} T^i\omega_i d\log(T_i) \mapsto \omega_0  \quad (\omega_i\in \Omega^{i-1}_{P_{i-1}}).$$ $\Res_P$ kills the image of $d: \Omega^{d-1}_P \to \Omega^d_P$. 

Then the reciprocity map $K^M_d(K)\to \Gal(K^{\ab}/K)$ (which is independent of the choice of the isomorphism (\ref{dlisom}))  is characterized by the following property: 
 Let  $\chi\in H^1(K, \Z/p^s\Z)$ and assume that $\chi$ is the image of $f\in W_s(K)$. Then $\chi$ sends  
 $\{y_1, \dots, y_d\}\in K^M_d(K)$ ($y_i\in K^\times$) to  $\Res_P(\theta(f)d\log(\tilde y_1)\wedge \dots \wedge d\log(\tilde y_d))$ where $\tilde y_i$ is a filting of $y_i$ to $P$. 

\end{sbpara}

\begin{sbpara}\label{pfcharp=} We prove \ref{charp=}. We may assume that $K$ is a $d$-dimensional local field of characteristic $p$ with $d\geq 1$.

Let $m=\max(n-e_K, [n/p])$ and let $x\in {\frak m}_K^{m+1}$, $y_i\in K^\times$ ($1\leq i\leq d-1$). Let the notation be as in \ref{charpd}.
We have 
\begin{align}
&\chi(\{E(x), y_1, \dots, y_{d-1}\})=
\Res_P(\theta(f) d\log E(\tilde x) \wedge d\log(\tilde y_1) \wedge \dots \wedge d\log(\tilde y_{d-1}))\nonumber \\
&=\Res_P( \theta(f) d\tilde x \wedge d\log(\tilde y_1) \wedge \dots \wedge d\log(\tilde y_{d-1}))\quad (\text{here we used 3 of \ref{exponential}})
\nonumber\\
&= \Res(d (\theta(f)\tilde x d\log(\tilde y_1) \wedge \dots \wedge d\log(\tilde y_{d-1}))) -\Res_P(\tilde x d\theta(f) \wedge d\log(\tilde y_1) \wedge \dots \wedge d\log(\tilde y_{d-1}))\nonumber\\
&=-\Res_P( \tilde x d\theta(f) \wedge d\log(\tilde y_1) \wedge \dots \wedge d\log(\tilde y_{d-1})).\nonumber
\end{align}
Write $f=(f_{s-1}, \dots, f_0)$ with $f_i\in K$. By (\ref{Wittd}), $\Res_P( \tilde x d\theta(f) \wedge d\log(\tilde y_1) \wedge \dots \wedge d\log(\tilde y_{d-1}))$ is the image of $-\Res_F(x\cdot \sum_{i=0}^{s-1} f_i^{p^i-1}df_i \wedge d\log(y_1)\wedge \dots d\log(y_{d-1}))\in \F_p$ under the homomorphism $\F_p \overset{\subset}\to  \Z/p^s\Z$ which sends $1$ to $p^{s-1}$.

\end{sbpara}

\subsection{Applications}

A non-logarithmic version of the following result is obtained in \cite{Bo2}. 
\begin{sbthm}\label{thmB} Let $K$ be a complete discrete valuation field whose  residue field is of characteristic $p>0$, and let $\chi\in H^1(K, \Q_p/\Z_p)$. Then 
$$Sw(\chi)= \sup_L Sw(\chi_L)/e(L/K)$$
where $L$ ranges over cdvf over $K$ with perfect residue fields. 
\end{sbthm}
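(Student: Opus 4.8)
The plan is to prove the two inequalities separately. The bound $\sup_L\Sw(\chi_L)/e(L/K)\le\Sw(\chi)$ is the easy half: for any extension of complete discrete valuation fields $L/K$ one has $\chi\in F_nH^1(K,\Q_p/\Z_p)\Rightarrow\chi_L\in F_{e(L/K)n}H^1(L,\Q_p/\Z_p)$, as recorded in \ref{Rsw4} and proved in \cite{KKs}, i.e. $\Sw(\chi_L)\le e(L/K)\Sw(\chi)$. So the work is, setting $n:=\Sw(\chi)$, to produce extensions $L$ with \emph{perfect} residue field for which $\Sw(\chi_L)/e(L/K)$ is as close to $n$ as we like. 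If $n=0$ then $\chi$ is unramified and $\Sw(\chi_L)=0$ for all $L$, so assume $n\ge 1$. Fix a prime element $\pi$ of $K$ and a $p$-base $(b_\lambda)_\lambda$ of $F$ with lifts $\tilde b_\lambda\in O_K$. By \ref{hatdif} the $F$-space $\Omega^1_{O_K}(\log)\otimes_{O_K}F$ is free on the classes of the $d\tilde b_\lambda$ and of $d\log\pi$, and since $\Sw(\chi)=n$, the element $\Rsw(\chi)\in{\frak m}_K^{-n}/{\frak m}_K^{-m}\otimes_{O_K}\Omega^1_{O_K}(\log)$ has ``exact pole order $n$'' in the sense of \ref{m<i<n}(2): its leading term is $\pi^{-n}\otimes\bar\eta$ with $\bar\eta=\sum_\lambda\bar c_\lambda\,\overline{d\tilde b_\lambda}+\bar c_0\,\overline{d\log\pi}\neq 0$ for some $\bar c_\lambda,\bar c_0\in F$. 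Throughout I will compute $\Rsw$ of pullbacks from the compatibility (i) of \ref{Rsw4} and read off Swan conductors from pole orders via \ref{m<i<n}(2); the basic mechanism is that over a field $L$ with perfect residue field $\Omega^1_{O_L}(\log)\otimes F_L$ is one-dimensional over $F_L$, spanned by $d\log(\text{uniformizer})$ (as $\Omega^1_{F_L}=0$), and moreover $dx\in{\frak m}_L\,\Omega^1_{O_L}(\log)$ for every $x\in O_L$ (in mixed characteristic also using $p\in{\frak m}_L$).

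When $\bar c_0\neq 0$, take $L$ to be the completion of a maximal unramified extension of $K$, so $e(L/K)=1$ and the residue field is algebraically closed, hence perfect. Then every $d\tilde b_\lambda$ maps into ${\frak m}_L\,\Omega^1_{O_L}(\log)$ while $d\log\pi$ survives, so $\Rsw(\chi_L)$ still has leading term $\pi^{-n}\otimes\bar c_0\,\overline{d\log\pi}\neq 0$, whence $\Sw(\chi_L)=n=e(L/K)\Sw(\chi)$ and the supremum is attained.

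When $\bar c_0=0$, choose $\lambda_0$ with $\bar c_{\lambda_0}\neq 0$. For $q=p^N$ with $N$ large I will build a complete discrete valuation field $L=L_q$ with perfect residue field together with an embedding $\Phi\colon K\hookrightarrow L$ with $e(L/K)=q$, a uniformizer $\varpi$ of $L$ with $\Phi(\pi)=\varpi^q$, and $\Phi(\tilde b_{\lambda_0})\equiv s+\varpi\pmod{{\frak m}_L^2}$, $\Phi(\tilde b_\lambda)\equiv s_\lambda\pmod{{\frak m}_L^2}$ for $\lambda\neq\lambda_0$, where $s,s_\lambda\in O_L$ are residue representatives. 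In equal characteristic one takes $L=F_L((\varpi))$ for $F_L$ a perfect field (for instance the algebraic closure of $F(u)$ for one new variable $u$) and defines $\Phi$ by the substitution $\pi\mapsto\varpi^q$, $\tilde b_{\lambda_0}\mapsto u+\varpi$, $\tilde b_\lambda\mapsto b_\lambda$, which converges coefficient-by-coefficient in $\varpi$; in mixed characteristic the same is done one level up, over the Cohen ring of such an $F_L$. Granting $\Phi$, push the leading term $\pi^{-n}\otimes\sum_\lambda\bar c_\lambda\,\overline{d\tilde b_\lambda}$ of $\Rsw(\chi)$ forward: $\Phi(\pi^{-n})=\varpi^{-qn}$; for $\lambda\neq\lambda_0$, $d\Phi(\tilde b_\lambda)\in{\frak m}_L^2\,\Omega^1_{O_L}(\log)$ by the normalization; for $\lambda_0$, $d\Phi(\tilde b_{\lambda_0})\equiv\varpi\,d\log\varpi\pmod{{\frak m}_L^2\,\Omega^1_{O_L}(\log)}$ since $ds\in{\frak m}_L\,\Omega^1_{O_L}(\log)$; and the $d\log\pi$-direction contributes nothing through pole order $qn-1$ (in equal characteristic $q\,d\log\varpi=0$; in mixed characteristic it has pole order $<qn-1$). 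Hence $\Rsw(\chi_L)$ has a nonzero term $\bar\Phi(\bar c_{\lambda_0})\,\varpi^{-qn+1}\otimes\overline{d\log\varpi}$ at pole order $qn-1$ and nothing at pole order $qn$, while the sub-leading terms of $\Rsw(\chi)$ reach only pole orders $\le q(n-1)<qn-1$. Taking $N$ large enough that $qn-1>m_L=\max(qn-e_L,[qn/p])$, \ref{m<i<n}(2) yields $\Sw(\chi_L)=qn-1$, so $\Sw(\chi_L)/e(L/K)=n-1/q\to n$, and therefore $\sup_L\Sw(\chi_L)/e(L/K)=n=\Sw(\chi)$.

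The step I expect to be the genuine obstacle is the construction and verification of the twisted embedding $\Phi$ in the required generality — making $\Phi|_F$ a bona fide field embedding with the prescribed value on $\tilde b_{\lambda_0}$ while keeping the residue field of $L$ perfect, and carrying this out in mixed characteristic through a Cohen ring. It may be convenient first to reduce to the case that $F$ is finitely generated over $\F_p$, where $F$ is purely transcendental over a finite field and $\Phi$ becomes completely explicit; everything else is pole-order bookkeeping resting on the compatibility (i) of \ref{Rsw4} and on \ref{m<i<n}.
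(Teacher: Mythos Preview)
Your overall strategy is correct and matches the paper's: split on whether the $d\log\pi$-coefficient of the leading term of $\Rsw(\chi)$ is a unit, and in each case construct an $L$ with perfect residue field that realizes (or approaches) the supremum. The computations via \ref{Rsw4}(i) and \ref{m<i<n}(2) are the right tool.

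There is, however, a genuine error in your Case~1. The completion of the maximal unramified extension $K^{\mathrm{ur}}$ has residue field the \emph{separable} closure $F^{\mathrm{sep}}$, not the algebraic closure; when $F$ is imperfect these differ and $F^{\mathrm{sep}}$ is not perfect. So your proposed $L$ does not have perfect residue field. The fix is easy: take $L$ with $e(L/K)=1$ and residue field the perfection $F^{1/p^\infty}$ (in equal characteristic $L=F^{1/p^\infty}((\pi))$; in mixed characteristic one adjoins compatible $p$-power roots of lifts of a $p$-base and completes). The paper simply says ``any $L/K$ with $e(L/K)=1$ and perfect residue field,'' leaving this standard construction implicit.

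In Case~2 you correctly identify the delicate point as constructing the embedding $\Phi$. The paper sidesteps this entirely by building $L_t$ \emph{as an extension of $K$} rather than as an abstract target: for $t\ge 2$ it adjoins $\pi^{1/t}$ together with compatible $p$-power roots $b_{\lambda,n}$ of the $b_\lambda$ (for $\lambda\neq\mu$) and of $b_\mu+\pi^{1/t}$ (for the distinguished index $\mu$), and then completes. This makes the residue field the perfection of $F$, the uniformizer $\pi^{1/t}$, and the differential computation immediate: $db_\lambda\mapsto 0$ for $\lambda\neq\mu$ while $db_\mu=d(b_{\mu,0}-\pi^{1/t})=-\pi^{1/t}\,d\log(\pi^{1/t})$, giving $\Rsw(\chi_{L_t})=\pi^{-n}\otimes(a_\mu\pi^{1/t}+ct)\,d\log(\pi^{1/t})$ and hence $\Sw(\chi_{L_t})/t=n-1/t$. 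This is both simpler and more general than your $q=p^N$ construction, and it avoids the question of whether your abstract $\Phi$ (which must respect all algebraic relations in $F$, not just the $p$-base) actually exists. Your formal-smoothness intuition for $\F_p\to F$ would eventually make $\Phi$ work modulo $\mathfrak m_L^2$, but the paper's direct construction is much cleaner.
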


\begin{pf} Write $\Rsw(\chi)=  \pi^{-n} \otimes (a + c d\log(\pi))$ where $\pi$ is a prime element of $K$, $n=Sw(\chi)$, $a\in \Omega^1_{O_K}(\log)$ and $c\in O_K$. If $c$ is a unit, any $L/K$ with $e(L/K)=1$ (the residue field of $L$ is perfect) satisfies $\Rsw(\chi_L)= \pi^{-n}\otimes c d\log(\pi)$ and hence $Sw(\chi_L)= Sw(\chi)$. Assume  $c$ is not a unit. Write  $a= \sum_{\lam\in \Lambda} a_{\lam} db_{\lam}$ where $(b_{\lam})_{\lam\in \Lambda}$ is a lifting of a $p$-base of $F$ to $O_K$ and $a_{\lam}\in O_K$. Then  $a_{\mu}$ is a unit for some $\mu\in \Lambda$. For an integer  $t\geq 2$, let  $L_t$  be the completion of the extension of $K$ obtained by adding $b_{\lam, n}$ ($\lam\in \Lambda$, $n\geq 0$)   such that $b_{\lam,n}=b_{\lam,n+1}^p$ for any $\lam\in \Lambda$ and $n\geq 0$, 
$b_{0,\lam}=b_{\lam}$ for $\lam\neq \mu$, and $b_{\mu,0}=b_{\mu}+\pi^{1/t}$ for a $t$-th root $\pi^{1/t}$ of $\pi$.
 Then the residue field of $L_t$ is perfect, $\pi^{1/t}$ is a prime element in $L_t$, $\Rsw(\chi_{L_t})=\pi^{-n}\otimes  (a_{\mu}\pi^{1/t}+ ct)d\log(\pi^{1/t})$,  and hence we have $e(L_t/K)^{-1}Sw(\chi_{L_t})= Sw(\chi)- t^{-1}$. 
\end{pf}

We improve the previous result of the second author in \cite{IL}. 

\begin{sbpara}\label{thmIL0}   Let $L/K$ be a separable extension of complete discrete valuation fields whose residue fields are of characteristic $p>0$, and assume that $K$ has perfect residue field. (The extension $L/K$ need not be a finite extension.) 

Let ${\hat \Omega}^1_{O_L/O_K}(\log)$ be the ${\frak m}_L$-adic completion of the cokernel of $O_L\otimes_{O_K} \Omega^1_{O_K}(\log) \to \Omega^1_{O_L}(\log)$. Then by \ref{hatdif}, the $O_L$-torsion part ${\hat \Omega}^1_{O_L/O_K}(\log)_{\text{tor}}$ of ${\hat \Omega}^1_{O_L/O_K}(\log)$ is of finite length as an $O_L$-module.  Let  $\delta_\mathrm{tor}(L/K)\in \Z_{\geq 0}$ be the length of the $O_L$-module ${\hat \Omega}^1_{O_L/O_K}(\log)_{\text{tor}}$. 
\end{sbpara}

\begin{sbthm}\label{thmIL1} Let $L/K$ and   $\delta_\mathrm{tor}(L/K)$ be as in \ref{thmIL0}.  Assume  $$\delta_\mathrm{tor}(L/K)<e_L.$$ If
	$\chi \in H^1(K,\Q_p/\Z_p)$ is such that
	$$\Sw (\chi)> \dfrac{p}{p-1}\dfrac{\delta_\mathrm{tor}(L/K)}{e(L/K)}.$$ Then
	$$\Sw (\chi_L)= e(L/K)\Sw(\chi)-\delta_\mathrm{tor}(L/K).$$
\end{sbthm}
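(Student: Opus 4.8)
The plan is to reduce the statement to a computation with the refined Swan conductor mod $p$ of $\chi$, exploiting the compatibility property (i) of \ref{Rsw4} together with the explicit description of $\hat\Omega^1_{O_L/O_K}(\log)$ coming from \ref{hatdif} and \ref{KLdif}. First I would set $n=\Sw(\chi)$ and $\delta=\delta_\mathrm{tor}(L/K)$, and write $\Rsw(\chi)=\pi^{-n}\otimes\omega$ for a prime element $\pi$ of $K$ and some $\omega\in\Omega^1_{O_K}(\log)$ whose image in $\Omega^1_{O_K}(\log)/\mathfrak m_K\Omega^1_{O_K}(\log)$ is nonzero (this nonvanishing is exactly the injectivity in part 1 of \ref{m<i<n}, i.e. it records $\Sw(\chi)=n$ via part 2 of \ref{m<i<n}). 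Since $K$ has perfect residue field, \ref{KLdif} gives that $O_L\otimes_{O_K}\hat\Omega^1_{O_K}(\log)\to\hat\Omega^1_{O_L}(\log)$ is injective, so $\hat\Omega^1_{O_L/O_K}(\log)$ is genuinely a quotient with torsion submodule of length $\delta<e_L$; concretely, there is a prime element $\pi_L$ of $L$ with $\mathrm{ord}_L(\pi)=e(L/K)$, and the image of $\omega$ in $\hat\Omega^1_{O_L}(\log)$ can be written $\pi_L^{\delta}\cdot\eta$ where $\eta$ is part of a basis (its reduction mod $\mathfrak m_L$ is nonzero) — this is the content of the structure theorem \ref{hatdif} applied to $O_L$, once one checks that the torsion comes precisely from the relation defining $\pi$ over $O_K$ and that its $O_L$-length is $\delta$.

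The key step is then to compute $\Rsw(\chi_L)$ using compatibility with the homomorphism of cdvf $K\to L$, property (i) of \ref{Rsw4}. With $n'=e(L/K)n$ and $m'=\max(n'-e_L,[n'/p])$, the image of $\Rsw(\chi)$ under $\mathfrak m_K^{-n}\otimes\Omega^1_{O_K}(\log)\to\mathfrak m_L^{-n'}\otimes\Omega^1_{O_L}(\log)$ is $\pi^{-n}\otimes\omega = \pi_L^{-n'}\otimes(\pi_L^{\delta}\eta)$, i.e. it has "order" exactly $-n'+\delta=-(e(L/K)n-\delta)$ in the target. The hypothesis $\Sw(\chi)>\frac{p}{p-1}\frac{\delta}{e(L/K)}$ translates, after multiplying by $e(L/K)$ and rearranging, into $e(L/K)n-\delta>n'/p$, equivalently $e(L/K)n-\delta>m'$ once one also uses $\delta<e_L$ to control the $n'-e_L$ term; this is precisely the condition ensuring that the element $\pi_L^{-n'+\delta}\otimes\eta$ is nonzero in the quotient $\mathfrak m_L^{-n'}/\mathfrak m_L^{-m'}\otimes\Omega^1_{O_L}(\log)$ and lies in the image of $\mathfrak m_L^{-i'}\otimes\Omega^1_{O_L}(\log)$ with $i'=e(L/K)n-\delta>m'$ but not in the image of $\mathfrak m_L^{-i'+1}\otimes\Omega^1_{O_L}(\log)$. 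Part 2 of \ref{m<i<n} applied over $L$ then yields $\Sw(\chi_L)=i'=e(L/K)\Sw(\chi)-\delta_\mathrm{tor}(L/K)$.

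The main obstacle I expect is the bookkeeping in the middle step: showing that the image of $\omega$ in $\hat\Omega^1_{O_L}(\log)$ factors as $\pi_L^{\delta}\cdot(\text{unit-like basis element})$ with the exponent being exactly $\delta_\mathrm{tor}(L/K)$, and not merely $\ge$ or $\le$ it. This requires unwinding the presentation of $\hat\Omega^1_{O_L}(\log)$ as in the proof of \ref{KLdif} (the relation $(f'(\pi_L)\pi_L/a_0)d\log(\pi_L)+\cdots=0$, or its characteristic-$p$ analogue), identifying the torsion submodule with the $O_L$-module generated by the image of $d\pi$ modulo the basis differentials $d\tilde b_\lambda$, and matching its length with $\delta$. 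One must also verify the two inequalities $e(L/K)n-\delta>[n'/p]$ and $e(L/K)n-\delta>n'-e_L$ separately: the first is the stated hypothesis, and the second follows from $\delta<e_L$ together with $n'-e_L<n'\le e(L/K)n$ — but one should be careful that these give $i'=e(L/K)n-\delta$ in the range $(m',n']$ so that part 2 of \ref{m<i<n} genuinely applies. Once the factorization and the inequalities are in hand, the conclusion is immediate from \ref{m<i<n}.
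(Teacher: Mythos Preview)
Your proposal is correct and follows essentially the same approach as the paper's proof: write $\Rsw(\chi)$ as $\pi^{-n}$ times a generator of $\hat\Omega^1_{O_K}(\log)$, use \ref{hatdif} and \ref{KLdif} to see that its image in $\mathfrak m_L^{-n'}/\mathfrak m_L^{-m'}\otimes\Omega^1_{O_L}(\log)$ has order exactly $-(n'-\delta)$, verify from the hypotheses that $n'-\delta>m'$, and conclude via compatibility (i) of \ref{Rsw4} and part 2 of \ref{m<i<n}. The paper phrases the middle step more abstractly (``the image of this generator belongs to the image of $\mathfrak m_L^{-n'+\delta}$ but not $\mathfrak m_L^{-n'+\delta+1}$'') while you make the factorization $\omega\mapsto\pi_L^{\delta}\eta$ explicit, but the content is the same; your flagged ``main obstacle'' is exactly the point the paper handles by a bare citation of \ref{hatdif} and \ref{KLdif}.
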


\begin{sbrem} 
In the case $K$ is of characteristic $p$, this result is proved in \cite{IL}. In \cite{IL}, it is stated without assuming $L/K$ is separable, but this was a mistake. 
\end{sbrem}

  When $K$ is of mixed characteristic and $\delta_{\mathrm{tor}}(L/K)<e_L$, Theorem \ref{thmIL1}  gives a stronger result than the following result  obtained in \cite{IL}. 
  
  \begin{sbthm}\label{thmIL2} (\cite{IL})
	Let $L/K$ be an  extension of complete discrete valuation fields of mixed characteristic. Assume that $K$ has perfect residue field of characteristic $p>0$ and  
	$\chi\in H^1(K,\Q_p/\Z_p)$ is such that 
	$$\Sw (\chi) \geq
	\dfrac{2e_K}{p-1}+\dfrac{1}{e(L/K)}+\left\lceil\dfrac{\delta_\mathrm{tor}(L/K)}{e(L/K)}\right\rceil.$$ Then $$\Sw (\chi_L) = e(L/K) \Sw (\chi) - \delta_\mathrm{tor}(L/K).$$
\end{sbthm}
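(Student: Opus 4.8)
The plan is to reduce everything to the compatibility property (i) of $\Rsw$ in \ref{Rsw4}, the pole-order characterisation of $\Sw$ in \ref{m<i<n}, and the structural lemmas \ref{hatdif} and \ref{KLdif} for $\hat\Omega^1(\log)$. Put $n=\Sw(\chi)$, $e=e(L/K)$, $n'=en$, $m=\max(n-e_K,[n/p])$, $m'=\max(n'-e_L,[n'/p])$ and $\delta=\delta_\mathrm{tor}(L/K)$; recall $e_L=e\,e_K$. First I would record that the two hypotheses are equivalent to the single statement $m'<n'-\delta\le n'$: indeed $n'-\delta>n'-e_L$ is precisely $\delta<e_L$, while $\Sw(\chi)>\frac{p}{p-1}\frac{\delta}{e}$ rearranges to $p(en-\delta)>en$, i.e. $n'-\delta>n'/p\ge[n'/p]$; and $\delta\ge0$ gives $n'-\delta\le n'$. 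In particular $m'<n'$.

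Next I would determine the shape of $\Rsw(\chi)$. Since the residue field $F$ of $K$ is perfect, \ref{hatdif} shows that $\hat\Omega^1_{O_K}(\log)$ is cyclic, generated by $d\log\pi_K$ for a prime element $\pi_K$, with annihilator $\frak m_K^{a}$ where $a=\infty$ if $K$ has characteristic $p$ and $a\ge e_K$ otherwise; as $a\ge n-m$, the module $\frak m_K^{-n}/\frak m_K^{-m}\otimes_{O_K}\Omega^1_{O_K}(\log)$ is free of rank one over $O_K/\frak m_K^{n-m}$ on $\pi_K^{-n}\otimes d\log\pi_K$, so $\Rsw(\chi)=\pi_K^{-n}\otimes c\,d\log\pi_K$ for some $c\in O_K$. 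Because $\Sw(\chi)=n>m$ we have $\chi\notin F_mH^1(K,\Q_p/\Z_p)$, hence $\Rsw(\chi)\ne0$ by part 1 of \ref{m<i<n}, and then part 2 of \ref{m<i<n} forces the pole order of $\Rsw(\chi)$ to be exactly $n$, i.e. $c\in O_K^\times$. Writing $\pi_K=w\pi_L^{e}$ with $w\in O_L^\times$ and $\pi_L$ a prime element of $L$, so that $\pi_K^{-n}=w^{-n}\pi_L^{-n'}$ and $d\log\pi_K=d\log w+e\,d\log\pi_L$, property (i) of \ref{Rsw4} then gives $\Rsw(\chi_L)=\pi_L^{-n'}\otimes c'g$ in $\frak m_L^{-n'}/\frak m_L^{-m'}\otimes_{O_L}\Omega^1_{O_L}(\log)$, where $c'=cw^{-n}\in O_L^\times$ and $g$ is the image of $d\log\pi_K$ in $M:=\hat\Omega^1_{O_L}(\log)$.

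The main step is to show that the $\frak m_L$-adic order $v_L(g):=\max\{k:g\in\frak m_L^{k}M\}$ equals $\delta$. By \ref{KLdif} the canonical map $O_L\otimes_{O_K}\hat\Omega^1_{O_K}(\log)\to M$ is injective; its image is $O_Lg$, so $M/O_Lg=\hat\Omega^1_{O_L/O_K}(\log)$, whose torsion submodule has length $\delta$ by the definition in \ref{thmIL0}. I would then combine this with the description of $M$ in \ref{hatdif} — a (completed) free $O_L$-module if $K$ has characteristic $p$, and a (completed) free module plus a single cyclic torsion summand $O_L/\frak m_L^{a_L}$ in mixed characteristic — together with the elementary fact that for such $M$ and a cyclic submodule $O_Lg$ one has $\mathrm{length}\,((M/O_Lg)_\mathrm{tor})=v_L(g)$. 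Concretely: in characteristic $p$, write $g=\pi_L^{v}h$ with $h$ primitive; then $M/O_Lh$ is torsion-free and $M/O_Lg$ has torsion $O_L/\frak m_L^{v}$, so $v=v_L(g)=\mathrm{length}((M/O_Lg)_\mathrm{tor})$. In mixed characteristic $g$ is torsion (its annihilator is $(\frak m_K^{a})O_L\ne0$), hence lies in the torsion summand $O_L/\frak m_L^{a_L}$; if $c:=\mathrm{length}(O_Lg)$ then $g$ is a unit times $\pi_L^{a_L-c}$ there, so $v_L(g)=a_L-c$, while $M/O_Lg$ is the free part plus $O_L/\frak m_L^{a_L-c}$, of torsion length $a_L-c$. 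Thus $v_L(g)=\delta$ in all cases.

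Finally I would assemble. Under the isomorphism $\frak m_L^{-n'}/\frak m_L^{-m'}\otimes_{O_L}\Omega^1_{O_L}(\log)\cong M/\frak m_L^{n'-m'}M$ (multiplication by $\pi_L^{n'}$, using $\hat\Omega^1_{O_L}(\log)/\frak m_L^{n'-m'}\hat\Omega^1_{O_L}(\log)=\Omega^1_{O_L}(\log)/\frak m_L^{n'-m'}\Omega^1_{O_L}(\log)$), $\Rsw(\chi_L)$ corresponds to $c'g\bmod\frak m_L^{n'-m'}M$; since $c'$ is a unit and $v_L(g)=\delta<n'-m'$ this element has $\frak m_L$-adic order exactly $\delta$, which translates into: $\Rsw(\chi_L)$ lies in the image of $\frak m_L^{-(n'-\delta)}\otimes_{O_L}\Omega^1_{O_L}(\log)$ but not in that of $\frak m_L^{-(n'-\delta)+1}\otimes_{O_L}\Omega^1_{O_L}(\log)$. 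Since $\chi_L\in F_{n'}H^1(L,\Q_p/\Z_p)$ and $m'<n'-\delta\le n'$, part 2 of \ref{m<i<n} applied over $L$ yields $\Sw(\chi_L)=n'-\delta=e(L/K)\Sw(\chi)-\delta_\mathrm{tor}(L/K)$. The step I expect to be hardest is the identification $v_L(d\log\pi_K)=\delta_\mathrm{tor}(L/K)$ of the previous paragraph: this is where the differential invariant $\delta_\mathrm{tor}$ genuinely enters, whereas the rest is bookkeeping with the compatibility and duality properties of $\Rsw$ mod $p$ that are already available (in equal characteristic the whole statement is due to the second author in \cite{IL}; the new input here is the existence of $\Rsw$ mod $p$ in the mixed characteristic case).
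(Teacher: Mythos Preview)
There is a genuine gap: you have proved the wrong theorem. The hypotheses you invoke in your first paragraph --- ``$\delta<e_L$'' and ``$\Sw(\chi)>\frac{p}{p-1}\frac{\delta}{e}$'' --- are the hypotheses of Theorem~\ref{thmIL1}, not of Theorem~\ref{thmIL2}. Theorem~\ref{thmIL2} assumes only the single inequality $\Sw(\chi)\ge \frac{2e_K}{p-1}+\frac{1}{e(L/K)}+\lceil\delta/e(L/K)\rceil$ and does \emph{not} assume $\delta<e_L$. Your crucial reduction ``$n'-\delta>n'-e_L$ is precisely $\delta<e_L$'' therefore uses an assumption that is simply absent here; when $\delta\ge e_L$ one has $n'-\delta\le n'-e_L\le m'$, so $\Rsw(\chi_L)$ vanishes in $\frak m_L^{-n'}/\frak m_L^{-m'}\otimes\Omega^1_{O_L}(\log)$ and the $\Rsw$-mod-$p$ argument gives no information on $\Sw(\chi_L)$.

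In fact the paper does not prove \ref{thmIL2} at all: it is quoted from \cite{IL}, whose proof uses different (mixed-characteristic) techniques. What the paper \emph{does} prove is \ref{thmIL1}, and your write-up is essentially that proof (compare the paragraph beginning ``We prove \ref{thmIL1}''): one shows $n'-\delta>m'$ from the hypotheses of \ref{thmIL1}, notes $\Rsw(\chi)$ generates the cyclic module $\frak m_K^{-n}/\frak m_K^{-m}\otimes\Omega^1_{O_K}(\log)$, pushes forward via \ref{hatdif} and \ref{KLdif} to see the image has $\frak m_L$-order exactly $\delta$, and concludes with 2 of \ref{m<i<n}. So your argument is correct as a proof of \ref{thmIL1} and matches the paper's, but it does not address \ref{thmIL2} in the range $\delta\ge e_L$ that \ref{thmIL2} also covers.
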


\begin{sbpara} We prove \ref{thmIL1}. Write  $n= \Sw(\chi)$, $n' = e(L/K)n$, $m = \max(n-e_K,[n/p])$, $m'=\max(n'-e_L, [n'/p])$ and $\delta= \delta_\mathrm{tor}(L/K)$.
 We prove first 
\begin{equation}
\Sw (\chi_L)= e(L/K)\Sw(\chi)-\delta\quad \text{if}\;\;\;	
n'-\delta>m'.
\label{ILthm}
\end{equation}
 The map $\Rsw: F_nH^1(K, \Q_p/\Z_p)\to {\frak m}_K^{-n}/{\frak m}_K^{-m}\otimes_{O_K} \Omega^1_{O_K}(\log)$ sends $\chi$ to a generator of the $O_K$-module $ {\frak m}_K^{-n}/{\frak m}_K^{-m}\otimes_{O_K} \Omega^1_{O_K}(\log)$  which is isomorphic to $O_K/{\frak m}_K^a$ for some $a\geq 1$. 
If $n'-\delta >m'$, by \ref{hatdif} and \ref{KLdif}, the image of this generator in ${\frak m}_L^{-n'}/{\frak m}_L^{-m'}\otimes_{O_L} \Omega^1_{O_L}(\log)$ belongs to the image of ${\frak m}_L^{-n+\delta}/{\frak m}_L^{-m'} \otimes_{O_L} \Omega^1_{O_L}(\log)$ but does not belong to the image of ${\frak m}_L^{-n+\delta+1}/{\frak m}_L^{-m'} \otimes_{O_L} \Omega^1_{O_L}(\log)$. Hence by the 
compatibility with pullback ((i) of \ref{Rsw4}) and the relationship between $\Rsw$ and $\Sw$ (2 of \ref{m<i<n}), we obtain (\ref{ILthm}). 

 If 
$$n>\dfrac{p}{p-1}\dfrac{\delta}{e(L/K)},$$ 
then $e(L/K)n - \delta > e(L/K)np^{-1}$ and hence $n'-\delta>[n'/p]$.
Further, since $\delta< e_L$, we have $n'-\delta>n'-e_L$. It follows that $n'-\delta>m'$ and the formula holds. 

\end{sbpara}

\begin{sbpara} In the case $\delta_\mathrm{tor}(L/K)<e_L$, \ref{thmIL1} in the mixed characteristic case is stronger than \ref{thmIL2}. 
 Indeed, we have that 
$$	\dfrac{p}{p-1}\dfrac{\delta_\mathrm{tor}(L/K)}{e(L/K)}< \dfrac{2e_K}{p-1}+\dfrac{1}{e(L/K)}+\left\lceil\dfrac{\delta_\mathrm{tor}(L/K)}{e(L/K)}\right\rceil.
$$
To see that, observe that 
$$ 
\dfrac{2e_K}{p-1}+\dfrac{1}{e(L/K)}+\left\lceil\dfrac{\delta_\mathrm{tor}(L/K)}{e(L/K)}\right\rceil > \dfrac{e_K}{p-1} + \dfrac{1}{p-1}\dfrac{\delta_\mathrm{tor}(L/K)}{e(L/K)} +\dfrac{\delta_\mathrm{tor}(L/K)}{e(L/K)}> \dfrac{p}{p-1}\dfrac{\delta_\mathrm{tor}(L/K)}{e(L/K)},
$$
so we have an improvement.

\end{sbpara}

\section{Globalization}

\subsection{Module theoretic preparations}\label{s:gl}

In this paper, a simple normal crossing divisor has no multiplicity (it is reduced as a scheme).

\begin{sbprop}\label{dualX} Let $X$ be a regular scheme of finite type over $\Z$ of dimension $d$, let $D$ be a simple normal crossing divisor on $X$, let $p$ be a prime number, and let $E$  be an effective divisor on $X$ whose support is contained in $D$ such that  the scheme $E=\Spec(\cO_X/\cO_X(-E))\subset X$ is of characteristic $p$.

1. The $\cO_E$-module $\cO_E \otimes_{\cO_X} \Omega^1_X(\log D)$ is locally free of  rank $d$.

2. The $\cO_E$-module $\cO_E \otimes_{\cO_X} \Omega^d_X(\log D)(E-D)$ is the canonical dualizing module of  $E$ relative to $\F_p$. 

\end{sbprop}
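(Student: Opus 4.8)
The plan is to reduce both assertions to local computations and to run, in essentially arbitrary dimension, the arguments already used in \ref{hatdif}. Both statements are local on $X$, so by faithful flatness of completion we may replace $\cO_{X,x}$ at a point $x\in E$ by its completion $A$: a complete regular local ring whose residue field $k$ has characteristic $p$, is finitely generated over $\F_p$, and has transcendence degree $s=d-\dim A$ over $\F_p$. Fix a regular system of parameters $t_1,\dots,t_c$ of $A$ ($c=\dim A$) with $D$ cut out by $t_1\cdots t_r$, write the ideal of $E$ in $A$ as $(h)$ with $h=\prod_{i=1}^r t_i^{a_i}$ a monomial (so by hypothesis $p\in(h)$), and lift a $p$-base $b_1,\dots,b_s$ of $k$ to elements $\tilde b_1,\dots,\tilde b_s\in A$.

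For part 1: when $A$ has characteristic $p$ it is $k[[t_1,\dots,t_c]]$, and (the $c$-variable analogue of 1 of \ref{hatdif}) its completed module of log differentials is free on $d\log t_1,\dots,d\log t_r,dt_{r+1},\dots,dt_c,d\tilde b_1,\dots,d\tilde b_s$. When $A$ is of mixed characteristic the Cohen structure theorem --- applied so as to keep $D$ visible --- presents $\Omega^1_{A/\Z}(\log D)$ (completed) as the quotient of the free $A$-module on those same $d$ generators by one relation (trivial if $A$ is unramified over $\Z$; coming from the Eisenstein-type equation of $A$ otherwise), and rewriting that relation in terms of the $d\log t_i$ exactly as in the proof of 3 of \ref{hatdif} shows that all of its coefficients lie in $(p)\subseteq(h)$. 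Thus in either case $\cO_E\otimes_{\cO_X}\Omega^1_X(\log D)=A/(h)\otimes_A\Omega^1_{A/\Z}(\log D)$ is free over $\cO_E$ on the images of those $d$ generators. Being locally free, the sheaf has constant rank along each component of $E$, and at the generic point of a component --- a codimension one point $\eta$ of $X$ whose residue field $F$ is the function field of that component, so that $F$ is finitely generated of transcendence degree $d-1$ over $\F_p$ and $[F:F^p]=p^{d-1}$ --- the rank is $(d-1)+1=d$ by 3 of \ref{hatdif}.

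For part 2: by part 1 the sheaf $\cO_E\otimes_{\cO_X}\Omega^d_X(\log D)(E-D)=\bigl(\wedge^d_{\cO_E}(\cO_E\otimes_{\cO_X}\Omega^1_X(\log D))\bigr)\otimes_{\cO_E}\cO_E(E-D)$ is invertible on $E$. On the other hand $E$, being an effective Cartier divisor in the regular (hence Gorenstein) scheme $X$, is Gorenstein of pure dimension $d-1$; being of finite type over $\F_p$ it carries a dualizing complex $\omega^\bullet_{E/\F_p}$ concentrated in a single degree, and its one nonzero cohomology sheaf $\omega_{E/\F_p}$ is invertible --- this is the canonical dualizing module of the statement. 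Grothendieck duality for the regular codimension one immersion $i\colon E\hookrightarrow X$ gives $\omega^\bullet_{E/\Z}\cong i^*\omega^\bullet_{X/\Z}\otimes_{\cO_E}\cO_E(E)[-1]$ with $\cO_E(E)=i^*\cO_X(E)$, where $\omega^\bullet_{X/\Z}$ is, up to a shift, an invertible sheaf $\omega_{X/\Z}$ because $X$ is Gorenstein over $\Z$; applying the same to the Cartier divisor $\Spec\F_p\hookrightarrow\Spec\Z$ shows $\omega^\bullet_{\F_p/\Z}$ is $\F_p$ up to a shift, hence pulls back to $\cO_E$, so $\omega_{E/\F_p}$ and $\omega_{E/\Z}$ agree up to a shift. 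Passing to nonzero cohomology sheaves, part 2 reduces to the isomorphism $\cO_E\otimes_{\cO_X}\omega_{X/\Z}\cong\cO_E\otimes_{\cO_X}\Omega^d_X(\log D)(-D)$ (then twist both sides by $\cO_E(E)$). This is again local: $X$ being regular of finite type over $\Z$ is locally a complete intersection in a smooth $\Z$-scheme, so $\omega_{X/\Z}$ is locally free with an explicit generator; in equal characteristic $p$, $X$ is smooth over $\F_p$ of dimension $d$ and $\Omega^d_X(\log D)(-D)=\Omega^d_{X/\F_p}\otimes_{\cO_X}\cO_X(D)\otimes_{\cO_X}\cO_X(-D)=\Omega^d_{X/\F_p}=\omega_{X/\F_p}$ already on $X$; in mixed characteristic, the presentation above identifies $\cO_E\otimes_{\cO_X}\Omega^d_X(\log D)(-D)$ with $\cO_E\cdot(dt_1\wedge\cdots\wedge dt_c\wedge d\tilde b_1\wedge\cdots\wedge d\tilde b_s)$ (the twist by $(-D)$ turning each $d\log t_i$ into $dt_i$), which one compares with the generator of $\cO_E\otimes_{\cO_X}\omega_{X/\Z}$ coming from the complete intersection presentation --- the two differ by a factor which is a unit modulo the ideal of $E$, precisely because the obstruction is divisible by $p\in(h)$, again as in the proof of 3 of \ref{hatdif}. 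These local identifications are the ones induced by the residue maps and the fundamental class of the local complete intersection $X/\Z$, so they are independent of the choices and glue to a global isomorphism.

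The hard part will be the mixed-characteristic computation common to both parts: arranging a Cohen presentation of $A$ compatible with $D$ and with the monomial $h$, and verifying that the single correction relation is divisible by $p$, so that it becomes trivial after $\otimes_{\cO_X}\cO_E$ --- this is the higher-dimensional, and in part 2 the $\wedge^d$- and $(E-D)$-twisted, analogue of the computation in the proof of 3 of \ref{hatdif}. A secondary, bookkeeping-level obstacle is matching the degree shifts in the two applications of Grothendieck duality for Cartier divisors and checking that the twist coming from $\Spec\F_p\hookrightarrow\Spec\Z$ is trivial on $E$, together with the routine verifications that $E$ is Gorenstein of pure dimension $d-1$ and of finite type over $\F_p$.
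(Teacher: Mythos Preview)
Your approach is correct but differs from the paper's in the mixed characteristic reduction. You pass to the completion at a closed point, invoke a Cohen-type presentation of $A$ compatible with the regular system cutting out $D$, and then imitate the computation in the proof of \ref{hatdif} to see that the single extra relation in $\hat\Omega^1_A(\log D)$ has all coefficients in $(p)\subset(h)$, hence dies after $\otimes\,\cO_E$. For part 2 you then run Grothendieck duality relative to $\Z$ for the Cartier divisor $E\hookrightarrow X$ and separately track the shift coming from $\Spec\F_p\hookrightarrow\Spec\Z$.

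The paper instead avoids completions and Cohen structure entirely: at a closed point $x$ it produces an unramified morphism $X\to{\bf A}^d_\Z$ using the regular system, factors it \'etale-locally as a closed immersion followed by an \'etale map $X\overset{f}\to Z\overset{g}\to{\bf A}^d_\Z$ (EGA IV 18.4.7), and observes that the defining equation of $X$ in $Z$ has the form $p-T_1^{e(1)}\cdots T_r^{e(r)}\tilde h$. This yields directly an \'etale morphism $E\to E'$ with $E'=\Spec(\F_p[T_1,\dots,T_d]/(T_1^{a(1)}\cdots T_r^{a(r)}))$ sitting inside $X'={\bf A}^d_{\F_p}$, and both assertions for $(X,D,E)$ pull back from the equal-characteristic case $(X',D',E')$, already settled. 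Your route buys self-containment---it reuses the exact mechanism of \ref{hatdif} and needs no \'etale factorization lemma---at the price of making precise the ``Cohen presentation compatible with $D$'' in higher dimension and checking your local isomorphisms for part 2 glue; the paper's route trades that for one citation and gets the reduction to characteristic $p$ in a single \'etale step, so that part 2 follows immediately from $R\cH om_{\cO_{X'}}(\cO_{E'},\Omega^d_{X'})[1]$ without separately juggling the $\F_p/\Z$ shift.
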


\begin{pf} We may assume either $X$ is a scheme over $\F_p$ or $X$ is flat over $\Z$.

Assume first that $X$ is over $\F_p$. Then $X$ is smooth over $\F_p$ and hence $\Omega^1_X(\log D)$ is locally free of rank $d$ and $\Omega^d_X$ is a dualizing module of $X$ relative to $\F_p$. Hence the $\cO_E$-module $\cO_E\otimes_{\cO_X} \Omega^1_X(\log D)$ is locally free of rank $d$ and $R\cH om_{\cO_X}(\cO_E, \Omega^d_X)[1]= \cO_E\otimes_{\cO_X} \Omega_X^d(E) =\cO_E \otimes_{\cO_X} \Omega^d_X(\log D)(E-D)$ is the dualing module of $E$ relative to $\F_p$. 

 Assume $X$ is flat over $\Z$. This case is reduced to the case $X$ is over $\F_p$ as follows.
Let $x$ be a closed point of $X$ and 
let $(t_i)_{1\leq i\leq d}$ be a regular system of parameters of the regular local ring $\cO_{X,x}$ such that $D$ is defined at $x$ by $t_1\dots t_r$ for some $r$ ($0\leq r\leq d$). 
Then $E$ is defined by $t_1^{a(1)}\dots t_r^{a(r)}$ at $x$ for some $a(i)\in \Z_{\geq 0}$ and we have $p=t_1^{e(1)}\dots t_r^{e(r)}h$ for some $e(i)\in \Z_{\geq 0}$ such that $e(i)\geq a(i)$ ($1\leq i\leq r$) and for some $h\in \cO_{X,x}$.
Replacing $X$ by an open neighborhood of $x$ in $X$, we have an unramified  morphism  $X\to {\bf A}^d_\Z$ given by $(t_i)_{1\leq i\leq d}$. 
Replacing $X$ by an open neighborhood of $x$ in $X$, this morphism factors as $X\overset{f}\to Z\overset{g}\to {\bf A}^d_\Z$ where $g$ is \'etale and $f$
 is a closed immersion (\cite{Gr}  18.4.7) defined by a section 
$p-T_1^{e(1)}\dots T_r^{e(r)}\tilde h$ of $\cO_Z$ for some lifting $\tilde h$ of $h$ to $\cO_Z$. Hence $E$ is defined in $Z$ by $T_1^{a(1)} \dots T_r^{a(r)}$ and $p$. 
Hence we have an \'etale morphism from $E$ to $E':=\Spec(\F_p[T_1,\dots, T_d]/(T_1^{a(1)}\dots T_r^{a(r)})) $ for which the pullback of $T_i$ is $t_i$. Let $X'= \Spec(\F_p[T_1, \dots, T_d])$ and let $D'$ be the divisor $T_1\dots T_r=0$ on $X'$ with simple normal crossings. Then 
$\cO_E \otimes_{\cO_X} \Omega^1_X(\log D)$ is the pull back of $\cO_{E'} \otimes_{\cO_{X'}} \Omega^1_{X'}(\log D')$. 1 follows from this. Since $\cO_E\otimes_{\cO_X} \Omega^d_X(\log D)(E-D)$ is the pullback of  $\cO_{E'}\otimes_{\cO_{X'}} \Omega^d_{X'}(\log D')(E'-D')$ which is the dualizing module 
of $E'$ relative to $\F_p$ by the \'etale morphism $E\to E'$, it is the dualizing module of $E$ relative to $\F_p$. 
 \end{pf}

\begin{sbpara}\label{HxRes}  Let the assumption and the notation be as in \ref{dualX} and let $\cF=\cO_E \otimes_{\cO_X} \Omega_X^d(\log D)(E-D)$. By local duality of Grothendieck (\cite{Ha} Chap. V, Section 6), $\Hom(H^{d-1}_x(E, \cF), \F_p)$ is isomorphic to the completion $\hat \cO_{E,x}$ of $\cO_{E,x}$. In the correspondence \ref{lcoh} (we take $E$ as $X$ in \ref{lcoh} and we take $d-1$ as $d$ in \ref{lcoh}), $f\in \cO_{E,x}$ corresponds to $(h_{\frak p})_{\frak p\in P_x(E)}$ with $h_{\frak p}(\omega)= \Res_{\frak p}(f\omega)$ where $\Res_{\frak p} :\cF_{x_{d-1}}\to \F_p$ is as follows. Write $\frak p=(x_i)_{0\leq i\leq d-1}$, let 
 $Y\subset X$ be the closure of $x_{d-1}$, let $F$ be the residue field of $x_{d-1}$, and take an integer $b$ such that $p^b \geq \text{ord}_{x_{d-1}}(E)$. Then $$\Res_{\frak p}= \sum_{v\in Pl(Y, \frak p)} \Res_{F_v}\circ R_b.$$ Here $R_b:\cF_{x_{d-1}}\to \Omega^{d-1}_F$ is as in \ref{Rsw3}, $Pl(Y, \frak p)$ is as in \ref{cpl9}, and $\Res_{F_v}$ is as in \ref{rdl}. Note that $\Res_{F_v}\circ R_b$ is independent of the 
 choice of $b$ because $\Res_{F_v}\circ C=\Res_{F_v}$ where $C$ is the Cartier operator.

\end{sbpara}

\begin{sblem}\label{dualint} 
Let  the assumption and the notation be as in \ref{HxRes}. 
Let $x$ be a closed point of $E$ and let $\xi$ be a point of $E$ of codimension one (it is of codimension two in $X$) such that $x\in \overline{\{\xi\}}$. Let $I$ be the finite set of all generic points $\nu$ of $E$ such that $\xi\in \overline{\{\nu\}}$. 
 Let $\frak q=(x_i)_i$ be an element of $Q_{d-1}(E)$ such that $x_0=x$ and $x_{d-2}=\xi$. For $\nu\in I$, let $\frak q(\nu)$ be the element $(x'_i)_i$ of $P(E)$ defined by $x'_i=x_i$ for $0\leq i\leq d-2$ and $x'_{d-1}=\nu$. Regard $\cF_{\xi}$ as a subset of $\oplus_{\nu \in I} \cF_{\nu}$ by the diagonal embedding.

 1.  Let $a\in \oplus_{\nu\in I} \cF_{\nu}$.  Assume that for any $f\in \cO_{E,\xi}$, we have $\sum_{\nu \in I} \Res_{\frak q(\nu)}(fa_{\nu})=0$. Then $a\in \cF_{\xi}$. 
 
 Let $A$ and $B$ be divisors on $X$ such that $E-D=A+B$. Let $i, j$ be integers $\geq 0$ such that $i+j=d$. Let $\cG= \cO_E \otimes_{\cO_X} \Omega^i_X(\log D)(A)$, $\cH= \cO_E \otimes_{\cO_X} \Omega^j_X(\log D)(B)$. Regard $\cG_{\xi}$ as a subset of $\oplus_{\nu\in I} \cG_{\nu}$ and $\cH_{\xi}$ as a subset of $\oplus_{\nu\in I} \cH_{\nu}$ via the diagonal embeddings.

 2. Let $a\in  \oplus_{\nu\in I} \cH_{\nu}$. Assume that for any $b\in \cG_{\xi}$, we have $b\wedge a \in \cF_{\xi}$ in $\oplus_{\nu\in I} \cF_{\nu}$. Then $a\in \cH_{\xi}$. 

 3. Let $a\in \oplus_{\nu \in I} \cH_{\nu}$.  Assume that for any $b\in \cG_{\xi}$, we have $\sum_{\nu \in I} \Res_{\frak q(\nu)}(b \wedge a_{\nu})=0$. Then $a\in \cH_{\xi}$. 

\end{sblem}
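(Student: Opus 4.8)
The plan is to prove statement 1 by local duality at $\xi$, to deduce statement 2 from the fact that the wedge product exhibits each of $\cG$ and $\cH$ as the $\cF$-linear dual of the other, and then to obtain statement 3 formally from 1 and 2. The geometric input throughout is that $E$, being a Cartier divisor on the regular scheme $X$, is Cohen--Macaulay; in particular $\cO_{E,\xi}$ is a one-dimensional Cohen--Macaulay local ring (a hypersurface quotient of the two-dimensional regular local ring $\cO_{X,\xi}$) whose only associated primes are its minimal ones, indexed by $I$.

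For statement 1 I would first record the localization exact sequence for local cohomology with support at the closed point of $\Spec\cO_{E,\xi}$, namely $0\to\cF_\xi\to\bigoplus_{\nu\in I}\cF_\nu\to H^1_\xi(\Spec\cO_{E,\xi},\cF)\to 0$ (cf.\ the proof of \ref{lcoh}: the punctured spectrum of $\Spec\cO_{E,\xi}$ is precisely $I$, and the left-hand map, which is the diagonal embedding, is injective because $\cF_\xi$ is free over $\cO_{E,\xi}$ and $\cO_{E,\xi}$ has no embedded primes). Applying the local duality of \ref{HxRes} with $\Spec\cO_{E,\xi}$ in place of $E$ and $\xi$ in place of $x$ --- legitimate since $\cF$ restricted to $\Spec\cO_{E,\xi}$ is still its dualizing module relative to $\F_p$ (by \ref{dualX} and localization) --- yields a canonical isomorphism $\Hom(H^1_\xi(\Spec\cO_{E,\xi},\cF),\F_p)\cong\hat\cO_{E,\xi}$ under which $f\in\cO_{E,\xi}$ corresponds to the functional on $\bigoplus_{\nu}\cF_\nu/\cF_\xi$ induced by $(\omega_\nu)_\nu\mapsto\sum_{\nu\in I}\Res_{\frak q(\nu)}(f\omega_\nu)$. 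The hypothesis of 1 then says precisely that the class $\bar a$ of $a$ in $H^1_\xi(\Spec\cO_{E,\xi},\cF)$ is killed by every element of $\Hom(H^1_\xi(\Spec\cO_{E,\xi},\cF),\F_p)$; since $\cO_E$ is an $\F_p$-algebra this group is an $\F_p$-vector space, whose $\F_p$-linear dual separates points, so $\bar a=0$, i.e.\ $a\in\cF_\xi$.

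For statement 2, by 1 of \ref{dualX} the $\cO_E$-module $W:=\cO_E\otimes_{\cO_X}\Omega^1_X(\log D)$ is locally free of rank $d$; hence the $\cO_E\otimes_{\cO_X}\Omega^k_X(\log D)=\wedge^k_{\cO_E}W$ are locally free, $\cF$ is invertible, and the wedge product $\cG\otimes_{\cO_E}\cH\to\cF$ is a perfect pairing of $\cO_E$-modules, being the twist by the invertible modules $\cO_E\otimes_{\cO_X}\cO_X(A)$ and $\cO_E\otimes_{\cO_X}\cO_X(B)$ of the perfect pairing $\wedge^i W\otimes_{\cO_E}\wedge^{d-i}W\to\wedge^d W$. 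Thus $\cH\cong\cH om_{\cO_E}(\cG,\cF)$ via $h\mapsto(g\mapsto g\wedge h)$. Localizing at $\xi$ and writing $Q=\prod_{\nu\in I}\cO_{E,\nu}$ for the ring obtained from $\cO_{E,\xi}$ by inverting all non-zerodivisors, one gets $\cH_\xi=\Hom_{\cO_{E,\xi}}(\cG_\xi,\cF_\xi)$ and, since $\cG_\xi$ is free of finite rank, $\bigoplus_\nu\cH_\nu=\cH_\xi\otimes_{\cO_{E,\xi}}Q=\Hom_Q(\bigoplus_\nu\cG_\nu,\bigoplus_\nu\cF_\nu)$; under these identifications $a$ is the $Q$-linear map $b\mapsto b\wedge a$. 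The hypothesis says this map carries the $\cO_{E,\xi}$-submodule $\cG_\xi$ into $\cF_\xi$, so its restriction to $\cG_\xi$ is an element of $\Hom_{\cO_{E,\xi}}(\cG_\xi,\cF_\xi)=\cH_\xi$ whose image in $\bigoplus_\nu\cH_\nu$ is $a$; hence $a\in\cH_\xi$.

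Statement 3 is then formal: for $b\in\cG_\xi$ and any $f\in\cO_{E,\xi}$ one has $fb\in\cG_\xi$ and $f\cdot(b\wedge a_\nu)=(fb)\wedge a_\nu$, so the hypothesis of 3 applied to $fb$ gives $\sum_{\nu\in I}\Res_{\frak q(\nu)}(f\cdot(b\wedge a_\nu))=0$ for every $f$; by statement 1 applied to $b\wedge a=(b\wedge a_\nu)_\nu\in\bigoplus_\nu\cF_\nu$ this forces $b\wedge a\in\cF_\xi$, and as it holds for all $b\in\cG_\xi$, statement 2 gives $a\in\cH_\xi$. The step I expect to be the main obstacle is the appeal to \ref{HxRes} at $\xi$ in statement 1: one must check that Grothendieck local duality for the one-dimensional Cohen--Macaulay ring $\cO_{E,\xi}$, whose residue field $\kappa(\xi)$ is now a function field of transcendence degree $d-2$ over $\F_p$ rather than a finite field, still pairs with $\hat\cO_{E,\xi}$ compatibly with the maps $\Res_{\frak q(\nu)}$ of \ref{HxRes}. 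This amounts to unwinding the iterated-residue structure of the $\Res_{F_v}$ appearing in \ref{HxRes} together with the $\F_p$-duality of $\kappa(\xi)$ supplied by its places, exactly as in the proof of the duality underlying \ref{HxRes}.
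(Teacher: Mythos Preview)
Your arguments for parts 2 and 3 are correct. The wedge pairing $\cG\otimes_{\cO_E}\cH\to\cF$ is indeed perfect (both factors are twists of exterior powers of the locally free $\cO_E$-module $W$ of \ref{dualX}), and your deduction of 3 from 1 and 2 is exactly what the paper does. For part 2 your direct argument via $\cH\cong\cH om_{\cO_E}(\cG,\cF)$ is cleaner than the paper's, which proves both 1 and 2 by induction on the multiplicities of $E$ at codimension-one points of $X$ rather than by duality.

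The gap is in part 1, and it is the one you flag at the end. The isomorphism $\Hom_{\F_p}(H^1_\xi(\Spec\cO_{E,\xi},\cF),\F_p)\cong\hat\cO_{E,\xi}$ is false once $d\geq 3$: the residue field $\kappa(\xi)$ is then infinite over $\F_p$, so $H^1_\xi(\cF)$ (which, since $\cF$ is dualizing and $\cO_{E,\xi}$ is one-dimensional Cohen--Macaulay, is the injective hull $I_\xi$ of $\kappa(\xi)$) is an infinite-dimensional $\F_p$-vector space whose $\F_p$-dual has strictly larger cardinality than $\hat\cO_{E,\xi}$. Grothendieck local duality at $\xi$ pairs $H^1_\xi(\cF)$ with $\hat\cO_{E,\xi}$ into $I_\xi$, not into $\F_p$. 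What you actually need is weaker: that the single functional $\phi\colon H^1_\xi(\cF)\to\F_p$, $\bar a\mapsto\sum_\nu\Res_{\frak q(\nu)}(a_\nu)$, is nonzero on the socle $\kappa(\xi)\subset I_\xi$; then for nonzero $\bar a$ some multiple $f\bar a$ lands in the socle where $\phi$ does not vanish. But establishing $\phi|_{\kappa(\xi)}\neq 0$ means identifying the socle concretely (via the first residue step at $\xi$) and then checking that the remaining iterated residues along the \emph{fixed} chain $(x_0,\dots,x_{d-3})$ do not annihilate it. This is genuine work that your last paragraph gestures at without carrying out, and it is not contained in \ref{HxRes} as stated. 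The paper avoids this entirely: its induction on multiplicity reduces part 1 to the case where $E$ is reduced near $\xi$, so that $\cO_{E,\xi}$ is a product of discrete valuation rings, where the statement is elementary.
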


\begin{pf} 1 and 2 can be proved by induction on the multiplicity of $E$ at each point of codimension one of $X$.  3 follows from 1 and 2.
\end{pf}

 \begin{sblem}\label{xxi} Let the assumption and the notation be as in \ref{dualX}. 
 For a vector bundle $\cF$ on $E$, for a closed point $x$ of $E$ and for $a\in \oplus_{\nu} \cF_{\nu}$ where $\nu$ ranges over 
 all generic points of $E$ such that $x\in \overline{\{\nu\}}$, $a$ belongs to the diagonal image of $\cF_x$ if and only if for any point $\xi$ of $E$ of codimension one such that $x\in \overline{\{\xi\}}$, the image of $a$ in $\oplus_{\nu} \cF_{\nu}$, where $\nu$ ranges over all generic points of $E$ such that $\xi\in \overline{\{\nu\}}$, belongs to the diagonal image of $\cF_{\xi}$. 
 
 \end{sblem}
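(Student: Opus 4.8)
The plan is to localize at $x$ and to recognize the assertion as the standard statement that a vector bundle on a Cohen--Macaulay scheme extends uniquely across a closed subset of codimension $\geq 2$, applied to $S:=\Spec(\cO_{E,x})$. Since $X$ is regular and $E$ is an effective Cartier divisor, $\cO_{E,x}=\cO_{X,x}/(f)$ for a nonzerodivisor $f$ of the regular local ring $\cO_{X,x}$; hence $\cO_{E,x}$ is Cohen--Macaulay, equidimensional and catenary, and has no embedded associated prime. I would first record three consequences: (a) the minimal primes of $\cO_{E,x}$ are exactly the generic points $\nu$ of $E$ with $x\in\overline{\{\nu\}}$; (b) the height one primes of $\cO_{E,x}$ are exactly the points $\xi$ of $E$ of codimension one with $x\in\overline{\{\xi\}}$, using that $E$ is equidimensional and catenary near $x$; (c) the diagonal map $\cF_x\to\bigoplus_\nu\cF_\nu$ is injective, since a free module over a ring without embedded primes injects into the product of its localizations at the minimal primes. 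The ``only if'' direction is then immediate: for a height one point $\xi$ the localization $\cF_x\to\cF_\xi$ carries the image of $\cF_x$ into that of $\cF_\xi$.

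For the ``if'' direction, take $a\in\bigoplus_\nu\cF_\nu$ satisfying the hypothesis, set $N:=\cF_x+\cO_{E,x}\cdot a$ (a finitely generated $\cO_{E,x}$-submodule of $\bigoplus_\nu\cF_\nu$), and put $Z:=\mathrm{Supp}_S(N/\cF_x)$, $W:=S\smallsetminus Z$. For a point $\eta\in S$ one has $\eta\in W$ precisely when the image of $a$ in $\bigl(\bigoplus_\nu\cF_\nu\bigr)_\eta=\bigoplus_{\eta\in\overline{\{\nu\}}}\cF_\nu$ (localizing at $\eta$ kills the summand $\cF_\nu$ unless $\eta\in\overline{\{\nu\}}$) lies in the diagonal image of $\cF_\eta$. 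Every generic point of $S$ lies in $W$ automatically, and by hypothesis every codimension one point of $S$ lies in $W$; hence $\mathrm{codim}(Z,S)\geq 2$. If $\dim\cO_{E,x}\leq 1$ this forces $Z=\emptyset$, so $a\in N=\cF_x$ and we are done; thus I may assume $\dim\cO_{E,x}\geq 2$.

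In this case the inclusion $\cF_x\hookrightarrow N$ is an isomorphism over $W$, so $a$ defines a section of $\cF$ over $W$. Since $\cO_{E,x}$ is Cohen--Macaulay, $\mathrm{depth}_{I(Z)}(\cO_{E,x})=\mathrm{ht}(I(Z))=\mathrm{codim}(Z,S)\geq 2$, and as $\cF$ is locally free we get $H^i_Z(S,\cF)=0$ for $i\leq 1$; since $S$ is affine, $H^1(S,\cF)=0$ as well, so the excision exact sequence
$$0\longrightarrow H^0_Z(S,\cF)\longrightarrow\cF_x\longrightarrow\cF(W)\longrightarrow H^1_Z(S,\cF)$$
gives an isomorphism $\cF_x\overset{\cong}{\longrightarrow}\cF(W)$. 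Let $b\in\cF_x$ be the element restricting to $a|_W$. Because every generic point $\nu$ of $S$ belongs to $W$, the images of $b$ and $a$ in $\cF_\nu$ coincide for every $\nu$, so $b$ maps to $a$ under the diagonal map $\cF_x\to\bigoplus_\nu\cF_\nu$; that is, $a$ lies in the diagonal image of $\cF_x$, as required.

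I do not expect a genuine obstacle here: the content is concentrated in the depth/local-cohomology computation of the third paragraph, i.e.\ the Cohen--Macaulay extension property, and everything else is formal. The only fiddly point is the identification (b), matching ``codimension one in $E$'' with ``height one in $\cO_{E,x}$'', which is where the hypothesis that $X$ is regular and $E$ an effective Cartier divisor (so that $E$ is locally a hypersurface, hence equidimensional and catenary near $x$) is used once more.
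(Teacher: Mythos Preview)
Your proof is correct, but the route is genuinely different from the paper's. The paper simply writes ``This can be shown by induction on the multiplicities of $E$ at generic points of $D$,'' in parallel with the proof of the preceding Lemma~\ref{dualint}: one peels off one layer of multiplicity at a time via the short exact sequence coming from $\cO_X(-E')/\cO_X(-E)$ (with $E'\leq E$ differing by a reduced component), thereby reducing to the case where $E$ is reduced, i.e.\ a union of regular divisors meeting transversally, where the statement is elementary. Your argument instead localizes at $x$, observes that $\cO_{E,x}$ is a hypersurface quotient of a regular local ring and hence Cohen--Macaulay, and invokes the Hartogs-type extension $H^0(S,\cF)\overset{\cong}\to H^0(W,\cF)$ across the codimension $\geq 2$ locus $Z$ via vanishing of $H^0_Z$ and $H^1_Z$. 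This is cleaner and more conceptual --- it uses no combinatorics of the normal crossing structure and no induction --- while the paper's approach is more elementary in that it avoids local cohomology and stays in line with the inductive style already set up for \ref{dualint}. One small remark: your point (b) is actually immediate from the definition $\text{codim}_E(\xi)=\dim\cO_{E,\xi}=\text{ht}(\mathfrak p_\xi)$ and does not need catenarity or equidimensionality, so the only essential use of the hypersurface/Cohen--Macaulay structure is in (c) and in the depth computation of your third paragraph.
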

 
 \begin{pf} This can be shown by induction on the multiplicities of $E$ at generic points of $D$. 
   \end{pf}

\subsection{Global refined Swan conductor mod $p$}\label{ss:glRsw}

\begin{sbpara}\label{s4X} In this Section \ref{ss:glRsw}, 
let $X$ be a regular scheme of finite type over $\Z$, let $D$ be a divisor on $X$ with simple normal crossings, let $j: U:=X\smallsetminus D\to X$ be the inclusion morphism, and let $p$ be a prime number. Consider the sheaf $R^1j_*(\Q_p/\Z_p)$ on the \'etale site of $X$, where $j_*$ is the direct image functor for the \'etale topology. 

For an effective divisor $N$ on $X$ whose support is contained in $D$, let $F_NR^1j_*(\Q_p/\Z_p)$ be the subsheaf of $R^1j_*(\Q_p/\Z_p)$ consisting of local sections $\chi$ such that the  Swan conductor $\Sw_{\nu}(\chi)$ at any point $\nu$ of codimension one satisfies $\Sw_{\nu}(\chi) \leq \text{ord}_{\nu}(N)$, where $\text{ord}_{\nu}(N)$ denotes the multiplicity of $N$ at $\nu$. 
Let $F_NH^1_{et}(U, \Q_p/\Z_p)\subset H^1_{et}(U, \Q_p/\Z_p)$ be the inverse image of $H^0(X, F_NR^1j_*(\Q_p/\Z_p))$.

\end{sbpara}

\begin{sbthm}\label{thmS} Let $N$ be an effective divisor on $X$ whose support is contained in $D$. Write $N=\sum_P n(P)P$ ($P$ ranges over all generic points of $D$ and $n(P)\in \Z_{\geq 0}$) and let $M= \sum_P m(P)P$ where $m(P)=\max(n(P)-\text{ord}_P(p), [n(P)/p])$ (so $M$ is an effective divisor on $X$ such that $M\leq N$). 
 Then there is a unique homomorphism 
$$\Rsw:  F_NR^1j_*(\Q_p/\Z_p) \to \cO(N)/\cO(M) \otimes_{\cO_X} \Omega^1_X(\log D)$$
which is compatible with Rsw at all  points of $X$ of codimension one in the support of $N-M$. 
\end{sbthm}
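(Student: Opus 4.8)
The plan is to obtain $\Rsw$ by gluing the local refined Swan conductors of Section 3 along the generic points of the support of $N-M$, the gluing being governed by the Serre--Grothendieck duality of Section 4.1. Set $E=\Spec(\cO_X/\cO_X(-(N-M)))$; since $n(P)-m(P)\le\text{ord}_P(p)$ for all $P$, the scheme $E\subset D$ is of characteristic $p$, and $\cF:=\cO(N)/\cO(M)\otimes_{\cO_X}\Omega^1_X(\log D)$ equals $\cO_E\otimes_{\cO_X}\Omega^1_X(\log D)(N)$, which by \ref{dualX}.1 twisted by the invertible sheaf $\cO(N)$ is a vector bundle of rank $d$ on $E$. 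As $X$ is regular, $\cO_E$ is Cohen--Macaulay, hence has no embedded associated primes, so $\cF$ injects into the direct sum of its stalks $\cF_\nu$ at the generic points $\nu$ of $E$ — precisely the codimension one points of $X$ in the support of $N-M$. Using $\cO_{X,\nu}/{\frak m}_\nu^a=\hat\cO_{X,\nu}/{\frak m}^a$ and $\Omega^1_O(\log)/{\frak m}^a=\Omega^1_{O/{\frak m}^a}(\log)$ one identifies $\cF_\nu$ canonically with the target ${\frak m}_\nu^{-n}/{\frak m}_\nu^{-m}\otimes\Omega^1(\log)$ of the local $\Rsw_{\hat J_\nu}$ of \ref{gen1}. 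Uniqueness of $\Rsw$ is then immediate: any homomorphism compatible with the local $\Rsw$ at these $\nu$ is determined after composing with the injection $\cF\hookrightarrow\bigoplus_\nu\cF_\nu$.

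For existence it suffices, by uniqueness, to work Zariski-locally; so fix an affine étale $V\to X$ and $\chi\in(F_NR^1j_*)(V)$, and define a candidate $\omega\in\bigoplus_\nu\cF_\nu$ by $\omega_\nu=\Rsw_{\hat J_\nu}(\chi_{\hat J_\nu})$. I must show $\omega\in\cF(V)$. Since $\cF$ is a vector bundle on $E$, Lemma \ref{xxi} together with the $S_1$ property reduces this to checking, at each codimension one point $\xi$ of $E$ (a codimension two point of $X$ lying on $D$), that the image of $\omega$ in $\bigoplus_{\nu\ni\xi}\cF_\nu$ lies in the diagonal copy of $\cF_\xi$. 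Writing $E-D=A+B$ with $A=N$ and $B=-M-D$, so that $\cO_E\otimes\Omega^1_X(\log D)(A)=\cF$ while $\cH:=\cO_E\otimes\Omega^{d-1}_X(\log D)(B)$ has $\cH_\nu={\frak m}_\nu^{m(\nu)+1}/{\frak m}_\nu^{n(\nu)+1}\otimes\Omega^{d-1}(\log)$ — the source of the truncated exponential — the duality Lemma \ref{dualint}.3 (with the roles of the two factors exchanged) shows this is equivalent to the residue identity
$$\sum_{\nu\ni\xi}\Res_{\frak q(\nu)}\bigl(\omega_\nu\wedge b_\nu\bigr)=0\qquad\text{for every }b\in\cH_\xi,$$
where $\Res_{\frak q(\nu)}=\sum_{v}\Res_{F_v}\circ R_b$ as in \ref{HxRes}. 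Because $D$ is a simple normal crossing divisor there are at most two such $\nu$.

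To prove this identity, reduce to a global situation exactly as in \ref{gen0}: shrinking $V$ to an affine neighbourhood of $\xi$ and choosing a proper normal model $\bar V$ over $\Z$ of the function field $J$ of $V$ with $\bar V(\R)=\emptyset$, the codimension one points of $\bar V$ through $\xi$ — hence all data entering the identity — are the same as in $V$, while $\chi$ extends to $H^1(J,\Q_p/\Z_p)$ and lies in $F_{\nu,n(\nu)}H^1(J,\Q_p/\Z_p)$ for each relevant $\nu$. Global class field theory of $\bar V$ attaches to $\chi$ a family $(h_{\frak p})_{\frak p\in P(\bar V)}$ satisfying the compatibility relations of \ref{coh}. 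Fix $b\in\cH_\xi$ and write it as a sum of elements $t_1^{m_1+1}t_2^{m_2+1}g\,d\log u_1\wedge\dots\wedge d\log u_{d-1}$, with $t_1,t_2$ local equations of the branches of $D$ at $\xi$; using \ref{exponential}.2 one produces an element $\beta\in K^M_d(J)$ lifting $b$ which modulo $U^{n+1}$ is a sum of symbols in $\cO_{X,\xi}^\times\cdot t_1^{\Z}t_2^{\Z}$. For $\frak p$ whose middle entry is one of the $\nu\ni\xi$ one gets $h_{\frak p}(\beta)=\sum_v\chi_{J_w}(E(b_\nu))=\Res_{\frak q(\nu)}(b_\nu\wedge\omega_\nu)$ by the characterizing property \ref{Rsw4}(ii) applied to the $d$-dimensional local field $J_w$ (whose residue field $F_v$ is a higher dimensional local field) together with \ref{gen1}(i) identifying the image of $\omega_\nu$ with $\Rsw_{J_w}(\chi_{J_w})$; for $\frak p$ whose middle entry is a codimension one point $\mu$ not in the support of $N-M$ one has $h_{\frak p}(\beta)=0$ — either $\mu\notin D$, where $\chi$ is unramified and $\beta$ maps to a sum of symbols of units, as in the proof of \ref{scoh}, or $\mu\in D$ with $n(\mu)=m(\mu)$, where $\cH_\mu=0$ forces $\beta\in U^{n(\mu)+1}$. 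Summing the relation $\sum_{\frak p\in P_{\frak q}(\bar V)}h_{\frak p}(\beta)=0$ from \ref{coh} over the flag $\frak q=(x_0,\dots,x_{d-2},\eta_J)$ yields the identity, hence $\omega\in\cF(V)$; these local sections are compatible by uniqueness and define the global $\Rsw$.

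The main obstacle is this residue identity at codimension two, a reciprocity law for the refined Swan conductor mod $p$: matching $\Res_{\frak q(\nu)}$ with local reciprocity pairings through \ref{Rsw4}(ii), controlling the truncated exponential so that all contributions away from the support of $N-M$ vanish (this is the analogue of the computations inside the proof of \ref{scoh}, and the point at which the simple normal crossing hypothesis and the exact value of $m$ are used), and carrying out the reduction to a proper model compatibly with the local theory — which relies on the independence statement in \ref{gen1} and on the compatibility \ref{lem3} of local $\Rsw$ under finite extensions and under adjoining $\{\{T\}\}$.
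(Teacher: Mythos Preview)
Your proposal is correct and follows essentially the same route as the paper. Both arguments reduce existence to a residue identity at each codimension-one point $\xi$ of $E=N-M$ via \ref{xxi} and \ref{dualint}, and establish that identity from the reciprocity relation $\sum_{\frak p\in P_{\frak q}(\bar X)}h_{\frak p}=0$ of higher-dimensional global class field theory, splitting the sum into the contributions from $\nu\in I$ (which give the residues of $\omega_\nu\wedge b$) and from the remaining codimension-one points $\mu$ through $\xi$ (which vanish either because $\chi$ is unramified at $\mu\notin D$ and the boundary map kills symbols of units, or because $b$ already lies in $U^{n(\mu)+1}$ when $\mu\in D$ with $n(\mu)=m(\mu)$).

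The only organizational difference is one of starting point: the paper phrases the construction dually, beginning with the maps $s_{\frak p}$ induced by $h_{\frak p}$ and the truncated exponential and invoking \ref{lcoh} together with \ref{dualint} and \ref{xxi} to produce the section, whereas you begin with the local classes $\omega_\nu=\Rsw_{\hat J_\nu}(\chi)$ already furnished by Section~3 and check directly that they glue. The two are equivalent through \ref{HxRes} and \ref{gen1}(i); your version makes the compatibility with the local $\Rsw$ transparent by construction, while the paper's makes the link to the Serre--Grothendieck duality of \ref{SG} more visible. A minor point: your sentence ``summing the relation \dots over the flag $\frak q$'' is just applying the single relation for that $\frak q$, and you need not pass to a separate model $\bar V$ for each $\xi$ --- one compactification $\bar X\supset X$ with $\bar X(\R)=\emptyset$ suffices, since for $\xi\in X$ every codimension-one $\mu$ of $\bar X$ with $\xi\in\overline{\{\mu\}}$ already lies in $X$.
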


\begin{pf} We may assume that $X$ is a dense open subscheme of a proper normal $d$-dimensional integral scheme $\bar X$ over $\Z$ such that $\bar X(\R)=\emptyset$. We use the global class field theory of $\bar X$. (Note that $\bar X$ here is $X$ in Section 3.3.) Let $\chi\in F_NH^1_{et}(U, \Q_p/\Z_p)$. Let $K$ be the function field of $X$ and let $h_{\frak p}: K_d^M(K)\to \Q_p/\Z_p$ for $\frak p\in P(\bar X)$ be the homomorphism induced by $\chi$. Let $Y\subset X$ be  the support of the divisor $E=N-M$.
For $\frak p=(x_i)_i\in  P(Y)$,
let $s_{\frak p}:(\cO_X(-M-D)/\cO_X(-N-D) \otimes_{\cO_{X,\nu}} \Omega^{d-1}_X(\log D))_{\nu}\to \F_p$  be the homomorphism induced by $h_{\frak p}$ and the truncated exponential map.  By \ref{lcoh}, \ref{dualint} and \ref{xxi}, it is sufficient  to prove that (under the notation in \ref{dualint} where we take $A= N$,  $B= -M-D$, $i=1$ and $j=d-1$), for any point $\xi$ of $Y$ of codimension one and for any $\frak q=(x_i)_i\in Q_{d-1}(Y)$ such that $x_{d-2}=\xi$, $\sum_{\nu\in I} s_{\frak q(\nu)}$ kills $L:=(\cO_X(-M-D)/\cO_X(-N-D)\otimes_{\cO_X} \Omega^{d-1}_X(\log D))_{\xi}$, where $I$ and ${\frak q}(\nu)$ is as in \ref{dualint}. This is proved by the following Fact 1 and Fact 2.   

Fact 1. We have  $\sum_{\frak p\in P_{\frak q}(X)} h_{\frak p}=\sum_{\frak p\in P_{\frak q}(\bar X)} h_{\frak p}=0$ on  $K^M_d(K)$, where $\frak q$ is identified with an element of $P_{d-1}(X)$.

Fact 2.  If $\frak p =(x_i)_i\in P_{\frak q}(X)$ and $\mu:= x_{d-1}$ does not belong to $Y$, $h_{\frak p}$ factors through the boundary map $K^M_d(K)\to K_{d-1}^M(\kappa(\mu))$ and $s_{\frak p}(L)$ is generated by elements of the form $h_{\frak p}(\{u_1,\dots, u_d\})$ with $u_i\in \cO_{X,\xi}^\times$ and such $\{u_1,\dots, u_d\}$ is killed by the boundary map.
\end{pf}

\begin{sbprop} Let the notation be as in \ref{thmS}. The map $$\Rsw: F_NR^1j_*(\Q_p/\Z_p)/F_MR^1j_*(\Q_p/\Z_p) \to  \cO_X(N)/\cO_X(M) \otimes_{\cO_X} \Omega^1_X(\log D)$$ is injective, and its image is contained in the kernel of 
$$d: \cO_X(N)/\cO_X(M) \otimes_{\cO_X} \Omega^1_X(\log D)
\to  \cO_X(N)/\cO_X(M) \otimes_{\cO_X} \Omega^2_X(\log D).$$

\end{sbprop}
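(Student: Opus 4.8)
The plan is to deduce both assertions from the local Proposition \ref{m<i<n} by testing everything at the generic points of the effective divisor $E:=N-M$. The divisor $E$ has support in $D$, and at every generic point $P$ of $D$ one has $\mathrm{ord}_P(E)=\min(\mathrm{ord}_P(p),\,n(P)-[n(P)/p])\le \mathrm{ord}_P(p)$, so $p$ annihilates $\cO_E$ and $E$ is an effective Cartier divisor on the regular scheme $X$ which is of characteristic $p$. Hence $\cO_E$ is Cohen--Macaulay and equidimensional of dimension $d-1$, so it has no embedded associated points. By 1 of \ref{dualX}, together with the fact that $\cO_X(N)/\cO_X(M)\cong \cO_X(N)\otimes_{\cO_X}\cO_E$ is invertible over $\cO_E$ and with the formation of exterior powers, the sheaves $\cO_X(N)/\cO_X(M)\otimes_{\cO_X}\Omega^i_X(\log D)$ are locally free $\cO_E$-modules for $i=1,2$; they therefore inject into the direct sum of their stalks at the generic points of $E$, which are among the points of $X$ of codimension one lying in the support of $N-M$. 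Consequently a section of either sheaf vanishes as soon as all its images in those finitely many stalks vanish.

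For the injectivity, let $\chi$ be a local section of $F_NR^1j_*(\Q_p/\Z_p)$ over some \'etale $X'\to X$, and let $\nu$ be a point of codimension one in the support of $N-M$, with $\hat K_\nu$ the fraction field of the completion of $\cO_{X',\nu}$. By the defining compatibility in Theorem \ref{thmS}, the image of $\Rsw(\chi)$ at $\nu$ is the local refined Swan conductor $\Rsw_{\hat K_\nu}(\chi_{\hat K_\nu})\in {\frak m}_\nu^{-n(\nu)}/{\frak m}_\nu^{-m(\nu)}\otimes_{O_{\hat K_\nu}}\Omega^1_{O_{\hat K_\nu}}(\log)$, which by 1 of \ref{m<i<n} vanishes if and only if $\Sw_\nu(\chi)\le m(\nu)$. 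By the reduction of the first paragraph, $\Rsw(\chi)=0$ as a section if and only if $\Sw_\nu(\chi)\le m(\nu)$ for every generic point $\nu$ of $N-M$; combined with the fact that at all other codimension-one points one has $m(\nu)=n(\nu)\ge \Sw_\nu(\chi)$ already by $\chi\in F_NR^1j_*$, this is exactly the condition that $\chi$ be a section of $F_MR^1j_*(\Q_p/\Z_p)$. In particular $\Rsw$ annihilates $F_MR^1j_*(\Q_p/\Z_p)$, so it factors through the quotient, and the induced homomorphism is injective.

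For the last assertion, again let $\chi$ be a local section of $F_NR^1j_*(\Q_p/\Z_p)$. By the reduction above it suffices to show that the germ of $d\Rsw(\chi)$ at each generic point $\nu$ of $E$ is zero. Exterior differentiation commutes with localization at $\nu$, and modulo ${\frak m}_\nu^{\,n(\nu)-m(\nu)}$ the differential on $\Omega^\bullet_X(\log D)$ is identified, via Section \ref{difdv} and the comparison of $\cO_{X,\nu}$ with its completion, with the differential on $\Omega^\bullet_{O_{\hat K_\nu}}(\log)$; hence that germ is the image of $d\bigl(\Rsw_{\hat K_\nu}(\chi_{\hat K_\nu})\bigr)$, which is $0$ by 3 of \ref{m<i<n}. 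The routine verifications are the compatibility of $d$ and of the twisted log-differential modules with completion at $\nu$; the one place genuinely needing care is the unmixedness of $\cO_E$, equivalently that the sheaves $\cO_X(N)/\cO_X(M)\otimes_{\cO_X}\Omega^i_X(\log D)$ carry no nonzero sections supported in codimension $\ge 2$, since this is precisely what reduces both vanishing statements to the generic points of $E$ and lets Proposition \ref{m<i<n} be invoked verbatim through the gluing already set up in the proof of Theorem \ref{thmS}.
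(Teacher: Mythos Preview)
Your proof is correct and takes essentially the same approach as the paper, whose entire proof is the one-liner ``This follows from \ref{m<i<n}.'' You have supplied the details of the reduction to the generic points of $E=N-M$ via the local freeness of the relevant sheaves over $\cO_E$ (from \ref{dualX}) and the Cohen--Macaulayness of $\cO_E$, which is exactly what the paper leaves implicit.
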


\begin{pf} This follows from \ref{m<i<n}. \end{pf}

\subsection{Pullbacks}

 We prove that the global Rsw (\ref{thmS}) commutes with the pullback maps. 

\begin{sbthm}\label{pullback} Let $f: X'\to X$ be a morphism of regular schemes of finite type over $\Z$, let $D$ (resp. $D'$) be a divisor on $X$ (resp. $X'$) with simple normal crossings, and assume $f(U')\subset U$ where $U=X\smallsetminus D$, $U'=X'\smallsetminus D'$. Let $p$ be a prime number. Let $N$ be an effective divisor on $X$ whose support is contained in $D$, let $N'$ be the pullback $f^*N$ of $N$ on $X'$, define the effective divisor $M\leq N$ on $X$ using $N$ as in \ref{thmS} and 
define the effective divisor $M'\leq N'$ on $X'$ using $N'$ similarly. Let $j':U'\to X'$ be the inclusion morphism.

1. The map $f^*R^1j_*(\Q_p/\Z_p)\to R^1j'_*(\Q_p/\Z_p)$ sends $f^*F_NR^1j_*(\Q_p/\Z_p)$ to $F_{N'}R^1j'_*(\Q_p/\Z_p)$.

2. We have a commutative diagram
$$\begin{matrix}  
f^*F_NR^1j_*(\Q_p/\Z_p) &\overset{\Rsw}\to & f^*(\cO_X(N)/\cO_X(M)  \otimes_{\cO_X} \Omega^1_X(\log D))\\
\downarrow && \downarrow\\
F_{N'}R^1j'_*(\Q_p/\Z_p) & \overset{\Rsw}\to & \cO_{X'}(N')/\cO_X(M')  \otimes_{\cO_{X'}} \Omega^1_{X'}(\log D').$$
\end{matrix}$$

\end{sbthm}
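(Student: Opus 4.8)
The plan is to establish the two assertions in turn, reducing each to a statement at the points of $X'$ of codimension one.

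For assertion 1, let $\xi'$ be a point of $X'$ of codimension one and put $\xi=f(\xi')$. If $\xi\in U$, then $f(\xi')\notin\mathrm{supp}(N)$, so $\text{ord}_{\xi'}(N')=\text{ord}_{\xi'}(f^*N)=0$; moreover $R^1j_*(\Q_p/\Z_p)$ is supported on $D$, so its pullback has trivial stalk at $\xi'$ and $\chi_{X'}$ is unramified there. If $\xi\in D$, then $\text{ord}_{\xi'}(f^*N)=\sum_P n(P)\,\text{ord}_{\xi'}(f^*P)\geq\sum_P\Sw_P(\chi)\,\text{ord}_{\xi'}(f^*P)$, and the bound $\Sw_{\xi'}(\chi_{X'})\leq\sum_P\Sw_P(\chi)\,\text{ord}_{\xi'}(f^*P)$ follows from the behaviour of Swan conductors under change of complete discrete valuation field established in \cite{KKs} (via $\Sw(\chi_L)\leq e(L/K)\Sw(\chi)$), applied to the local homomorphisms induced by $f$ at $\xi'$.

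For assertion 2, write $\mathcal T'=\cO_{X'}(N')/\cO_{X'}(M')\otimes_{\cO_{X'}}\Omega^1_{X'}(\log D')$ and let $\phi$ be the difference of the two composites $f^*F_NR^1j_*(\Q_p/\Z_p)\to\mathcal T'$; we must show $\phi=0$. The scheme $N'-M'$ is cut out locally on $X'$ by a single equation (the product of local equations of its components), hence is Cohen--Macaulay and has no embedded component; since $N'-M'$ is of characteristic $p$, the sheaf $\mathcal T'$ is locally free on it by 1 of \ref{dualX}, and therefore a section of $\mathcal T'$ vanishing at all generic points of $N'-M'$ is zero. (This is the mechanism behind the uniqueness assertion in \ref{thmS}.) Since, by \ref{thmS}, $\Rsw$ on $X'$ is compatible with the local $\Rsw$ at every point of $X'$ of codimension one lying in the support of $N'-M'$, it therefore suffices to prove, for each such point $\xi'$, that the image in the stalk $\mathcal T'_{\xi'}$ of the pullback of $\Rsw_X(\chi)$ coincides with $\Rsw_{K'}(\chi_{K'})$, where $K'$ is the fraction field of the completion $\widehat{\cO_{X',\xi'}}$ --- a complete discrete valuation field whose residue field is finitely generated over $\F_p$, because $N'-M'$ is of characteristic $p$ --- and $\chi_{K'}$ denotes the pullback of $\chi$.

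Let $\xi=f(\xi')$. If $\xi$ has codimension one in $X$, then $\text{ord}_{\xi'}(N')>0$ forces the local homomorphism $\cO_{X,\xi}\to\cO_{X',\xi'}$ to be injective (it cannot factor through the residue field), so it induces an extension of complete discrete valuation fields $K\to K'$, where $K$ is the fraction field of $\widehat{\cO_{X,\xi}}$; by \ref{thmS} applied to $X$ we have $\Rsw_X(\chi)|_K=\Rsw_K(\chi_K)$, and then property (i) of \ref{Rsw4} --- valid for the local $\Rsw$ by its construction in \ref{union} --- identifies the pullback of $\Rsw_K(\chi_K)$ along $K\to K'$ with $\Rsw_{K'}(\chi_{K'})$. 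If $\xi$ has codimension $\geq 2$ in $X$, I would, working locally near $\xi'$, factor $f$ as a regular closed immersion followed by a projection $X\times_{\Z}{\bf A}^k_{\Z}\to X$ (and the projection further into an \'etale morphism and iterated projections ${\bf A}^1$); the \'etale and projection parts are covered by the trivial local-isomorphism case and the $K\{\{T\}\}$-case of \ref{lem3} respectively, so the essential remaining case is that of a regular closed immersion. For that case I would argue as in the proof of \ref{thmS}: choose proper normal models $\bar X\supseteq X$ and $\bar X'\supseteq X'$ with a morphism $\bar X'\to\bar X$ extending $f$ (e.g.\ the normalization of the closure of the graph; then $\bar X'(\R)=\emptyset$ since $\bar X'$ maps to $\bar X$), describe $\Rsw_X(\chi)$ and $\Rsw_{X'}(\chi_{X'})$ through the higher-dimensional global class field theory of $\bar X$ and $\bar X'$ in terms of the functionals $h_{\frak p}$, and use the functoriality of the reciprocity maps: $h_{\frak p'}$ factors through the boundary (residue) maps on Milnor $K$-groups attached to the closed subscheme, compatibly with the truncated exponential maps (as in case (ii) of the proof of \ref{lem3}) and with the residue maps on differential forms ($\Res_{F'}=\Res_F\circ\Tr_{F'/F}$); translated through the Grothendieck--Serre duality isomorphism of \ref{HxRes} this yields the required equality.

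The main obstacle is this last, dimension-dropping case: one must verify that the Grothendieck--Serre local duality isomorphism underlying the construction of $\Rsw_X$ is compatible with restriction to a regular closed subscheme, which --- after passing through the class field theory --- amounts to the compatibility of the reciprocity maps with the boundary maps on Milnor $K$-theory together with the behaviour of the residue maps on differentials under trace. The equal-codimension case, by contrast, is essentially formal once property (i) of \ref{Rsw4} is in hand.
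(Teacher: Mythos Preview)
Your reduction of assertion~2 to the generic points of $N'-M'$ is correct, and the codimension-one case (your Case~1) goes through exactly as you say via property~(i) of \ref{Rsw4}. The difficulty is localized in Case~2, as you yourself flag, and your sketch there does not close. Factoring $f$ locally as a regular closed immersion followed by an affine projection is fine, but the closed-immersion step is not of codimension one in general, and checking that the Grothendieck--Serre identification of \ref{HxRes} intertwines the boundary maps on Milnor $K$-theory with restriction of differentials across a jump in dimension is a real computation that you have not carried out. The same gap appears already in your argument for assertion~1: the inequality $\Sw(\chi_L)\leq e(L/K)\Sw(\chi)$ from \cite{KKs} presupposes an extension of complete discrete valuation fields, and when $\xi=f(\xi')$ has codimension $\geq 2$ in $X$ there is no such $K$ available at $\xi$, so the bound you write down is not justified.

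The paper avoids this obstacle entirely. Rather than confronting higher-codimension images, it proves an injectivity lemma (\ref{injrest}) saying that a section of $\cO_{X'}(N')/\cO_{X'}(M')\otimes\Omega^1_{X'}(\log D')$ is determined by its restrictions to curves $C\subset X'$ through regular points of $D'$. This reduces the whole theorem to the case $\dim X'=1$, which is handled in \ref{special2} by repeated blow-up of $X$ at points of $C\cap D$ until the situation of \ref{special} (a codimension-one subscheme with transversal intersection) is reached; \ref{special1} controls the blow-ups via \cite{KKs} Theorem~8.1 and a second injectivity lemma (\ref{X-x0}), while \ref{special} is the single direct class-field-theory computation, done only in codimension one. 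So the paper never needs the compatibility you identify as the main obstacle: it trades the dimension-dropping closed-immersion problem for an injectivity statement on the target sheaf plus the codimension-one case.
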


We first prove Theorem \ref{pullback} in special cases (\ref{special1}, \ref{special}, \ref{special2}).

\begin{sblem}\label{special1} Theorem \ref{pullback} is true if $X'$ is the blowing-up of $X$ at a closed point of $D$. 

\end{sblem}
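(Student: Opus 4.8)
The plan is to reduce the statement to a computation at the exceptional divisor, the behaviour along strict transforms being transparent.

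First I would fix notation: write $f\colon X'\to X$ for the blowing-up at the closed point $x\in D$, choose a regular system of parameters $t_1,\dots,t_d$ of $\cO_{X,x}$ such that $D$ is defined at $x$ by $t_1\cdots t_s$, where $D_1,\dots,D_s$ are the components of $D$ through $x$, and set $D'=f^{-1}(D)_{\red}$. Then $D'$ again has simple normal crossings, its generic points being the strict transforms $\tilde D_i$ of the components $D_i$ of $D$ together with the exceptional divisor $E\cong\P^{d-1}_{\kappa(x)}$, and $U':=X'\smallsetminus D'=f^{-1}(U)$ with $f|_{U'}\colon U'\xrightarrow{\ \sim\ }U$. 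Hence $f^{*}$ identifies $H^1_{\mathrm{et}}(U,\Q_p/\Z_p)$ with $H^1_{\mathrm{et}}(U',\Q_p/\Z_p)$; let $\chi'$ be the section corresponding to $\chi$. By the characterization of the global $\Rsw$ in \ref{thmS} together with \ref{xxi}, it suffices to verify both assertions of \ref{pullback} at each generic point of $D'$ lying in the support of $N'-M'$, namely at the $\tilde D_i$ and at $E$.

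At a strict transform $\tilde D_i$ with generic point $\eta_i$, the local rings $\cO_{X',\eta_i}$ and $\cO_{X,\eta_{D_i}}$ are the valuation rings of one and the same divisorial valuation of the common function field, and near $\eta_i$ (resp. $\eta_{D_i}$) the divisor $D'$ (resp. $D$) is just $\tilde D_i$ (resp. $D_i$). Thus $\hat J_{\eta_i}=\hat J_{\eta_{D_i}}$ and $\chi'$ and $\chi$ restrict there to the same character; in particular $\mathrm{ord}_{\eta_i}(N')=n(D_i)$ and $\mathrm{ord}_{\eta_i}(p)=\mathrm{ord}_{\eta_{D_i}}(p)$, so $\mathrm{ord}_{\eta_i}(M')=m(D_i)$, $\Sw_{\eta_i}(\chi')=\Sw_{\eta_{D_i}}(\chi)\le n(D_i)$ (assertion 1 at $\eta_i$), and $\Rsw_{\hat J_{\eta_i}}(\chi')=\Rsw_{\hat J_{\eta_{D_i}}}(\chi)$, which by \ref{thmS} is the germ at $\eta_{D_i}$ of $\Rsw(\chi)$ and hence equals the germ at $\eta_i$ of $f^{*}\Rsw(\chi)$ (assertion 2 at $\eta_i$).

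The essential case is the exceptional divisor $E$. Let $v=v_E$ be the divisorial valuation of the function field $K$ attached to $E$; on $\cO_{X,x}$ it is the $\frak m_x$-adic order function, so $v(t_j)=1$ for all $1\le j\le d$ and $\mathrm{ord}_E(N')=\sum_{j=1}^s n(D_j)$. The plan is to compute $\Sw_v(\chi)$ and $\Rsw_{\hat J_v}(\chi)$ directly from the local homomorphism $\cO_{X,x}\to\cO_{X',\eta_E}\to\cO_{\hat J_v}$, using the chart $t_j\mapsto t_1\cdot(t_j/t_1)$ ($2\le j\le d$) of the blowing-up, in which $t_1$ is a prime element of $\hat J_v$ and the residue field of $\hat J_v$ is the function field of $E$. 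Passing to the completion $\hat\cO_{X,x}$ and, in equal characteristic, realizing $\chi$ over $\mathrm{Frac}(\hat\cO_{X,x})$ by a Witt vector as in Section \ref{pos} whose polar behaviour along each $D_j$ is recorded by $\Rsw_{\eta_{D_j}}(\chi)$, the fact that $d$ commutes with the ring homomorphism $\mathrm{Frac}(\hat\cO_{X,x})\to\hat J_v$ and the formula $\Rsw=-d(-)$ (truncated) show that $\Sw_v(\chi)\le\sum_j\Sw_{\eta_{D_j}}(\chi)\le\sum_j n(D_j)=\mathrm{ord}_E(N')$ and that $\Rsw_{\hat J_v}(\chi')$ is exactly the image of the germ of $\Rsw(\chi)$ at $x$ under $f^{*}$ followed by the natural map, the truncation levels being the ones dictated by that natural map. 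The mixed characteristic case I would reduce to this equal characteristic computation via the isomorphism $F[T]/(T^{a})\xrightarrow{\ \sim\ }\cO_{X,x}/\frak m_x^{a}$ of rings with log structures (\ref{isoTi}); alternatively, choosing a proper normal model $\bar X'\to\bar X$ of $f$ and using the injectivity \ref{inj}, one reduces to checking the identity after passing to the higher-dimensional local fields $J_v$ furnished by higher-dimensional global class field theory, where the characterization \ref{Rsw4}(ii) applies and the required equality follows from functoriality of the reciprocity maps and the residue formulas of \ref{rdl} under $\bar X'\to\bar X$. I expect this verification at $E$ to be the only substantial point: the compatibilities for the strict transforms and the behaviour of $\Sw$ and of the truncation modules are formal, whereas relating the $v_E$-data of $\chi$ to the data of $\chi$ along the components of $D$ through $x$, together with the bookkeeping for the natural map of coherent sheaves under $f$, is where the real work lies.
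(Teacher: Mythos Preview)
Your approach diverges substantially from the paper's, and the mixed characteristic part has a genuine gap.

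The paper avoids any direct computation at the exceptional divisor $E$. For part 1 it simply invokes \cite{KKs} Thm.~8.1, which already gives $\Sw_E(\chi')\le \mathrm{ord}_E(N')$. For part 2 it proves a module-theoretic injectivity lemma (\ref{X-x0}): the restriction map
\[
f_*\bigl(\cO_{X'}(N')/\cO_{X'}(M')\otimes\Omega^1_{X'}(\log D')\bigr)\longrightarrow \lambda_*\lambda^*\bigl(\cO_X(N)/\cO_X(M)\otimes\Omega^1_X(\log D)\bigr)
\]
induced by $X\smallsetminus\{x\}\hookrightarrow X'$ is injective. Since $f$ is an isomorphism over $X\smallsetminus\{x\}$, both $\Rsw(\chi')$ and $f^*\Rsw(\chi)$ have the same image under this restriction, and injectivity finishes the argument. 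No analysis of $\Rsw$ at $E$ is needed; the whole content is shifted into a coherent-sheaf computation on the blow-up (filtering by divisors between $M'$ and $N'$ and reducing to $f_*\cO_Q\hookrightarrow \lambda_*\lambda^*\cO_{D_j}$, which holds because global functions on $\mathbb P^{d-1}_{\kappa(x)}$ are constant). This is both shorter and uniform in the characteristic.

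Your direct computation at $E$ is plausible in equal characteristic via Artin--Schreier--Witt (and the paper remarks at the end of Section 4 that \ref{thmS} and \ref{pullback} admit such proofs in characteristic $p$). But your mixed characteristic reduction does not go through as written. The isomorphism (\ref{isoTi}) is $F[T]/(T^a)\cong O_K/\mathfrak m_K^a$ for a complete \emph{discrete valuation ring} $O_K$; it says nothing about $\cO_{X,x}/\mathfrak m_x^a$ when $\cO_{X,x}$ is a regular local ring of dimension $d\ge 2$, and there is no comparable isomorphism carrying the blow-up geometry over to characteristic $p$. Your alternative via ``functoriality of reciprocity maps'' is too vague to constitute an argument: the local $\Rsw$ at $E$ was \emph{constructed} from those reciprocity maps, so one cannot simply appeal back to them without in effect reproving \ref{thmS} for $X'$ and then still needing to compare. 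The paper's injectivity trick is precisely what lets one bypass this.
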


To prove \ref{special1}, we use

\begin{sblem}\label{X-x0} Let the notation be as in \ref{thmS}.  Let $X'$ be the blowing-up of $X$ at a closed point $x$ of $D$ and let $D'$ be the support of the pullback of $D$ on $X'$.  Assume $\dim(\cO_{X,x})\geq 2$.  Let $\la: X\smallsetminus \{x\}\to X$ be the inclusion map. Then the map 
\begin{equation}
f_*(\cO_{X'}(N')/\cO_{X'}(M') \otimes_{\cO_{X'}} \Omega^1_{X'}(\log D')) \to \la_*\la^*(\cO_X(N)/\cO_X(M)\otimes_{\cO_X} \Omega^1_X(\log D))
\label{X-x}
\end{equation}
induced by the inclusion map $X\smallsetminus \{x\}\to X'$ is injective. 

\end{sblem}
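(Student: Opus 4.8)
I plan to check the injectivity stalk by stalk. Away from $x$ the blow-up $f\colon X'\to X$ is an isomorphism, and under it $N'$, $M'$, $D'$ restrict to $N$, $M$, $D$; hence the map (\ref{X-x}) is an isomorphism on $X\smallsetminus\{x\}$, and only the stalk at $x$ needs attention. Write $\cG'=\cO_{X'}(N')/\cO_{X'}(M')\otimes_{\cO_{X'}}\Omega^1_{X'}(\log D')$ and $\cG$ for the analogous sheaf on $X$, and let $E=f^{-1}(x)$ be the exceptional divisor; since $\cO_{X,x}$ is regular local of dimension $d:=\dim(\cO_{X,x})\geq 2$, we have $E\cong\P^{d-1}_{\kappa(x)}$ and $\cO_E(E)\cong\cO_{\P^{d-1}}(-1)$. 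Then $(f_*\cG')_x=\varinjlim_{V\ni x}\Gamma(f^{-1}(V),\cG')$, the target stalk is $\varinjlim_V\Gamma(V\smallsetminus\{x\},\cG)$, and the map is restriction along $f^{-1}(V)\smallsetminus E\cong V\smallsetminus\{x\}$. Thus its kernel is $\varinjlim_V H^0_E(f^{-1}(V),\cG')$, so I must show that $\cG'$ has no nonzero local section with support in $E$ near $x$.

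For this I would identify the subsheaf $\cH\subset\cG'$ of sections supported on $E$ explicitly. Writing $N'-M'=a\,E+R'$ with $R'$ the part supported away from $E$ (a combination of the strict transforms $\tilde D_i$ of the branches of $D$ at $x$) and $a=\mathrm{ord}_E(N'-M')$, the point is that a section supported on $E$ is divisible by a local equation of $R'$, so that (using local freeness of $\Omega^1_{X'}(\log D')$) one gets $\cH=\cO_{X'}(N'-R')\otimes_{\cO_{X'}}\bigl(\cO_{X'}/\cO_{X'}(-a\,E)\bigr)\otimes_{\cO_{X'}}\Omega^1_{X'}(\log D')$, a coherent sheaf supported on the thickening $a\,E$, which is proper over $\kappa(x)$. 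Hence $\Gamma(f^{-1}(V),\cH)=\Gamma(a\,E,\cH)$, and I would run the filtration of $\cO_{a\,E}$ with graded pieces $\cI_E^k/\cI_E^{k+1}\cong\cO_E(-kE)$, $0\le k<a$, to reduce the vanishing of $\Gamma(a\,E,\cH)$ to showing
$$H^0\bigl(\P^{d-1}_{\kappa(x)},\ \cO_E(N'-R')\otimes_{\cO_E}\cO_E(-kE)\otimes_{\cO_E}\Omega^1_{X'}(\log D')|_E\bigr)=0\qquad(0\le k<a).$$
(It may be cleaner to organize the last step as an induction on $a$, i.e.\ on the multiplicity of $N'-M'$ along $E$, as in the proofs of \ref{dualint} and \ref{xxi}.)

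In each of these groups the first two factors form a line bundle $\cO_{\P^{d-1}}(e_k)$, and $\Omega^1_{X'}(\log D')|_E$ is the rank-$d$ bundle sitting in the residue sequence $0\to\Omega^1_{\P^{d-1}}(\log\textstyle\sum_i H_i)\to\Omega^1_{X'}(\log D')|_E\to\cO_E\to0$ with the $H_i=\tilde D_i\cap E$ hyperplanes in general position; combining this with the Euler sequence reduces the vanishing to $e_k<0$ for all $k$ in range. Here $\cO_E(\tilde D_i)\cong\cO_{\P^{d-1}}(1)$ while $\cO_E(E)\cong\cO_{\P^{d-1}}(-1)$ is anti-ample, and $e_k$ is controlled by $k$ together with the multiplicities of $N'$ and $M'$ along $E$ and along the $\tilde D_i$; one knows $M'\le f^*M$ (since $\mathrm{ord}_E(M')=\max(\mathrm{ord}_E(N')-\mathrm{ord}_E(p),[\mathrm{ord}_E(N')/p])\le\sum_i\mathrm{ord}_{D_i}(M)$), and I expect the negativity $e_k<0$ throughout $0\le k<a$ to fall out of the subadditivity properties of $n\mapsto n-\max(n-\mathrm{ord}(p),[n/p])$. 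The step I anticipate as the main obstacle is exactly this one: bookkeeping the various multiplicities and the twists coming from $E$ and the $\tilde D_i$ so as to extract the required negativity from the arithmetic of $\max(n-\mathrm{ord}(p),[n/p])$; once that is in place the geometric input (the structure of $E$ and the standard cohomology of $\Omega^1_{\P^{d-1}}(\log(\text{hyperplanes}))$) is routine.
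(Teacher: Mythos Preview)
Your strategy is sound: reduce to $H^0_E(\cG')=0$, identify the $E$-supported subsheaf as $\cH\cong\cO_{X'}(N'-R')\otimes\cO_{aE}\otimes\Omega^1_{X'}(\log D')$, filter by powers of the ideal of $E$, and compute $H^0$ on $E\cong\P^{d-1}_{\kappa(x)}$. The error is in the numerics at the last step. You assert $M'\le f^*M$, i.e.\ $m':=\text{ord}_E(M')\le\sum_i m_i$, but this is the wrong direction and in fact false already in characteristic~$p$ (take $r=2$, $n_1=n_2=1$, $p=2$: then $m_1=m_2=0$ while $m'=[2/2]=1$). Computing your degree explicitly: $N'-R'=\sum_i m_i\tilde D_i+(\sum_i n_i)E$, so with $\cO_E(\tilde D_i)\cong\cO(1)$ and $\cO_E(E)\cong\cO(-1)$ one gets $e_k=\sum_i m_i-\sum_i n_i+k$; hence $e_k<0$ for all $0\le k<a=\sum_i n_i-m'$ is equivalent to $m'\ge\sum_i m_i$, the \emph{reverse} of what you wrote. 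Once that inequality is in place your residue/Euler sequence argument on $\P^{d-1}$ does give the vanishing, so the ``subadditivity'' you anticipate is exactly $a\le\sum_i(n_i-m_i)$, i.e.\ $m'\ge\sum_i m_i$.

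The paper organizes the argument differently. It chooses an auxiliary divisor $A=\sum_i a_iD_i$ on $X$ with $m_i\le a_i\le n_i$ and $\sum_i a_i=m'$, so that $A':=f^*A$ agrees with $M'$ along $E$, and treats the two layers $\cO_{X'}(A')/\cO_{X'}(M')$ and $\cO_{X'}(N')/\cO_{X'}(A')$ separately. The first is a vector bundle on the scheme $A'-M'$, which contains $(A-M)\smallsetminus\{x\}$ as a dense open subscheme, so restriction is injective. The second is filtered with successive quotients supported on $Q=\tilde D_j\cup E$; after enlarging $D$ near $x$ so that $r=d$ (whence $\Omega^1_{X'}(\log D')=f^*\Omega^1_X(\log D)$ becomes trivial along $E$), the problem reduces to the injectivity of $f_*\cO_Q\to\la_*\la^*\cO_{D_j}$, which holds because global functions on $E\cong\P^{d-1}$ are constant. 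This route avoids computing cohomology of twisted log-differentials on $\P^{d-1}$ altogether. Note, however, that the very existence of such an $A$ already presupposes $\sum_i m_i\le m'$, so the paper's argument and yours ultimately rest on the same arithmetic input; what differs is only how the geometric step is packaged.
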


\begin{pf} Let $D_i$ ($1\leq i\leq r$) be the irreducible components of $D$ which contain $x$. Replacing $X$ by an open neighborhood of $x$, we may assume that $D=\cup_{i=1}^r D_i$. Write $N=\sum_{i=1}^r n_iD_i$, $M= \sum_{i=1}^r m_iD_i$ ($n_i, m_i\in \Z_{\geq 0}$). We have $N'=\sum_{i=1}^r n_iD'_i + (\sum_{i=1}^r n_i)P$ where $D'_i$ is the proper transformation of $D_i$ in $X'$ and $P$ is the inverse image of $x$ in $X'$. We have $M'= \sum_{i=1}^r m_iD'_i + m'P$ for some integer $m'$ such that $\sum_{i=1}^r m_i \leq m' \leq \sum_{i=1}^r n_i$. Take integers $a_i$ ($1\leq i\leq r$) such that $m_i \leq a_i\leq n_i$ and $\sum_{i=1}^r a_i=m'$, and let $A$ be the divisor $\sum_{i=1}^r a_iD_i$ on $X$. Then the pullback $A'$ of $A$ on $X'$ is $\sum_{i=1}^r a_iD'_i+m'P$. It is sufficient to prove that the maps
\begin{equation}
f_*(\cO_{X'}(N')/\cO_{X'}(A') \otimes_{\cO_{X'}} \Omega^1_{X'}(\log D')) \to \la_*\la^*(\cO_X(N)/\cO_X(A) \otimes_{\cO_X} \Omega^1_X(\log D))
\label{X-x1}
\end{equation}
\begin{equation}
f_*(\cO_{X'}(A')/\cO_{X'}(M') \otimes_{\cO_{X'}} \Omega^1_{X'}(\log D')) \to \la_*\la^*(\cO_X(A)/\cO_X(M) \otimes_{\cO_X} \Omega^1_X(\log D))
\label{X-x2}
\end{equation}
are injective. 

We first prove that the map (\ref{X-x2}) is injective. Regard the divisors $A-M$ and $A'-M'$ as schemes. Then $(A-M)\smallsetminus \{x\}$ is a dense open subscheme of $A'-M'$, $\cF:= \cO_{X'}(A')/\cO_{X'}(M') \otimes_{\cO_{X'}} \Omega^1_{X'}(\log D')$ is a vector bundle on $A'-M'$, and the map (\ref{X-x2}) is the canonical map $\cF\to \gamma_*\gamma^*\cF$ for the inclusion map $\gamma: (A-M)-\{x\} \to A-M$ and hence it is injective.

We next prove that the map (\ref{X-x1}) is injective. Take a sequence of divisors $A_i$ ($0\leq i\leq s$) on $X$ such that $N=A_s\geq \dots \geq A_0=A$ and such that for each $1\leq i\leq s$, $A_i-A_{i-1}$ coincides with the divisor $D_j$ for some $j$ ($1\leq j\leq r$, $j$ can depend on $i$, $D_j$ is regarded as a reduced scheme), and let $A_i'$ be the pullback of $A_i$ to $X'$. It is sufficient to prove that the map
\begin{equation}
 f_*(\cO_{X'}(A'_i)/\cO_{X'}(A'_{i-1}) \otimes_{\cO_{X'}} \Omega^1_{X'}(\log D')) \to \la_*\la^*(\cO_X(A_i)/\cO_X(A_{i-1}) \otimes_{\cO_X} \Omega^1_X(\log D))
\label{X-x3}
\end{equation}
is injective for each $1\leq i\leq s$. Since $\cO_{X'}(A'_i)/\cO_{X'}(A'_{i-1}) \cong \cO_Q$ where $Q= D_j'\cup P$ with the reduced scheme structure for some $j$, it is sufficient to prove that the map 
\begin{equation}
f_*(\cO_Q \otimes_{\cO_{X'}} \Omega^1_{X'}(\log D')) \to \la_*\la^*(\cO_{D_j} \otimes_{\cO_X} \Omega^1_X(\log D))
\label{X-x4}
\end{equation}
is injective. We may assume that $r=d$ where $d$ is the dimension of $\cO_{X,x}$. In fact, on an open neighborhood of $x$, we can enlarge $D$ to a simple normal crossing divisor $D^*$ with $d$ irreducible components such that $x$ is contained in all  irreducible components of $D^*$, and we can replace $D$  by $D^*$ because the map $\cO_Q \otimes_{\cO_{X'}} \Omega^1_{X'}(\log D')\to \cO_Q \otimes_{\cO_{X'}} \Omega^1_{X'}(\log (D^*)')$ is injective where $(D^*)'$ is the support of the pullback of $D^*$ to $X'$. 
In the case $r=d$, $\Omega^1_{X'}(\log D')$ is the pullback of $\Omega^1_X(\log D)$, and hence the injectivity of (\ref{X-x4}) is reduced to the injectivity of 
\begin{equation}
f_*(\cO_Q) \to \la_*\la^*(\cO_{D_j}).
\label{X-x5}
\end{equation}
Since $P$ is isomorphic to the $d-1$-dimensional projective space over the residue field of $\kappa(x)$, $f_*(\cO_P)$ consists of constant functions. Hence (\ref{X-x5}) is injective. 
\end{pf}

\begin{sbpara} We prove \ref{special1}. Let $U'$ be the inverse image of $U$ in $X'$ (so we have an isomorphism $U'\overset{\cong}\to U$). Let $\chi\in F_NH^1_{et}(U, \Q_p/\Z_p)$. By \cite{KKs} Thm. 8.1, the Swan conductor divisor of $\chi_{U'}$ on $X'$ is $\leq N'$. 
Both $\Rsw(\chi_{U'})$ and the pullback of $\Rsw(\chi)$ on $X'$ are sections of the left hand side of (\ref{X-x}) whose images in  the right hand side of (\ref{X-x}) coincide with the pullback of $\Rsw(\chi)$. Hence by the injectivity \ref{X-x0}, they coincide.

\end{sbpara}

\begin{sblem}\label{special}
Theorem \ref{pullback} is true if $X'$ is a (locally closed) subscheme of $X$ of codimension one, $D$ is regular,  and the scheme    $D\times_X X'$ is regular.

\end{sblem}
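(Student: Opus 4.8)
The plan is to proceed as in the proof of \ref{thmS}: reduce the assertion to the generic points of $D'$, and there match the two constructions of $\Rsw$ by descending to a two--dimensional complete regular local ring and then passing, as in \ref{gen1} and \ref{union}, to higher dimensional local fields, where $\Rsw$ is pinned down by reciprocity.

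The assertion being local on $X$, we may assume (as in the proof of \ref{thmS}) that $X$ and $X'$ are dense open subschemes of proper normal integral schemes over $\Z$ with no real points, so that the global class field theory of Section \ref{global} applies to both. Part 1 of \ref{pullback} is then \cite{KKs} Thm.\ 8.1, exactly as used in the proof of \ref{special1}: the Swan conductor divisor of $\chi_{U'}$ on $X'$ is $\le f^*N=N'$.

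For Part 2, both $\Rsw(\chi_{U'})$ and the image of $\Rsw(\chi)$ under the pullback map are sections of the vector bundle $\cO_{X'}(N')/\cO_{X'}(M')\otimes_{\cO_{X'}}\Omega^1_{X'}(\log D')$ on the scheme $N'-M'$; since $N'-M'$ is an effective Cartier divisor on the regular, hence Cohen--Macaulay, scheme $X'$, it is itself Cohen--Macaulay and so has no embedded associated points, and therefore a section of a vector bundle on it is determined by its images at the generic points of $D'$ lying in the support of $N-M$. Fix such a generic point $P'$, let $P$ be the generic point of the component $D_P$ of $D$ with $P'\in D_P$ (by transversality of $X'$ and $D$, which holds since $D$ and $D\times_X X'$ are regular, $P'$ lies on no other component of $D$), and put $n=\text{ord}_P(N)=\text{ord}_{P'}(N')$. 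By the construction in Section \ref{global} (recalled in \ref{thmS}), the image of $\Rsw(\chi_{U'})$ at $P'$ is the local refined Swan conductor $\Rsw_{\hat J'_{P'}}(\chi_{U'})$, where $\hat J'_{P'}$ is the fraction field of the completion of $\cO_{X',P'}$, and the image of $\Rsw(\chi)$ at $P$ is $\Rsw_{\hat J_P}(\chi)$; hence the image of $\Rsw(\chi)$ at $P'$ is obtained from $\Rsw_{\hat J_P}(\chi)$ by generalization to the germ at $P'$ followed by the pullback of coherent sheaves along $X'\hookrightarrow X$, and it remains to identify this with $\Rsw_{\hat J'_{P'}}(\chi_{U'})$.

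This identity is a statement about the two--dimensional complete regular local ring $B=\hat\cO_{X,P'}$, in which $D_P$ is cut out by a regular parameter $\pi$ and $X'$ by a regular parameter $g$, with $(\pi,g)$ the maximal ideal; the character $\chi$ on the fraction field of $B$ is ramified in the $\pi$--direction, with $\Rsw$ equal by (i) of \ref{Rsw4} to $\Rsw_{\hat J_P}(\chi)$, and unramified in the $g$--direction (the generic point of $X'$ lies on $U$), its specialization along $g$ being $\chi_{U'}$. To prove the identity we enlarge residue fields to higher dimensional local fields: choose a place $w$ of the finitely generated field $\kappa(P')$ — of characteristic $p$, since $N-M$ is — making $\kappa(P')_w$ a higher dimensional local field, and on $\kappa(P)$ the composite of $w$ with the valuation given by $P'$; passing along cdvf extensions of ramification index one (compatible with $\Rsw$ by (i) of \ref{Rsw4}, injective on differential modules by \ref{inj}), we reach a higher dimensional local situation in which $\Rsw$ is characterized by the pairing of (ii) of \ref{Rsw4}. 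There the identity reduces, just as in \ref{pflem3ii}, to an explicit computation with reciprocity maps: restriction of $\chi$ to the transverse divisor $\{g=0\}$ corresponds on Milnor $K$--groups to the residue homomorphism in the $g$--direction of \cite{KK1}, under which — using (ii) of \ref{exponential} — the truncated exponential and logarithmic differential forms transform precisely so as to realise the pullback map on differential forms; the finite and $K\{\{T\}\}$--type extensions of higher dimensional local fields that intervene are handled by \ref{lem3}. The main obstacle is this last matching: one must track the filtration levels (which agree because $X'$ is transverse to $D$, leaving the $\pi$--adic geometry untouched) together with the $R_b$ and Cartier twists, and verify that the $g$--residue on Milnor $K$--theory is compatible, through the truncated exponential, with reduction modulo $g$ on differential forms.
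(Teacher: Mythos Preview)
Your outline has the right shape, but there are two genuine gaps.

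For Part~1 you invoke \cite{KKs} Thm.~8.1 ``exactly as in the proof of \ref{special1}''. In \ref{special1} the morphism $X'\to X$ is a blow-up, hence birational: $U'\cong U$ and $\chi_{U'}$ is the same character viewed on a different compactification, so Kato's theorem on the behaviour of the Swan divisor under change of model applies. Here $X'\hookrightarrow X$ is a closed immersion of codimension one; the function field $K'$ of $X'$ is not an extension of the function field $K$ of $X$, and $\chi'=\chi|_{U'}$ is a \emph{specialization}, not a pullback along a field extension. That result does not cover this situation, and the paper does not try to cite it: it proves $\Sw_\xi(\chi')\le n$ directly, using the global reciprocity relation $\sum_{\frak p\in P_{\frak q}(X)}h_{\frak p}=0$. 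For test elements $\alpha=\{E(g),y_1,\dots,y_{d-2},\tau\}$ with $g\in\pi^{n+1}\cO_{X,\xi}$ only the two terms $h_{\frak p_1}$ (at the generic point of $D$) and $h_{\frak p_2}$ (at the generic point of $X'$) survive, whence $h_{\frak p_2}(\alpha)=-h_{\frak p_1}(\alpha)=0$ since $\Sw_\nu(\chi)\le n$.

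For Part~2 your reduction to the generic point $P'$ of $D'$ and to the two-dimensional complete local ring $B=\hat\cO_{X,P'}$ is fine, but the step you summarise as ``just as in \ref{pflem3ii}'' does not apply. The compatibilities in \ref{lem3} and \ref{pflem3ii} are between $\Rsw_K(\chi)$ and $\Rsw_{K'}(\chi_{K'})$ for an \emph{inclusion} of complete discrete valuation fields $K\subset K'$. What you need here is structurally different: $\hat J_P$ and $\hat J'_{P'}$ are completions of $\mathrm{Frac}(\cO_{X,\xi})$ along two distinct height-one primes of a two-dimensional regular local ring, neither contained in the other, and $\chi'$ is obtained from $\chi$ by specialization along $g=0$, not by base change. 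Enlarging residue fields does not turn this into an instance of \ref{lem3}. The link between the two local pictures is again the relation $h_{\frak p_1}(\alpha)+h_{\frak p_2}(\alpha)=0$ for suitable $\alpha\in K_d^M(K)$, and this is precisely where the paper's use of \emph{global} reciprocity on a proper model $\bar X$ enters (the method of Brylinski \cite{Br}). The paper then runs this identity against $\alpha=\{E(\pi^i\otimes\iota(c)),\tau\}$ and $\alpha=\{E(\pi^i\otimes\iota(c)),\pi,\tau\}$, translates $h_{\frak p_2}$ through the boundary map $K_d^M(K)\to K_{d-1}^M(K')$ to $h_{\frak p_2(X')}$, and matches the coefficients $a_i,b_i$ of $\Rsw(\chi)$ with those of $\Rsw(\chi')$ via the residue pairings $\Res_{\frak p_1(D)}$ and $\Res_{\frak q(D')}$. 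Your localize-then-enlarge strategy would need a two-dimensional local reciprocity law to supply the same relation; that is possible in principle, but it is not what (ii) of \ref{Rsw4} or \ref{lem3} give you, and you have not established it.
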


The following proof of \ref{special} models on the method of Brylinski in \cite{Br}  who studied the induced ramification on a curve $X'\subset X$ for a surface $X$ over a finite field by using the class field theory of $X$. 

\begin{pf} Let $D'=D\times_X X'$.
We may assume $X$, $D$, $X'$ and $D'$ are integral. Let $K$ (resp. $F$, resp. $K'$, resp.  $F'$) be the function field of $X$ (resp. $D$, resp. $X'$,  resp. $D'$). Let $d$ be the dimension of $X$. 
Let $\xi$ be the generic point of $D'$ and let $\nu$ be the generic point of $D$. 
Let $\tau\in \cO_{X,\xi}$ be an element which defines $X'$ at $\xi$ and let $\pi\in \cO_{X,\xi}$ be an element which 
defines $D$ at $\xi$. Let $\frak q=(x_i)_i \in Q_{d-1}(X)$ and assume $x_{d-2}=\xi$. Let $\chi\in F_NH^1_{et}(U, \Q_p/\Z_p)$. For $\frak p\in P_{\frak q}(X)= P_{\frak q}(\bar X)$, consider the homomorphism $h_{\frak p}: K^M_d(K) \to \Q_p/\Z_p$ induced by $\chi$. Then we have $\sum_{\frak p\in P_{\frak q}(X)} h_{\frak p}=0$ on $K_d^M(K)$. Let $\frak p_1=(x_i)_i \in P_{\frak q}(X)$ be the unique element such that $x_{d-1}$ is the generic point of $D$, and let $\frak p_2=(x_i')_i$ be the unique element of $P_{\frak q}(X)$ such that $x'_{d-1}$ is the generic point of $X'$. 

Write $N=nD$, $M=mD$. We may assume $n>0$ and hence $n>m$.
Let  $g\in \pi^{m+1}\cO_{X, \xi}$ and $y_1,\dots, y_{d-2}\in \cO_{X,\xi}^\times \cdot \pi^{\Z}$,
and let  $\alpha= \{E(g), y_1, \dots, y_{d-2}, \tau\}\in K^M_d(K)$.
Since $\sum_{\frak p\in P_{\frak q}(X)} h_{\frak p}(\alpha)=0$ and $h_{\frak p}(\alpha)=0$ for $\frak p\in P_{\frak q}(X)\smallsetminus \{\frak p_1, \frak p_2\}$, we have

 \begin{equation}
h_{\frak p_1}(\alpha)+ h_{\frak p_2}(\alpha)=0. 
\label{p1p21}
\end{equation}

Let $\chi'$ be the pullback of $\chi$ to $X'\cap U$. Let $\frak p_1(D)$ be $\frak p_1$ 
regarded as an element of $P(D)$ and define $\frak p_2(X')\in P(X')$ and $\frak q(D')\in P(D')$ similarly. Let $h_{\frak p_2(X')}: K^M_{d-1}(K') \to \Q/\Z$ be the homomorphism induced by $\chi'$. Then $h_{\frak p_2}: K^M_d(K)\to \Q/\Z$ coincides with the composition $K^M_d(K)\overset{\partial}\to K^M_{d-1}(K')\to \Q/\Z$ where the first arrow is the boundary map and the second arrow is $h_{\frak p_2(X')}$.

We first prove $\Sw_{\xi}(\chi')\leq n$. 
It is sufficient to prove that $h_{\frak p_2(X')}$ kills $U^{n+1}K^M_{d-1}(K')$ where $U^{\bullet}$ is for the discrete valuation ring $\cO_{X',\xi}$. The group $U^{n+1}K^M_{d-1}(K')$ is generated by $\partial(\alpha)$ where  $\alpha=\{E(g), y_1, \dots, y_{d-2}, \tau\}$ is as above such that $g\in 1+\pi^{n+1}\cO_{X,\xi}$. For such $\alpha$, we have $h_{\frak p_2(X')}(\partial(\alpha))= h_{\frak p_2}(\alpha)=-h_{\frak p_1}(\alpha)=0$. Here we used (\ref{p1p21}) and the fact $\Sw_{\nu}(\chi)\leq n$ and hence $h_{\frak p_1}$ kills $\alpha\in U^{n+1}K^M_d(K)$ where $U^{\bullet}$ is for the discrete valuation ring $\cO_{X,\nu}$.

Let $E$ and $E'$ be the schemes $(n-m)D$ and $(n-m)D'$, respectively. 
Since $\F_p[T]\to \cO_{D,\xi}\;;\;T\mapsto \tau$ is a localization of a smooth map, there is a ring homomorphism $\iota: \cO_{D, \xi}\to \cO_{E, \xi}$ which lifts the identity map of $\cO_{D,\xi}$ and which sends the image of $\tau$ to the image of $\tau$. Then $\iota$ induces ring homomorphisms $F\to \cO_{E, \nu}$ and $F'\to \cO_{E',\xi}$ which lift the identity maps of $F$ and $F'$, respectively. Write
$$\Rsw(\chi)= \sum_{i=m+1}^n   \pi^{-i} \otimes (\iota(a_i) d\log(\pi) + \iota(b_i)) \in (\cO_X(nD)/\cO_X(mD)\otimes_{\cO_X} \Omega^1_X(\log D))_{\xi},$$
$$\Rsw(\chi')= \sum_{i=m+1}^n   \pi^{-i} \otimes (\iota(a'_i) d\log(\pi) + \iota(b'_i))\in (\cO_{X'}(nD')/\cO_{X'}(mD')\otimes_{\cO_{X'}} \Omega^1_{X'}(\log D'))_{\xi},$$
where $a_i\in \cO_{D,\xi}, b_i\in \Omega^1_{D,\xi}, a'_i\in F', b_i'\in \Omega_{F'}^1$.  

For $j\in \Z$, let $\epsilon: \Omega^j_{D, \xi} \to \Omega^j_{F'}$ be the canonical projection. 
It is sufficient to prove $a'_i=\epsilon(a_i), b'_i=\epsilon(b_i)$ for all $i$.

We prove $a'_i=\epsilon(a_i)$. For any $c\in \Omega^{d-2}_{D, \xi}$, we have 
$$h_{\frak p_1}(\{E(\pi^i\otimes \iota(c)), \tau\})= (-1)^{d-1}\Res_{\frak p_1(D)}(a_ic \wedge d\log(\tau)) = (-1)^{d-1}\Res_{\frak q(D')}(\epsilon(a_i)\epsilon(c)),$$ 
where $\Res_{\frak p_1(D)}=\sum_{v \in Pl(D, \frak p_1(D))} \Res_{F_v}$, $\Res_{\frak q(D')}= \sum_{v\in Pl(D', \frak q(D'))} Res_{F'_v}$. 
On the other hand, $$h_{\frak p_2}(\{E(\pi^i \otimes  \iota(c)), \tau\})= h_{\frak p_2(X')}(E(\pi^i\otimes \iota(c))) = (-1)^{d-2} \Res_{\frak q(D')}(a_i' \epsilon(c)).$$
Hence by (\ref{p1p21}), $\Res_{\frak q(D')}(\epsilon(a_i)c)=\Res_{\frak q(D')}(a'_ic)$ for any $c\in \Omega^{d-2}_{F'}$. Hence we have $a_i=a_i'$.

We prove $b'_i=\epsilon(b_i)$. For $c\in \Omega^{d-3}_{D, \xi}$, 
$$h_{\frak p_1}(\{E(\pi^i \otimes \iota(c)),\pi,  \tau\})=-\Res_{\frak p_1(D)}(b_i\wedge c \wedge d\log(\tau)) = -\Res_{\frak q(D')}(\epsilon(b_i)\wedge \epsilon(c)).$$
On the other hand, $$h_{\frak p_2}(\{E(\pi^i  \otimes \iota(c)), \pi,  \tau\})=h_{\frak p_2(X')}(\{E(\pi^i\otimes \iota(c)), \pi\}) = \Res_{\frak q(D')}(b_i' \wedge \epsilon(c)).$$
Hence by (\ref{p1p21}), $\Res_{\frak q(D')}(\epsilon(b_i) \wedge c)=\Res_{\frak q(D')}(b'_i\wedge c)$ for any $c\in \Omega^{d-3}_{F'}$. Hence we have $b_i=b_i'$. \end{pf}

\begin{sbcor}\label{special0} Theorem \ref{pullback} is true in the case $X'$ is a one-dimensional subscheme of $X$ such that $X'$ meets $D$ only at regular points of $D$ and the scheme $D\times_X X'$ is regular. 
\end{sbcor}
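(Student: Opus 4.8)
The plan is to deduce \ref{special0} from \ref{special} by factoring, locally around each point where $X'$ meets $D$, the immersion $X'\hookrightarrow X$ into a chain of codimension-one immersions each of which satisfies the hypotheses of \ref{special}. Both assertions of Theorem \ref{pullback} are local on $X'$. Since $N'=f^*N$ is assumed to be a well-defined divisor, no irreducible component of $X'$ is contained in $D$, so $X'\cap D$ consists of finitely many closed points of $X'$, each a regular point of $D$ by hypothesis; moreover at a point $\xi\in X'$ with $\xi\notin D'$ one has $f(\xi)\in U$, where the stalk of $R^1j_*(\Q_p/\Z_p)$ vanishes, and the stalk of $\cO_{X'}(N')/\cO_{X'}(M')$ vanishes there as well. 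Hence it suffices to prove Theorem \ref{pullback} in an open neighbourhood of each closed point $x\in X'\cap D$.

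Fix such an $x$ and let $d=\dim\cO_{X,x}$. Since $D$ is regular at $x$, after shrinking $X$ we may write $D=V(\pi)$ with $\pi$ part of a regular system of parameters of $\cO_{X,x}$. As $X'$ is regular and (after further shrinking) closed in $X$, its ideal at $x$ is generated by $d-1$ elements $t_1,\dots,t_{d-1}$ forming part of a regular system of parameters of $\cO_{X,x}$. The regularity of the zero-dimensional scheme $D'=D\times_XX'=V(\pi|_{X'})$ forces the image of $\pi$ to generate the maximal ideal of the one-dimensional regular local ring $\cO_{X',x}$, and therefore $\{t_1,\dots,t_{d-1},\pi\}$ is a full regular system of parameters of $\cO_{X,x}$. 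Shrinking $X$ once more, put $X_j=V(t_1,\dots,t_j)$ for $0\le j\le d-1$, so that $X_0=X$, $X_{d-1}=X'$, each $X_j$ is a regular scheme of finite type over $\Z$ of codimension $j$ in $X$, and $D_j:=D\times_XX_j=V(\pi|_{X_j})$ is a regular (hence simple normal crossing) divisor on $X_j$ with $D_j\times_{X_j}X_{j+1}=D_{j+1}$ again regular; moreover the pullback $N_j$ of $N$ to $X_j$ is supported on $D_j$ and $X_{j+1}\smallsetminus D_{j+1}\subset X_j\smallsetminus D_j$.

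Now I would apply \ref{special} to each inclusion $X_{j+1}\hookrightarrow X_j$: it is a locally closed subscheme of codimension one, $D_j$ is regular, and $D_j\times_{X_j}X_{j+1}$ is regular, so \ref{special} gives Theorem \ref{pullback} for this inclusion, with $M_j$ and $M_{j+1}$ the divisors attached to $N_j$ and $N_{j+1}$ as in \ref{thmS}. Composing the resulting $d-1$ commutative squares, and using that the pullback maps on $\cO(N)$, $\cO(M)$, $\Omega^1(\log D)$ and on $R^1j_*(\Q_p/\Z_p)$ are functorial so that their composites along the chain are precisely the pullback maps for $f\colon X'\to X$, yields statements 1 and 2 of Theorem \ref{pullback} over the chosen neighbourhood of $x$. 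As $x$ ranges over the closed points of $X'\cap D$, this proves \ref{special0}.

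The main obstacle is the local construction in the second paragraph: one must verify that $\{t_1,\dots,t_{d-1},\pi\}$ can indeed be chosen as a regular system of parameters and that every intermediate $X_j$, every $D_j$, and every fibre product $D_j\times_{X_j}X_{j+1}$ is regular, so that the hypotheses of \ref{special} are met verbatim at each stage. The final composition of the commutative squares, and the fact that the filtrations $F_{N_j}$ are carried into $F_{N_{j+1}}$ at each step, are routine functoriality of pullbacks and of Swan conductors.
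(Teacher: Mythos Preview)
Your proposal is correct and follows exactly the approach of the paper, which proves the corollary in one sentence: ``This is because the morphism $X'\to X$ is a composition of morphisms $X'\to X$ of the type of \ref{special}.'' You have simply unpacked this sentence by constructing the flag $X=X_0\supset X_1\supset\dots\supset X_{d-1}=X'$ from a regular system of parameters and verifying the hypotheses of \ref{special} at each step, then composing the resulting commutative squares.
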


\begin{pf} This is because the morphism $X'\to X$ is a composition of morphisms $X'\to X$ of the type of \ref{special}. 
\end{pf}

\begin{sblem}\label{special2} Theorem \ref{pullback} is true if $X'$ is one-dimensional. 

\end{sblem}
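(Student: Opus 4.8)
\begin{pf}[Sketch of the intended argument]
The plan is to reduce the statement, by a normalization, a sequence of blow\nobreakdash-ups, and a final ``straightening'' step, to the two cases already in hand: the compatibility of $\Rsw$ along finite extensions of complete discrete valuation fields (part~(i) of \ref{Rsw4}, equivalently \ref{lem3}(i)), and the case \ref{special0} of a transversal one\nobreakdash-dimensional subscheme. Since the assertion is local on $X'$ and on $X$ and the target of the $\Rsw$\nobreakdash-diagram vanishes outside the support of $N'-M'$, I would fix a closed point $x'$ of $X'$ in that support; then $x:=f(x')$ lies in $D$ because $N'=f^*N$ is supported on $f^{-1}(D)$. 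One may assume $X'$ integral (a regular scheme is a disjoint union of integral ones); if $f(X')$ is a single point then $f^*\chi$ is unramified near $x'$ and there is nothing to prove, so let $Z$ be the reduced closed image $\overline{f(X')}$, an integral curve in $X$ through $x$, and write $f=\psi\circ\varphi$ with $\varphi\colon X'\to Z^{\nu}$ the map to the normalization of $Z$ (it exists because $X'$ is normal) and $\psi\colon Z^{\nu}\to Z\hookrightarrow X$. Equipping $Z^{\nu}$ with the divisor $\psi^{-1}(D)$ (automatically simple normal crossing, being a finite set of points on a regular curve) and the divisor $\psi^*N$, it suffices, by the routine compatibility of the construction of \ref{thmS} with the formula for $M$ under pullback, to prove Theorem \ref{pullback} for $\varphi$ and for $\psi$ separately and then compose.

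For $\varphi$: it is a dominant, hence quasi\nobreakdash-finite, morphism of regular integral curves, and on completions at $x'$ and at its image $\bar z\in Z^{\nu}$ (a closed point, with finite residue field) it becomes a finite extension of complete discrete valuation fields, i.e.\ of $d$\nobreakdash-dimensional local fields with $d=1$. Since the global $\Rsw$ of \ref{thmS} restricts at codimension\nobreakdash-one points to the local $\Rsw$ of Sections~3.3--3.5 (this is built into its characterization; see \ref{gen1}), the compatibility for $\varphi$ at $x'$ is precisely the compatibility of $\Rsw$ under extensions of complete discrete valuation fields, which is \ref{Rsw4}(i) (equivalently \ref{lem3}(i)), once the divisors and the multiplicities $N,N',M,M'$ are matched up, which on a regular curve is immediate.

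For $\psi$: it is finite and birational onto the curve $Z$, and the only obstructions to it being, near $\bar z$, a closed immersion of a regular curve satisfying the hypotheses of \ref{special0} are that $Z$ may be singular at $x$, or meet $D$ non\nobreakdash-transversally, or pass through a singular point of $D$. I would remove these by a finite sequence of blow\nobreakdash-ups $\pi\colon \tilde X\to X$, each centered at a closed point lying on the current total transform of $D$, so that \ref{special1} applies at each step and the total transform stays simple normal crossing. Two points make this work. First, by the valuative criterion of properness applied to $\pi$ and the discrete valuation ring $\cO_{Z^{\nu},\bar z}$, the map $\psi$ lifts to $\tilde\psi\colon Z^{\nu}\to \tilde X$ near $\bar z$, with image point $\tilde z$ necessarily on the total transform $\tilde D$. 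Second, by embedded resolution of the curve $Z$ inside the regular scheme $X$ relative to the simple normal crossing divisor $D$, one may arrange that the strict transform $\tilde Z$ is regular at $\tilde z$, that $\tilde D$ is regular at $\tilde z$, and that $\tilde Z$ meets $\tilde D$ transversally there; crucially this never forces a center off the divisor, because whenever one blows up a point of $\tilde D$ through which $\tilde Z$ passes, the new exceptional divisor, added to $\tilde D$, again meets $\tilde Z$. After this reduction $\tilde\psi$ is, on a neighborhood of $\bar z$, a closed immersion of the regular curve $Z^{\nu}$ meeting $\tilde D$ only at the regular point $\tilde z$ of $\tilde D$ with $\tilde D\times_{\tilde X}Z^{\nu}$ regular, so \ref{special0} yields the compatibility of $\Rsw_{Z^{\nu}}$ with $\Rsw_{\tilde X}$; combining this with \ref{special1} at each blow\nobreakdash-up, which gives the compatibility of $\Rsw_{\tilde X}$ with $\Rsw_{X}$ along $\pi$, produces the compatibility for $\psi$ and completes the proof.

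The main obstacle I expect is the embedded resolution step above: carrying it out so that every blow\nobreakdash-up center stays on the successive total transform of $D$ and so that regularity and the simple normal crossing property are preserved throughout. Concretely, near $x$ one works with the local homomorphism $\cO_{X,x}\to\cO_{Z^{\nu},\bar z}$ into a discrete valuation ring, picks a regular system of parameters $t_1,\dots,t_d$ of $\cO_{X,x}$ adapted to $D$, and shows that a suitable measure of non\nobreakdash-transversality, such as $\max_i\operatorname{ord}_{\bar z}(\psi^*t_i)$ together with the number of components of $D$ through the relevant point, strictly decreases under an appropriate blow\nobreakdash-up. The mixed\nobreakdash-characteristic points, where $p$ itself enters the local equations of $D$ and of the total transforms, need a little extra care but no essentially new idea; everything else is bookkeeping on top of results already established in the text.
\end{pf}
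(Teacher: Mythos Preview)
Your argument is correct and is essentially the same as the paper's: factor $f$ through the image curve, handle the map from $X'$ to (the normalization of) that curve by \ref{Rsw4}(i), and handle the inclusion of the curve into $X$ by repeated blow-ups at points of $C\cap D$ (using \ref{special1}) until one is in the situation of \ref{special0}. The paper's proof is a three-line version of exactly this; your explicit passage through the normalization $Z^{\nu}$ makes precise what the paper leaves implicit when it writes ``$X'=C$'' after the blow-ups (the strict transform being locally the normalization), and the embedded-resolution step you single out as the main obstacle is precisely what the paper asserts without further comment.
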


\begin{pf} 
Let $C$ be the image of $X' \to X$. We may assume that $C$ is one-dimensional. By repeating the blowing-up of $X$ at $C\cap D$, we obtain the situation of \ref{special0} with $X'=C$. Hence in the composition $X'\to C \to X$, \ref{special2} for  the latter morphism  is reduced to \ref{special1} and  \ref{special0}.  \ref{special2} for the first morphism is reduced to (i) of \ref{Rsw4}. 
\end{pf}

The following \ref{dif} is a preparation for \ref{injrest} which is important to reduce Theorem \ref{pullback} to \ref{special2}

\begin{sblem}\label{dif} Let $D$ be a smooth scheme over a perfect field $k$ of characteristic $p>0$ and let $x$ be a closed point of $D$. Then we have an injection $$\Omega^1_{D/k,x}\to \prod_Z \Omega^1_{Z/k,x}$$ where $Z$ ranges over all closed integral subschemes of $D$ of dimension one which contain $x$ and which are smooth at $x$. 

\end{sblem}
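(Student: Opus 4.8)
The plan is to reduce the assertion to the following: for each nonzero $\omega\in\Omega^1_{D/k,x}$ there is a single curve $Z$ of the stated type with $\omega|_Z\neq 0$; and then to produce such a $Z$ very explicitly by an iterated $p$-power substitution, which has the advantage of working over an arbitrary residue field (in particular a finite one). First I would fix a regular system of parameters $x_1,\dots,x_n$ of $R:=\cO_{D,x}$, where $n=\dim R$. Since $k$ is perfect, $\kappa:=\kappa(x)$ is separable over $k$, so $\Omega^1_{\kappa/k}=0$ and the surjection $\mathfrak m_x/\mathfrak m_x^2\to\Omega^1_{D/k}\otimes_{\cO_D}\kappa$ sending $\bar x_i$ to $\overline{dx_i}$ is an isomorphism of $n$-dimensional $\kappa$-spaces; by Nakayama and smoothness of $D$, $dx_1,\dots,dx_n$ is then an $R$-basis of $\Omega^1_{D/k,x}$. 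Write $\omega=\sum_i f_i\,dx_i$ with $f_i\in R$; after renumbering the coordinates we may assume $f_1\neq 0$. By Cohen's theorem $\widehat{R}\cong\kappa[[x_1,\dots,x_n]]$.

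The crux is an elementary fact, proved by induction on the number of variables: for $0\neq h\in\kappa[[y_1,\dots,y_m]]$ there are integers $N_2,\dots,N_m\ge 1$ with $h(y_1,\,y_1^{p^{N_2}},\dots,y_1^{p^{N_m}})\neq 0$ in $\kappa[[y_1]]$. To see this, write $h=\sum_{j\ge j_0}y_m^{\,j}h_j(y_1,\dots,y_{m-1})$ with $h_{j_0}\neq 0$; by induction choose $N_2,\dots,N_{m-1}$ so that $h_{j_0}(y_1,y_1^{p^{N_2}},\dots,y_1^{p^{N_{m-1}}})$ is nonzero, say of order $r$ in $y_1$, and then pick $N_m$ with $p^{N_m}>r$. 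In $h(y_1,y_1^{p^{N_2}},\dots,y_1^{p^{N_m}})=\sum_{j\ge j_0}y_1^{\,jp^{N_m}}h_j(y_1,y_1^{p^{N_2}},\dots,y_1^{p^{N_{m-1}}})$ the summand $j=j_0$ contributes a nonzero coefficient in degree $j_0p^{N_m}+r$, whereas every summand with $j>j_0$ has $y_1$-order $\ge(j_0+1)p^{N_m}>j_0p^{N_m}+r$, so that coefficient survives. Applying this to $h=f_1\in\widehat{R}$ yields integers $N_2,\dots,N_n\ge 1$ with $f_1(x_1,x_1^{p^{N_2}},\dots,x_1^{p^{N_n}})\neq 0$ in $\kappa[[x_1]]$.

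Now let $I=(x_2-x_1^{p^{N_2}},\dots,x_n-x_1^{p^{N_n}})$ in $R$ (equivalently on the open set of $D$ where the $x_i$ are regular functions), and let $Z$ be the closure in $D$ of the component through $x$ of $V(I)$. Its local ring $\cO_{Z,x}=R/I$ has completion $\widehat{R}/I\widehat{R}\cong\kappa[[x_1]]$, hence is a regular local domain, so $Z$ is an integral curve through $x$, smooth at $x$ over $k$. In $\Omega^1_{Z/k,x}$ one has $dx_i=d(x_1^{p^{N_i}})=p^{N_i}x_1^{p^{N_i}-1}dx_1=0$ for $i\ge 2$ because $\operatorname{char} k=p$; therefore $\Omega^1_{Z/k,x}=\cO_{Z,x}\,dx_1$ is free of rank one and $\omega|_Z=(f_1\bmod I)\,dx_1$. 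Since $\cO_{Z,x}$ injects into its completion $\kappa[[x_1]]$, and $f_1\bmod I$ maps there to the nonzero series $f_1(x_1,x_1^{p^{N_2}},\dots,x_1^{p^{N_n}})$, we conclude $\omega|_Z\neq 0$, which gives the desired injection.

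The only genuine difficulty is the inductive combinatorial fact of the second paragraph: a priori infinitely many monomials of $f_1$ could collide under a substitution $x_i\mapsto x_1^{e_i}$, and this is exactly what makes naive choices (e.g. generic linear substitutions, which would also require an infinite residue field) fail, as the example $\omega=d(x_1x_2)$ already shows. The induction circumvents this by always choosing the exponent of the last variable large relative to the order of the already-stabilised leading series, so that no cancellation can reach the relevant coefficient — and by taking the exponents to be powers of $p$, which simultaneously kills the unwanted differentials $dx_i$ for $i\ge 2$.
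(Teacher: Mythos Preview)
Your proof is correct and follows essentially the same approach as the paper: both cut down to a curve by equations of the form $x_i = x_1^{e_i}$ with $p \mid e_i$, so that $dx_i$ vanishes on the curve, choosing the exponents so that the coefficient $f_1$ survives the substitution. The paper packages this as an induction on $\dim D$, eliminating one variable at a time via the observation that $g_1$ lies in only finitely many of the infinitely many height-one primes $(t_1^n - t_d)$ with $p\mid n$, whereas you cut directly to a curve in one step using your power-series substitution lemma; the two arguments are equivalent repackagings of the same idea.
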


 \begin{pf} Let $\omega$ be a non-zero element of $\Omega^1_{D/k,x}$. 
Let $(t_i)_{1\leq i\leq d}$ be a regular system of parameters at $x$. We prove by induction on $d$. Write $\omega= \sum_i g_i dt_i$ on an open neighborhood of  $x$. We may assume $g_1\neq 0$. Assume $d\geq 2$. 
Since  the ideals of $(t_1^n -t_d)$ ($n\geq 1$) of $\cO_{X,x}$  are distinct prime ideals of height one, there is $n\geq 1$ which is divisible by $p$ such that $g_1\notin (t_1^n-t_d)$. Let $Z$ be the closed integral subscheme of $X$ containing $x$ which is defined by $t_1^n-t_d$ at $x$. Then $Z$ is smooth at $x$, $(dt_i)_{1\leq i\leq d-1}$ is a basis of $\Omega^1_{Z,x}$,  the image of $\omega$ in $\Omega^1_{Z,x}$ is $\sum_{i=1}^{d-1} g_idt_i$, and $g_1$ is non-zero in $\cO_{Z,x}$.
 \end{pf}

\begin{sblem}\label{injrest} Let the notation be as in \ref{thmS}. 
 Let $x$ be a closed point of $D$ at which $D$ is regular. Then we have an injection
 $$(\cO_X(N)/\cO_X(M) \otimes_{\cO_X} \Omega^1_X(\log D))_x\to \prod_C (\cO_C(N'_C)/\cO_C(M'_C) \otimes_{\cO_C} \Omega^1_C(\log D'_C))_x$$ where $C$ ranges over all one-dimensional regular integral subschemes of $X$ which contain $x$ and which are not contained in $D$.
 Here $N'_C$ is the pullback of $N$ to $C$, $M'_C$ is the effective divisor on $C$ defined using $N'_C$ similarly to $M$, and $D'_C$ is the support of the pullback of $D$ to $C$ regarded as a divisor on $C$ with simple normal crossings.

\end{sblem}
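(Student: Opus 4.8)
The plan is to localize at $x$, reduce the assertion to a concrete statement about a free module over a truncated regular local ring, and then produce the detecting curves by combining Lemma \ref{dif} with an explicit choice of curve in $X$.

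First I work in $A=\cO_{X,x}$. If the stalk $W:=(\cO_X(N)/\cO_X(M)\otimes_{\cO_X}\Omega^1_X(\log D))_x$ is zero there is nothing to prove, so assume it is nonzero; then the unique component $P$ of $D$ through $x$ (unique since $D$ is regular at $x$) has $m(P)<n(P)$, so $P$ is of characteristic $p$, $\kappa(x)$ is a finite, hence perfect, field of characteristic $p$, and $\cO_{D,x}$ is a regular local $\F_p$-algebra essentially of finite type, so $D$ is smooth over $\F_p$ near $x$. Choose a regular system of parameters $\pi=t_1,t_2,\dots,t_d$ of $A$ with $D=\{\pi=0\}$, and put $e:=n(P)-m(P)$; by the definition of $m(P)$ one has $1\le e\le\mathrm{ord}_P(p)$ and $p\in\pi^e A$, so $E:=N-M=eD$ is a scheme of characteristic $p$ and $\cO_{E,x}=A/\pi^e A$. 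By part 1 of \ref{dualX}, after trivializing $\cO_X(N)$ by $\pi^{-n(P)}$, $W$ is free of rank $d$ over $\cO_{E,x}$ on $d\log\pi,dt_2,\dots,dt_d$; for a regular integral curve $C$ through $x$ with $C\not\subseteq D$, with uniformizer $\pi_C$ and $r_C:=\mathrm{ord}_{\pi_C}(\pi|_C)$ the intersection multiplicity of $C$ with $D$ at $x$, the target $W_C:=(\cO_C(N'_C)/\cO_C(M'_C)\otimes_{\cO_C}\Omega^1_C(\log D'_C))_x$ is free of rank one over $\cO_{C,x}/\pi_C^{er_C}$ on $d\log\pi_C$, and the restriction $W\to W_C$ is induced by $\cO_X\to\cO_C$ and the natural pullback of log differential forms, in which $d\log\pi\mapsto d\log(\pi|_C)=r_C\,d\log\pi_C+d\log(\mathrm{unit})$ (with $d\log(\mathrm{unit})\in\pi_C\Omega^1_C(\log D'_C)$) and each $dt_j\mapsto d(t_j|_C)\in\pi_C\Omega^1_C(\log D'_C)$.

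Given a nonzero $\omega\in W$, I filter along $\pi$: let $\ell\in\{0,\dots,e-1\}$ be maximal with $\omega\in\pi^\ell W$, and set $\bar\omega:=(\pi^{-\ell}\omega\bmod\pi)\in\bar W:=W/\pi W$, which by part 1 of \ref{dualX} is free of rank $d$ over $\cO_{D,x}$ on $\overline{d\log\pi},\overline{dt_2},\dots,\overline{dt_d}$; it is nonzero, and I write $\bar\omega=\bar g\,\overline{d\log\pi}+\bar\eta$ with $\bar g\in\cO_{D,x}$ and $\bar\eta\in\cO_{D,x}\langle\overline{dt_2},\dots,\overline{dt_d}\rangle$, the latter identified with $\Omega^1_{D/\F_p,x}$ via $\overline{dt_j}\mapsto d\bar t_j$. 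The goal is to produce a regular curve $C$ through $x$, $C\not\subseteq D$, that detects $\bar\omega$ on the $\ell$-th graded piece of the $\pi_C$-adic filtration of $W_C$, which forces $\omega|_C\ne0$ since $\ell<e$. When the $\overline{d\log\pi}$-component dominates — e.g. $\bar g$ a unit, or more generally when a transversal curve $C$ ($r_C=1$) can be chosen along which $\bar g$ has $\pi_C$-order $<e-\ell$ — this works, because $d\log(\pi|_C)$ then contributes a unit multiple of $d\log\pi_C$ to the $\ell$-th graded piece while the $\overline{dt_j}$-terms restrict to strictly higher $\pi_C$-order. When instead the class of $\bar\eta$ in $\Omega^1_{D,x}$ is what matters, Lemma \ref{dif} applied to $D$ smooth over $\F_p$ gives a curve $Z\subseteq D$ through $x$, smooth at $x$, on which that class survives; writing $Z=\{\pi=g_2=\dots=g_{d-1}=0\}$ in a regular system of parameters $(\pi,g_2,\dots,g_d)$ of $A$ and taking $C:=\{g_2=\dots=g_d=0\}$ produces a regular curve transversal to $D$, not contained in $D$, whose restriction realizes the restriction $\Omega^1_{D,x}\to\Omega^1_{Z,x}$ on the $\ell$-th graded piece, so $\omega|_C\ne0$. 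In the remaining case ($\bar\eta=0$, $\bar g\in\mathfrak m_{D,x}\setminus\{0\}$ of order $s$, invisible to transversal curves) I take a curve $C$ tangent to $D$ at $x$ of order $r$ with $p\nmid r$ and $r(e-\ell)>s$, chosen with $\bar g$ of order exactly $s$ along it; then $d\log(\pi|_C)$ has unit leading coefficient in $W_C=\cO_{C,x}/\pi_C^{er}$ and the $\bar g$-term gives the nonzero leading term of $\omega|_C$ at $\pi_C$-order $\ell r+s<er$. In all cases $Z$, respectively $C$, is pinned down by finitely many further genericity conditions that rule out cancellation.

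In every case the verification is an explicit computation of the $\pi_C$-adic order of the coefficient of $\omega|_C$ in $W_C$, using the pullback formulas recorded above. The step I expect to be the main obstacle is carrying out this bookkeeping uniformly; the delicate points are: (i) when $p\mid r_C$ the term $r_C\,d\log\pi_C$ vanishes and a higher-order term becomes leading — this is precisely why Lemma \ref{dif} is proved using curves $\{t_1^n-t_d=0\}$ with $p\mid n$, and an analogous device must be arranged here; (ii) one must ensure the chosen $C$ detects the full germ $\omega$ and not merely its leading term $\bar\omega$, so the $\pi$-filtration d\'evissage and the choice of $C$ must be matched; and (iii) in the tangent case one must simultaneously control the $\pi_C$-order of $\bar g$ along $C$ and the unit appearing in $d\log(\pi|_C)$. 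The organizing principle is to reduce \ref{injrest} directly to Lemma \ref{dif}, rather than re-proving a higher-codimensional analogue of \ref{dif} on $X$ itself.
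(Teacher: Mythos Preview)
Your case 2 has a genuine error. The transversal curve $C=\{g_2=\dots=g_d=0\}$ has $g_j|_C=0$ identically for $j\ge2$, so $\eta|_C=\sum h_j\,d(g_j|_C)=0$; more generally, on any transversal curve each $dt_j$ restricts into $\pi_C\Omega^1_C(\log D'_C)$, so on the $\ell$-th graded piece only the $\overline{d\log\pi}$-coefficient $\bar g(x)$ survives and $\bar\eta$ is invisible. Thus there is no sense in which $W\to W_C$ ``realizes'' $\Omega^1_{D,x}\to\Omega^1_{Z,x}$. The paper's fix is precisely the device you mention in your point (i) but do not apply here: after arranging $Z=\{t_2=\dots=t_{d-1}=\pi=0\}$ via Lemma \ref{dif}, take $C=\{t_2=\dots=t_{d-1}=t_1^e-\pi=0\}$ with $p\mid e$, a curve \emph{tangent} to $D$ along $Z$. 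Then $d\log\pi|_C\equiv e\,d\log t_1\equiv 0$ at leading order, killing the $d\log\pi$-term, and the surviving leading coefficient is governed by $g_1t_1|_Z$, exactly what Lemma \ref{dif} made nonzero. So both of the paper's cases use tangent curves of high order; transversal curves only work in the trivial subcase $\bar g\in\cO_{D,x}^\times$.

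A second issue: your claim that $W_C$ has length $er_C$ is not correct in general. The length is $r_Cn-m'_C$ with $m'_C=\max(r_Cn-e_{C,x},[r_Cn/p])$, which typically exceeds $r_Cm$, so $W_C$ is shorter than you assert. Consequently the inequality $\ell r+s<er$ in your case 3 does not guarantee $\omega|_C\ne0$. The paper controls this via the bound $r^{-1}m'_C\le m+(p-1)/p$ (its Claim 1) and then needs the much stronger tangency condition $r\ge p(a+1)$, not merely $r(e-\ell)>s$, to force $r(n-\ell)-a>m'_C$ (its Claim 3).
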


\begin{pf} We may assume  that $D$ is regular and that $N=n D$ for some integer $n\geq 1$. Let $\omega$ be a non-zero element of $(\cO_X(N)/\cO_X(M) \otimes_{\cO_X} \Omega^1_X(\log D))_x$. 
Let $\pi$ be an element of $\cO_{X,x}$ which defines $D$ at $x$. Take a system of regular parameters $(t_1, \dots, t_d)$ of $\cO_{X,x}$ such that $t_d=\pi$. 
Write $M=mD$. Let $h\geq 0$ be the largest integer such that $\omega$ is in the image of $\pi^{h-n} \otimes \Omega^1_X(\log D)$. Then $n-h>m$. 
Write $\omega= \pi^{h-n}  \otimes (f d\log(\pi) + \sum_{i=1}^{d-1} g_i dt_i)$ with  $f, g_i\in \cO_{X,x}$. 
 Then the pullback of some of $f$ and $g_i$ to $D$ is non-zero. 

There are two cases.

Case 1. The pullback of some $g_i$ to $D$ is non-zero.

Case 2.  the pullbacks of $g_i$ to $D$ are zero for all $i$. In this case, the pullback of $f$ to $D$ is non-zero.

In Case 1 (resp. Case 2), 
  there is a closed integral subscheme $Z$ of $D$ of dimension one which contains $x$ and which is smooth at $x$ such that the pullback of $\sum_{i=1}^{d-1} g_idt_i$ (resp. $f$) to $Z$ is non-zero. (In Case 1, the existence of $Z$ is by \ref{dif}.) By changing the choice of  $(t_i)_{1\leq i\leq d-1}$, we may assume that $Z$ is defined at $x$ by $t_2, \dots, t_{d-1}$ and $\pi$. (In Case 1, the pullback of $\sum_{i=1}^{d-1} g_idt_i$ to $Z$ becomes the pull back of $g_1dt_1$.)

 Let $a$ be the order of the pullback of $g_1t_1$ (resp. $f$) to $Z$ at $x$.

Take an integer $e$ which is divisible by $p$ (resp. is coprime to $p$) such that  $e\geq p(a+1)$. Let $C$ be a one-dimensional regular integral subscheme of $X$ containing $x$ which is defined by $t_2, \dots, t_{d-1}, t_1^e-\pi$ at $x$. Let $\omega'$ be the pullback of  $\omega$ to $(\cO_C(N'_C)/\cO_C(M'_C)\otimes_{\cO_C}\Omega^1_C(\log D'_C))_x$. 

In Case 1 (resp. Case 2), we have 

{\bf Claim 1.} $N'_C=enD'_C$, $M'_C=m'D'_C$ where $m'$ is an integer such that $e^{-1}m'\leq m + p^{-1}(p-1)$. $t_1$ is a prime element of the discrete valuation ring $\cO_{C,x}$. $d\log(t_1)$ is a generator  of $\Omega^1_C(\log D'_C)_x$

{\bf Claim 2.}  $\omega'=\pi^{h-n} \otimes  gd\log(t_1)$, 
where $g$ is the pullback of $g_1t_1$ (resp. $ef+g_1t_1$) to $C$.  The order  of $g$  at $x$ is $a$.

The proofs of Claim 1 and Claim 2 are straightforward.

{\bf Claim 3.}  $\omega'\neq 0$.

To prove Claim 3, by  Claim 1 and Claim 2, it is sufficient to prove $e(n-h)-a> m'$. But $e^{-1}(e(n-h)-a-m'-1) = (n-h-m-1)+ m-e^{-1}m'+1-e^{-1}(a+1) \geq (m-e^{-1}m')+1 -e^{-1}(a+1) \geq -p^{-1}(p-1)+1 - p^{-1}=0$. 
\end{pf}

\begin{sbpara} Now we prove Theorem \ref{pullback}. By  \ref{injrest} which we apply by taking $X'$ in \ref{pullback} as $X$ in \ref{injrest}, we may assume that $X'$ in \ref{pullback} is one dimensional. Then we are reduced to \ref{special2}. 
\end{sbpara}

\begin{sbpara} By \ref{special2} and \ref{injrest}, for $\chi\in F_NH^1_{et}(U, \Q_p/\Z_p)$, $\Rsw(\chi)$ in Theorem \ref{thmS} is determined by $\Rsw$ of the pullbacks of $\chi$ to $F_{n(C,x)}H^1(K_{C,x},\Q_p/\Z_p)$, where $x$ ranges over regular points of $D$, $C$ ranges over one-dimensional regular integral subschemes of $X$ which contain $x$ and which are not contained in $D$, $n(C,x)$ denotes the multiplicity of the pullback of $N$ to $C$ at $x$, and $K_{C,x}$ denotes the local field of the function field of $C$ at $x$.  

\end{sbpara}

 \begin{sbpara} In the case $X$ is of characteristic $p$, the proofs of the theorems \ref{thmS} and \ref{pullback} can be given also by using 
 Artin-Schreier-Witt theory. 
 
    We first consider \ref{thmS}. 
 The exact sequence $0\to \Z/p^s\Z\to W_s(\cO_U)\overset{\phi-1}\to W_s(\cO_U) \to 0$ on $U_{et}$, where $\phi$ is the Frobenius, induces an exact sequence $0\to \Z/p^s\Z\to j_*W_s(\cO_U) \overset{\phi-1}\to j_*W_s(\cO_U) \to R^1j_*(\Z/p^s\Z) \to 0$ on $X_{et}$ (this is because $R^1j_*W_s(\cO_U)=0$ by the fact $j$ is an affine morphism).  For an effective divisor $N$ whose support is contained in $D$, let $F_Nj_*W_s(\cO_U)$ be the subsheaf of $j_*W_s(\cO_U)$ consisting of local sections $(f_{s-1}, \dots, f_0)$ satisfying $p^i\text{div}(f_i) \geq -N$ for all $i$. Let $F'_NR^1j_*(\Z/p^s\Z)\subset R^1j_*(\Z/p^s\Z)$ be the image of $F_Nj_*W_s(\cO_U)$. 
 
 Claim. Let $F_NR^1j_*(\Z/p^s\Z):= R^1j_*(\Z/p^s\Z)\cap F_NR^1j_*(\Q_p/\Z_p)$ where we embed $\Z/p^s\Z$ in $\Q_p/\Z_p$ in the canonical way. Then $F'_NR^1j_*(\Z/p^s\Z)= F_NR^1j_*(\Z/p^s\Z)$. \
  
 Proof of Claim.  Define the effective divisor $M$ on $X$ from $N$ as in \ref{thmS}. We have an exact sequence
 \begin{equation} 
 0\to \Z/p^s\Z\to F_M j_*W_s(\cO_U)  \overset{\phi-1}\to F_N j_*W_s(\cO_U) \to F'_NR^1j_*(\Z/p^s\Z)\to 0
 \label{ASWX}
 \end{equation} 
 By using (\ref{ASWX}), we can show that for an effective divisor $N'$ in $X$ with support in $D$ such that $N'\geq N$, the sheaf $\cG:= F'_{N'}R^1j_*(\Z/p^s\Z)/F'_NR^1j_*(\Z/p^s\Z)$  is a successive extension of sheaves of abelian groups which are isomorphic to vector bundles on  irreducible components of $D$. Hence $\cG\to \oplus_{\nu} i_{\nu,*}i_{\nu}^*\cG$ is injective where $\nu$ ranges over all generic points of $D$ and $i_{\nu}: \nu \to X$ is the inclusion morphism. Claim follows from this by the downward induction on the multiplicities of $N$ at generic points of $D$. 
 
 We have the homomorphism
 $$F_Nj_*W_s(\cO_U)\to \cO_X(N) \otimes_{\cO_X} \Omega^1_X(\log D)\;;\; (f_{s-1}, \dots, f_0)\mapsto -\sum_i f_i^{p^i-1}df_i.$$ 
 By the exact sequence (\ref{ASWX}) and by Claim, 
 this homomorphism induces a homomorphism
 $$F_NR^1j_*(\Z/p^s\Z) \to \cO_X(N)/\cO_X(M) \otimes_{\cO_X} \Omega^1_X(\log D)$$
 which is $\Rsw$.

 The compatibility with pullbacks (\ref{pullback}) is evident in this method.

 \end{sbpara}

As an application of our theory, we have

\begin{sbthm}\label{thmC} Let $X$ be a regular scheme of finite type over $\Z$,  $D$ a divisor on $X$ which is regular and integral,   $U=X\smallsetminus D$, $p$ a prime number, and let $\chi\in H^1_{et}(U, \Q_p/\Z_p)$. Let $x$ be a closed point of $D$. Then 
$$\Sw_D(\chi)= \sup_C   \Sw_x(\chi|_{C\cap U})/(C, D)_x$$
where $C$ ranges over all one-dimensional integral subschemes  on $X$ which contain $x$, which are regular at $x$,  and which are  not contained $D$. Here $\Sw_D(\chi)$ denotes the Swan conductor of $\chi$ at the generic point of $D$, and $(C,D)_x$ denotes the intersection number of $C$ and $D$ at $x$ which is the multiplicity at $x$ of the pullback of the divisor $D$ to $C$.  
\end{sbthm}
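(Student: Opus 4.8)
The plan is to establish the two inequalities separately, using the global refined Swan conductor mod $p$ of \ref{thmS}, its compatibility with pullbacks (\ref{pullback}), the restriction injectivity \ref{injrest}, and the dictionary between $\Rsw$ and $\Sw$ in 2 of \ref{m<i<n}. After shrinking $X$ to an open neighborhood of $x$ we may assume the Swan conductor divisor of $\chi$ equals $nD$ with $n=\Sw_D(\chi)$, so that $\chi\in F_{nD}H^1_{et}(U,\Q_p/\Z_p)$; if $n=0$ both sides of the asserted identity vanish, so from now on assume $n\ge 1$.

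For the upper bound: given a one-dimensional integral $C\ni x$ that is regular at $x$ and not contained in $D$, after shrinking $C$ we may assume $C$ is regular, and then 1 of \ref{pullback} (or \cite{KKs}) shows that the restriction of $\chi$ lies in $F_{N'}H^1_{et}(C\cap U,\Q_p/\Z_p)$ with $N'$ the pullback of $nD$ to $C$, so that $\Sw_x(\chi|_{C\cap U})\le\text{ord}_x(N')=n(C,D)_x$. Hence $\sup_C \Sw_x(\chi|_{C\cap U})/(C,D)_x\le n$.

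For the reverse inequality I would exhibit, for infinitely many $e$, a regular integral curve $C_e\ni x$ with $C_e\not\subset D$, $(C_e,D)_x=e$, and $\Sw_x(\chi|_{C_e\cap U})=en-a$ for a single fixed integer $a\ge 0$ independent of $e$; these curves are exactly those produced in the proof of \ref{injrest}. Let $\omega=\Rsw(\chi)$, a section near $x$ of $\cO_X(nD)/\cO_X(M)\otimes_{\cO_X}\Omega^1_X(\log D)$ furnished by \ref{thmS}. Since this $\Rsw$ agrees at the generic point of $D$ with the local refined Swan conductor mod $p$ there, 2 of \ref{m<i<n} combined with $\Sw_D(\chi)=n$ forces $\omega$ to be nonzero with pole order exactly $n$ along $D$ near $x$, i.e.\ the integer $h$ occurring in the proof of \ref{injrest} is $0$. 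Feeding this $\omega$ into that construction --- taking $e$ divisible by $p$ or prime to $p$ according to its two cases and $e\ge p(a+1)$, where $a\ge 0$ is the order at $x$ of the pullback of the relevant coefficient of $\omega$ to the auxiliary curve $Z\subset D$ supplied by \ref{dif}, a quantity that does not involve $e$ --- produces a regular integral $C_e\ni x$ with $C_e\not\subset D$, $(C_e,D)_x=e$, local multiplicities $N'_{C_e}=en\cdot D'_{C_e}$ and $M'_{C_e}=m'\cdot D'_{C_e}$ satisfying $en-a>m'$ (Claim 3 there, now with $h=0$), and whose pullback $\omega'_e$ of $\omega$ to $C_e$ has pole order exactly $en-a$ at $x$.

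Finally, 2 of \ref{pullback} identifies $\omega'_e$ with $\Rsw(\chi|_{C_e\cap U})$; since $\Sw_x(\chi|_{C_e\cap U})\le en$ by the upper bound and $\omega'_e$ has pole order exactly $en-a>m'$, 2 of \ref{m<i<n} applied to the complete discrete valuation field $K_{C_e,x}$ gives $\Sw_x(\chi|_{C_e\cap U})=en-a$, so that $\Sw_x(\chi|_{C_e\cap U})/(C_e,D)_x=n-a/e\to n$. Combined with the upper bound this yields $\sup_C\Sw_x(\chi|_{C\cap U})/(C,D)_x=\Sw_D(\chi)$. I expect the only genuinely delicate ingredient to be the geometric construction of the $C_e$ together with the verification that the defect $a$ stays bounded (in fact constant) as $e\to\infty$ --- but that is precisely what the proof of \ref{injrest} already delivers, leaving here only bookkeeping with pole orders and the $\Rsw$--$\Sw$ dictionary.
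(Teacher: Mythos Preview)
Your proposal is correct and follows essentially the same approach as the paper: the upper bound via 1 of \ref{pullback}, and the lower bound by taking the curves $C_e$ constructed in the proof of \ref{injrest} (with $h=0$ since $n=\Sw_D(\chi)$), so that $\Sw_x(\chi|_{C_e\cap U})/(C_e,D)_x = n - a/e$ with $a$ fixed and $e$ arbitrarily large. You have simply made explicit the step the paper leaves implicit, namely that $\Sw_x(\chi|_{C_e\cap U})=en-a$ follows from 2 of \ref{pullback} together with 2 of \ref{m<i<n}.
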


The positive characteristic case of \ref{thmC} is already proved by Barrientos  (\cite{Ba} Theorem 5.2) without the  assumption $X$ is of finite type over $\Z$.

 \begin{sbpara} We prove \ref{thmC}. l.h.s $\geq $ r.h.s. follows from 1 of Theorem \ref{pullback}. To prove l.h.s. $\leq$ r.h.s., we consider the $C$ in the proof of \ref{injrest} with $n=\Sw(\chi)$ (so $h=0$). We have 
$\Sw_x(\chi|_{C\cap U})= en- a$, $(C,D)_x=e$, and hence $\Sw_x(\chi|_{C\cap U})/(C,D)_x= n-e^{-1}a$. 
 $a$ can be fixed and $e$ can become arbitrarily big. 

\end{sbpara}

\begin{sbpara} 
As is discussed in \cite{Ba}, \ref{thmC} is
related to the rank one case of Conjecture A in \cite{Ba}  on ramification of  $\ell$-adic sheaves on a scheme $X$ (here $\ell$ is a prime number which is invertible on $X$). This conjecture was first formulated by Esnault and Kerz in the positive characteristic case at the end of Section 3 of \cite{EK}.  Conjecture A is formulated by using the Sawn conductor of a representation of $\Gal(\bar K/K)$ 
 for a complete discrete valuation field $K$ defined by Abbes-Saito theory \cite{AS}. For a one-dimensional Galois representation, the Swan conductor given by \cite{KKs} and used in this paper coincides with that given by \cite{AS} in the positive characteristic case (Cor. 9.12 of \cite{AS2}), but this coincidence is not yet known in the mixed characteristic case. 

If the last coincidence is true, then (1) of \ref{pullback},  \ref{thmC} and the arguments in Section 6 of \cite{Ba} show that Conjecture A  is true for rank one $\ell$-adic sheaves on $X$ in \ref{s4X}.

 \end{sbpara}

\noindent {\rm Kazuya KATO
\\
Department of mathematics
\\
University of Chicago
\\
Chicago, Illinois, 60637, USA
\\
{\tt kkato@math.uchicago.edu}

\bigskip

\noindent {\rm Isabel LEAL}
\\
 Courant Institute of Mathematical Sciences
\\
New York, N.Y., 0711, USA
\\
{\tt leal@courant.nyu.edu }

\bigskip

\noindent
{\rm Takeshi Saito}
\\
Department of Mathematical Sciences
\\
University of Tokyo
\\
Meguro-ku, Tokyo, 153-8914, Japan
\\
{\tt t-saito@ms.u-tokyo}

\end{document}